\documentclass{article} 

\usepackage{authblk,breakcites}
\usepackage[round]{natbib}
\usepackage{microtype}
\usepackage{graphicx}
\usepackage{sidecap}
\usepackage{caption}
\usepackage{subfigure}
\usepackage{subcaption}
\usepackage{booktabs} 
\usepackage{multirow}
\usepackage[svgnames,dvipsnames]{xcolor}
\usepackage{float} 
\usepackage{wrapfig}
\usepackage[margin=3cm]{geometry}
%\graphicspath{{fig/}}
\usepackage{enumerate}
\newcommand{\Cpdata}{C_{*}}
\newcommand{\Lpdata}{L_{*}}

\usepackage{hyperref}
\hypersetup{
    colorlinks=true,
    linkcolor=blue,
    filecolor=magenta,      
    urlcolor=cyan,
    citecolor=Emerald
    }

\newcommand{\rmd}{\mathrm{d}}
\newcommand{\rme}{\mathrm{e}}
\newcommand{\eqsp}{\,}
\newcommand{\kl}{\mathrm{KL}}
\newcommand{\ola}{\overleftarrow}
\newcommand{\ora}{\overrightarrow}
\newcommand{\eqdef}{:=}

\usepackage{amsmath}
\usepackage{amssymb}
\usepackage{mathtools}
\usepackage{amsthm}
\usepackage[capitalize,noabbrev]{cleveref}

\theoremstyle{plain}
\newtheorem{theorem}{Theorem}[section]
\newtheorem{proposition}[theorem]{Proposition}
\newtheorem{lemma}[theorem]{Lemma}
\newtheorem{corollary}[theorem]{Corollary}
\theoremstyle{definition}

\theoremstyle{remark}

\usepackage{ulem}

\newcommand{\sched}{\beta}
\newcommand{\param}{\theta}
\newcommand{\rset}{\mathbb{R}}
\newcommand{\gausspdf}{\varphi}
\newcommand{\pihat}{\widehat{\pi}_N^{(\beta,\theta)}} 
\newcommand{\R}{\mathbb{R}}
\newcommand{\Wc}{\mathcal{W}}
\newcommand{\1}{\mathbb{1}}
\def \E{\mathbb{E}}
\def \Nc{\mathcal{N}}

\newcounter{hypH}
\newenvironment{hypH}{
    \refstepcounter{hypH}
    \begin{itemize}
    \item[{\bf H\arabic{hypH}}]
    }
{\end{itemize}}

\title{An analysis of the noise schedule for score-based generative models}

\author[$\dag$]{Stanislas Strasman}
\author[*]{Antonio Ocello}
\author[$+,\circ$]{Claire Boyer}
\author[$\dag$]{Sylvain Le Corff}
\author[$\dag$]{Vincent Lemaire}

\affil[$\dag$]{{\small LPSM, 
       Sorbonne Universit\'e, UMR CNRS 8001, Paris, France.}}
\affil[*]{{\small CMAP, 
       \'Ecole Polytechnique, Institut Polytechnique de Paris, France.}}
\affil[$+$]{{\small LMO, 
       Universit\'e Paris-Saclay, 
       UMR CNRS 8628,
       Orsay, France.}}
\affil[$\circ$]{{\small IUF, 
       Institut Universitaire de France.}}

\date{}

\begin{document}

\maketitle

\begin{abstract}
Score-based generative models (SGMs) aim at estimating a target data distribution by learning score functions using only noise-perturbed samples from the target.
Recent literature has focused extensively on assessing the error between the target and estimated distributions, gauging the generative quality through the Kullback-Leibler (KL) divergence and Wasserstein distances.  
Under mild assumptions on the data distribution, we establish an upper bound for the KL divergence between the target and the estimated distributions, explicitly depending on any time-dependent noise schedule. Under additional regularity assumptions, taking advantage of favorable underlying contraction mechanisms, we provide a tighter error bound in Wasserstein distance compared to state-of-the-art results. 
In addition to being tractable, this upper bound jointly incorporates properties of the target distribution and SGM hyperparameters that need to be tuned during training. Finally, we illustrate these bounds through numerical experiments using simulated and CIFAR-10 datasets \footnote{Code available at \url{ https://github.com/StanislasStrasman/Noise_Schedule_for_Score-based_Generative_Models}.}, identifying an optimal range of noise schedules within a parametric family.
\end{abstract}

\section{Introduction}
\label{sec:intro}

Recent years have seen impressive advances in machine learning and artificial intelligence, with one of the most notable breakthroughs being the success of diffusion models, introduced by \citet{dickstein2015}. 
Diffusion models in generative modeling refer to a class of algorithms that generate new samples given training samples of an unknown distribution $\pi_{\mathrm{data}}$.
This method is now recognized for its ability to produce high-quality images that appear genuine to human observers \citep[see $e.g.$,][for text-to-image generation]{ramesh2022hierarchical}. Its range of applications is expanding rapidly, yielding impressive outcomes in areas such as computer vision \citep{li2022srdiff,lugmayr2022repaint} or natural language generation \citep{gong2022diffuseq},  among others, see \citet{yang2023diffusion} for a comprehensive overview of the latest advances in this topic.

\paragraph{Score-based generative models (SGMs).} Generative diffusion models aim at creating synthetic instances of a target distribution when only a genuine sample ($e.g.$, a dataset of real-life images) is accessible. It is crucial to note that the complexity of real data prohibits a thorough depiction of the distribution $\pi_{\mathrm{data}}$ through 
%\replace{a conventional parametric model, and its estimation via traditional maximum likelihood methods. Standard strategies based on non-parametric density estimation such as kernel smoothing are also generally ruled out due to the high dimensionality of the data in play.}
standard non-parametric density estimation strategies.
Score-based Generative Models (SGMs) are probabilistic models designed to address this challenge using two main phases. The first phase, the noising phase (also referred to as the forward phase), involves progressively perturbing the empirical distribution by adding noise to the training data until its distribution approximately reaches an {easy-to-sample} distribution $\pi_{\infty}$. The second phase involves learning to reverse this noising dynamics by sequentially removing the noise, which is referred to as the sampling phase (or backward phase). 
Reversing the dynamics during the backward phase would require in principle knowledge of the score function, i.e., the gradient of the logarithm of the density at each time step of the diffusion. %However, knowing the score amounts to knowing the distribution at time $t=0$, i.e., knowing the distribution $\pi_{\mathrm{data}}$ according to which we wish to simulate new examples. 
To circumvent this issue, the score function is learned based on the evolution of the noised data samples and using a deep neural network architecture. When applying these learned reverse dynamics to samples from  $\pi_\infty$, we obtain a generative distribution that approximates  $\pi_{\rm data}$.

\paragraph{Related works.}
Significant attention has been paid to understanding the sources of errors that affect the quality of data generation associated with SGMs \citep{block2020generative, debortoli2022convergence, lee2022convergence, lee2023convergence, chen2023improved, chen2023sampling}. In particular, a key area of interest has been the derivation of upper bounds for distances or pseudo-distances between the training and generated sample distributions. 
Note that all the mathematical theory for diffusion models developed so far covers general time discretizations of time-homogeneous SGMs \citep[see][in the variance-preserving case]{song2019generative}, which means that the strength of the noise is prescribed to be constant during the forward phase.
\citet{debortoli2021,chen2023importance}
provided upper bounds in terms of total variation, by assuming smoothness properties of the score and its derivatives. On the other hand, the upper bounds in total variation and Wasserstein distances provided by \citet{lee2023convergence,gao2023wasserstein} also require smoothness assumptions on the data distribution, either involving non-explicit constants, or focusing on iteration complexity sharpness. 
More recently, \citet{conforti2023,benton2024nearly} established an upper bound in terms of Kullback--Leibler (KL) divergence avoiding strong assumptions about the score regularity, and relying on mild conditions about the data distribution (e.g., assumed to be of finite Fisher information w.r.t.\ the Gaussian distribution).
Regarding time-inhomogeneous SGMs, the central role of the noise schedule has already been exhibited in numerical experiments, see for instance \citet{chen2023importance,nichol2021improved,guo2023rethinking}.
However, a rigorous theoretical analysis of it is still missing. 

\paragraph{Contributions.} 
In this paper, we conduct a thorough mathematical analysis of the role of the noise schedule in score-based generative models.  We propose a unified framework for time-inhomogeneous SGMs, to conduct joint theoretical analyses in KL and Wasserstein metrics, 
with state-of-the-art set of assumptions, using exponential integration of the backward process.  In our opinion, these upper-bounds provide numerical insights into proper SGM training.   
\begin{itemize}
    \item  We establish an upper bound on the Kullback-Leibler divergence between the data distribution and the law of the SGM. This bound holds under the mildest assumptions used in the SGM literature and explicitly depends on the noise schedule used to train the SGM. The proof follows the same steps as \cite{conforti2023}. However, it requires to establish a Kullback-Leibler upper bound for an inhomogeneous forward diffusion which involves determining a non-asymptotic rate of convergence for the mixing time using Fokker-Planck equations and a log-Sobolev inequality that depends on the noise schedule,
 and not only on the diffusion time horizon, see Lemma~\ref{lem:mix}. In addition, taking into account the backward contraction for the diffusion process (Proposition~\ref{prop:back_contractivity_kl}) provides state-of-the-art results on mixing time convergence for SGM under the Ornstein-Uhlenbeck forward process, whether inhomogeneous or not. 
       \item By making additional assumptions on the Lipschitz and strong log-concavity properties of the score function, we establish a bound in terms of Wasserstein distance explicitly depending on the noise schedule. %In the latter, the mixing time error is improved by an exponential factor, by leveraging the contraction of the drift not only of the forward, but also of the backward stochastic diffusion.
    This extends the similar result for the KL in the Gaussian setting. These  results are in the same line of work as \cite{bruno2023diffusion,gao2023wasserstein}, incorporating to the time inhomogeneous setting a refinement of the mixing time error based on an analysis of the modified score function.
    \item We illustrate, through numerical experiments, the upper bounds obtained in practice in regard of the effective empirical KL divergences and Wassertein metrics. These simulations highlights the relevancy of the upper bound, reflecting in practice the effect of the noise schedule on the quality of the generative distribution.
    Additionally, the simulations conducted provide theoretically-inspired guidelines for improving SGM training.
    For reproducibility purposes, the code for the numerical experiments is available at \url{ https://github.com/StanislasStrasman/Noise_Schedule_for_Score-based_Generative_Models}.
\end{itemize}

\section{Mathematical framework for SGMs}

\paragraph{Forward process.}
Denote as $\sched : [0,T] \mapsto \mathbb{R}_{>0}$ the noise schedule, assumed to be continuous and non decreasing.
Although originally developed using a finite number of noising steps \citep{dickstein2015, song2019generative, ho2020denoising,song2021score}, most recent approaches consider time-continuous noise perturbations through the use of stochastic differential equations (SDEs) \citep{song2021score}. 
Consider, therefore, a forward process given by
\begin{equation} \label{eq:forward-SDE:beta}
    \rmd  \overrightarrow{X}_t = - \frac{\sched(t)}{2 \sigma^2}  \overrightarrow{X}_t \rmd t + \sqrt{ \sched(t)}  \rmd B_t, \quad \ora{X}_0 \sim \pi_{\mathrm{data}}\eqsp. 
\end{equation} 
We denote by $p_t$ the density of $\ora X_t$ at time $t \in (0, T]$.
Note that, up to the time change  $t\mapsto \int_0^t\sched(s) / 2\rmd s$, this process corresponds to the standard Ornstein–Uhlenbeck (OU) process, solution to
\begin{align*}
    \rmd  \overrightarrow{X}_t = - \frac{1}{\sigma^2}  \overrightarrow{X}_t \rmd t + \sqrt{ 2}  \rmd B_t, \quad \ora{X}_0 \sim \pi_{\mathrm{data}}\eqsp,
\end{align*}
see, e.g., \citet[][Chapter 3]{karatzas2012brownian}. 
Due to the linear nature of the drift with respect to $(X_t)_t$, an exact simulation can be performed for this process (Section \ref{sec:exact_simulation_forward}).
The stationary distribution $\pi_{\infty}$ of the forward process is the Gaussian distribution with mean $0$ and variance $\sigma^2 \mathrm{I}_d$.
In the literature, when $\beta(t)$ is constant equal to $2$ (meaning that there is no time change), this diffusion process is referred  to as the Variance-Preserving SDE 
\citep[VPSDE,][]{debortoli2021, conforti2023, chen2023sampling}, leading to the so-called  
Denoising Diffusion Probabilistic Models \citep[DDPM,][]{ho2020denoising}.
Understanding the effects of the general diffusion model \eqref{eq:forward-SDE:beta}, in particular when reversing the dynamic,
remains a challenging problem, to which we devote the rest of our analysis.

\paragraph{Backward process.} The corresponding backward process is given by
\begin{align*}
\left\{
\begin{array}{l}
\rmd \overleftarrow{X}_t  =\eta(t,\overleftarrow{X}_t)\rmd t +   \sqrt{\bar \sched(t)} \rmd B_t, \\
\ola X_0 \sim \pi_{\infty},
\end{array}
\right.
\text{ with }
\left\{
\begin{array}{l}
\bar \sched(t) \eqdef \sched(T- t) \\ 
\eta(t,\overleftarrow{X}_t) \eqdef \frac{\bar \sched (t)}{2\sigma^2}\overleftarrow{X}_t + \bar \sched (t) \nabla \log  p_{T-t}\left( \ola{X}_t \right).
\end{array}
\right.
\end{align*}

We consider the marginal time distribution of the forward process divided by the density of its stationary distribution, introducing 
\begin{align}
\label{eq:renormalized_density}
\forall x \in \mathbb{R}^d, \quad \tilde p_{t}(x) \eqdef {p_t(x)} / {\gausspdf_{\sigma^2}(x)}, 
\end{align}
where $\gausspdf_{\sigma^2}$ denote the density function of $\pi_\infty$, a Gaussian distribution with mean $0$ and variance $\sigma^2 \mathrm{I}_d$.
Thus, the backward process can be rewritten as
\begin{align}\label{eq:backward_SDE}
\rmd \overleftarrow{X}_t =  \bar \eta \left(t,\overleftarrow{X}_t\right)  \rmd t  +  \sqrt{\bar \sched(t)} \rmd B_t, \quad \ola X_0 \sim \pi_{\infty},
\end{align}
where $\bar \eta(t,\overleftarrow{X}_t) \eqdef  - \frac{\bar \sched (t)}{2 \sigma^2} \overleftarrow{X}_t + \bar \sched (t) \nabla \log  \tilde p_{T-t}( \ola{X}_t)$. 
The benefit of using the renormalization $\tilde{p}_t$ in our analysis results in considering the backward equation as a perturbation of an OU process. This trick is crucial to highlight the central role of the relative Fisher information in the performance of the SGM. It has already been used by \citet{conforti2023}.

\paragraph{Score estimation.}
Simulating the backward process means knowing how to operate the score.
However, the (modified) score function $\nabla \log \tilde p_t(x)= \nabla \log p_t(x) + x/\sigma^2$ cannot be evaluated directly, because it depends on the unknown data distribution. To work around this problem, the score function $\nabla \log p_t$ needs to be estimated. In \citet{hyvarinen2005estimation}, the authors proposed to estimate the score function associated with a distribution by minimizing the expected $\mathrm{L}^2$-squared distance between the true score function  and the proposed approximation. In the context of diffusion models, this is typically done with the use of a deep neural network architecture $s_{\param} : [0,T] \times \mathbb{R}^d \mapsto \mathbb{R}^d$ parameterized by $\theta \in \Theta$,  and trained to minimize:
\begin{align} \label{eq:explicit_score_matching_objective}
\mathcal{L}_{\rm explicit} (\theta) &= \mathbb{E} \left[ \left\|  s_{\theta} \left(\tau, \ora X_{\tau}\right) - \nabla \log p_{\tau} \left(\ora X_{\tau}\right)  \right\|^2 \right]\eqsp,
\end{align}
with $\tau \sim \mathcal{U}(0,T)$ independent of the forward process $(\ora X_t)_{t\geq 0}$. However, this estimation problem still suffers from the fact that the regression target is not explicitly known. A tractable optimization problem sharing the same optima can be defined though, through the marginalization over $\pi_{\rm data}$ of $p_{\tau}$ \citep[see][]{Vincent,song2021score}: 
\begin{align}
\label{eq:cond_score_matching_objective}
\mathcal{L}_{\mathrm{score}}(\param) = \mathbb{E}\left[ \left\|s_{\param} \left( \tau, \overrightarrow{X}_{\tau} \right) - \nabla \log p_\tau \left(\ora X _\tau| X _0 \right)\right\|^2\right]
\end{align}
where $\tau$ is uniformly distributed on $[0,T]$, and independent of $X_0\sim \pi_{\mathrm{data}}$ and $\ora X _\tau \sim p_\tau(\cdot| X _0)$. This loss function is appealing as it only requires to know the transition kernel of the forward process. In \eqref{eq:forward-SDE:beta}, this is a Gaussian kernel with explicit mean and variance. 

\paragraph{Discretization.} 
Once the score function is learned, it remains that, in most cases, the backward dynamics no longer enjoys a linear drift, which makes its exact simulation challenging.  
To address this issue, one solution is to discretize the continuous dynamics of the backward process. 
In this way, \citet{song2021score} propose an Euler-Maruyama (EM) discretization scheme  in which both the drift and the diffusion coefficients are discretized recursively (see \eqref{def_Euler_Maruyama}). 
The Euler Exponential Integrator \citep[EI, see ][]{durmus2015quantitative}, as already used in \citet{conforti2023},
only requires to discretize the part associated with the modified score function.
Introduce $ \tilde s_{\theta} (t, x) \eqdef s_{\theta}(t, x) + x/\sigma^2$ and consider the regular time discretization $0 = t_0 \leq t_1 \leq \dots \leq t_{N}= T$. Then,  $(\ola X_t^{\param})_{t \in [0,T]}$ is such that, for $t \in  [ t_k , t_{k+1} ]$,
\begin{align}
\label{eq:backward_EI_main}
\rmd \overleftarrow{X}^{\param}_t  &=  \bar \sched (t)\left( - \frac{1}{2 \sigma^2} \overleftarrow{X}^{\param}_t +  \tilde s_{\param} \left( T-{t_k}, \ola{X}_{t_k}^{\param}  \right) \right)  \rmd t  
+  \sqrt{\bar \sched(t)} \rmd B_t\eqsp.
\end{align} 
This scheme can be seen as a refinement of the classical EM one as it handles the linear drift term by integrating it explicitly. In addition, $(\ola X_t^{\param})_{t \in \{t_0, \hdots, t_N\}}$ can be sampled exactly, see Appendix~\ref{sec:app:notations}. 
We consider therefore such a scheme in our further theoretical developments.

\section{Non-asymptotic Kullback-Leibler bound}
\label{sec:main}

In this section, we provide a theoretical analysis of the effect of the noise schedule used when training an SGM. Its impact is scrutinized through a bound on the KL divergence between the data distribution and the generative one.

\paragraph{Statement.}   
The data distribution $\pi_{\rm data}$ is assumed to be absolutely continuous with respect to the Gaussian measure $\pi_{\infty}$. Define the relative Fisher information $\mathcal{I}(\pi_{\rm data} | \pi_{\infty})$ by 
\begin{align*}
\mathcal{I}(\pi_{\rm data} | \pi_{\infty}) \eqdef
\int \left\| \nabla \log \left( \frac{\rmd \pi_{\rm data}}{\rmd \pi_{\infty}} \right) \right\|^2 \, \rmd \pi_{\rm data}\eqsp, 
\end{align*}
and consider the following assumptions.

\begin{hypH}
\label{hyp:sched}
    The noise schedule $\sched$ is continuous, positive, non decreasing and such that $\int_0^\infty \beta(t)\rmd t = \infty$.
\end{hypH}

\begin{hypH}
\label{hyp:fisher_info}
    The data distribution is such that $\mathcal{I}(\pi_{\rm data} | \pi_{\infty})<\infty$.
\end{hypH}

\begin{hypH}
\label{hyp:novikov}
The NN parameter $\param\in\Theta$ and the schedule $\sched$ satisfy
    \begin{align*}
    \mathbb{E}\left[
        \exp\left\{
            \frac{1}{2} \int_0^T \bar \sched (t)
                \left\|  \left(  \tilde s\left(T-t,\ola X_t\right)   -\tilde s_{\param} \left( T-t_k, \ola{X}_{t_k}    \right) \right)  \right\|^2  \rmd t\right\}\right]&<\infty\eqsp,
    \end{align*}
    with 
    $\tilde s(t,x)\!\eqdef\! \nabla \log \tilde p_{t}(x)$ and $\tilde{p}_t$ defined in \eqref{eq:renormalized_density}.
\end{hypH}

Assumption H\ref{hyp:sched} is necessary to ensure that the forward process converges to the stationary distribution when the diffusion time tends to infinity. 
Assumption H\ref{hyp:fisher_info} is inherent to the data distribution, as it involves only the $L^2$-integrability of the score function.  Such a kind of hypothesis has already been considered in the literature, see \citet{conforti2023}.
We stress that, in this section, we do not require extra assumptions about the smoothness of the score function.
Lastly, Assumption H\ref{hyp:novikov} is the guarantor of a good approximation of the score by the neural network $\tilde{s}_\theta$, weighted by the level of noise in play.
We  are now in position to provide an upper bound for the relative entropy between the distribution $\pihat$ of samples obtained from \eqref{eq:backward_EI_main}, and the target data distribution $\pi_{\rm data}$.

\begin{theorem}
\label{th:main}
Assume that H\ref{hyp:sched}, H\ref{hyp:fisher_info} and H\ref{hyp:novikov} hold. Then, 
\begin{equation*}
\kl \left( \pi_{\rm data} \middle|\middle| \pihat  \right) \leq \mathcal{E}_1^{\rm KL}(\sched) + \mathcal{E}_2^{\rm KL}(\param,\sched) + \mathcal{E}_3^{\rm KL}(\sched)\eqsp,
\end{equation*}
where 
\begin{align*}
    &\mathcal{E}_1^{\rm KL}(\sched) = \kl\left( \pi_{\rm data} \middle|\middle| \pi_\infty  \right)  \exp\Big\{- \frac{1}{\sigma^2} \int_0^T \sched(s) \rmd s\Big\}\eqsp,\\
    &\mathcal{E}_2^{\rm KL}(\param,\sched) = \sum_{k = 0}^{N-1}  \mathbb{E} \bigg[  \Big\|   \nabla \log \tilde p_{T-t_k}\Big(\overrightarrow X_{T-t_k}\Big)  - \tilde s_{\param}\Big(T-t_k,  \overrightarrow X_{T-t_k}\Big) \Big\|^2 \bigg]
    % \\
    % &\hspace{2cm}\times
    \int^{T-t_k}_{T-t_{k+1}} \sched (t) \rmd t\eqsp,\\
    &\mathcal{E}_3^{\rm KL}(\sched) = 2 h  \sched (T) \mathcal{I}(\pi_{\rm data} | \pi_{\infty} )\eqsp,
\end{align*}

with $h \eqdef \sup_{k \in \{1 , \ldots , N \} } (t_{k}-t_{k-1})$ small enough and $t_0\eqdef 0$.
\end{theorem}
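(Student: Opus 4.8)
The plan is to move the comparison to path space, where the three error terms stem from three well-separated mechanisms: the ergodicity of the time-changed Ornstein--Uhlenbeck forward dynamics yields $\mathcal{E}_1$, while Girsanov's theorem, applied to the gap between the exact and the learned time-discretised backward dynamics, produces the score-approximation contribution $\mathcal{E}_2$ and the time-discretisation contribution $\mathcal{E}_3$. Concretely, let $\mathbb{P}\in\mathcal{P}(C([0,T],\R^d))$ be the law of the time-reversal of the forward process \eqref{eq:forward-SDE:beta}; by the time-reversal formula (legitimate here under H\ref{assum:fisher_info}) it solves \eqref{eq:backward_SDE} with drift $\bar\eta$ and diffusion coefficient $\sqrt{\bar\sched(t)}$, is started from $p_T$, and has time-$T$ marginal $\pi_{\rm data}$. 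Since $\mathbb{\bar Q}_N^{\beta,\theta}$ is initialised at $\pi_\infty$ and has time-$T$ marginal $\pihat$, the data-processing inequality for the evaluation-at-$T$ map gives $\kl(\pi_{\rm data}\,\|\,\pihat)\le\kl(\mathbb{P}\,\|\,\mathbb{\bar Q}_N^{\beta,\theta})$, and the chain rule at time $0$ decomposes the right-hand side as
\[
\kl\big(\mathbb{P}\,\|\,\mathbb{\bar Q}_N^{\beta,\theta}\big)=\kl(p_T\,\|\,\pi_\infty)+\mathbb{E}_{\mathbb{P}}\Big[\kl\big(\mathbb{P}(\cdot\mid\ola X_0)\,\|\,\mathbb{\bar Q}_N^{\beta,\theta}(\cdot\mid\ola X_0)\big)\Big].
\]

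For the first term, note that up to the time change $t\mapsto\int_0^t\sched(s)/2\,\rmd s$, $p_t$ is the law at time $t$ of the standard OU semigroup with invariant measure $\pi_\infty=\mathcal{N}(0,\sigma^2\mathrm{I}_d)$ started from $\pi_{\rm data}$. Since $\pi_\infty$ satisfies a logarithmic Sobolev inequality with constant $\sigma^2$, relative entropy dissipates along this semigroup, $\tfrac{\rmd}{\rmd s}\kl(p_s\,\|\,\pi_\infty)=-\mathcal{I}(p_s\mid\pi_\infty)\le-\tfrac{2}{\sigma^2}\kl(p_s\,\|\,\pi_\infty)$, and running the clock up to $\int_0^T\sched(s)/2\,\rmd s$ yields $\kl(p_T\,\|\,\pi_\infty)\le\kl(\pi_{\rm data}\,\|\,\pi_\infty)\exp\{-\tfrac1{\sigma^2}\int_0^T\sched(s)\,\rmd s\}=\mathcal{E}_1(\sched)$; finiteness of $\kl(\pi_{\rm data}\,\|\,\pi_\infty)$ follows from H\ref{assum:fisher_info} through the same inequality, and H\ref{assum:sched} makes the exponent genuinely contractive.

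For the second term, conditionally on a common starting point $\mathbb{P}$ and $\mathbb{\bar Q}_N^{\beta,\theta}$ are laws of diffusions with identical diffusion coefficient $\sqrt{\bar\sched(t)}$ whose drifts differ, on $[t_k,t_{k+1}]$, by $\bar\sched(t)\big(\tilde s(T-t,\ola X_t)-\tilde s_\param(T-t_k,\ola X_{t_k})\big)$; under H\ref{assum:novikov} the Girsanov change of measure is legitimate and the conditional divergence above equals
\[
\tfrac12\,\mathbb{E}_{\mathbb{P}}\Big[\int_0^T\bar\sched(t)\,\big\|\tilde s(T-t,\ola X_t)-\tilde s_\param(T-t_k,\ola X_{t_k})\big\|^2\,\rmd t\Big].
\]
Inserting $\pm\,\tilde s(T-t_k,\ola X_{t_k})$ and applying $\|a+b\|^2\le2\|a\|^2+2\|b\|^2$ bounds this by the sum of $\sum_k\big(\int_{t_k}^{t_{k+1}}\bar\sched(t)\,\rmd t\big)\,\mathbb{E}_{\mathbb{P}}\big[\|\tilde s(T-t_k,\ola X_{t_k})-\tilde s_\param(T-t_k,\ola X_{t_k})\|^2\big]$ and of $\mathbb{E}_{\mathbb{P}}\big[\int_0^T\bar\sched(t)\,\|\tilde s(T-t,\ola X_t)-\tilde s(T-t_k,\ola X_{t_k})\|^2\,\rmd t\big]$. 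In the first, since $(\ola X_t,\ola X_{t_k})$ under $\mathbb{P}$ has the law of $(\ora X_{T-t},\ora X_{T-t_k})$ and $\tilde s(r,\cdot)=\nabla\log\tilde p_r$, one recovers exactly $\mathcal{E}_2(\param,\sched)$ after unfolding $\bar\sched(t)=\sched(T-t)$ and relabelling the partition of $[0,T]$ in reversed time; the second quantity is the time-discretisation error, and it remains to bound it by $\mathcal{E}_3(\sched)$.

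This last step is the main obstacle, as it has to be carried out with no regularity assumption on the score. The plan is to use, on each subinterval, the Fokker--Planck equation $\partial_r\tilde p_r=\tfrac{\sched(r)}{2}\big(\Delta-\sigma^{-2}x\cdot\nabla\big)\tilde p_r$ together with Itô's formula to expand $r\mapsto\nabla\log\tilde p_r(\ora X_r)$ along the forward process; because $\nabla\log\tilde p_r$ is a gradient field, the drift and quadratic-variation contributions reduce, after an integration by parts eliminating the Hessian, to expectations of quantities quadratic in $\nabla\log\tilde p_r$, hence controlled by $\mathcal{I}(p_r\mid\pi_\infty)=\mathbb{E}[\|\nabla\log\tilde p_r(\ora X_r)\|^2]$. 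Two structural facts then close the estimate: $r\mapsto\mathcal{I}(p_r\mid\pi_\infty)$ is non-increasing along the OU flow, hence $\le\mathcal{I}(\pi_{\rm data}\mid\pi_\infty)<\infty$ by H\ref{assum:fisher_info}, and the forward OU has explicit Gaussian conditional moments between times $T-t$ and $T-t_k$. Bounding separately the contribution of the Brownian fluctuation over a subinterval (linear in the mesh $h$) and of the mean contraction (quadratic in $h$), and replacing $\bar\sched(t)=\sched(T-t)$ by $\sched(T)$ since $\sched$ is non-decreasing (H\ref{assum:sched}), gives $2h\,\sched(T)\max\{h\sched(T)/(4\sigma^2),1\}\,\mathcal{I}(\pi_{\rm data}\mid\pi_\infty)=\mathcal{E}_3(\sched)$. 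Summing $\mathcal{E}_1$, $\mathcal{E}_2$ and $\mathcal{E}_3$ gives the stated bound.
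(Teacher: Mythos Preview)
Your overall architecture is the paper's: pass to path measures, apply the data-processing inequality and Girsanov under H\ref{assum:novikov} to obtain the half-integral of the squared drift difference, then split into $E_1+E_2+E_3$. Your treatment of $\mathcal{E}_1$ (log-Sobolev for $\pi_\infty$ plus the time change) and of $\mathcal{E}_2$ (equality of forward/backward marginals) is correct and matches the paper.

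The gap is in $\mathcal{E}_3$. After It\^o's formula along the backward dynamics, the increment $Y_t-Y_{t_k}$, with $Y_t=\nabla\log\tilde p_{T-t}(\ola X_t)$, carries the Hessian $Z_t=\nabla^2\log\tilde p_{T-t}(\ola X_t)$ through the diffusion term $\sqrt{\bar\sched(t)}\,Z_t\,\rmd B_t$, so the quadratic variation contributes $\int_{t_k}^{t}\bar\sched(s)\,\|Z_s\|_{\mathrm{Fr}}^2\,\rmd s$. There is no integration by parts that turns $\mathbb{E}[\|Z_s\|_{\mathrm{Fr}}^2]$ into a quantity quadratic in $\nabla\log\tilde p$: under H\ref{assum:fisher_info} alone, $\mathbb{E}[\|Z_s\|_{\mathrm{Fr}}^2]$ is \emph{not} controlled by $\mathcal{I}(p_s\mid\pi_\infty)$ and need not be bounded pointwise. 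And if you bound each subinterval separately by the monotone $\mathcal{I}(p_r\mid\pi_\infty)\le\mathcal{I}(\pi_{\rm data}\mid\pi_\infty)$, the sum over $N\sim T/h$ intervals does not yield the factor $h$ you claim.

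What the paper does is different and is the device your plan is missing. From $\rmd Y_t=\tfrac{\bar\sched(t)}{2\sigma^2}Y_t\,\rmd t+\sqrt{\bar\sched(t)}\,Z_t\,\rmd B_t$ and It\^o on $\|Y_t\|^2$ one gets
\[
\mathbb{E}\big[\|Y_{t_{k+1}}\|^2\big]-\mathbb{E}\big[\|Y_{t_k}\|^2\big]=\mathbb{E}\Big[\int_{t_k}^{t_{k+1}}\tfrac{\bar\sched(s)}{\sigma^2}\|Y_s\|^2\,\rmd s+\int_{t_k}^{t_{k+1}}\bar\sched(s)\,\|Z_s\|_{\mathrm{Fr}}^2\,\rmd s\Big],
\]
so that both the drift and the Hessian contributions in $\mathbb{E}[\|Y_t-Y_{t_k}\|^2]$ are bounded by a constant (of order $\max\{h\sched(T)/(4\sigma^2),1\}$) times the \emph{increment} $\mathbb{E}[\|Y_{t_{k+1}}\|^2]-\mathbb{E}[\|Y_{t_k}\|^2]$. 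Summing over $k$, these increments \emph{telescope} to $\mathbb{E}[\|Y_T\|^2]=\mathcal{I}(\pi_{\rm data}\mid\pi_\infty)$, and the outer factor $\int_{t_k}^{t_{k+1}}\bar\sched\le h\,\sched(T)$ supplies the missing $h$. The Hessian is never ``eliminated''; it is absorbed into the telescoping. The explicit Gaussian conditional moments you invoke play only a technical role in the paper (to certify that the It\^o integrals are true martingales), not in the main estimate.
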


The obtained bound is composed of three terms, all depending on the noise schedule, through either its integrated version over the diffusion time, or its final value at time $T$. 
If the result was derived for the EI discretization scheme, it could be adapted to the Euler one up to minor technicalities.
Remark also that using Pinsker's inequality, the obtained bound could be transferred in terms of total variation.

\paragraph{Dissecting the upper bound. } The upper bound of Theorem \ref{th:main} involves  three different types of error that affect the training of an SGM.
The term $\mathcal{E}_1^{\rm KL}$ represents the \textit{mixing time} of the OU forward process, 
arising from the practical limitation of considering the forward process up to a finite time $T$. Indeed, $\mathcal{E}_1^{\rm KL}$ is shrinked to 0 when $T$ grows to infinity. Note that the multiplicative term in $\mathcal{E}_1^{\rm KL}$ corresponds to the KL divergence between $\pi_{\mathrm{data}}$ and $\pi_\infty$ which is ensured to be finite by Assumption H\ref{hyp:fisher_info}.
The second term $\mathcal{E}_2^{\rm KL}$   corresponds to the \textit{approximation error}, which stems from the use of a deep neural network to estimate the score function. 
Note that if we assume that the error of the score approximation is uniformly (in time) bounded by $M_\theta$ \citep[see][Equation (8)]{debortoli2021}, the term $\mathcal{E}_2^{\rm KL}$ admits as a crude bound $M_\theta \int_0^T \beta(t) \rmd t$, with the disadvantage of exploding when $T\to + \infty$. 
Otherwise, by considering \citet[][Assumption H3]{conforti2023}, one can make this bound finer and finite, by balancing the quality of the score approximation, the discretization grid and the final time $T$. 
Finally, $\mathcal{E}_3^{\rm KL}$ is the  \textit{discretization error} of the EI discretization scheme. This last term vanishes as the discretization grid is refined (i.e., $h\to 0$).

\paragraph{Comparison with existing bounds.}
Under perfect score approximation, 

and infinitely precise discretization (i.e., when $\mathcal{E}_2^{\rm KL}(\theta,\beta) = \mathcal{E}_3^{\rm KL}(\beta) =0$), we recover that the Variance Preserving SDE  \citep[VPSDE,][]{debortoli2021, conforti2023, chen2023sampling} converge exponentially fast to the target distribution.  Beyond this idealized setting,
the bound established in Theorem \ref{th:main} recovers that of
\citet[][Theorem 1]{conforti2023} when choosing $\sched(t)=2$, $\sigma^2 = 1$, $T=1$, and using a discretization step size $h\leq 1$. 

\paragraph{Refined analysis of the mixing time error}
Still assuming  ``perfect score approximation'' and infinitely precise discretization (i.e., $\mathcal{E}_2^{\rm KL}(\theta,\beta) = \mathcal{E}_3^{\rm KL}(\beta) =0$), 
one can assess the sharpness of the term $\mathcal{E}_1^{\rm KL}(\beta)$ in the upper bound of Theorem \ref{th:main}.
In particular, when restricting the data distribution to be Gaussian $\mathcal{N}(\mu_0, \Sigma_0)$, 
one can exploit the backward contraction assuming that 
$\lambda_{\max}(\Sigma_0)\leq\sigma^2$, where $\lambda_{\max}(\Sigma_0)$ denotes the largest eigenvalue of $\Sigma_0$.  
In this specific case, we can obtain a refined version for $\mathcal{E}_1^{\rm KL}$ (see Proposition~\ref{prop:back_contractivity_kl}), given by 
 \begin{align}
    \label{eq:KL_refined_mixing_error}
     \kl\left(\pi_{\rm data} \| \pi_\infty Q_T \right) \le \kl &\left(\pi_{\rm data} \| \pi_\infty \right) \exp\Big( - \frac{2}{\sigma^2} \int_0^T \beta(s) \rmd s \Big),
\end{align}
where $(Q_t)_{0\leq t \leq T}$ is the Markov semi-group associated with the backward SDE. 
This idea is exploited in Section \ref{sec:wasserstein} to establish Wasserstein bounds for more general data distributions than Gaussian, but requiring extra regularity of the score.

\section{Non-asymptotic Wasserstein bound}
\label{sec:wasserstein}

In the literature, much attention is paid to derive upper bounds with other metrics such as the $\mathcal{W}_2$ distance, which has the advantage to be a distance and to have easier-to-handle and implementable estimators. 
In \citet{lee2023convergence}, the authors obtain a control for the 2-Wasserstein and total variation distances. However, those results rely on additional assumptions on $\pi_{\mathrm{data}}$ (which is assumed to have bounded support for instance in \citet{debortoli2022convergence}).

\paragraph{Regularity assumptions. }
We consider extra regularity assumptions about the modified marginal density $\tilde p_t $ at any time of the diffusion.
\begin{hypH} \label{hyp:score_regularity}
\begin{enumerate}[(i)]
\item \label{hyp:strong-log-concavity}
    For all $t\geq 0$, there exists $C_t\geq0$ such that for all $ x,y\in\R^d$,
    \begin{equation*}
    \left(\nabla \log \tilde p_t(y) \!- \!\nabla \log \tilde p_t(x)\right)^\top\!\!(x-y)\geq C_t\left\|x-y\right\|^2\eqsp.
    \end{equation*}
\item \label{hyp:score_lipschitz}
For all $t\geq 0$, there exists $L_t\geq0$ such that $\nabla \log \tilde{p}_t$ is $L_t$-Lipschitz continuous.
\end{enumerate}
\end{hypH}

The strong log-concavity \eqref{hyp:strong-log-concavity} \citep[see, e.g.,][]{saumard2014log} plays a crucial role in terms of contraction of the backward SDE. 
Classical distributions satisfying H\ref{hyp:score_regularity}\eqref{hyp:strong-log-concavity} include logistic densities restricted to a compact, or Gaussian laws with a positive definite covariance matrix, see \citet{saumard2014log} for other examples. 
We observe, notably, that when the density of the data distribution is log-concave, this property propagates within the probability flow $(\tilde{p}_t)_{0\leq t\leq T}$ (see Proposition \ref{prop:from_C_0_to_C_t}).  
Similar conclusions can be drawn regarding the Lipschitz continuity of the score (Proposition \ref{prop:from_L_0_to_L_t}). 
This property is formalized in the Lemma~\ref{lem:gaussian:contract} for Gaussian distributions.

\begin{lemma}\label{lem:gaussian:contract}
Assume that $\pi_{\rm data}$ is a Gaussian distribution $\mathcal{N}(\mu_0,\Sigma_0)$, such that  $\Sigma_0$ is invertible and $\lambda_{\max}(\Sigma_0) < \sigma^2$.
Let $m_t:=\exp(-\int_0^t \beta(s)/2\sigma^2 \rmd s)$.
Then, the probability flow $\tilde{p}_t$ given by \eqref{eq:forward-SDE:beta} initialized at $\pi_{\rm data}$ is $C_t$-strongly log concave, with 
\begin{align*}
C_t\eqdef \frac{m^2_t\left(\sigma^2 - \lambda_{\max}(\Sigma_0) \right)}{m_t^2\lambda_{\max}(\Sigma_0) + \sigma^2\left(1-m_t^2\right)}\eqsp.
\end{align*}
In addition, the associated score $\nabla \log \tilde{p}_t$ is $L_t$-Lipschitz continuous with 
$$L_t := \min \left\{ \frac{1}{\sigma^2\left(1-m^2_t\right)} ;  \frac{1}{ \lambda_{\rm min} (\Sigma_0 ) m^2_t} \right\} + \frac{1}{\sigma^2} \eqsp.
$$ 
\end{lemma}
This result, restricted to the Gaussian case, sets the focus on the importance of calibrating the parameter $\sigma^2$ depending on the covariance structure of the data distribution, in order to enhance strong log concavity of the probability flow through the diffusion.

\paragraph{Error bound.}
To establish a 2-Wasserstein bound explicitly depending on the noise schedule, we consider the following additional assumptions, respectively about uniform approximation of the score, and Lipschitz continuity in time of the renormalized score.
\begin{hypH}
\label{hyp:sup_approx}
There exists $\varepsilon \geq 0$ such that
$\underset{k \in \{0,..,N-1 \} }{\sup} \left\| \tilde s\left(T-t_k, \bar{X}_{t_{k}}^{\theta} \right) - \tilde s_{\theta} \left( T-t_k, \bar{X}_{t_{k}}^{\theta} \right) \right\|_{L_2}
    \leq \varepsilon \eqsp.
$
\end{hypH}

\begin{hypH}
\label{hyp:lipschitz_score_time}
For a regular discretization $\{t_k , 0\leq k \leq N\}$ of $[0,T]$ of constant step size $h$, there exists $M \geq 0$ such that
\begin{align*}
    &\sup_{k \in \{0,..,N-1 \} } \sup_{t_k\leq t \leq t_{k+1}} \left\| \tilde s\left(T-t, x \right) - \tilde s\left(T-t_k,x \right) \right\|_{L_2} 
 \leq M h (1+\|x\|) \eqsp.
\end{align*}
\end{hypH}
We now have all the ingredients to present our theoretical guarantee in terms of Wasserstein distance.

\begin{theorem} \label{thm:wasserstein_bound}
    Assuming H\ref{hyp:score_regularity}, H\ref{hyp:sup_approx} and H\ref{hyp:lipschitz_score_time} and that the time step $h$ is small enough, it holds that
    \begin{align}
    \label{eq:W2_bound}
      \Wc_2 \left( \pi_{\rm data} ,\pihat \right) \leq \mathcal{E}_1^{\rm \mathcal{W}_2}(\sched) + \mathcal{E}_2^{\rm \mathcal{W}_2}(\param,\sched)
    \end{align}
    \begin{align*}
    \text{with }\qquad &\mathcal{E}_1^{\rm \mathcal{W}_2}(\sched) =\Wc_2\left(\pi_{\rm data},\pi_\infty\right)
    \exp\left(- 
    \int_0^T \frac{\beta(t)}{\sigma^2}\left(1+  C_t\sigma^2\right)\rmd t\right) ,\\
    &\mathcal{E}_2^{\rm \mathcal{W}_2}(\param,\sched) =  \sum_{k=0}^{N-1}
        \left(\int_{t_k}^{t_{k+1}} \bar L_{t} 
        \bar \beta (t) \rmd t \right)
         \left(
        \frac{\sqrt{2h\beta(T)}}{\sigma}  + \frac{h\beta(T)}{2\sigma^2}
        +\int_{t_k}^{t_{k+1}}  2\bar{L}_{t} \bar{\beta} (t) \rmd t
    \right) B  \\
    & \qquad\qquad\qquad + \varepsilon T\beta(T) +  M h T \beta(T)\left(1+2B \right) , \\
    &B= (  \mathbb{E} [ \| X_0 \|^2 ] + \sigma^2 d )^{1/2}\eqsp, \eqsp\mathrm{and\,for\,all\,}t\in[0,T], \bar L_t = L_{T-t}\eqsp .
\end{align*}
\end{theorem}

In Theorem \ref{thm:wasserstein_bound}, we exploit the contraction entailed by Assumption H\ref{hyp:score_regularity}\eqref{hyp:strong-log-concavity} of the backward diffusion processes in top of that of the forward phase. To our knowledge, in the state-of-the-art results, this feature has never been considered. This idea leads to an improvement of all the existing bounds in Wasserstein metrics, by refining their mixing time term. 
The previous result can be established when the target distribution has a Lipschitz continuous score and is strongly log-concave: by propagating these properties, the constants $L_t$ and $C_t$ can be characterized in function of $L_0$ and $C_0$ (see Propositions \ref{prop:from_C_0_to_C_t} and \ref{prop:from_L_0_to_L_t}).

\begin{corollary}
\label{cor:wasserstein_time0}
    Assume that $\nabla \log \tilde{p}_{\rm data}$ is $L_0$-Lipschitz, that $\log \tilde{p}_{\rm data}$ is $C_0$-strongly concave such that $C_0>1/\sigma^2$. 
    Under Assumption  H\ref{hyp:sup_approx} and H\ref{hyp:lipschitz_score_time}, with a time step $h$ small enough, 
    \begin{align*}
        \Wc_2 \Big( \pi_{\rm data} ,\pihat \Big)
        &\!\leq 
        \Wc_2\left(\pi_{\rm data},\pi_\infty\right)
    \exp\Big(\!\!-\!\!
    \int_0^T \!\frac{\beta(t)}{\sigma^2}\left(1+  C_t\sigma^2\right)\rmd t\Big) + c_1 \sqrt{h} + c_2 h + \varepsilon T \beta(T)\eqsp,
    \end{align*}
    with $c_1 = L_0 \beta(T) T \sqrt{2\beta(T)}/\sigma$ and $c_2 = \beta(T) T \left( L_0  \left( 1/(2\sigma^2) + 2L_0\right) \beta(T) B + M (1+2B)\right)$. 
\end{corollary}

This provides an easy-to-handle upper bound in Wasserstein distance, encompassing the three types of error (e.g., mixing time, score approximation and discretization error), for Lipschitz scores and strongly-log concave distributions. We remark that it also exhibits an extra term in $\sqrt{h}$ compared to the more general $\kl$ bound obtained under milder assumptions. Note however that this term is in line with what can be found in the literature for Wasserstein bound for SDE approximation \citep[see][]{alfonsi2015optimal}.

By considering early stopping techniques (typical in the literature on SGM), we could adapt these bounds to more general data scenarios ($e.g.$, not strongly log concave) exploiting the regularization properties of the convolution with a Gaussian kernel.

\section{Evaluation of the theoretical upper bounds}
\label{sec:exp}

\begin{wrapfigure}[12]{r}{0.35\textwidth} 
\vspace{-0.5cm}
\centering 
\includegraphics[width=0.8\linewidth]{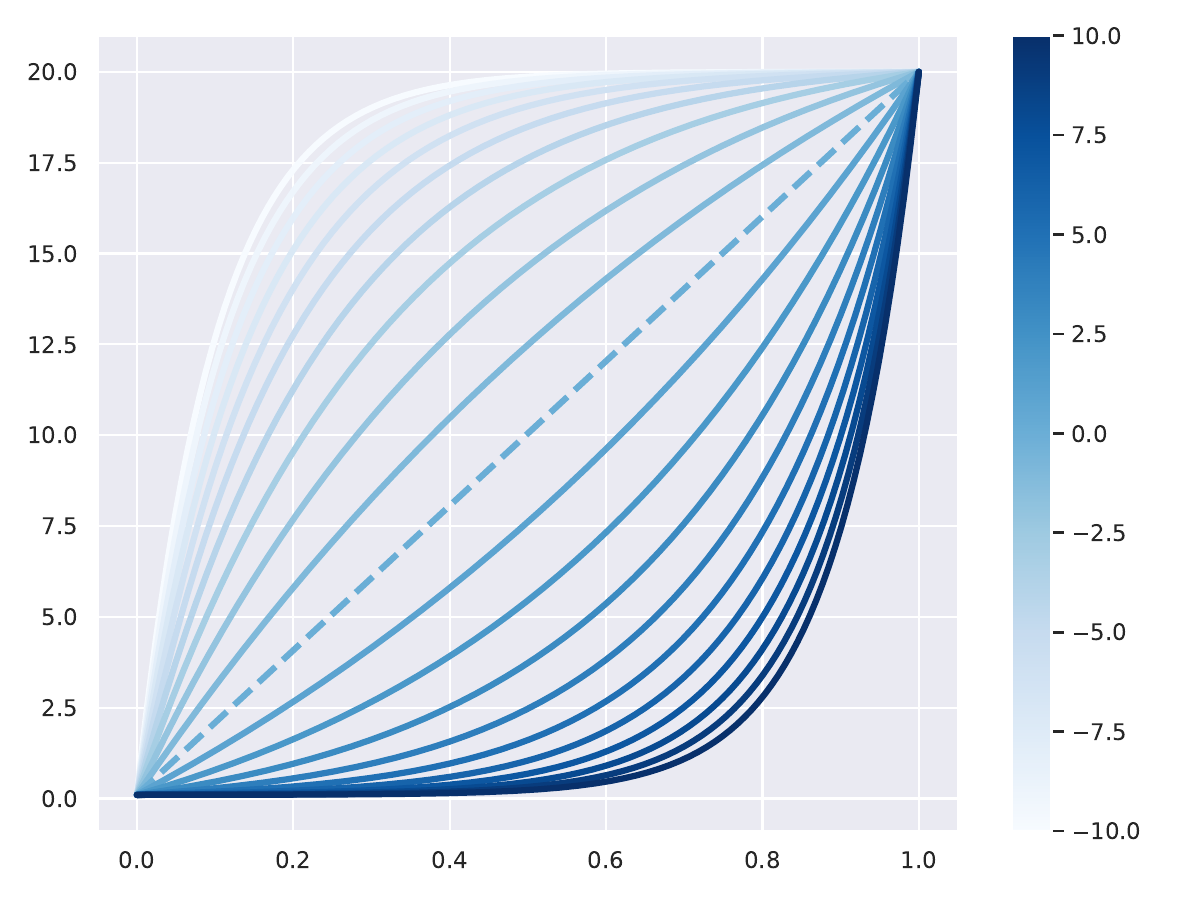}    
\caption{Noise schedule $\beta_a$ over time for $a \in \{-10, -9,..,10 \}$ with the linear schedule $a=0$ shown as a dashed line.}
    \label{fig:noise_schedules}
\end{wrapfigure}
The goal of this section is to numerically illustrate the validity of the theoretical bounds obtained in Theorem \ref{th:main} and Theorem \ref{thm:wasserstein_bound}. More precisely, we aim at unraveling the contributions of each error term of the upper bounds. We consider a simulation design where the target distribution is known, and the associated constants of interest (i.e., the strong log concavity parameter, the Lipschitz constant, $\Wc_2\left(\pi_{\rm data},\pi_\infty\right)$, $\mathcal{I} \left( \pi_{\rm data} | \pi_{\infty} \right)$ or $\mathrm{KL}\left(\pi_{\rm data} || \pi_\infty\right)$) can be evaluated. 

The error bounds are assessed for different choices of noise schedules of the form 
\begin{align}
\label{eq:noise_schedules}
\beta_a(t) &\propto (\rme^{at} -1)/(\rme^{aT} -1),
\end{align}
with $a\in\mathbb{R}$ ranging from $-10$ to $10$ with a unit step size. We set $T=1$ and adjust schedules so that they all start at $\beta (0) = 0.1$ and end at $\beta (1) = 20$ (see Figure \ref{fig:noise_schedules}). This choice has been made so that when $a = 0$ the schedule is linear and matches exactly the  classical VPSDE implementation \citep{song2019generative, song2021score}.

\subsection{Gaussian setting}
\label{sec:exp_gaussian}

\paragraph{Target distributions.} We consider the setting where the true distribution $\pi_{\rm data}$ is Gaussian in dimension $d=50$ with mean ${\bf 1}_d$ and different choices of covariance structure:
\begin{enumerate}
    \item (Isotropic, denoted by $\pi_{\rm data}^{\mathrm{(corr)}}$) $\Sigma^{\mathrm{(iso)}} = 0.5 \mathrm{I}_d$. 
    \item (Heteroscedastic, denoted by $\pi_{\rm data}^{\mathrm{(heterosc)}}$) $\Sigma^{\mathrm{(heterosc)}}\in\mathbb{R}^{d\times d}$ is a diagonal matrix such that $\Sigma^{\mathrm{(heterosc)}}_{jj}=1$ for $1\leq j \leq 5$, and $\Sigma^{\mathrm{(heterosc)}}_{jj}=0.01$ otherwise. 
    \item (Correlated, denoted by $\pi_{\rm data}^{\mathrm{(corr)}}$) $\Sigma^{\mathrm{(corr)}}\in\mathbb{R}^{d\times d}$ is a full matrix whose diagonal entries are equal to one and the off-diagonal terms are   $\Sigma^{\mathrm{(corr)}}_{jj'}=1/\sqrt{|j-j'|}$ for $1\leq j\neq j' \leq d$. 
\end{enumerate}

\paragraph{SGM simulations.} We simulate $\widehat{\pi}_N^{(\beta_a, \theta)}$ from SGM using the forward process defined in \eqref{eq:forward-SDE:beta} with $t \mapsto \beta_a(t)$ for the noise schedule. The score is learned via a dense neural network with 3 hidden layers of width 256 over 150 epochs (see Figure \ref{fig:nn_architecture}) trained to optimize $\mathcal{L}_{\rm explicit}$ \eqref{eq:explicit_score_matching_objective}. 
This is feasible because the score is analytically derived when $\pi_{\rm data}$ is Gaussian (Lemma \ref{lem:exactscore}). 
Numerical experiments have also been run with the commonly used  conditional loss  $\mathcal{L}_{\rm score}$, without changing the nature of the conclusions, see Appendix \ref{sec:conditional_training_SGM}. 
For backward process simulation, we use an Euler-Maruyama scheme with 500 steps, as being the most encountered discretization in practice. 
For each value of $a$, and each data distribution, we train the SGM using $n=10000$ training samples.

\paragraph{KL bound.} 
In Figure \ref{fig:gaussian_xp} (top), we compare the empirical KL divergence between $\pi_{\rm data}$ and samples from $\widehat{\pi}_N^{(\beta_a, \theta)}$ to the upper bound from Theorem~\ref{th:main}. We refer the reader to Appendix \ref{sec:appendix_KL_bound} for implementation details.
For Gaussian distributions, both the bound and KL divergence can be computed using closed-form expressions (see Lemma \ref{KL divergence Gaussian rvs} and \ref{Info Fisher Gaussian rvs}). In all scenarios the noise schedule significantly impacts the value of $\mathrm{KL}(\pi_{\rm data}\| \widehat{\pi}_N^{(\beta_a, \theta)})$, and thereby the quality of the learned distribution. 
In the isotropic case (Figure \ref{fig:gaussian_xp} (a) (top)), the behavior of the upper bound does not exactly match the one of $\mathrm{KL}(\pi_{\rm data}\| \widehat{\pi}_N^{(\beta_a, \theta)})$ suggesting that the refinement relying on contraction arguments specific to the Gaussian setting (see \eqref{eq:KL_refined_mixing_error}) is indeed more informative in such a case. When considering $\pi_{\rm data}^{\mathrm{(heterosc)}}$ and $\pi_{\rm data}^{\mathrm{(corr)}}$ (Figure \ref{fig:gaussian_xp} (b,c) (top)), the upper bound remains clearly relevant to assess the efficiency of the noise schedule used during training. In all these experiments, the KL upper bound indicates possible values for $a$ improving over the classical linear noise schedule. 

\begin{figure}[h!]
    \centering
    \begin{tabular}{ccc}
    \hspace{-1cm}
        \includegraphics[width=0.32\linewidth]{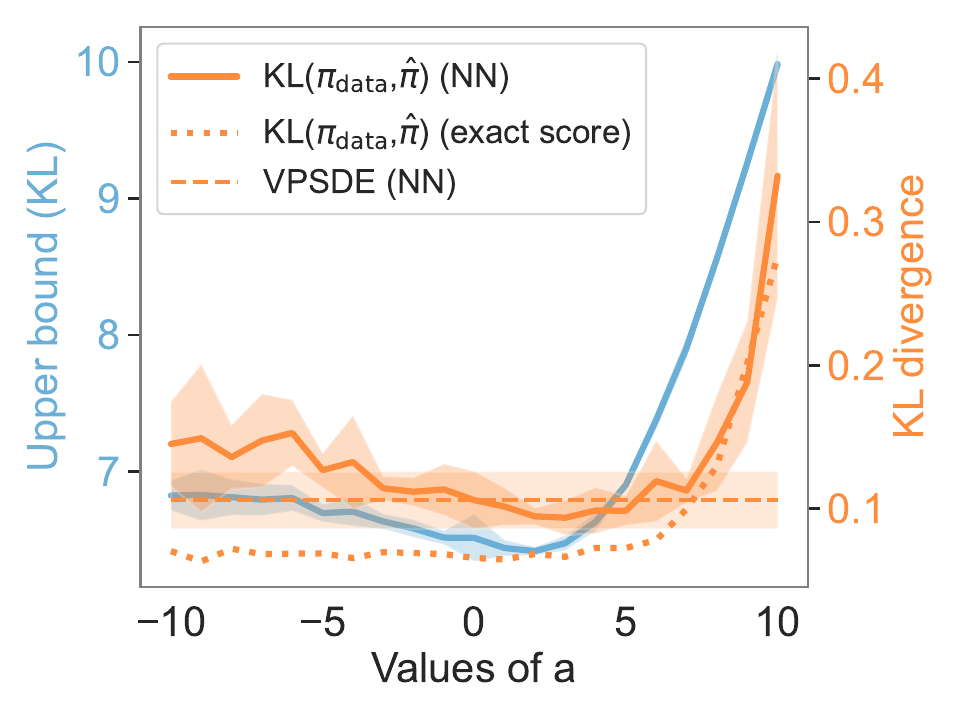}&
        \includegraphics[width=0.32\linewidth]{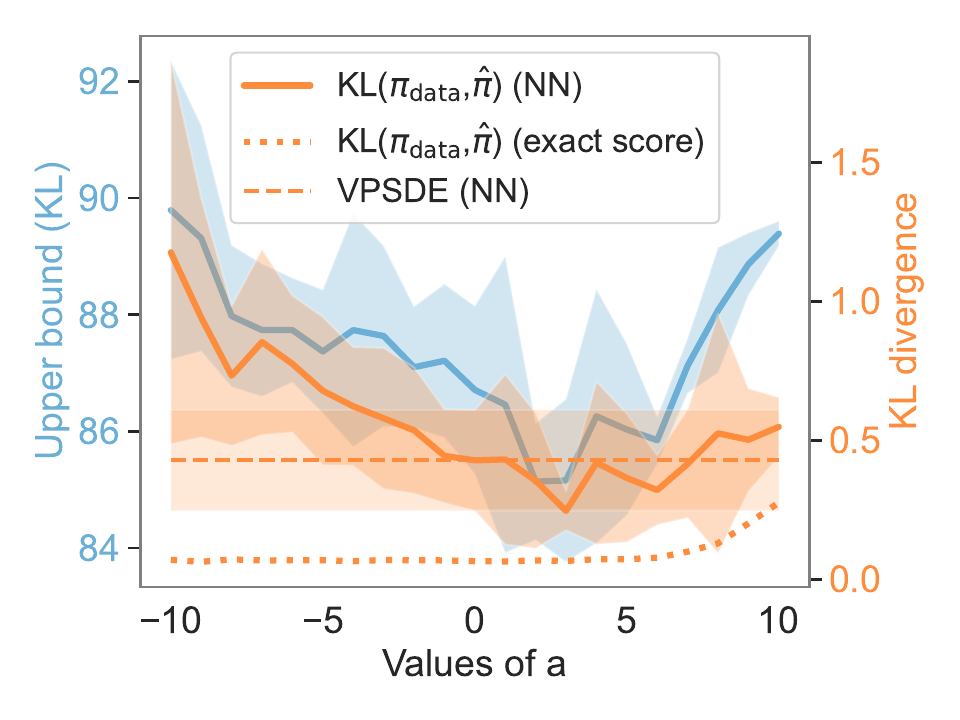}&
        \includegraphics[width=0.32\linewidth]{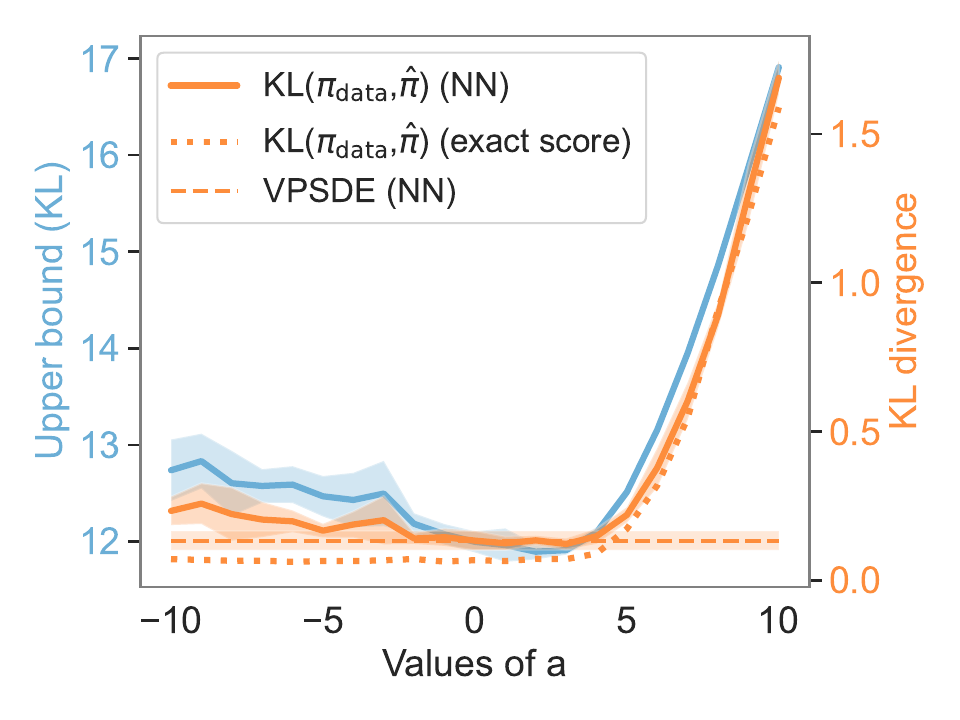} \\
        \hspace{-1cm}  
        \includegraphics[width=0.32\linewidth]{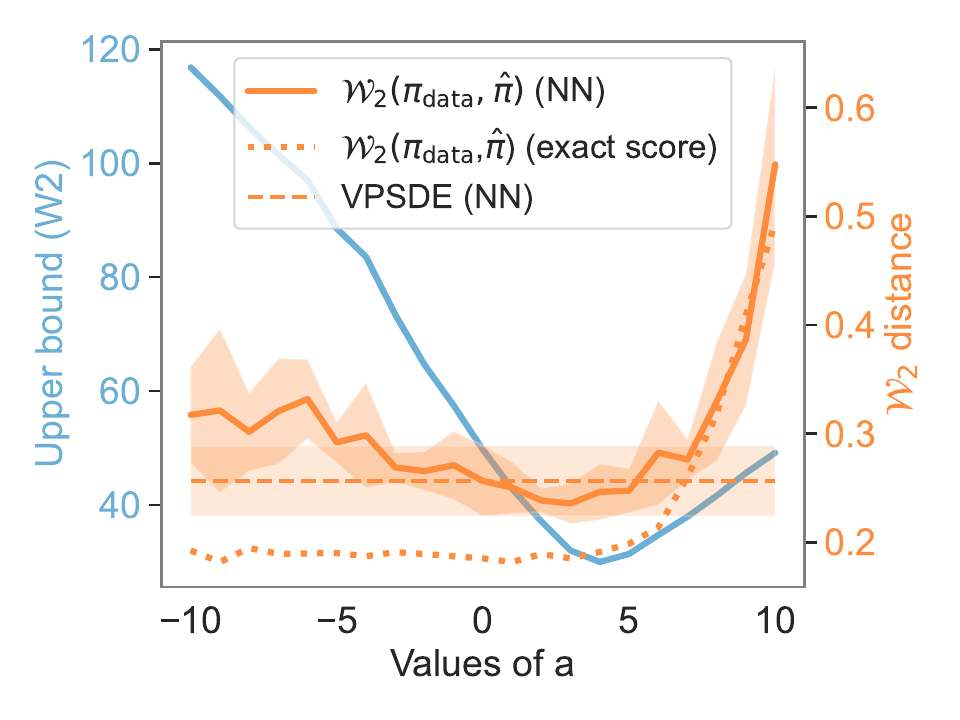}&
        \includegraphics[width=0.32\linewidth]{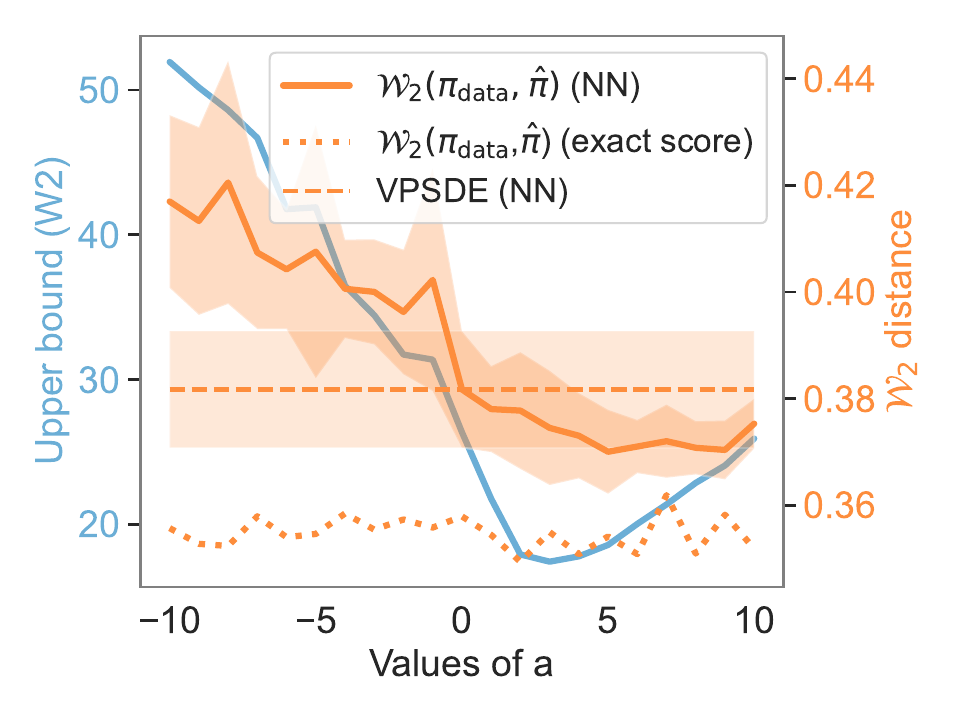}&
        \includegraphics[width=0.32\linewidth]{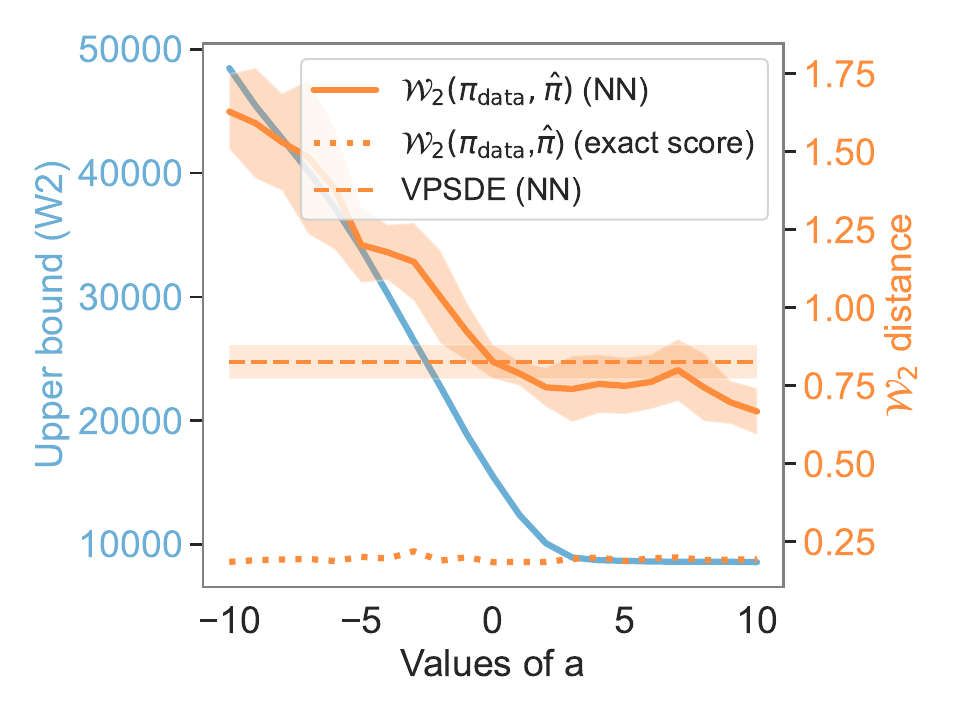} \\
        \hspace{-1cm} {(a) Isotropic setting 
        } & {(b) Heteroscedastic setting } & {(c) Correlated setting }
    \end{tabular}
    \caption{\label{fig:gaussian_xp} Comparison of the empirical KL divergence (top) and $\mathcal{W}_2$  distance (bottom) (mean ± std over 10 runs) between $\pi_{\mathrm{data}}$ and $\pihat$ (orange) and the related upper bounds (blue) from Theorem \ref{th:main} and Theorem \ref{thm:wasserstein_bound}  across parameter $a$ for noise schedule $\beta_a$, $d=50$. We also show the metrics for the linear VPSDE model (dashed line) and our model (dotted line) with exact score evaluation.
    }
\end{figure}

\paragraph{2-Wasserstein bound.} In Figure \ref{fig:gaussian_xp} (bottom), we compare the empirical $\mathcal{W}_2$ distance between $\pi_{\rm data}$ and samples from $\widehat{\pi}_N^{(\beta_a, \theta)}$ to the upper bound from Theorem \ref{thm:wasserstein_bound}. 
For Gaussian distributions, both the bound and the $\mathcal{W}_2$ distance can be computed using closed-form expressions (see Lemma  \ref{lem:gaussian:contract}, \ref{W2 Gaussian rvs}, and \ref{prop:bar_l_t}). 
For the isotropic case, the proposed $\mathcal{W}_2$ upper bound reflects the SGM performances, as already highlighted by the KL bound. 
However, in non-isotropic cases, the raw distributions $\pi_{\rm data}^{\mathrm{(heterosc)}} $ and $\pi_{\rm data}^{\mathrm{(corr)}}$ do not directly satisfy  Assumption \ref{hyp:score_regularity} (\ref{hyp:strong-log-concavity}) when the variance of the stationary distribution is set to 1. Therefore,
scaling the distributions in play becomes crucial for the theoretical $\mathcal{W}_2$ upper bound to hold. 
That is why we propose the following preprocessing: train an SGM with centered and standardized samples of covariance $\Sigma^{\text{(stand)}}$ rescaled in turn by a factor $1/(2\lambda_{\rm max}(\Sigma^{\text{(stand)}}))^{1/2}$. This choice ensures that $\lambda_{\max} \left( \Sigma^{\text{(scaled)}}\right) < \sigma^2 = 1$, for $\Sigma^{\text{(scaled)}}$ the resulting covariance matrix, and thus the strong log-concavity of $\tilde{p}_0 = p_{\rm data} / \varphi_{\sigma^2}$. 
We call  $\widehat{\pi}_{N, \text{scaled}}^{(\beta_a, \theta)}$ the resulting generative distribution, and the evaluated metrics is adjusted (see \eqref{eq:w2_bound_recale}) to ensure a fair numerical comparison. 
After this preprocessing, not only the $\mathcal{W}_2$ upper bound of Theorem \ref{thm:wasserstein_bound} aligns with the empirical performances but the SGM performances can be also boosted (see degraded empirical performances on raw distributions in Appendix \ref{sec:appendix_W2_bound}). 
This highlights the importance of properly calibrating the training sample to the stationary distribution of the SGM. 
Note that data normalization does not only enforce the strong log-concavity of the modified score at time 0, but can lower the ratio $L_0/C_0$. To see this, consider the heteroscedastic case, for which $\lambda_{\min}(\Sigma^{\rm (heterosc)})/\lambda_{\max}(\Sigma^{\rm (heterosc)})=100$, whereas $\lambda_{\min}(\Sigma^{\rm (scaled)})/\lambda_{\max}(\Sigma^{\rm (scaled)})=1$ after scaling.  
This Gaussian set-up reveals that data renormalization improves the conditioning of the covariance matrix, and thereby the conditioning of SGM training. In particular, this is captured in the upper bound of Theorem \ref{thm:wasserstein_bound} by limiting the growth of $L_t$ and inducing a more balanced second term. 

We now consider a varying dimension in $\{5, 10, 25, 50\}$, and we compare the empirical $\mathcal{W}_2$ distance obtained by (i) $\beta_0$ the classical VPSDE \citep[][]{song2021score}, with a linear noise schedule (i.e., $a=0$), (ii) $\beta_{\cos}$ the SGM with cosine schedule \citep{nichol2021improved}, and
(iii) $\beta_{a^\star}$ the SGM with parametric schedule with $a = a^\star$ approximately minimizing the upper bound from Theorem \ref{th:main}. In Figure \ref{fig:gaussian_xp_dim_w2}, we observe that the SGMs run with $\beta_{a^\star}$ consistently outperforms those run with linear schedule $\beta_{0}$ providing significant improvement in the data generation quality. It  displays lower average $\mathcal{W}_2$ distances between $\pi_{\rm data}$ and the generated sample distribution, but also reduces the standard deviation of the resulting $\mathcal{W}_2$ distances yielding more stable generation (see Table \ref{tab:W2_table_gaussian}). These performances are comparable to, and often surpass, those achieved with state-of-the-art schedules like the cosine schedule, particularly in higher dimensions.

\begin{figure*}[h]
    \centering
    \begin{tabular}{ccc}
        \includegraphics[width=0.3\linewidth]{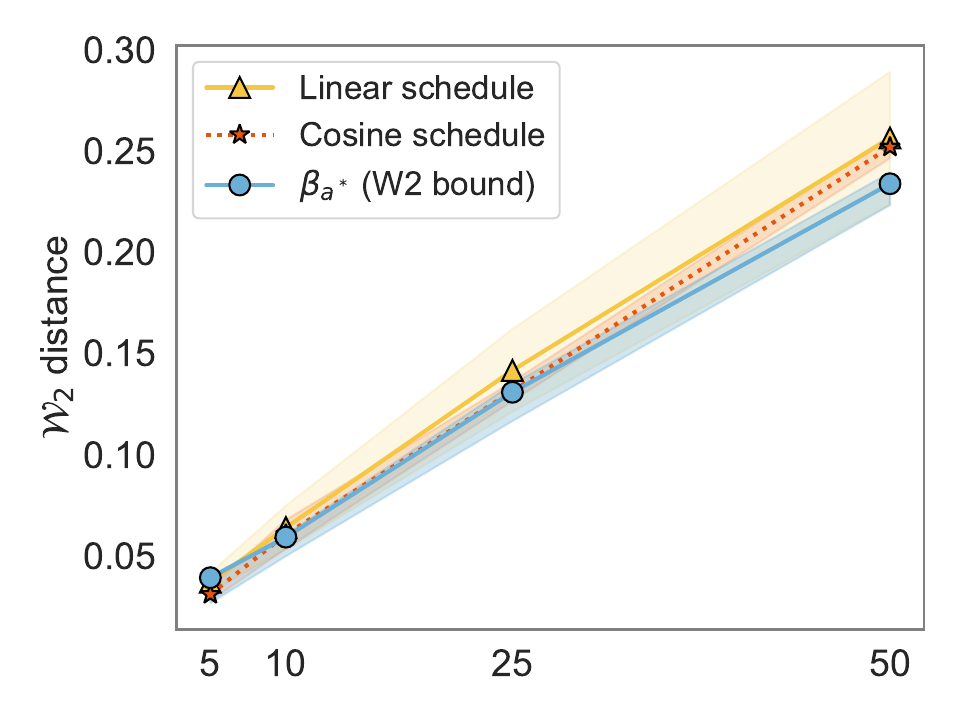}&
        \includegraphics[width=0.3\linewidth]{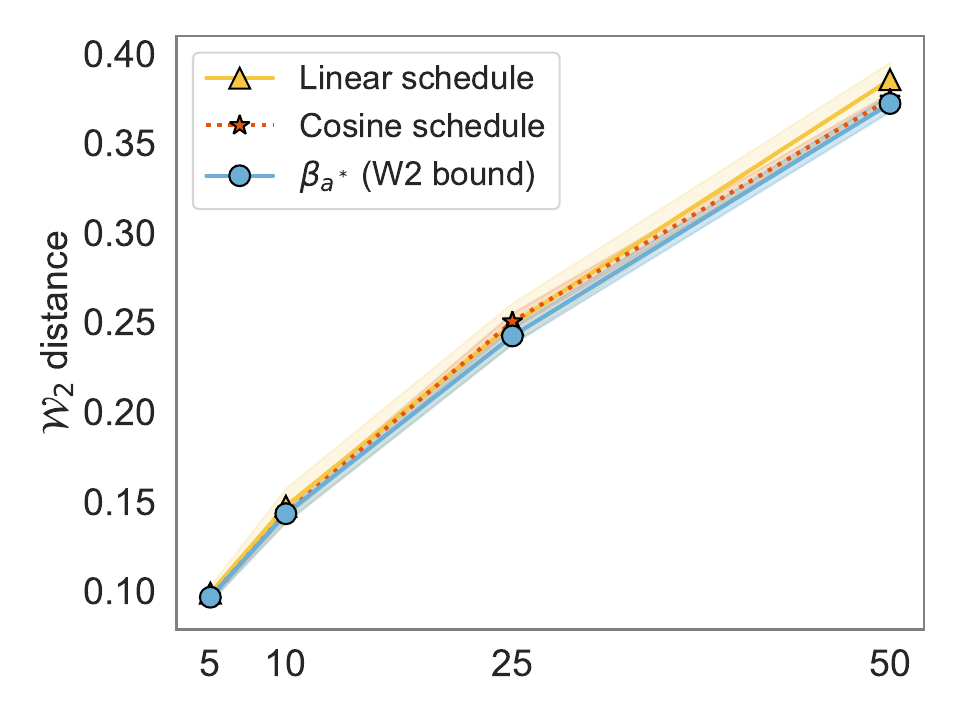}&
        \includegraphics[width=0.3\linewidth]{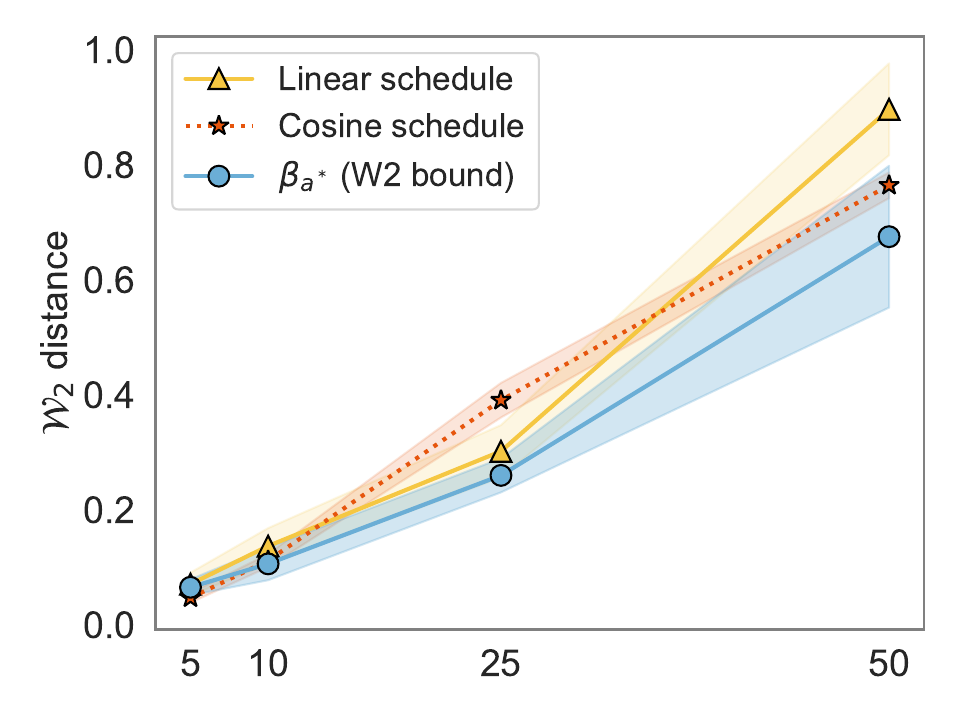} \\
        {(a) Isotropic setting } & 
        {(b) Rescaled heterosc.\ setting} & 
        {(c) Rescaled correlated setting} 
    \end{tabular}
    \caption{\label{fig:gaussian_xp_dim_w2} 
    Comparison of the empirical $\mathcal{W}_2$ distance (mean value $\pm$ std over 10 runs) between $\pi_{\mathrm{data}}$ and the generative distribution $\widehat{\pi}_N^{(\beta, \theta)}$ across various dimensions. The distributions compared include SGMs with different noise schedules: $\beta_{a^\star}$ (blue solid), $\beta_0$ (yellow dashed), and $\beta_{\cos}$ (orange dotted).   
    }
\end{figure*}
\subsection{More general target distributions}

Beyond Gaussian distributions, numerical analysis in terms of KL divergence is not tractable as standard estimators of the KL terms do not scale well with dimension. On the contrary, there exist computationally-efficient estimators of Wasserstein distances, as for instance the sliced $\mathcal{W}_2$ estimate \citep{pot_library}. We use the latter to assess the relevancy of Theorem \ref{thm:wasserstein_bound} when the target distribution corresponds to a  
50-dimensional Funnel distribution defined as:
$\pi_{\rm data} (x) =  \varphi_{a^2} (x_1)\prod_{j=2}^{d} \varphi_{\exp(2bx_1)}(x_j)$,
with $a=1$ and $b=0.5$ (see Section \ref{sec:appendix_synthetic_data} for more details and additional experiments on a Gaussian mixture model). As previously, the samples are standardized and rescaled. 
In Figure \ref{fig:funnel_main}, 
empirical results demonstrate that the minimum of the upper bound closely aligns with that of the empirical sliced 2-Wasserstein distance between the simulated and training data. 
Moreover, implementing SGM with the optimal parameter $a$ yields consistent improvements of the data generation quality across different metrics w.r.t.\ to classical noise schedule competitors (linear or cosine). 
These experiments not only support the relevance of the theoretical upper bound beyond the assumptions required in Section \ref{sec:wasserstein}, but also the validity of theoretically-inspired data preprocessing for improving SGM training with arbitrary target distributions. 
% \begin{figure}
%     \centering
%     \includegraphics[width=0.5\linewidth]{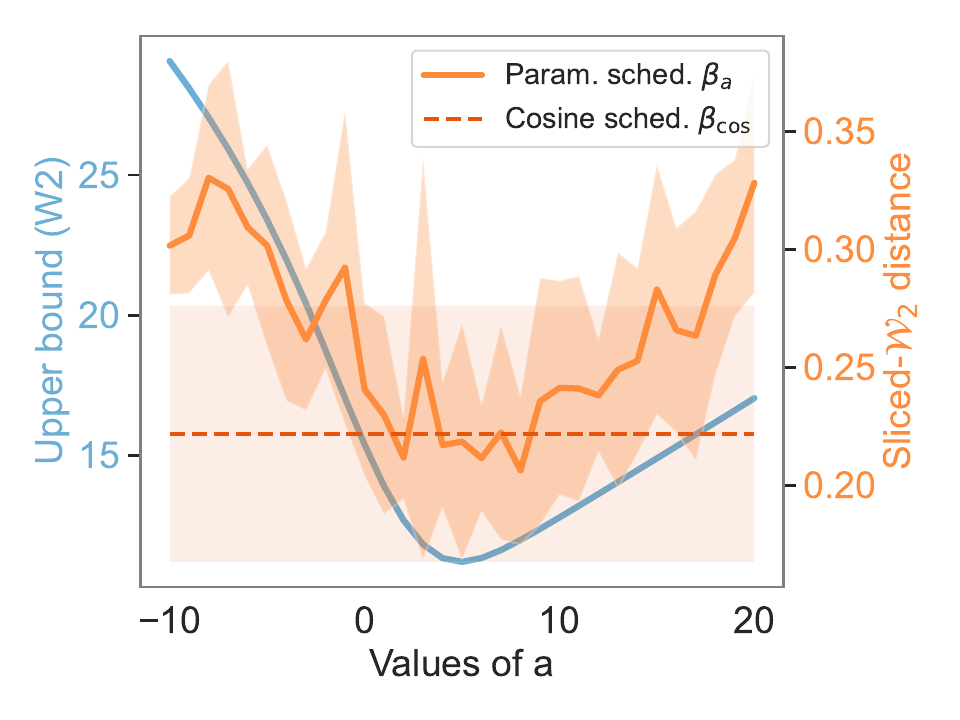}
%     \caption{funnel_w2}
%     \label{fig:enter-label}
% \end{figure}
\begin{wrapfigure}[14]{R}{0.5\textwidth} 
\vspace{-0.2cm}
\includegraphics[width=0.8\linewidth]{}    \caption{Upper bound and sliced 2-Wasserstein distance on a Funnel dataset in dimension 50.}
    \label{fig:funnel_main}
\end{wrapfigure}

When dealing with high-dimensional real-world datasets, directly evaluating our theoretical upper bounds (Theorems \ref{th:main} and \ref{thm:wasserstein_bound}) becomes more challenging because relevant quantities (distances and constants) are either poorly estimated or unavailable. 
As a first step toward real data, we evaluate the impact of the noise schedule on the sampling quality of models pre-trained using CIFAR-10 dataset. In Figure \ref{fig:CIFAR_main}, we display the FID score with 50,000 generated samples using Euler-Maruyama discretization scheme for various noise schedules drawn from the parametric family in Equation \eqref{eq:noise_schedules}.  
Additional implementation details are available in Appendix \ref{sec:CIFAR_EXP}. Although the assumptions underpinning our results cannot be verified in this setting, the empirical performance trends mirror closely those observed in the simulated settings. 
\begin{wrapfigure}[12]{r}{0.5\textwidth} 
\vspace{-1.9cm}
\includegraphics[width=0.95\linewidth]{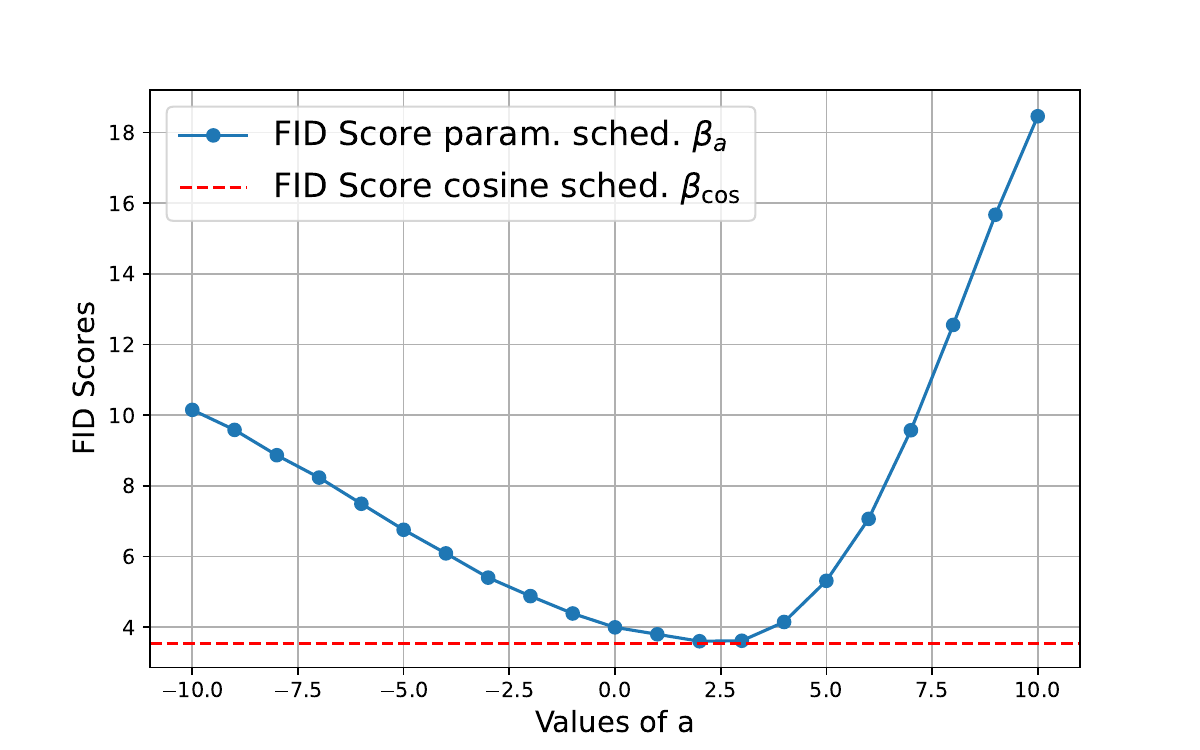}    \caption{FID Scores using 50,000 generated samples for the parametric and cosine schedules (CIFAR-10 dataset).}
    \label{fig:CIFAR_main}
\end{wrapfigure}
This consistency highlights that analyzing and optimizing noise schedules could be a promising direction for improving SGM-based generation in more complex scenarios.
% \begin{figure}[h]
%     \centering
%     \begin{tabular}{cc}
%     \includegraphics[width=0.38\linewidth]{funnel_w2.pdf} &
%     \includegraphics[width=0.5\linewidth]{FID_CIFAR_10_sched.pdf} \label{fig:enter-label}
%  \\
%     $m_t$ & FID Scores using 50,000 generated samples for the parametric and cosine schedules (CIFAR-10 dataset).
%     \end{tabular}
%     \caption{}
% \end{figure}

\section{Discussion}
\label{sec:discussion}
In this paper, we propose a unified framework to analyze the impact of the noise schedule for time-inhomogeneous
SGMs,  providing upper bounds in KL and Wasserstein metrics. The KL upper bound follows the steps of recent works using the mildest assumptions used in the SGM literature. We also provide an improved upper bound in the Gaussian setting with numerical experiments highlighting the impact of the backward contraction of the forward noise process. Following \cite{bruno2023diffusion,gao2023wasserstein}, under additional assumptions on the Lipschitz and strong log-concavity properties of the score function, we establish upper bounds for the  Wasserstein distance.  This bound highlights the role of the noise schedule and provides a detailed analysis based on the modified score function. Our results are supported by numerical experiments in simple settings to highlight the several terms of the upper bounds and the role of the noise schedule. There are many perspectives to this work.  Studying multi-dimensional noise schedules is of particular interest. Indeed, they could be useful to understand how to deal with target distributions with complex covariance structures, and thereby an alternative solution to data normalization issues. Establishing upper bounds for Wasserstein distances under milder assumptions remains an exciting open problem, which would shed light on the performances and limitations of score-based generative models. A specific perspective would be to adapt our result using early-stopping to avoid the explosion of error terms in the neighborhood of 0 and to provide other assumptions to control the corresponding error close to 0.

\subsubsection*{Acknowledgements}

We would like to thank Gabriel  Victorino Cardoso for his valuable insights and thoughtful help on the numerical experiments involving real-world datasets.

Antonio Ocello was funded by the European Union (ERC-2022-SYG-OCEAN-101071601). Views and opinions expressed are however those of the author only and do not necessarily reflect those of the European Union or the European Research Council Executive Agency. Neither the European Union nor the granting authority can be held responsible for them.

\bibliography{24-noise_diff}
\bibliographystyle{plainnat}

\newpage
\appendix

\onecolumn

\section{Notations and assumptions. } 
\label{sec:app:notations}
Consider the following notations, used throughout the appendices. For all $d\geq 1$, $\mu\in\rset^d$ and definite positive matrices $\Sigma\in\rset^{d\times d}$, let $\gausspdf_{\mu,\Sigma}$ be the probability density function of a Gaussian random variable with mean $\mu$ and variance $\Sigma$. We also use the notation  $\gausspdf_{\sigma^2} = \gausspdf_{0,\sigma^2 \mathrm{I}_d}$. 
When the context is clear, we may indifferently use the measure and the associated density w.r.t.\ the reference measure. 
For all twice-differentiable real-valued function $f$, let $\Delta f$ be the Laplacian of  $f$. For all matrix $A\in\rset^{m\times n}$, $\|A\|_{\mathrm{Fr}}$ is the  Frobenius norm of $A$, i.e., $\|A\|_{\mathrm{Fr}} = (\sum_{i=1}^m\sum_{j=1}^n|A_{i,j}|^2)^{1/2}$. For all time-dependent real-valued functions $h:t\mapsto h_t$ or $f:t\mapsto f(t)$, we write $\bar h_t = h_{T-t}$ and $\bar f(t) = f(T-t)$ for all $t\in[0,T]$.

Let $\pi_0$ be a probability density function with respect to the Lebesgue measure on $\rset^d$ and $\alpha: \rset\to \rset$ and $g: \rset\to \rset$ be two continuous and increasing functions. Consider the general forward process
\begin{equation} \label{eq:forward:general}
    \rmd  \overrightarrow{X}_t = -\alpha (t) \overrightarrow{X}_t \rmd t + g(t) \rmd B_t, \quad \ora{X}_0 \sim \pi_0 \eqsp,
\end{equation} 
 and introduce $\tilde p_{t}:x \mapsto  p_t(x)/\gausspdf_{\sigma^2}(x)$, where $p_t$ is the probability density function of $\overrightarrow{X}_t$. The backward process associated with \eqref{eq:forward:general} is referred to as $( \ola{X}_t )_{t \in [0,T]}$ and given by
\begin{equation} \label{eq:backward:general}
    \rmd \overleftarrow{X}_t = \left\{ \left( \bar \alpha(t) - \frac{\bar g^2(t)}{\sigma^2} \right)  \overleftarrow{X}_t + \bar g^2(t) \nabla \log \tilde p_{T-t}\left( \ola{X}_t \right) \right\} \rmd t +  \bar g(t) \rmd \bar B_t \quad \ola{X}_0 \sim p_T \eqsp,
\end{equation}
with $\bar B$ a standard Brownian motion in $\R^d$.
Moreover, consider
\begin{align}\label{eq:cond_variance}
\sigma_t^2 \eqdef \exp \left( {-2 \int_0^t \alpha (s) \rmd s} \right)\int_0^t g^2(s)\exp\left({ 2 \int_0^s \alpha (u) \rmd u}\right) \rmd s. 
\end{align}
The approximate EI discretization of \eqref{eq:backward:general} considered in this paper is, for $t_k \leq t \leq t_{k+1}$, $0\leq k \leq N-1$,
\begin{equation*} 
    \rmd \overleftarrow{X}^\theta_t =  \left\{\bar \alpha(t)  \overleftarrow{X}^\theta_t + \bar g^2(t) s_{\theta} (T-t_{k}, \ola{X}_{t_k}^\theta) \right\} \rmd t +  \bar g(t) \rmd \bar B_t\eqsp.
\end{equation*}
Sampling from this backward SDE is  possible recursively for $k \in \{ 0, \ldots ,N-1\}$, with $(Z_k)_{1 \leq k \leq N} \overset{\text{i.i.d}}{\sim} \mathcal{N}(0, I_d)$. For $k \in \{ 0, \ldots ,N-1\}$, writing $\tau_k = T-t_k$,
\begin{multline*}
    \ola{X}_{t_{k+1}}^\theta  = \rme^{- \int_{\tau_k}^{\tau_{k+1}} \alpha (s) \rmd s}  \ola{X}_{t_k}^\theta
    + s_{\theta} (\tau_{k}, \ola{X}_{t_k}^\theta) \rme^{-\int_{\tau_{k}}^{\tau_{k+1}}\alpha(s)\rmd s} \int_{\tau_{k}}^{\tau_{k+1}}  g^2(t) \rme^{\int_{\tau_k}^t \alpha(v)\rmd v} \rmd t \\
     + \left(\rme^{-2\int_{\tau_k}^{\tau_{k+1}} \alpha (s) \rmd s} \int_{\tau_{k+1}}^{\tau_k} \rme^{2 \int_{\tau_k}^t \alpha(s) \rmd s} g^2(t) \rmd t\right)^{1/2}Z_{k+1} \eqsp.
\end{multline*}
We denote by $ \mathbb{Q}_T \in \mathcal{P}(C([0,T], \mathbb{R}^d))$ the path measure associated with the backward diffusion and by $(Q_t)_{0\leq t \leq T}$ its Markov semi-group. We also write $\ola{X}^\infty_{T}\sim \pi_\infty Q_T$ and, for each time step $t_k$ for $0\leq k \leq N$,  $\ola{X}_{t_{k}}^{\infty} \sim \pi_{\infty} Q_{t_k}$. 
For each time step $t_k$ for $0\leq k \leq N$, the kernel associated with the backward discretization is denoted by  $Q_{t_k}^{N, \theta}$, so that we have $ \bar X_{t_k}^{\theta} \sim \pi_{\infty} Q_{t_k}^{N, \theta}$. 

In Appendix~\ref{sec:proof:wasserstein}, these notations are used for the specific case where $\alpha: t\mapsto  \beta(t)/(2\sigma^2)$ and $g :t\mapsto \beta(t)^{1/2}$ and the associated backward discretization is given in \eqref{eq:backward_EI-theta}. 

\section{Proofs of Section~\ref{sec:main}}
\subsection{Proof of Theorem~\ref{th:main}}
\label{sec:proofs}

{
    We are interested in the relative entropy of the training data distribution $\pi_{\rm data}$ with respect to the generated data distribution $ \pihat $. Leveraging the time-reverse property we have:

\begin{align*}
    \kl \left( \pi_{\rm data} \middle\| \pihat   \right)  = \kl\left( p_T Q_T \middle\| \pihat   \right)\eqsp.
\end{align*}
By the data processing inequality, 
% {\color{red}
% \begin{align*}
%      \kl \left( p_T Q_T \middle\| \pihat  \right) \leq \kl \left( p_T Q_T \middle\| \pi_{\infty} Q_T^{N,\theta}\right)\eqsp.
% \end{align*}
% }
\begin{align*}
     \kl \left( p_T Q_T \middle\| \pihat  \right) \leq \kl \left( p_T \mathbb{Q}_T \middle\| \pi_{\infty} \mathbb{Q}_T^{N,\theta}\right)\eqsp.
\end{align*}
% \stan{C'etait pas bon tel que c'était écrit (en rouge) je pensais mettre l'equation bleue mais est-ce ok car on n'a pas formellement introduit la mesure de chemin du processus discretisé je crois}

where $\mathbb{Q}_T$ and $\mathbb{Q}_T^{N,\theta}$ denote the path measures of, respectively, the backward process and the SGM generation. Writing the backward time $\tau_t = T-t$ and its discretized version $\tau_k = T-t_k$, with $0=t_0<t_1<\ldots<t_N=T$, we have (by Lemma \ref{tvboundgirsanovproof}) that
\begin{multline*}
 \kl \left( \pi_{\rm data} \| \pihat   \right) \leq  \kl \left( p_T \| \pi_{\infty} \right) + \frac{1}{2} \int_0^T \frac{1}{\bar \sched(t)} \mathbb{E}\Bigg[ \Bigg\|  \frac{-  \bar \sched (t)}{2 \sigma^2} \overleftarrow{X}_t + \bar \sched (t) \nabla \log \tilde p_{\tau_t}\left( \ola{X}_t \right)    \\
- \left( - \frac{ \bar \sched (t)}{2 \sigma^2} \overleftarrow{X}_t + \bar \sched (t) \tilde s_{\param} \left( \tau_k, \ola{X}_{t_k}    \right) \right) \Bigg\|^2 \Bigg] \rmd t \eqsp.
\end{multline*}
From there, the KL divergence can be split into the theoretical mixing time of the forward OU process and the approximation error for the score function made by the neural network, as follows:
\begin{align*}
 &\kl \left( \pi_{\rm data} \| \pihat \right) \leq \kl \left( p_T \| \pi_{\infty} \right) +  \frac{1}{2} \int_0^T \frac{1}{\bar \sched(t)} \mathbb{E}\Bigg[ \Bigg\| \bar \sched (t) \left(  \tilde s\left(\tau_t,\ola X_t\right) - \tilde s_{\param} ( \tau_k, \ola{X}_{t_k}    ) \right)  \Bigg\|^2 \Bigg] \rmd t\eqsp.
\end{align*}
By using the regular discretization of the interval $[0,T]$, one can disentangle the last term as follows:

\begin{align*}
 \kl \left( \pi_{\rm data} \middle\| \pihat \right)& \leq\kl \left( p_T \| \pi_{\infty} \right) + \frac{1}{2} \sum_{k = 0}^{N-1} \int_{t_k}^{t_{k+1}} \bar \sched (t) \mathbb{E} \left[  \left\|   \tilde s\left(\tau_t,\ola X_t\right)- \tilde s_{\param}\left(\tau_k, \ola X_{t_k} \right) \right\|^2 \right] \rmd t \\
&\leq E_1(\sched) + E_2(\param,\sched) + E_3(\sched)\eqsp,
\end{align*}
where
\begin{align}
    E_1(\sched) &= \kl \left( p_T \| \gausspdf_{\sigma^2} \right)\eqsp,\label{eq:err:mix}\\
    E_2(\param,\sched) &= \sum_{k = 0}^{N-1} \int_{t_k}^{t_{k+1}} \bar \sched (t)\mathbb{E} \left[  \left\|  \tilde s\left( \tau_k, \ola X_{t_k} \right)  - \tilde s_{\param}\left(\tau_k, \ola X_{t_k} \right) \right\|^2 \right] \rmd t\eqsp,\label{eq:err:approx}\\
    E_3(\sched) &= \sum_{k = 0}^{N-1} \int_{t_k}^{t_{k+1}} \bar \sched (t)\mathbb{E} \left[  \left\| \tilde s\left(\tau_t,\ola X_t\right) -\tilde s\left( \tau_k, \ola X_{t_k} \right)   \right\|^2 \right] \rmd t\eqsp. \label{eq:err:discr}
\end{align}
Finishing the proof of Theorem~\ref{th:main} amounts to obtaining upper bounds for $E_1(\sched)$, $E_2(\param,\sched)$ and $E_3(\sched)$. This is done in Lemmas~\ref{lem:mix}, \ref{lem:approx} and \ref{lem:fish}, so that $E_1(\sched) \leq \mathcal{E}_1(\sched)$, $E_2(\param,\sched) \leq \mathcal{E}_2(\param,\sched)$ and $E_3(\sched) \leq \mathcal{E}_3(\sched)$.
}

\begin{lemma} 
\label{lem:mix}For any noise schedule $\beta$,
    \begin{align*}
    E_1(\sched) = 
    \kl\left(p_T \| \pi_{\infty} \right) \leq  
    \kl\left(\pi_{\rm data} \|  \pi_{\infty} \right) 
    \exp\left(- \frac{1}{\sigma^2} \int_0^T \sched(s) \rmd s\right)  \eqsp.
    \end{align*}
\end{lemma}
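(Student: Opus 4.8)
The statement is the time-inhomogeneous version of the classical exponential dissipation of relative entropy along the Ornstein--Uhlenbeck flow. The plan is to differentiate $t\mapsto \kl(p_t\|\varphi_{\sigma^2})$ along the forward dynamics \eqref{eq:forward-SDE:beta}, recognize a de Bruijn-type identity producing the relative Fisher information $\mathcal{I}(p_t|\pi_\infty)$, close the resulting differential inequality via the log-Sobolev inequality for the Gaussian $\pi_\infty=\mathcal N(0,\sigma^2\mathrm I_d)$, and integrate it by Grönwall's lemma. An equivalent and shorter route is to use the deterministic time change $u(t)=\int_0^t\beta(s)/2\,\rmd s$, under which $p_t$ is the law at time $u(t)$ of the standard OU process started at $\pi_{\rm data}$, and to quote the homogeneous estimate $\kl(\mathrm{Law}(Y_s)\|\varphi_{\sigma^2})\le \kl(\pi_{\rm data}\|\varphi_{\sigma^2})\rme^{-2s/\sigma^2}$ before setting $s=u(T)$; this offloads all analytic technicalities onto the well-documented time-homogeneous case.

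\textbf{Steps.} First I would write the Fokker--Planck equation for \eqref{eq:forward-SDE:beta} in gradient-flow form, $\partial_t p_t=\tfrac{\beta(t)}{2}\nabla\cdot\!\big(p_t\nabla\log(p_t/\varphi_{\sigma^2})\big)$, using $\nabla\log\varphi_{\sigma^2}(x)=-x/\sigma^2$. Setting $H(t):=\kl(p_t\|\varphi_{\sigma^2})$, an integration by parts (the ``$+1$'' term vanishing because $\tfrac{\rmd}{\rmd t}\!\int p_t=0$, the boundary terms vanishing by decay of $p_t$) gives
\[
H'(t)=-\frac{\beta(t)}{2}\,\mathcal{I}(p_t|\pi_\infty)\eqsp.
\]
Next, since the potential $x\mapsto \|x\|^2/(2\sigma^2)$ is $\sigma^{-2}$-convex, the Bakry--Émery criterion yields the log-Sobolev inequality $\kl(\nu\|\pi_\infty)\le \tfrac{\sigma^2}{2}\,\mathcal{I}(\nu|\pi_\infty)$ for every $\nu$, hence $\mathcal{I}(p_t|\pi_\infty)\ge \tfrac{2}{\sigma^2}H(t)$ and $H'(t)\le -\tfrac{\beta(t)}{\sigma^2}H(t)$. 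Grönwall's lemma then gives $H(T)\le H(0)\exp\!\big(-\tfrac1{\sigma^2}\int_0^T\beta(s)\,\rmd s\big)$ with $H(0)=\kl(\pi_{\rm data}\|\varphi_{\sigma^2})$, which is finite by the same log-Sobolev inequality applied at $t=0$ together with Assumption H\ref{assum:fisher_info}: $\kl(\pi_{\rm data}\|\varphi_{\sigma^2})\le \tfrac{\sigma^2}{2}\mathcal{I}(\pi_{\rm data}|\pi_\infty)<\infty$.

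\textbf{Main obstacle.} The only delicate point is analytic rather than conceptual: justifying that for $t>0$ the density $p_t$ is smooth, positive and has enough integrability (together with its derivatives) for the differentiation under the integral sign and the integration by parts to be licit --- this is supplied by the parabolic regularization of the OU semigroup --- and handling the limit $t\downarrow 0$, where $\pi_{\rm data}$ need not admit a Lebesgue density; one then runs the argument on $[\varepsilon,T]$ and lets $\varepsilon\to0$, using lower semicontinuity of the relative entropy to pass $H(\varepsilon)\to H(0)$. If one prefers to bypass these technicalities entirely, the time-change route reduces the claim to citing the standard entropic convergence rate of the OU process, as already used by \citet{conforti2023}.
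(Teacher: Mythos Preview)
Your proposal is correct and follows essentially the same route as the paper: differentiate $\kl(p_t\|\varphi_{\sigma^2})$ via the Fokker--Planck equation written in gradient-flow form, obtain $H'(t)=-\tfrac{\beta(t)}{2}\mathcal I(p_t|\pi_\infty)$, apply the Gaussian log-Sobolev inequality (the paper calls it the Stam--Gross inequality, Lemma~\ref{stam-gross}), and conclude by Gr\"onwall. The only difference is cosmetic---the paper carries out the integration-by-parts step line by line and does not discuss the regularity/endpoint technicalities or the time-change alternative you mention.
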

\begin{proof}

The proof follows the same lines as \citet[][Lemma 1]{franzese2023diffusion}. The Fokker-Planck equation associated with \eqref{eq:forward-SDE:beta} is
\begin{align*}
    \partial_{t} p_{t}(x) = \frac{\sched(t)}{2\sigma^2} \text{div} \left( x p_{t} (x) \right) + \frac{\sched(t)}{2} \Delta p_{t} (x) =
    \frac{\sched(t)}{2}\text{div} \left(\frac{1}{\sigma^2}x p_{t} (x) + \nabla p_{t} (x)\right)
    \eqsp,
\end{align*}
for $t\in[0,T], x\in\R^d$. Combing this with the derivation under the integral theorem, we get
\begin{align*}
    \frac{\partial}{ \partial t} \kl \left( p_t \|\gausspdf_{\sigma^2}  \right) &= \frac{\partial}{\partial t} \int_{\mathbb{R}^d}  \log \frac{p_t(x)}{\gausspdf_{\sigma^2}(x)} p_t(x) \rmd x \\
    &= \int_{\mathbb{R}^d} \frac{\partial}{\partial t} p_t(x) \log \frac{p_t(x)}{\gausspdf_{\sigma^2}(x)}\rmd x + \int_{\mathbb{R}^d}\frac{p_t(x) \partial_t p_t(x)}{p_t(x)} \rmd x 
    \\
    &= \int_{\mathbb{R}^d} \frac{\partial}{\partial t} p_t(x) \log \frac{p_t(x)}{\gausspdf_{\sigma^2}(x)} \rmd x + \int_{\mathbb{R}^d} \frac{\partial}{\partial t} p_t(x) \rmd x
    \\
    &= \int_{\mathbb{R}^d} \frac{\sched(t)}{2} \text{div} \left( \frac{x}{\sigma^2} p_t(x) + \nabla p_t(x) \right) \log \frac{p_t(x)}{\gausspdf_{\sigma^2}(x)} \rmd x
    \\
    &= \frac{\sched(t)}{2} \int_{\mathbb{R}^d}  \text{div} \left(  - \nabla \log \gausspdf_{\sigma^2}(x) \; p_t(x)  +  \nabla p_t(x) \right) \log \frac{p_t(x)}{\gausspdf_{\sigma^2}(x)} \rmd x 
    \\
    &=  - \frac{\sched(t)}{2} \int_{\mathbb{R}^d} \left(- \nabla \log \gausspdf_{\sigma^2}(x) \; p_t(x)  +  \nabla p_t(x) \right)^\top \nabla \log \frac{p_t(x)}{\gausspdf_{\sigma^2}(x)} \rmd x 
    \\
    &= - \frac{\sched(t)}{2} \int_{\mathbb{R}^d} p_t(x) \left( - \nabla \log \gausspdf_{\sigma^2}(x)   +  \nabla \log p_t(x)\right)^\top \nabla \log \frac{p_t(x)}{\gausspdf_{\sigma^2}(x)} \rmd x\\
    &= - \frac{\sched(t)}{2} \int_{\mathbb{R}^d} p_t(x) \left\| \nabla \log \frac{p_t(x)}{\gausspdf_{\sigma^2}(x)} \right\|^2 \rmd x\eqsp.
\end{align*}
Using the Stam-Gross logarithmic Sobolev inequality given in  Proposition~\ref{stam-gross}, we get
\begin{align*}
    \frac{\partial}{ \partial t} \kl \left( p_t \|\gausspdf_{\sigma^2}  \right)
    & \leq - \frac{ \sched(t)}{\sigma^2} \kl \left( p_t \|\gausspdf_{\sigma^2}  \right)\eqsp.
\end{align*}
Applying Grönwall's inequality, we obtain
\begin{align*}
    \kl \left( p_T \|\gausspdf_{\sigma^2} \right) \leq \kl \left( p_0 \|\gausspdf_{\sigma^2} \right) \exp\left\{- \frac{1}{\sigma^2} \int_0^T \sched(s) \rmd s\right\}\eqsp, 
\end{align*}
which concludes the proof.
\end{proof}

\begin{lemma}
\label{lem:approx}
For all $\param$ and all $\beta$,
\begin{align*}
    E_2(\param,\sched) = \sum_{k = 1}^{N}  \mathbb{E} \left[  \left\|   \nabla \log \tilde p_{t_k}\left(\ora X_{t_k}\right) - \tilde s_{\param}\left(t_k, \ora X_{t_k}\right) \right\|^2 \right] \int_{t_k}^{t_{k+1}} \bar \sched (t) \rmd t\eqsp,
\end{align*}
    where $E_2(\param,\sched)$ is defined by \eqref{eq:err:approx}.
\end{lemma}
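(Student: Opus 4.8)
The plan is to read off the identity directly from the definition \eqref{eq:err:approx} of $E_2(\param,\sched)$ by three elementary manipulations: factoring the $t$-independent part out of each integral, invoking the time-reversal identity to pass from the backward process to the forward one, and performing the change of variables $u=T-t$ together with a relabelling of the time grid. There is no substantial analytic content here — the statement is purely a rewriting of $E_2$. Concretely, I would start from
\[
E_2(\param,\sched) = \sum_{k=0}^{N-1}\int_{t_k}^{t_{k+1}}\bar\sched(t)\,\mathbb{E}\!\left[\left\|\tilde s(\tau_k,\ola X_{t_k}) - \tilde s_{\param}(\tau_k,\ola X_{t_k})\right\|^2\right]\rmd t,
\]
with $\tau_k=T-t_k$, and observe that for each fixed $k$ the expectation does not depend on the integration variable $t$: it only involves the node $t_k$ and the backward time $\tau_k$. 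Hence it may be pulled out, leaving $E_2(\param,\sched)=\sum_{k=0}^{N-1}\mathbb{E}[\,\cdots\,]\int_{t_k}^{t_{k+1}}\bar\sched(t)\,\rmd t$.

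Next I would use that the expectations appearing in $E_2$ are taken under the reverse path measure $p_T\mathbb{Q}_T$ entering the Girsanov bound above, for which $(\ola X_t)$ is the exact time reversal of the forward diffusion, so that $\ola X_{t_k}\stackrel{d}{=}\ora X_{T-t_k}=\ora X_{\tau_k}$; combined with the definition $\tilde s(t,x)=\nabla\log\tilde p_t(x)$ from H\ref{assum:novikov}, each expectation becomes $\mathbb{E}[\|\nabla\log\tilde p_{\tau_k}(\ora X_{\tau_k}) - \tilde s_{\param}(\tau_k,\ora X_{\tau_k})\|^2]$. In parallel, the substitution $u=T-t$ gives $\int_{t_k}^{t_{k+1}}\bar\sched(t)\,\rmd t=\int_{t_k}^{t_{k+1}}\sched(T-t)\,\rmd t=\int_{\tau_{k+1}}^{\tau_k}\sched(u)\,\rmd u$. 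Finally I would reindex by $k\mapsto N-k$: this turns the decreasing sequence $(\tau_k)_{k=0}^{N}$ into an increasing grid of $[0,T]$, and after renaming that grid the sum over $k\in\{0,\dots,N-1\}$ becomes a sum over $\{1,\dots,N\}$ in which each summand has the announced shape $\mathbb{E}[\|\nabla\log\tilde p_{t_k}(\ora X_{t_k}) - \tilde s_{\param}(t_k,\ora X_{t_k})\|^2]\int_{t_k}^{t_{k+1}}\sched(t)\,\rmd t$.

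I do not expect a genuine obstacle. The only points requiring care are keeping the two grid conventions straight (the backward times $t_k$ used before the reversal versus the forward grid obtained afterwards) and checking that the integration endpoints match up after the change of variables; the sole external ingredient is the time-reversal property of $\mathbb{Q}_T$, which is already the one exploited to set up the Girsanov bound, so nothing new needs to be proved.
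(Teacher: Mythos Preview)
Your proposal is correct and follows essentially the same route as the paper's proof: factor the $t$-independent expectation out of each integral, use the equality in law $\ola X_{t_k}\stackrel{d}{=}\ora X_{T-t_k}$ (the paper phrases this as ``forward and backward processes have the same marginals since $\ora X_T\sim p_T$''), and perform the change of variables $u=T-t$ together with the reindexing to pass from the backward to the forward grid. Your write-up is simply more explicit about the bookkeeping of the change of variables and the relabelling; the paper compresses all of this into a single line.
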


\begin{proof}
By definition of $E_2(\param,\sched)$,
\begin{align*}
  E_2(\param,\sched) & = \sum_{k = 0}^{N-1} \int_{t_k}^{t_{k+1}} \bar \sched (t) \mathbb{E} \left[  \left\|   \nabla \log \tilde p_{T-t_k}\left(\ola X_{t_k}\right) - \tilde s_{\param}\left(T - t_k, \ola X_{t_k} \right) \right\|^2 \right] \rmd t \\
  & = \sum_{k = 0}^{N-1}  \mathbb{E} \left[  \left\|   \nabla \log \tilde p_{T-t_k}\left(\ola X_{t_k}\right) - \tilde s_{\param}\left(T - t_k, \ola X_{t_k} \right) \right\|^2 \right] \int_{t_k}^{t_{k+1}} \bar \sched (t) \rmd t \\
  & ={\sum_{k = 0}^{N-1}} \mathbb{E} \left[  \left\|   \nabla \log \tilde p_{t_k}\left(\ora X_{t_k}\right) - \tilde s_{\param}\left(t_k, \ora X_{t_k}\right) \right\|^2 \right] \int_{t_k}^{t_{k+1}} \bar \sched (t) \rmd t\eqsp,%\\
\end{align*}
where the last equality comes from the fact that the forward and backward processes have same marginals since $ \ora X_{T} \sim  p_T$.
\end{proof}

\begin{lemma}
\label{lem:fish}
Assume that H\ref{hyp:sched} holds. For all $T,\sigma>0$, $\param$ and all $\sched$,
    \begin{align*}
    E_3(\sched) \leq 2 h  \beta (T) \max \left\{ \frac{  h  \beta (T)  }{ 4 \sigma^2} ; 1  \right\} \mathcal{I}(\pi_{\rm data} | \pi_{\infty})   \eqsp,
    \end{align*}
    where $E_3(\sched)$ is defined by \eqref{eq:err:discr}.
\end{lemma}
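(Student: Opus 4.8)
The plan is to exploit the time‑reversal structure. After the data‑processing and Girsanov steps in the proof of Theorem~\ref{th:main}, the increment appearing in $E_3(\sched)$ (cf.\ \eqref{eq:err:discr}) is that of $t\mapsto \tilde s(\tau_t,\ola X_t)=\nabla\log\tilde p_{T-t}(\ola X_t)$ along the \emph{exact} reverse SDE \eqref{eq:backward_SDE} started from $p_T$, so that $\ola X_t\sim p_{T-t}$. I would apply Itô's formula to this (vector‑valued) process. Using the Fokker–Planck equation for $p_t$ exactly as in Lemma~\ref{lem:mix}, one checks that all the second‑order spatial terms cancel against the backward drift, leaving the clean linear SDE
\[
\rmd\,\tilde s(\tau_t,\ola X_t)=\frac{\bar\sched(t)}{2\sigma^2}\,\tilde s(\tau_t,\ola X_t)\,\rmd t+\sqrt{\bar\sched(t)}\,\nabla^2\log\tilde p_{T-t}(\ola X_t)\,\rmd B_t\eqsp.
\]
This identity is the backbone of the proof; its rigorous justification (smoothness of $p_t$ for $t>0$, integrability of the stochastic integral under H\ref{assum:fisher_info}) is standard and of the kind already carried out in \citet{conforti2023}.

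From this SDE I would extract two \emph{dissipation identities}. Applying Itô to $\|\tilde s(\tau_t,\ola X_t)\|^2$ and taking expectations gives $\tfrac{\rmd}{\rmd t}\mathbb{E}\|\tilde s(\tau_t,\ola X_t)\|^2=\tfrac{\bar\sched(t)}{\sigma^2}\mathbb{E}\|\tilde s(\tau_t,\ola X_t)\|^2+\bar\sched(t)\,\mathbb{E}\|\nabla^2\log\tilde p_{T-t}(\ola X_t)\|_{\mathrm{Fr}}^2$. Since $\ola X_t\sim p_{T-t}$ one has $\mathbb{E}\|\tilde s(\tau_t,\ola X_t)\|^2=\mathcal{I}(p_{T-t}\mid\pi_\infty)=:I(T-t)$, and with $s=T-t$ this reads $-I'(s)=\tfrac{\sched(s)}{\sigma^2}I(s)+\sched(s)\,\mathbb{E}_{p_s}\|\nabla^2\log\tilde p_s\|_{\mathrm{Fr}}^2$. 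Integrating over $[0,T]$ and using $I\ge0$, $I(0)=\mathcal{I}(\pi_{\rm data}\mid\pi_\infty)$ yields both
\[
\int_0^T\bar\sched(t)\,\mathbb{E}\big\|\nabla^2\log\tilde p_{T-t}(\ola X_t)\big\|_{\mathrm{Fr}}^2\,\rmd t\le\mathcal{I}(\pi_{\rm data}\mid\pi_\infty),\qquad
\int_0^T\bar\sched(t)\,\mathbb{E}\big\|\tilde s(\tau_t,\ola X_t)\big\|^2\,\rmd t\le\sigma^2\,\mathcal{I}(\pi_{\rm data}\mid\pi_\infty)\eqsp,
\]
the second because $\sched(s)I(s)\le-\sigma^2 I'(s)$ (equivalently, it follows from the entropy dissipation of Lemma~\ref{lem:mix} combined with the Stam–Gross log‑Sobolev inequality). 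The key feature is that both right‑hand sides are $\mathcal{I}(\pi_{\rm data}\mid\pi_\infty)$ up to constants and, crucially, involve neither $\int_0^T\sched$ nor $N$ — this is precisely what makes $E_3(\sched)$ depend on the schedule only through $h$ and $\sched(T)$.

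The estimate is then mechanical. On $[t_k,t_{k+1}]$ split the increment $\tilde s(\tau_t,\ola X_t)-\tilde s(\tau_k,\ola X_{t_k})$ into its drift part $\int_{t_k}^t\tfrac{\bar\sched(u)}{2\sigma^2}\tilde s(\tau_u,\ola X_u)\,\rmd u$ and its martingale part $\int_{t_k}^t\sqrt{\bar\sched(u)}\nabla^2\log\tilde p_{T-u}(\ola X_u)\,\rmd B_u$. Bound the squared norm of the drift part by Cauchy–Schwarz, keeping it in the form $\big(\int_{t_k}^t\tfrac{\bar\sched(u)}{2\sigma^2}\rmd u\big)\big(\int_{t_k}^t\tfrac{\bar\sched(u)}{2\sigma^2}\mathbb{E}\|\tilde s(\tau_u,\ola X_u)\|^2\rmd u\big)\le\tfrac{h\sched(T)}{4\sigma^4}\int_{t_k}^{t_{k+1}}\bar\sched(u)\mathbb{E}\|\tilde s(\tau_u,\ola X_u)\|^2\rmd u$ (using $\bar\sched\le\sched(T)$, $t_{k+1}-t_k\le h$), and the martingale part by the Itô isometry, $\int_{t_k}^{t_{k+1}}\bar\sched(u)\mathbb{E}\|\nabla^2\log\tilde p_{T-u}(\ola X_u)\|_{\mathrm{Fr}}^2\rmd u$. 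Multiplying by $\bar\sched(t)$, integrating over $[t_k,t_{k+1}]$ (another factor $\le h\sched(T)$), summing over $k$, and feeding in the two dissipation identities, all the per‑interval integrals telescope into the two global ones, and one is left with $E_3(\sched)\le h\sched(T)\big(\tfrac{h\sched(T)}{2\sigma^2}+\mathrm{const}\big)\mathcal{I}(\pi_{\rm data}\mid\pi_\infty)$, which after tracking the numerical constants is reorganised as $2h\sched(T)\max\{h\sched(T)/(4\sigma^2);1\}\,\mathcal{I}(\pi_{\rm data}\mid\pi_\infty)$.

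The hard part is this last bookkeeping. Because the drift coefficient $\tfrac{\bar\sched(t)}{2\sigma^2}\tilde s(\tau_t,\ola X_t)$ is \emph{positive} and $\tilde s(\tau_t,\ola X_t)$ is $O(\sqrt{\mathcal{I}})$ rather than $o(1)$, one cannot use the Itô isometry alone, and a crude $(a+b)^2\le2a^2+2b^2$ split costs a factor $2$ relative to the stated constant; obtaining exactly $2h\sched(T)\max\{h\sched(T)/(4\sigma^2);1\}\mathcal{I}$ requires either solving the linear SDE explicitly (so that the drift and martingale contributions decouple without that factor) or a Grönwall estimate on each single step, in both cases crucially combined with the sharp inequality $\int_0^T\sched(s)\,\mathcal{I}(p_s\mid\pi_\infty)\,\rmd s\le\sigma^2\mathcal{I}(\pi_{\rm data}\mid\pi_\infty)$; the $\max\{\cdot\,;1\}$ in the statement simply reflects the two additive contributions $\tfrac{h\sched(T)}{2\sigma^2}\mathcal{I}$ (discretisation of the drift) and $2\mathcal{I}$ (martingale term), recombined into a single factor.
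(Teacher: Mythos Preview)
Your approach is correct and mirrors the paper's: both hinge on the SDE $\rmd Y_t=\tfrac{\bar\sched(t)}{2\sigma^2}Y_t\,\rmd t+\sqrt{\bar\sched(t)}\,Z_t\,\rmd B_t$ for $Y_t=\nabla\log\tilde p_{T-t}(\ola X_t)$ (this is Lemma~\ref{score differential form} in the paper), apply It\^o to $\|Y_t\|^2$, and control the increment via Cauchy--Schwarz on the drift plus It\^o isometry on the martingale. The paper's bookkeeping is slightly different: it recognises that the two resulting pieces are precisely the summands in $\tfrac{\rmd}{\rmd t}\mathbb{E}\|Y_t\|^2$, bounds $\mathbb{E}\|Y_t-Y_{t_k}\|^2\le 2\max\{\int_{t_k}^{t_{k+1}}\bar\sched/(4\sigma^2),\,1\}\,\big(\mathbb{E}\|Y_{t_{k+1}}\|^2-\mathbb{E}\|Y_{t_k}\|^2\big)$, and then telescopes over $k$.

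Your final paragraph over-worries the constant. The extra factor you fear is an artefact of using the two dissipation identities \emph{separately}; no Gr\"onwall or explicit solution of the linear SDE is needed. Your own It\^o computation already yields the \emph{combined} identity
\[
\frac{1}{\sigma^2}\int_0^T\bar\sched(t)\,\mathbb{E}\|Y_t\|^2\,\rmd t\;+\;\int_0^T\bar\sched(t)\,\mathbb{E}\|Z_t\|_{\mathrm{Fr}}^2\,\rmd t\;=\;I(0)-I(T)\;\le\;\mathcal{I}(\pi_{\rm data}\mid\pi_\infty)\eqsp.
\]
Write your penultimate bound as
\[
E_3(\sched)\le 2h\sched(T)\left[\frac{h\sched(T)}{4\sigma^2}\cdot\frac{1}{\sigma^2}\int_0^T\bar\sched\,\mathbb{E}\|Y\|^2+1\cdot\int_0^T\bar\sched\,\mathbb{E}\|Z\|_{\mathrm{Fr}}^2\right]\eqsp,
\]
pull out $\max\{\tfrac{h\sched(T)}{4\sigma^2},1\}$, and apply the combined identity: the stated constant $2h\sched(T)\max\{h\sched(T)/(4\sigma^2),1\}\,\mathcal{I}$ drops out exactly. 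This is nothing but the paper's telescoping, viewed globally rather than interval by interval; your two separate inequalities are each weaker than (and both follow from) the single equality above.
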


\begin{proof}
By Lemma \ref{score differential form},  with $Y_t \eqdef \nabla \log \tilde p_{T-t} (\ola X_t)$,
\begin{align*}
\rmd Y_t &= \frac{ \bar \sched(t)}{2 \sigma^2} Y_t \rmd t + \sqrt{\bar \sched(t)} Z_t \rmd B_t\eqsp. 
\end{align*}
By applying  Itô's lemma to the function $x \mapsto \|x\|^2$, we obtain
\begin{align*}
\rmd \|Y_t\|^2 = \left(\frac{\bar \sched (t)}{\sigma^2} \|Y_t\|^2 + \bar \sched (t) \|Z_t\|_{\mathrm{Fr}}^2 \right) \rmd t + \sqrt{\bar \sched (t)} Y_t^\top Z_t \rmd B_t\eqsp.
\end{align*}

Fix $\delta>0$.
From \citet[Theorem~7.3, p.193]{baldi}, we have that $\left( \int_0^t g(s) Y_s^\top Z_s \rmd B_s\right)_{t \in [0,T-\delta]}$ is a square integrable martingale if 
\begin{align*}
\mathbb{E} \left[ \int_0^{T-\delta} g^2(s) \left\| Y_s^\top Z_s \right\|^2 \rmd s \right] < \infty\eqsp .
\end{align*}
From the Cauchy-Schwarz inequality, we get that 
\begin{align*}
    \mathbb{E} \left[ \left\| Y_s^\top Z_s \right\|^2_2 \right]
     \leq
    \mathbb{E} \left[ \left\| Y_s \right\|_2^2   \left\| Z_s \right\|_{\mathrm{Fr}}^2 \right] 
    \leq
    \mathbb{E} \left[ \left\| Y_s \right\|_2^4 \right]^{1/2}  \mathbb{E} \left[ \left\| Z_s \right\|_{\mathrm{Fr}}^4 \right]^{1/2}\eqsp .
\end{align*}
Applying Lemma \ref{score moment bound } and \ref{second derivative moment bound}, we get that both $\mathbb{E} [ \| Y_s \|_2^4 ]$ and $\mathbb{E} [ \| Z_s \|_2^4 ]$ are bounded by a quantity depending on $\sigma_{T-t}^{-8}$. As the term $\sigma_{T-t}^{-8}$ is uniformly bounded in $[0,T-\delta]$ and by Fubini's theorem, $\mathbb{E} [ \int_0^T g^2(s) \| Y_s^\top Z_s \|^2 \rmd s ]  =  \int_0^T g^2(s) \mathbb{E} [ \| Y_s^\top Z_s \|^2]   \rmd s < \infty$.  
Therefore, $( \int_0^t g(s) Y_s^\top Z_s \rmd B_s )_{t \in [0,T-\delta]}$ is a square integrable martingale. Therefore, we have
\begin{align*}
\mathbb{E} \left[ \|Y_t\|^2 \right] - \mathbb{E} \left[ \|Y_{t_k}\|^2 \right] = \mathbb{E} \left[\int_{t_k}^t \frac{\bar \sched (s)}{\sigma^2} \|Y_s\|^2  \rmd s+ \int_{t_k}^t \bar \sched (s) \|Z_s\|_{\mathrm{Fr}}^2  \rmd s \right]\eqsp,
\end{align*}
and
\begin{align}
    \mathbb{E} \left[ \left\|  Y_t -  Y_{t_k} \right\|^2 \right] &=  \mathbb{E} \left[ \left\|  \int_{t_k}^t \frac{ \bar \sched(s)}{2 \sigma^2} Y_s \rmd s + \int_{t_k}^t \sqrt{\bar \sched(s)} Z_s \rmd B_s \right\|^2 \right]
    \notag\\
    & \leq 2 \mathbb{E} \left[ \left\|  \int_{t_k}^{t} \frac{ \bar \sched(s)}{2 \sigma^2} Y_s \rmd s \right\|^2  \right] +2 \mathbb{E} \left[ \int_{t_k}^{t} \left\| \sqrt{\bar \sched(s)} Z_s \rmd B_s \right\|^2 \right]
    \notag\\
    & \leq 2 \mathbb{E} \left[ \left\|  \frac{1}{2 \sigma} \int_{t_k}^{t} \sqrt{\bar \sched(s)} \frac{ \sqrt{\bar \sched(s)}}{ \sigma} Y_s \rmd s \right\|^2  \right] +2 \mathbb{E} \left[ \int_{t_k}^{t} \left\| \sqrt{\bar \sched(s)} Z_s \rmd B_s \right\|^2 \right]
    \notag\\
    & \leq \frac{1}{2 \sigma^2} \int_{t_k}^{t_{k +1}} \bar \sched (s) \rmd s \mathbb{E} \left[ \int_{t_k}^{t_{k +1}} \frac{\bar \sched(s)}{\sigma^2}  \left\|  Y_s \right\|^2 ds  \right] + 2 \mathbb{E} \left[  \int_{t_k}^{t_{k +1}} \bar \sched (s) \left\| Z_s \right\|^2_{\mathrm{Fr}} \rmd s \right] 
    \notag\\
    & \leq 2 \max \left\{ \frac{\int_{t_k}^{t_{k +1}} \bar \sched (s) \rmd s}{ 4 \sigma^2} , 1 \right\} \left( \mathbb{E} \left[ \|Y_{t_{k+1}}\|^2 \right] - \mathbb{E} \left[ \|Y_{t_k}\|^2 \right] \right)\eqsp. \label{eq:lemmaA3:ineq-telescopic}
\end{align}
Without loss of generality, we have that $t_{N-1} = T-\delta$.
Then, the discretization error can be bounded as follows
\begin{align*}
    &\sum_{k = 0}^{N-1} \int_{t_k}^{t_{k+1}} \bar \sched (t) \mathbb{E} \left[  \left\|   \nabla \log \tilde p_{T-t}\left(\ola X_t\right) - \nabla \log \tilde p_{T-t_k}\left(\ola X_{t_k}\right) \right\|^2 \right] \rmd t
    \\
    &\hspace{.1cm}= 
    \sum_{k = 0}^{N-1} \int_{t_k}^{t_{k+1}} \bar \sched (t) \mathbb{E} \left[  \left\|   Y_t - Y_{t_k} \right\|^2 \right] \rmd t
    \\
    &\hspace{.1cm} \leq
    2 \sum_{k = 0}^{N-1} \int_{t_k}^{t_{k+1}} \bar \sched (t)  \max \left\{ \frac{\int_{t_k}^{t_{k +1}} \bar \sched (s) \rmd s}{ 4 \sigma^2} , 1 \right\} \left( \mathbb{E} \left[ \|Y_{t_{k+1}}\|^2 \right] - \mathbb{E} \left[ \|Y_{t_k}\|^2 \right] \right) \rmd t
    \\
    &\hspace{.1cm} \leq
    2 \sum_{k = 0}^{N-1}  \max \left\{ \frac{\int_{t_k}^{t_{k +1}} \bar \sched (s) \rmd s}{4 \sigma^2} , 1 \right\} \left( \mathbb{E} \left[ \|Y_{t_{k+1}}\|^2 \right] - \mathbb{E} \left[ \|Y_{t_k}\|^2 \right] \right) \int_{t_k}^{t_{k+1}} \bar \sched (t) \rmd t
    \\
    &\hspace{.1cm} \leq
    2 \sum_{k = 0}^{N-1}  \max \left\{ \frac{ \left( \int_{t_k}^{t_{k +1}} \bar \sched (s) \rmd s \right)^2 }{ 4 \sigma^2} , \int_{t_k}^{t_{k +1}} \bar \sched (s) \rmd s \right\} \left( \mathbb{E} \left[ \|Y_{t_{k+1}}\|^2 \right] - \mathbb{E} \left[ \|Y_{t_k}\|^2 \right] \right)
    \\
    &\hspace{.1cm} \leq
    2 \max_{0 \leq k \leq N-1} \left\{ \max \left\{ \frac{ \left( \int_{t_k}^{t_{k +1}} \bar \sched (s) \rmd s \right)^2 }{ 4 \sigma^2} , \int_{t_k}^{t_{k +1}} \bar \sched (s) \rmd s \right\} \right\}  \mathbb{E} \left[ \left\|\nabla \log \tilde p_{T-t_{N-1}} \left(\ola X_{t_{N-1}} \right) \right\|^2 \right]\eqsp.
\end{align*}
By H\ref{hyp:sched}, $t \mapsto \beta(t)$ is increasing, so that $t \mapsto \bar \sched(t)$ is decreasing. 
Therefore, using that since $\ola X_0 \sim p_T$, $\ola X_{T - \delta}$ and $\ora X_{\delta}$ have the same distribution, yields,
\begin{align*}
     \sum_{k = 0}^{N-1} \int_{t_k}^{t_{k+1}} &\bar \sched (t) \mathbb{E} \left[  \left\|   \nabla \log \tilde p_{T-t}\left(\ola X_t\right) - \nabla \log \tilde p_{T-t_k}\left(\ola X_{t_k}\right) \right\|^2 \right] \rmd t
    \\
    & \leq
    2 \max_{0 \leq k \leq N-1} \left\{ \max \left\{ \frac{ \left( (t_{k +1} - t_k) \bar \sched (t_k) \right)^2 }{ 4 \sigma^2} , (t_{k+1}  - t_k) \bar \sched (t_k)  \right\} \right\}\\
    &\hspace{6cm}\times\mathbb{E} \left[ \left\|\nabla \log \tilde p_{T-t_{N-1}} \left(\ola X_{t_{N-1}} \right) \right\|^2 \right] 
    \\
    & \leq
    2 \max_{0 \leq k \leq N-1}
    \left\{\max \left\{ \frac{  h^2 \bar \sched^2 (t_k)  }{4 \sigma^2} , h \bar \sched(t_k)  \right\} \right\}  \mathcal{I}(p_{T} Q_{T-\delta} | \pi_{\infty})   \\
    & \leq 2 h \bar \sched (0) \max \left\{ \frac{  h \bar \sched (0)  }{ 4 \sigma^2} , 1  \right\} \mathcal{I}(p_{T} Q_{T-\delta} | \pi_{\infty})   \\
    & \leq 2 h  \beta (T) \max \left\{ \frac{  h  \beta (T)  }{ 4 \sigma^2} , 1  \right\} \mathcal{I}(p_T Q_{T-\delta} | \pi_{\infty}) \eqsp.
\end{align*}
Finally, following the steps of the proof of \citet[Lemma 2]{conforti2023}, we can consider the limit when $\delta$ goes to zero, under Assumption H\ref{hyp:fisher_info}, concluding the proof.

\end{proof}

\subsection{Technical results}

\begin{lemma}\label{stationary distribution forward}
    Assume that H\ref{hyp:sched} and H\ref{hyp:fisher_info} hold. Let $(\ora X_t)_{t\geq 0}$ be a weak solution to the forward process (\ref{eq:forward-SDE:beta}). Then, the stationary distribution of $(\ora X_t)_{t\geq 0}$ is Gaussian with mean 0 and variance $\sigma^2 \mathrm{I}_d$.
\end{lemma}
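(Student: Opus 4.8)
The plan is to verify that the Gaussian measure $\pi_\infty=\gausspdf_{\sigma^2}$ is invariant for the semigroup generated by \eqref{eq:forward-SDE:beta}, and then to invoke uniqueness of the stationary distribution under H\ref{assum:sched}. First I would recall, as already established in the proof of Lemma~\ref{lem:mix}, that the density $p_t$ of $\ora X_t$ solves the Fokker--Planck equation
\begin{align*}
    \partial_t p_t(x) = \frac{\sched(t)}{2}\,\mathrm{div}\!\left(\frac{1}{\sigma^2}x\,p_t(x) + \nabla p_t(x)\right)\eqsp.
\end{align*}
Plugging $p_t = \gausspdf_{\sigma^2}$ into the right-hand side, one has $\nabla \gausspdf_{\sigma^2}(x) = -x\,\gausspdf_{\sigma^2}(x)/\sigma^2$, so the bracket $\frac{1}{\sigma^2}x\,\gausspdf_{\sigma^2}(x)+\nabla\gausspdf_{\sigma^2}(x)$ vanishes identically, hence $\partial_t \gausspdf_{\sigma^2}=0$: the Gaussian is a stationary solution for every $t$. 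Equivalently, one can check $\mathcal{A}_t^*\gausspdf_{\sigma^2}=0$ where $\mathcal{A}_t^*$ is the (time-dependent) forward generator; the time change $t\mapsto\int_0^t\sched(s)/2\,\rmd s$ reduces everything to the standard OU operator whose invariant law is well known to be $\gausspdf_{\sigma^2}$ \citep[Chapter 3]{karatzas2012brownian}.

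Next I would address uniqueness, i.e., that $\gausspdf_{\sigma^2}$ is the \emph{only} stationary distribution and that it is actually the limiting law. This is where H\ref{assum:sched} enters: the condition $\int_0^\infty\sched(t)\,\rmd t=\infty$ ensures that the accumulated time change $\Lambda(t):=\int_0^t\sched(s)/2\,\rmd s$ diverges, so the time-changed process runs for infinite OU-time and therefore converges. Concretely, I would re-run the computation of Lemma~\ref{lem:mix}: for any initial law $p_0$ with finite relative Fisher information (guaranteed by H\ref{assum:fisher_info}), the Stam--Gross logarithmic Sobolev inequality (Proposition~\ref{stam-gross}) together with Grönwall gives
\begin{align*}
    \kl\left(p_t \,\middle\|\, \gausspdf_{\sigma^2}\right) \leq \kl\left(p_0 \,\middle\|\, \gausspdf_{\sigma^2}\right)\exp\!\left(-\frac{1}{\sigma^2}\int_0^t\sched(s)\,\rmd s\right)\xrightarrow[t\to\infty]{}0\eqsp,
\end{align*}
so $p_t\to\gausspdf_{\sigma^2}$ in KL, hence in total variation by Pinsker. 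Since any stationary law $\pi$ satisfying the Fisher-information hypothesis would have to equal its own time-$t$ marginal while simultaneously converging to $\gausspdf_{\sigma^2}$, it must coincide with $\gausspdf_{\sigma^2}$; this pins down the stationary distribution as claimed. Alternatively, for a fully general initial condition one can argue via the exact Gaussian transition kernel of the linear SDE: $\ora X_t\mid \ora X_0$ is Gaussian with mean $m_t\ora X_0$ (with $m_t=\exp(-\Lambda(t)/\sigma^2)\to 0$) and variance $\sigma^2(1-m_t^2)\mathrm{I}_d\to\sigma^2\mathrm{I}_d$ using \eqref{eq:cond_variance}, so the marginal law converges weakly to $\gausspdf_{\sigma^2}$ regardless of $p_0$, and invariance follows from the computation above.

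The main obstacle here is not the invariance check, which is a one-line verification, but making the \emph{uniqueness/convergence} argument clean in the time-inhomogeneous setting: one must be careful that H\ref{assum:sched} is exactly what guarantees the time change diverges so that the contraction factor $\exp(-\sigma^{-2}\int_0^t\sched)$ tends to zero, and that the log-Sobolev/Grönwall route (or the explicit-kernel route) applies under the stated hypotheses H\ref{assum:sched}--H\ref{assum:fisher_info} without extra regularity on $\pi_{\rm data}$. I would therefore structure the proof as: (i) exhibit $\gausspdf_{\sigma^2}$ as a stationary solution of Fokker--Planck; (ii) bound $\kl(p_t\|\gausspdf_{\sigma^2})$ as above and let $t\to\infty$ using H\ref{assum:sched}; (iii) conclude that any stationary law with finite Fisher information equals $\gausspdf_{\sigma^2}$, which is the assertion of the lemma.
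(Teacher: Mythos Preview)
Your proposal is correct. The paper, however, takes the more elementary ``explicit solution'' route that you only sketch as an alternative at the end: it applies the integrating factor $\exp\bigl(\tfrac{1}{2\sigma^2}\int_0^t\sched(s)\,\rmd s\bigr)$ and It\^o's formula to write
\[
\ora X_t = m_t\,\ora X_0 + m_t\int_0^t \sqrt{\sched(s)}\,\exp\!\Bigl(\tfrac{1}{2\sigma^2}\int_0^s\sched(u)\,\rmd u\Bigr)\rmd B_s,
\]
then observes that $m_t\ora X_0\to 0$ and that the Wiener integral is centered Gaussian with variance $\sigma_t^2\mathrm{I}_d=\sigma^2(1-m_t^2)\mathrm{I}_d\to\sigma^2\mathrm{I}_d$, both limits using only H\ref{assum:sched}. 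Your primary strategy---check $\mathcal{A}_t^*\gausspdf_{\sigma^2}=0$ via Fokker--Planck, then invoke the KL contraction of Lemma~\ref{lem:mix} plus Proposition~\ref{stam-gross} to force uniqueness---is a genuinely different argument: it reuses machinery already developed in the paper and makes the role of the log-Sobolev inequality transparent, at the cost of needing H\ref{assum:fisher_info} (to have $\kl(p_0\|\gausspdf_{\sigma^2})<\infty$ as a starting point), whereas the paper's explicit-solution proof does not actually use H\ref{assum:fisher_info} at all. Both approaches lean on H\ref{assum:sched} at exactly the same spot, namely to make the contraction factor $\exp(-\sigma^{-2}\int_0^t\sched)$ vanish.
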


\begin{proof}
Consider the process
\begin{align*}
\bar X_t = \exp \left(\frac{1}{2 \sigma^2} \int_0^t \sched(s) \rmd s\right) \ora X_t\eqsp. 
\end{align*}
Itô's formula yields
\begin{align}\label{eq:stationary_dirst1}
    \ora X_t & =  \exp \left(-\frac{1}{2 \sigma^2} \int_0^t \sched(s) \rmd s\right) \left(\ora X_0 +  \int_0^t \sqrt{ \sched(s)} \exp \left( \int_0^s \sched(u)/(2\sigma^2) \rmd u\right) \rmd B_s\right).
\end{align}
First, we have that
\begin{align*}
\lim_{t\to\infty}\exp \left(-\frac{1}{2 \sigma^2} \int_0^t \sched(s) \rmd s\right) \ora X_0 = 0\eqsp.
\end{align*}
Secondly, we have that the second term in the r.h.s. of \eqref{eq:stationary_dirst1}, by property of the Wiener integral, is Gaussian with mean $0$ and variance $\sigma^2_t \mathrm{I}_d$, where
\begin{align*}
\sigma^2_t = \exp \left(-\frac{1}{ \sigma^2} \int_0^t \sched(s) \rmd s\right) \int_0^t \sched(s) \rme^{ \int_0^s \sched(u)/\sigma^2 \rmd u} \rmd s = \sigma^2 \left( 1 - \exp \left(-\frac{1}{ \sigma^2} \int_0^t \sched(s) \rmd s\right) \right).
\end{align*}
By H\ref{hyp:sched}, $\lim_{t \to \infty} \sigma_t^2  = \sigma^2$, which concludes the proof. 
\end{proof}

\begin{lemma} \label{tvboundgirsanovproof} 
Let  $T > 0$ and $b_1, b_2 : [0, T] \times C([0, T], \mathbb{R}^d) \rightarrow \mathbb{R}^d $ be measurable functions such that for $i\in \{1, 2\}$,
\begin{align}\label{eq:uniqueness_KL_SDE}
\rmd X_t^{(i)} = b_i\left(t, ( X_s^{(i)})_{s \in [0, T]}\right) \rmd t + \sqrt{\sched(T-t)} \rmd B_t
\end{align}
admits a unique strong solution with  $X_0^{(i)} \sim  \pi_0^{(i)}$. Suppose that $(b_i(t, (X_s^{(i)})_{s \in [0, t]}))_{t \in [0, T]} $  is progressively measurable, with Markov semi-group $(P_t^{(i)})_{t \ge 0}$. In addition, assume that 
\begin{equation}
\label{eq:novi}
  \mathbb{E} \left[ \exp\left\{  \frac{1}{2} \int_0^T \frac{1}{{\sched(T-s)}}  \left\|b_1\left(s, \left(X^{(1)}_u\right)_{ u \in [0,s]}\right) - b_2\left(s, \left(X^{(1)}_u\right)_{ u \in [0,s]}\right) \right\|^2 \rmd s \right\} \right] < \infty\eqsp.  
\end{equation}
Then,
\begin{multline}
\kl \left( \pi_0^{(1)} P_T^{(1)} \| \pi_0^{(2)} P_T^{(2)} \right) \leq \kl \left(\pi_0^{(1)} \| \pi_0^{(2)} \right)\\
+ \frac{1}{2} \int_0^T \frac{1}{\sched(T-t)} \mathbb{E}\left[\left\|b_1\left(s, \left(X^{(1)}_u\right)_{ u \in [0,s]}\right) - b_2\left(s, \left(X^{(1)}_u\right)_{ u \in [0,s]}\right) \right\|^2\right] \rmd t\eqsp. 
\end{multline}
\end{lemma}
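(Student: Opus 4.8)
The plan is to prove Lemma~\ref{tvboundgirsanovproof} via a standard Girsanov change-of-measure argument combined with the chain rule (tensorization) for relative entropy on path space. Let $\mathbb{P}^{(1)}$ and $\mathbb{P}^{(2)}$ denote the path measures on $C([0,T],\R^d)$ induced by the two SDEs in \eqref{eq:uniqueness_KL_SDE}, with initial laws $\pi_0^{(1)}$ and $\pi_0^{(2)}$ respectively. First I would reduce to the case of a common initial condition: decompose each path measure as $\mathbb{P}^{(i)} = \pi_0^{(i)} \otimes \mathbb{P}^{(i)}_{x_0}$, where $\mathbb{P}^{(i)}_{x_0}$ is the law of the solution started at $X_0^{(i)} = x_0$. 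The additivity of KL divergence under this disintegration gives
\begin{align*}
\kl\left(\mathbb{P}^{(1)} \middle\| \mathbb{P}^{(2)}\right) = \kl\left(\pi_0^{(1)} \middle\| \pi_0^{(2)}\right) + \int \kl\left(\mathbb{P}^{(1)}_{x_0} \middle\| \mathbb{P}^{(2)}_{x_0}\right) \pi_0^{(1)}(\rmd x_0)\eqsp.
\end{align*}
Since the marginal at time $T$ is a measurable image of the whole path, the data-processing inequality yields $\kl\left(\pi_0^{(1)} P_T^{(1)} \middle\| \pi_0^{(2)} P_T^{(2)}\right) \leq \kl\left(\mathbb{P}^{(1)} \middle\| \mathbb{P}^{(2)}\right)$, so it suffices to bound the second term above.

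Next I would compute the relative entropy between the two path measures with common starting point $x_0$. Because both SDEs share the same (deterministic, invertible) diffusion coefficient $\sqrt{\sched(T-\cdot)}$, Girsanov's theorem applies: the Radon--Nikodym derivative $\rmd\mathbb{P}^{(1)}_{x_0}/\rmd\mathbb{P}^{(2)}_{x_0}$ is the stochastic exponential
\begin{align*}
\exp\left\{ \int_0^T \frac{b_1 - b_2}{\sched(T-s)}\left(s,\cdot\right)^\top \rmd W_s^{(1)} - \frac{1}{2}\int_0^T \frac{\left\|b_1 - b_2\right\|^2}{\sched(T-s)}\left(s,\cdot\right)\rmd s \right\}\eqsp,
\end{align*}
where $W^{(1)}$ is the Brownian motion driving $X^{(1)}$ under $\mathbb{P}^{(1)}_{x_0}$ (writing $b_i(s,\cdot)$ for $b_i(s,(X_u^{(1)})_{u\in[0,s]})$). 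Taking $\log$ and then the expectation under $\mathbb{P}^{(1)}_{x_0}$: the stochastic-integral term has zero expectation — this is exactly where the Novikov-type condition \eqref{eq:novi} is needed, to guarantee that the exponential is a true martingale (so the measures are mutually absolutely continuous) and that the local martingale $\int_0^\cdot \frac{b_1-b_2}{\sched(T-s)}\rmd W_s^{(1)}$ is a genuine martingale under $\mathbb{P}^{(1)}_{x_0}$. This leaves
\begin{align*}
\kl\left(\mathbb{P}^{(1)}_{x_0} \middle\| \mathbb{P}^{(2)}_{x_0}\right) = \frac{1}{2}\,\mathbb{E}_{\mathbb{P}^{(1)}_{x_0}}\left[\int_0^T \frac{1}{\sched(T-s)}\left\|b_1\left(s,\cdot\right) - b_2\left(s,\cdot\right)\right\|^2 \rmd s\right]\eqsp.
\end{align*}
Integrating this identity against $\pi_0^{(1)}(\rmd x_0)$, using Fubini (justified again by \eqref{eq:novi}, which controls the integrand), and recombining with the disintegration identity gives exactly the claimed bound, with $\mathbb{E}$ now the expectation over the process $X^{(1)}$ with $X_0^{(1)}\sim\pi_0^{(1)}$.

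The main obstacle is the rigorous justification of the Girsanov step: one must verify that \eqref{eq:novi} (a Novikov-type integrability condition, stated with the conditional expectation implicit through the strong solution $X^{(1)}$) indeed ensures the Doléans-Dade exponential is a uniformly integrable martingale, so that $\rmd\mathbb{P}^{(1)}/\rmd\mathbb{P}^{(2)}$ is well-defined and the change of measure is valid; there is also the bookkeeping of progressive measurability of the (path-dependent) drifts and the existence/uniqueness of strong solutions, which are assumed in the hypotheses. A secondary technical point is the interchange of expectation and the $\log$ of the exponential, and showing the stochastic integral term vanishes in expectation rather than merely being a local martingale — both handled by the same integrability hypothesis. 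None of these steps is deep, but stating them carefully is the bulk of the work; the structure of the argument is the familiar one used in \citet{debortoli2021, chen2023sampling, conforti2023}.
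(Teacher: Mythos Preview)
Your proposal is correct and follows essentially the same Girsanov-based route as the paper: both arguments use the data-processing inequality to pass from the time-$T$ marginals to the path measures, invoke the Novikov condition \eqref{eq:novi} to certify that the Dol\'eans--Dade exponential is a true martingale, and then read off the path-space KL as $\frac{1}{2}\int_0^T \sched(T-s)^{-1}\,\mathbb{E}\|b_1-b_2\|^2\,\rmd s$ plus the initial KL. The only cosmetic difference is that you make the disintegration $\mathbb{P}^{(i)} = \pi_0^{(i)}\otimes \mathbb{P}^{(i)}_{x_0}$ explicit before applying Girsanov pointwise in $x_0$, whereas the paper performs the change of measure directly on the full path measure and writes the initial/path split at the end; also, watch the sign in your displayed Radon--Nikodym derivative (as written, $\rmd\mathbb{P}^{(1)}_{x_0}/\rmd\mathbb{P}^{(2)}_{x_0}$ should carry $+\tfrac{1}{2}\int\|\cdot\|^2$, not $-\tfrac{1}{2}$), though your stated conclusion is unaffected.
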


\begin{proof}
Consider the probability space $(\Omega,(\mathcal{F}_t)_{0\leq t\leq T},\mathbb{P})$ and for $i \in\{1,2\}$, let \( \mu^{(i)} \) be the distribution of \( ( X_t^{(i)})_{t \in [0,T]} \) on the Wiener space \( (C([0, T]; \mathbb{R}^d), \mathcal{B}(C([0, T]; \mathbb{R}^d))) \) with \( X_0^{(i)} \sim \pi_0^{(i)} \). Define $u(t,\omega)$ as
\begin{align*}
u(t,\omega) \eqdef \sched(T-t)^{-1/2}\left( b_1\left(t, \left(X^{(1)}_u\right)_{ u \in [0,t]}\right) -  b_2\left(t, \left(X^{(1)}_u\right)_{ u \in [0,t]}\right)\right)\eqsp,
\end{align*}
and define $\rmd \mathbb{Q}/\rmd\mathbb{P}(\omega) = M_T (\omega)$ where, for $t \in[0,T]$,
\begin{align*}
    M_t(\omega)  = \exp\left\{ - \int_0^t u(s, \omega)^\top \rmd B_s - \frac{1}{2} \int_0^t \|u (s , \omega)\|^2 \rmd s\right\}\eqsp.
\end{align*}
From \eqref{eq:novi}, the Novikov's condition is satisfied \citep[][Chapter 3.5.D]{karatzas2012brownian}, thus the process $(M_t)_{0\leq t\leq T}$ is a martingale. 
Applying Girsanov theorem,  $\rmd \bar B_t = \rmd B_t + u(t, (X^{(1)}_s)_{s \in [0,t]}) \rmd t$ is a Brownian motion under the measure $\mathbb{Q}$. Therefore,
\begin{align*}
\rmd X_t^{(1)} = b_1\left(t, \left(X^{(1)}_u\right)_{ u \in [0,t]}\right) \rmd t + \sqrt{\sched(T-t)} \rmd B_t =
b_2\left(t, \left(X^{(1)}_u\right)_{ u \in [0,t]}\right) \rmd t + \sqrt{\sched(T-t)} \rmd \bar B_t\eqsp.
\end{align*}
Using the uniqueness in law of \eqref{eq:uniqueness_KL_SDE}, the law of $X^{(1)}$ under $\mathbb{P}$ is the same as the one of $\bar X^{(2)}$ under $\mathbb{Q}$, with $\bar X^{(2)}$ solution of \eqref{eq:uniqueness_KL_SDE} with $i=2$ and $\bar X^{(2)}_0=\pi^{(1)}_0$. Denote by $\bar\mu^{(2)}$ the law of $\bar X^{(2)}$.
Therefore,
\begin{eqnarray*}
\mu^{(1)}(A) = \mathbb{P}(X^{(1)}\in A) = \mathbb{Q}(\bar X^{(2)}\in A) = \int \1_A(\bar X^{(2)}(\omega))\mathbb{Q}(\rmd\omega) \eqsp,%= \int_A M_t \mu^{(2)}(\rmd y)\eqsp,
\end{eqnarray*}
which implies that
\begin{align*}
\frac{\rmd \bar\mu^{(2)}}{\rmd \mu^{(1)}} =  M_T\eqsp.
\end{align*}
Hence, we obtain that
\begin{align*}
  \kl \left(\mu^{(1)} \middle\| \mu^{(2)}\right) 
  &= \kl \left( \pi_0^{(1)} \middle\| \pi_0^{(2)} \right) + \mathbb{E}  \left[\log\left(\frac{\rmd \mu^{(1)}}{\rmd \bar \mu^{(2)}}\right)\right]\\
  & = \kl \left( \pi_0^{(1)} \middle\| \pi_0^{(2)} \right) +  \mathbb{E}  \left[  \int_0^t u(s, \omega)^\top \rmd B_s + \frac{1}{2} \int_0^t \|u (s , \omega)\|^2 \rmd s \right] \\
  &= \kl \left( \pi_0^{(1)} \middle\| \pi_0^{(2)} \right) +  \frac{1}{2} \int_0^T \frac{1}{\sched(T-t)} \mathbb{E}\left[ \left\| b_1(t, (X_s^{(1)})_{s \in [0, t]}) - b_2(t, (X_s^{(1)})_{s \in [0, t]}) \right\|^2 \right] \rmd t\eqsp,
\end{align*}
which concludes the proof.
\end{proof}

\begin{lemma}
\label{stam-gross}
Let $p$ be a probability density function on $\rset^d$. For all $\sigma^2>0$,
    \begin{equation*}
\kl \left(p \| \gausspdf_{\sigma^2 }\right) = \int p(x) \log \frac{p(x)}{\gausspdf_{\sigma^2 }(x)} \, \rmd x \leq \frac{\sigma^2 }{2} \int \left\|\nabla \log \frac{p(x)}{\gausspdf_{\sigma^2 }(x)} \right\|^2 \, p(x) \, \rmd x.
\end{equation*}
\end{lemma}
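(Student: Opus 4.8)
The plan is to deduce this rescaled statement from the classical logarithmic Sobolev inequality for the \emph{standard} Gaussian measure $\gamma\eqdef\gausspdf_{1}$ on $\rset^d$, namely that for every probability measure $\nu\ll\gamma$ one has $\kl(\nu\|\gamma)\le\tfrac12\,\mathcal{I}(\nu\|\gamma)$ with $\mathcal{I}(\nu\|\gamma)=\int\|\nabla\log(\rmd\nu/\rmd\gamma)\|^2\,\rmd\nu$; this is the Gross--Stam inequality, which I would simply invoke as a known result. First I would dispose of the degenerate case: if the right-hand side of the statement is $+\infty$ there is nothing to prove, so I assume the relative Fisher information of $p$ with respect to $\gausspdf_{\sigma^2}$ is finite, which also makes the manipulations of $\log(p/\gausspdf_{\sigma^2})$ below meaningful.

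Next I would exploit the linear scaling that turns $\gausspdf_{\sigma^2}$ into $\gamma$. Let $\mu$ be the law with density $p$, let $T_\sigma:x\mapsto x/\sigma$, and set $\nu\eqdef(T_\sigma)_\#\mu$. Two elementary observations suffice. \emph{(i)} Since $T_\sigma$ pushes $\gausspdf_{\sigma^2}$ forward onto $\gamma$ and is a bijection, the Kullback--Leibler divergence is preserved: $\kl(\mu\|\gausspdf_{\sigma^2})=\kl(\nu\|\gamma)$. \emph{(ii)} Writing $f\eqdef p/\gausspdf_{\sigma^2}$, a change of variables shows $\rmd\nu/\rmd\gamma(y)=f(\sigma y)$, hence $\nabla_y\log(\rmd\nu/\rmd\gamma)(y)=\sigma\,(\nabla\log f)(\sigma y)$, and a further change of variables in the defining integral gives $\mathcal{I}(\nu\|\gamma)=\sigma^2\,\mathcal{I}(\mu\|\gausspdf_{\sigma^2})$. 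Substituting \emph{(i)} and \emph{(ii)} into the standard Gaussian inequality applied to $\nu$ then yields
\begin{align*}
\kl(\mu\|\gausspdf_{\sigma^2})=\kl(\nu\|\gamma)\le\tfrac12\,\mathcal{I}(\nu\|\gamma)=\tfrac{\sigma^2}{2}\,\mathcal{I}(\mu\|\gausspdf_{\sigma^2})\eqsp,
\end{align*}
which is precisely the claimed bound.

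If a self-contained argument were preferred, the natural route---already in the spirit of the Ornstein--Uhlenbeck machinery used elsewhere in the paper---is the semigroup/de Bruijn approach: take $(P_t)_{t\ge0}$ with generator $\mathcal{L}g=-\sigma^{-2}x\cdot\nabla g+\Delta g$ and invariant measure $\gausspdf_{\sigma^2}$, observe that its Bakry--\'Emery curvature equals $\rho=\sigma^{-2}$ because the Hessian of $|x|^2/(2\sigma^2)$ is $\sigma^{-2}\mathrm{I}_d$, differentiate $t\mapsto\kl(p_t\|\gausspdf_{\sigma^2})$ along the flow started from $p$ (a computation essentially identical to the one in the proof of Lemma~\ref{lem:mix}), bound the time derivative of the Fisher information by $-2\rho$ times itself using $\Gamma_2\ge\rho\,\Gamma$, and integrate the resulting exponential decay from $0$ to $\infty$ to recover the constant $1/(2\rho)=\sigma^2/2$. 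Either way, the only genuine difficulty is technical rather than conceptual---justifying differentiation under the integral sign and the integrability needed to run the interpolation---and the scaling reduction sidesteps it entirely once the classical Gross--Stam inequality is taken as given.
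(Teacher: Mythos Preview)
Your proposal is correct. The paper's own proof is essentially your \emph{second} alternative: it simply observes that $-\nabla^2\log\gausspdf_{\sigma^2}=\sigma^{-2}\mathrm{I}_d$, so the Bakry--\'Emery criterion holds with curvature $\rho=\sigma^{-2}$, and then quotes the classical logarithmic Sobolev inequality (citing Bakry--Gentil--Ledoux, Villani, Talagrand) to obtain the constant $1/(2\rho)=\sigma^2/2$ directly for $\gausspdf_{\sigma^2}$.

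Your primary route---reducing to the standard Gaussian via the dilation $T_\sigma:x\mapsto x/\sigma$ and then invoking the Gross--Stam inequality for $\gamma=\gausspdf_1$---is an equally valid and essentially equivalent argument; it trades the one-line appeal to Bakry--\'Emery for an explicit change-of-variables computation showing $\kl$ is scale-invariant while $\mathcal{I}$ picks up a factor $\sigma^2$. The only practical difference is economy: the paper's version is one sentence shorter because it cites a result already stated at the level of generality needed, whereas your scaling reduction has the minor pedagogical advantage of making the origin of the $\sigma^2$ factor completely transparent.
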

\begin{proof}
 Define $f_{\sigma^2}:x\mapsto p(x)/\varphi_{\sigma^2}(x)$. Since $\nabla^2 \log \varphi_{\sigma^2}(x) = -\sigma^{-2} \mathrm{I}_d$, the  Bakry-Emery criterion is satisfied with constant ${\sigma^2}^{-1}$, see \cite{bakry2014analysis,villani2021topics,talagrand1996transportation}. By the classical  logarithmic Sobolev inequality,
\begin{align*}
\int f_{\sigma^2}(x)\log f_{\sigma^2}(x)\varphi_{\sigma^2}(x)\rmd x \leq \frac{{\sigma^2}}{2}\int \frac{\|\nabla f_{\sigma^2}(x)\|^2}{f_{\sigma^2}(x)}\varphi_{\sigma^2}(x)\rmd x\eqsp,
\end{align*}
which concludes the proof.
\end{proof}

\begin{lemma} \label{score differential form}
 Define $Y_t \eqdef \nabla \log \tilde p_{T-t} (\ola X_t)$ and  $Z_t \eqdef \nabla^2 \log \tilde p_{T-t} (\ola X_t)$, where $\{\ola X_t\}_{t\geq 0}$ is a weak solution to \eqref{eq:forward:general}. Then, 
\begin{align}\label{eq:SDE_for_Y}
    \rmd Y_t = \left( \frac{\bar g^2(t)}{\sigma^2} - \bar \alpha(t) \right) Y_t \rmd t - \frac{2}{\sigma^2} \left( \frac{\bar g^2(t)}{2 \sigma^2} - \bar \alpha(t) \right) \ola X_t \rmd t + \bar g(t) Z_t \rmd \bar B_t\eqsp.
\end{align}
\end{lemma}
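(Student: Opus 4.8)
The plan is to reduce the claim to a pointwise identity for the (renormalised) score field $x\mapsto\nabla\log\tilde p_s(x)$ and then apply It\^o's formula along the backward dynamics \eqref{eq:backward:general}. Throughout, write $\tilde L_s\eqdef\log\tilde p_s$ and $u_s\eqdef\nabla\tilde L_s$, so that $Y_t=u_{T-t}(\ola X_t)$; note that, by symmetry of second derivatives, the Jacobian of $u_s$ equals the Hessian $\nabla^2\tilde L_s$, which evaluated at $(T-t,\ola X_t)$ is precisely $Z_t$.

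First I would derive a transport equation for $\tilde p_s$. Starting from the Fokker--Planck equation $\partial_s p_s=\nabla\cdot\big(\alpha(s)\,x\,p_s+\tfrac{g^2(s)}{2}\nabla p_s\big)$ associated with \eqref{eq:forward:general}, I substitute $p_s=\tilde p_s\,\varphi_{\sigma^2}$, use $\nabla\log\varphi_{\sigma^2}(x)=-x/\sigma^2$ together with $\nabla\cdot(\varphi_{\sigma^2}v)=\varphi_{\sigma^2}\big(\nabla\cdot v-\tfrac{x}{\sigma^2}\cdot v\big)$, and obtain
\[
\partial_s\tilde p_s=\frac{g^2(s)}{2}\Delta\tilde p_s+\Big(\alpha(s)-\frac{g^2(s)}{\sigma^2}\Big)x\cdot\nabla\tilde p_s+\Big(\alpha(s)-\frac{g^2(s)}{2\sigma^2}\Big)\Big(d-\frac{\|x\|^2}{\sigma^2}\Big)\tilde p_s\eqsp.
\]
Dividing by $\tilde p_s$ and using $\Delta\tilde p_s/\tilde p_s=\Delta\tilde L_s+\|\nabla\tilde L_s\|^2$ gives a PDE for $\tilde L_s$; taking its gradient and using $\nabla(x\cdot u_s)=u_s+(\nabla^2\tilde L_s)x$ and $\nabla\|u_s\|^2=2(\nabla^2\tilde L_s)u_s$ yields the evolution equation
\[
\partial_s u_s=\frac{g^2(s)}{2}\Delta u_s+g^2(s)\,(\nabla^2\tilde L_s)u_s+\Big(\alpha(s)-\frac{g^2(s)}{\sigma^2}\Big)\big(u_s+(\nabla^2\tilde L_s)x\big)-\frac{2}{\sigma^2}\Big(\alpha(s)-\frac{g^2(s)}{2\sigma^2}\Big)x\eqsp.
\]

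Next I would apply It\^o's formula componentwise to $Y^i_t=(u_{T-t})_i(\ola X_t)$, where $\ola X_t$ solves \eqref{eq:backward:general} with drift $\bar\mu_t(x)\eqdef\bar b(t)\,x+\bar g^2(t)\,u_{T-t}(x)$, $\bar b(t)\eqdef\bar\alpha(t)-\bar g^2(t)/\sigma^2$, and diffusion matrix $\bar g(t)\mathrm{I}_d$. The drift of $Y_t$ is $-\partial_s u_s\big|_{s=T-t}(\ola X_t)+Z_t\,\bar\mu_t(\ola X_t)+\tfrac{\bar g^2(t)}{2}(\Delta u_{T-t})(\ola X_t)$ and its martingale part is $\bar g(t)\,Z_t\,\rmd\bar B_t$, since $\nabla(u_{T-t})_i(\ola X_t)$ is the $i$-th row of $Z_t$. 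Substituting the formula for $\partial_s u_s$ from the previous step and writing $\bar\mu_t(\ola X_t)=\bar b(t)\ola X_t+\bar g^2(t)Y_t$, three groups of terms cancel pairwise: the two $\tfrac{\bar g^2}{2}\Delta u$ terms, the two $\bar g^2\,Z_t Y_t$ terms, and the $\bar b\,Z_t\ola X_t$ term against the matching piece of $Z_t\bar\mu_t$. What survives is $(\bar g^2(t)/\sigma^2-\bar\alpha(t))Y_t-\tfrac{2}{\sigma^2}(\bar g^2(t)/(2\sigma^2)-\bar\alpha(t))\ola X_t$, which, together with the martingale term above, is exactly \eqref{eq:SDE_for_Y}.

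The genuinely delicate part is not the algebra but the justification: one must ensure $p_s$ (hence $\tilde p_s$) is smooth enough for $s>0$ to differentiate Fokker--Planck in $x$ and to apply It\^o to the score field, and that $\int_0^\cdot\bar g(t)Z_t\,\rmd\bar B_t$ is a bona fide local martingale. Smoothness follows because $p_s$ is a Gaussian convolution of $\pi_0$ for every $s>0$, so $\tilde p_s\in C^\infty$ away from $s=0$; the moment and regularity estimates needed to control $Z_t$ near the endpoint are exactly why, in the applications (e.g.\ Lemma~\ref{lem:fish}), the identity is invoked on $[0,T-\delta]$ before letting $\delta\downarrow0$.
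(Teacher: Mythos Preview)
Your proposal is correct and follows essentially the same route as the paper: both derive the PDE for $\tilde p_s$ from Fokker--Planck, pass to $\log\tilde p_s$, and then combine this with It\^o's formula applied to the score along the backward dynamics, using the identities $\nabla(x\cdot u_s)=u_s+(\nabla^2\tilde L_s)x$ and $\nabla\|u_s\|^2=2(\nabla^2\tilde L_s)u_s$. The only cosmetic difference is that you take the $x$-gradient of the log-PDE first (obtaining an evolution equation for $u_s$) and then apply It\^o, whereas the paper applies It\^o to $\nabla\phi_t(\ola X_t)$ first and then recognises the gradient of the PDE inside the drift; the cancellations and the regularity justification (smoothness of $p_s$ for $s>0$, cf.\ \citet[Proposition~1]{conforti2023}) are identical.
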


\begin{proof}
The Fokker-Planck equation associated with the forward process \eqref{eq:forward:general} is
\begin{align}
\label{eq:fp}
    \partial_{t} p_{t}(x) = \alpha (t) \text{div} \left( x p_{t} (x) \right) + \frac{g^2 (t)}{2} \Delta p_{t} (x)\eqsp,
\end{align}
for $x \in \R^d$. First, we prove that $\tilde p_t$ satisfies the following PDE
\begin{equation}\label{eq:PDE_log_p_t}
    \begin{split}
        \partial_{t} \log \tilde p_{t}(x)  = d \left(  \bar \alpha(t) - \frac{\bar g^2(t)}{2 \sigma^2} \right) + \langle \nabla \log \tilde p_{t}(x),x \rangle  \left( \bar \alpha(t) - \frac{ \bar g^2(t)}{\sigma^2 } \right)  + \frac{\|x\|^2}{\sigma^2}  \left( \frac{\bar g^2(t)}{2 \sigma^2} - \bar \alpha(t) \right) \\+ \frac{\bar g^2(t)}{2} \frac{\Delta \tilde p_{t}(x)}{\tilde p_{t}(x)}\eqsp.
    \end{split}
\end{equation}

Using that $\nabla \log \gausspdf_{\sigma^2}(x) = -x/\sigma^2$, we have
\begin{align*}
 \text{div} ( x p_{t} (x) )
 &=d \;p_{t}(x) + p_{t} (x)  \;x^\top \nabla \log p_{t} (x) \\
 &= \gausspdf_{\sigma^2}(x)\left(d \;\tilde p_{t}(x)  + \tilde p_{t}(x)  \nabla \log \tilde p_{t}(x)^\top x - \frac{\|x\|}{\sigma^2} \right)\\
 &= \gausspdf_{\sigma^2}(x)\left(d \;\tilde p_{t}(x)  + \nabla \tilde p_{t}(x)^\top x - \frac{\|x\|}{\sigma^2} \tilde p_{t}(x)\right)\eqsp.
\end{align*}
Then, since $\Delta \gausspdf_{\sigma^2}(x) =(\gausspdf_{\sigma^2}(x)/\sigma^2) \left(\|x\|^2/\sigma^2 -d \right)$, we get
\begin{align*}
    \Delta p_{t}(x) &= \tilde p_{t}(x) \Delta \gausspdf_{\sigma^2}(x) + 2  \nabla \tilde p_{t}(x)^\top \nabla \gausspdf_{\sigma^2}(x) + \gausspdf_{\sigma^2}(x) \Delta \tilde p_{t}(x)
    \\
    &=\gausspdf_{\sigma^2}(x) \left(
    \frac{\tilde p_{t}(x)}{\sigma^2} \left( \frac{\|x\|^2}{\sigma^2} - d \right) - \frac{2}{\sigma^2}\nabla \tilde p_{t}(x)^\top x + \Delta \tilde p_{t}(x) \right)
    \eqsp.
\end{align*}
Combining these results with \eqref{eq:fp}, we obtain
\begin{align*}
     \partial_{t} \tilde p_{t}(x)&= d\; \tilde p_{t}(x) \left(  \alpha(t) - \frac{g^2(t)}{2 \sigma^2} \right) + \nabla \tilde p_{t}(x)^\top x \left( \alpha(t) - \frac{ g^2(t)}{\sigma^2 } \right)  \\
     &\hspace{3cm}+ \tilde p_{t}(x)\frac{\|x\|^2}{\sigma^2} \left( \frac{g^2(t)}{2 \sigma^2} - \alpha(t) \right) + \frac{g^2(t)}{2} \Delta \tilde p_{t}(x)\eqsp. 
\end{align*}
Hence, diving by $\tilde p_t$ yields \eqref{eq:PDE_log_p_t}.

The previous computation, together with the fact that $\Delta \tilde p_{t}/\tilde p_{t} = \Delta \log \tilde p_{t} + \|\nabla \log \tilde p_{t}\|^2$, yields that the function $\phi_t(x) \eqdef \log \tilde p_{T-t} (x)$ is a solution to the following PDE
\begin{align}\label{eq:PDE_phi_t}
\partial_t \phi_t(x) &= - d \left(  \bar \alpha(t) - \frac{\bar g^2(t)}{2 \sigma^2} \right) -  \nabla \phi_t (x)^\top x \left( \bar \alpha(t) - \frac{\bar g^2(t)}{\sigma^2 } \right)  \\
&\qquad- \frac{ \|x\|^2}{\sigma^2} \left( \frac{\bar g^2(t)}{2 \sigma^2} - \bar \alpha(t) \right) - \frac{\bar g^2(t)}{2} \left( \Delta \phi_t(x) + \| \nabla \phi_t(x)\|^2 \right)\eqsp. 
\end{align}
Following the lines of \citet[][Proposition 1]{conforti2023}, we get that, since $\alpha$ and $g$ are continuous and non-increasing, the map $p_t$, solution to \eqref{eq:fp}, belongs to $C^{1,2}((0,T]\times\R^d)$.
By \eqref{eq:backward:general}, as $Y_t = \nabla\phi_{t}(\ola X_t)$, we can apply Itô's formula and obtain, writing $\bar \gamma(t) = \bar \alpha(t) - \bar g(t)^2/\sigma^2$,
\begin{align*}
\rmd  Y_t &=
\left[
    \partial_t \nabla \phi_t\left(\ola X_t\right) + \nabla^2 \phi_t\left(\ola X_t\right)\left(    
        \bar\gamma(t)\ola X_t + \bar g^2 (t) \nabla\phi_t\left(\ola X_t\right)
    \right)+
    \frac{\bar g^2(t)}{2} \Delta \nabla \phi_t\left(\ola X_t\right)
\right]\rmd t \\
&\qquad\qquad\qquad\qquad\qquad\qquad
\qquad\qquad\qquad\qquad\qquad\qquad
\qquad\qquad+ \bar g(t) \nabla^2 \phi_t\left(\ola X_t\right) \rmd \bar B_t
\\
&=
\left[\nabla\left(
        \partial_t \phi_t\left(\ola X_t\right)+
        \frac{\bar g^2(t)}{2}\left(\Delta \phi_t\left(\ola X_t\right)+
        \left\|\nabla\phi_t\left(\ola X_t\right)\right\|^2\right)
    \right) + 
    \bar\gamma(t)
    \nabla^2 \phi_t\left(\ola X_t\right) \ola X_t
\right]\rmd t \\
&\qquad\qquad\qquad\qquad\qquad\qquad
\qquad\qquad\qquad\qquad\qquad\qquad
\qquad\qquad + \bar g(t) \nabla^2 \phi_t\left(\ola X_t\right) \rmd \bar B_t\eqsp,
% \\
\end{align*}
using that $ 2 \nabla^2 \phi_t(x)  \nabla \phi_t(x) = \nabla \| \nabla \phi_t(x) \|^2$. Using \eqref{eq:PDE_phi_t}, we get
\begin{align*}
\rmd  Y_t &=
\left[
    - \bar\gamma(t)\nabla \psi_t\left(\ola X_t\right)  
        + \frac{2}{\sigma^2}\left( \bar \alpha(t) - \frac{\bar g^2(t)}{2 \sigma^2} \right)  \ola X_t
    + \bar\gamma(t) \nabla^2 \phi_t\left(\ola X_t\right) \ola X_t
\right]\rmd t \\
&\qquad\qquad\qquad\qquad\qquad\qquad
\qquad\qquad\qquad\qquad\qquad
\qquad\qquad\qquad+ \bar g(t) \nabla^2 \phi_t\left(\ola X_t\right) \rmd \bar B_t\eqsp,
\end{align*}
with $\psi_t\left(x\right) \eqdef \nabla \phi_t(x)^\top x$. With the identity $\nabla \left( x^\top \nabla \phi_t(x) \right) = \nabla \phi_t (x) + \nabla^2 \phi_t(x) x$, we have
\begin{align*}
\rmd  Y_t &=
\left[
    \left( \frac{\bar g^2(t)}{\sigma^2 } - \bar \alpha(t) \right)\nabla \phi_t\left(\ola X_t\right)
        + \frac{2}{\sigma^2}\left( \bar \alpha(t) - \frac{\bar g^2(t)}{2 \sigma^2} \right)  \ola X_t
\right]\rmd t + \bar g(t) \nabla^2 \phi_t\left(\ola X_t\right) \rmd \bar B_t
\\
&=
\left[
    \left( \frac{\bar g^2(t)}{\sigma^2 } - \bar \alpha(t) \right)Y_t
        + \frac{2}{\sigma^2}\left( \bar \alpha(t) - \frac{\bar g^2(t)}{2 \sigma^2} \right)  \ola X_t
\right]\rmd t + \bar g(t) Z_t \rmd \bar B_t\eqsp,
\end{align*}
which concludes the proof.
\end{proof}

\begin{lemma} \label{score moment bound }
Let $Y_t \eqdef \nabla \log \tilde p_{T-t} ( \ola X_t )$, with $\ola X$ satisfying \eqref{eq:backward:general}. There exists a constant $C>0$ such that
\begin{align}\label{eq:moment4_Y}
\mathbb{E} \left[ \left\| Y_t \right\|^4 \right] \leq C \left(
    \sigma_{T-t}^{-4}
    \E\left[\left\| N \right\|^4\right] + \sigma^{-8} \E\left[\left\| \ora X_0 \right\|^4 \right]
     \right),
\end{align}
with $N\sim\Nc(0,\mathrm{I}_d)$ and $\sigma_t^2$ as in \eqref{eq:cond_variance}.
\end{lemma}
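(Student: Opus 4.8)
The plan is to exploit the exact Gaussian structure of the forward transitions to obtain a Tweedie-type closed form for the (renormalised) score, and then bound its fourth moment by elementary inequalities. First, using $\tilde p_s = p_s/\gausspdf_{\sigma^2}$ and $\nabla\log\gausspdf_{\sigma^2}(x)=-x/\sigma^2$, I would write $Y_t = \nabla\log\tilde p_{T-t}(\ola X_t) = \nabla\log p_{T-t}(\ola X_t) + \ola X_t/\sigma^2$. Since the backward process \eqref{eq:backward:general} is initialised at $p_T$, the time-reversal is exact, so $\ola X_t$ has the same law as $\ora X_{T-t}$. Writing $s=T-t$ and $m_s\eqdef\exp(-\int_0^s\alpha(u)\rmd u)$, the forward transition kernel is Gaussian, $\ora X_s = m_s\ora X_0 + \sigma_s N$ with $N\sim\mathcal{N}(0,\mathrm{I}_d)$ independent of $\ora X_0$ and $\sigma_s^2$ as in \eqref{eq:cond_variance}; hence $p_s$ is the Gaussian mixture $p_s(x)=\int\gausspdf_{m_s y,\sigma_s^2\mathrm{I}_d}(x)\,\pi_0(\rmd y)$, and it suffices to control $\mathbb{E}[\|\nabla\log p_s(\ora X_s) + \ora X_s/\sigma^2\|^4]$. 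We may assume $\mathbb{E}[\|\ora X_0\|^4]<\infty$, otherwise there is nothing to prove.

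Next I would establish the Tweedie identity. Differentiating $p_s$ under the integral sign (licit by the Gaussian tails of the kernel) gives $\nabla\log p_s(x) = -\sigma_s^{-2}\big(x - m_s\,\mathbb{E}[\ora X_0\mid\ora X_s=x]\big)$; taking the conditional expectation given $\ora X_s$ of the identity $\ora X_s - m_s\ora X_0 = \sigma_s N$ yields $x - m_s\mathbb{E}[\ora X_0\mid\ora X_s=x] = \sigma_s\,\mathbb{E}[N\mid\ora X_s=x]$, so that
\begin{align*}
\nabla\log p_s(\ora X_s) = -\frac{1}{\sigma_s}\,\mathbb{E}[N\mid\ora X_s]\eqsp.
\end{align*}
The benefit of this rewriting is that all the dependence of $Y_t$ on $\ora X_0$ is now confined to the harmless term $\ora X_s/\sigma^2$, while the potentially singular contribution $-\sigma_s^{-1}\mathbb{E}[N\mid\ora X_s]$ involves only the noise.

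Finally, from $Y_t\overset{d}{=}-\sigma_s^{-1}\mathbb{E}[N\mid\ora X_s] + \ora X_s/\sigma^2$ and $\|a+b\|^4\le 8(\|a\|^4+\|b\|^4)$,
\begin{align*}
\mathbb{E}[\|Y_t\|^4] \le \frac{8}{\sigma_s^{4}}\,\mathbb{E}\big[\|\mathbb{E}[N\mid\ora X_s]\|^4\big] + \frac{8}{\sigma^{8}}\,\mathbb{E}[\|\ora X_s\|^4]\eqsp.
\end{align*}
Jensen's inequality for conditional expectations bounds the first term by $8\sigma_s^{-4}\mathbb{E}[\|N\|^4]$; for the second, $\|\ora X_s\|^4\le 8(m_s^4\|\ora X_0\|^4 + \sigma_s^4\|N\|^4)$, and using $m_s\le 1$ and $\sigma_s^2\le\sigma^2$ the residual term $\sigma_s^4\sigma^{-8}\mathbb{E}[\|N\|^4]$ is absorbed into $\sigma_s^{-4}\mathbb{E}[\|N\|^4]$, leaving \eqref{eq:moment4_Y} with an absolute constant $C$. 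The only points requiring care, rather than pure computation, are the justification of the differentiation under the integral sign and the bound $\sigma_s^2\le\sigma^2$ (valid for the coefficients of \eqref{eq:forward-SDE:beta}, for which $\sigma_t^2\uparrow\sigma^2$); the structural identity $\nabla\log p_s(\ora X_s)=-\sigma_s^{-1}\mathbb{E}[N\mid\ora X_s]$ is the crux that makes the bookkeeping match the two distinct scalings $\sigma_{T-t}^{-4}$ and $\sigma^{-8}$ appearing in the statement.
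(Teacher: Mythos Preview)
Your proposal is correct and follows essentially the same route as the paper: both use the Gaussian transition kernel to derive the Tweedie-type identity $\nabla\log\tilde p_{s}(\ora X_{s}) = \sigma_{s}^{-2}\,\mathbb{E}[m_{s}\ora X_{0}-\ora X_{s}\mid\ora X_{s}] + \sigma^{-2}\ora X_{s}$, then split, apply Jensen's inequality for conditional expectations, and use the representation $\ora X_{s}=m_{s}\ora X_{0}+\sigma_{s}N$. Your reformulation $\nabla\log p_{s}(\ora X_{s})=-\sigma_{s}^{-1}\mathbb{E}[N\mid\ora X_{s}]$ is a clean notational twist on the same identity; the only extra bookkeeping you make explicit---absorbing the cross term via $\sigma_{s}^{2}\le\sigma^{2}$---is left implicit in the paper.
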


\begin{proof}
The transition density $q_t(y,x) $ associated with the semi-group of the process (\ref{eq:forward:general}) is given by 
\begin{align*}
q_t(y,x) =  \left(2 \pi \sigma_t^2 \right)^{-d/2} \exp\left( { \frac{-\left\| x - y \exp\left({- \int_0^t \alpha (s) \rmd s}\right) \right\|^2}{2 \sigma_t^2}} \right)\eqsp. 
\end{align*}
Therefore, we have 
\begin{align*}
    \nabla \log p_{T-t} (x) &= \frac{1}{p_{T-t}(x)} \int p_0(y)\nabla_x q_{T-t}(y,x) \rmd y \\
    &= \frac{1}{p_{T-t}(x)} \int p_0(y) \frac{y \exp\left({- \int_0^{T-t} \alpha (u) \rmd u}\right) - x }{\sigma^2_{T-t}}
    q_{T-t}(y,x) \rmd y\eqsp. 
\end{align*}
This, together with the definition of $\tilde p$, yields 
\begin{align*}
\nabla \log \tilde p_{T-t} \left( \ora X_{T-t} \right) = \sigma_{T-t}^{-2} \mathbb{E} \left[ \ora X_0 \rme^{- \int_0^{T-t} \alpha (u) \rmd u} - \ora X_{T-t} \middle| \ora X_{T-t}\right] + \sigma^{-2}\ora X_{T-t}\eqsp.
\end{align*}
Using Jensen's inequality for conditional expectation, there exists a constant $C>0$ (which may change from line to line) such that 
\begin{align*}
     \left\| \nabla \log \tilde p_{T-t} \left( \ora X_{T-t} \right) \right\|^4 
     & \leq
     C\left(\sigma_{T-t}^{-8} \left\| \mathbb{E} \left[  \ora X_{0} \rme^{- \int_0^{T-t} \alpha (s) \rmd s} - \ora X_{T-t} \middle| \ora X_{T-t}  \right] \right\|^4+
     \sigma^{-8} \left\|\ora X_{T-t}\right\|^4
     \right)\\
    & \leq C\left(\sigma_{T-t}^{-8}  \mathbb{E} \left[ \left\|  \ora X_{0} \rme^{- \int_0^{T-t} \alpha (s) \rmd s} - \ora X_{T-t} \right\|^4 \middle| \ora X_{T-t}  \right]+
     \sigma^{-8}\left\|\ora X_{T-t}\right\|^4
     \right)\eqsp.
\end{align*}
Note that $\ora X_t$ has the same law as $\exp(-\int_0^t \alpha(s) \rmd s) \ora X_0  + \sigma_t N$, with $N \sim \mathcal{N}(0, \mathrm{I}_d)$. This means that we have that
\begin{align*}
     \E\left[\left\| \nabla \log p_{T-t} \left( \ora X_{T-t} \right) \right\|^4\right] & \leq C\sigma_{T-t}^{-4} \left(    \E\left[\left\| N \right\|^4\right] + \E\left[\left\| \ora X_0 \right\|^4 \right]
     \right)\eqsp. 
\end{align*}

Finally,
\begin{align*}
    \mathbb{E}  \left[ \left\| Y_t \right\|^4  \right]      = \mathbb{E}  \left[ \left\| \nabla \log \tilde p_{T-t} \left( \ola X_t \right) \right\|^2  \right]
    &= \mathbb{E}  \left[ \left\| \nabla \log \tilde p_{T-t} \left( \ora X_{T-t} \right) \right\|^4  \right] \\
    &\leq \sigma_{T-t}^{-4}\mathbb{E}  \left[ \left\| N \right\|^4 \right]
    \\ &\leq C \left(
    \sigma_{T-t}^{-4}
    \E\left[\left\| N \right\|^4\right] + \sigma^{-8} \E\left[\left\| \ora X_0 \right\|^4 \right]
     \right)\eqsp,
\end{align*}
which concludes the proof.
\end{proof}

\begin{lemma} \label{second derivative moment bound}
Let $Z_t \eqdef \nabla^2 \log \tilde p_{T-t} ( \ola X_t )$, where $\{\ola X_t\}_{t\geq 0}$ is a weak solution to \eqref{eq:backward:general}. There exists a constant $C>0$ such that
\begin{align}\label{eq:moment4_Z}
\mathbb{E} \left[ \left\| Z_t \right\|^4 \right] \leq C \left(\sigma^{-8}_{T-t}+ \sigma^{-8}\right) \left( \mathbb{E} \left[ \left\|   Z \right\|^8_2 \right] + d^4 \right)
\eqsp,
\end{align}
with $Z\sim\Nc(0,\mathrm{I}_d)$ and $\sigma_t^2$ as in \eqref{eq:cond_variance}.
\end{lemma}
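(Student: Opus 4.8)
The plan is to first obtain a closed-form expression for $\nabla^2 \log \tilde p_{T-t}$ by exploiting that the transition kernel of \eqref{eq:forward:general} is Gaussian, and then to control its fourth Frobenius moment by elementary matrix inequalities. Write $a_s \eqdef \exp(-\int_0^s \alpha(u)\rmd u)$, so that the transition density $q_s(y,x)$ used in Lemma~\ref{score moment bound } satisfies $\nabla_x \log q_s(y,x) = (a_s y - x)/\sigma_s^2$ and $\nabla_x^2 \log q_s(y,x) = -\mathrm{I}_d/\sigma_s^2$. Differentiating once more in $x$ the score identity $\nabla \log \tilde p_{s}(x) = \sigma_{s}^{-2}\big(a_{s}\,\mathbb{E}[\ora X_0 \mid \ora X_{s}=x] - x\big) + \sigma^{-2}x$ established in the proof of Lemma~\ref{score moment bound }, and using that the gradient in $x$ of the posterior mean equals the cross-covariance $\mathrm{Cov}\big(\ora X_0,\nabla_x\log q_s(\ora X_0,x)\mid \ora X_s=x\big) = (a_s/\sigma_s^2)\,\mathrm{Cov}(\ora X_0\mid \ora X_s=x)$, one obtains
\begin{align*}
\nabla^2 \log \tilde p_{s}(x) = \left(\frac{1}{\sigma^2}-\frac{1}{\sigma_{s}^2}\right)\mathrm{I}_d + \frac{a_{s}^2}{\sigma_{s}^4}\,\mathrm{Cov}\big(\ora X_0 \mid \ora X_{s}=x\big)\eqsp.
\end{align*}
Since $\ora X_{s}$ has the same law as $a_{s}\ora X_0 + \sigma_{s} N$ with $N\sim\mathcal{N}(0,\mathrm{I}_d)$, we have $a_{s}^2\,\mathrm{Cov}(\ora X_0\mid \ora X_s=x) = \sigma_{s}^2\,\mathrm{Cov}(N\mid \ora X_s=x)$, and, as $\ola X_t$ has the same distribution as $\ora X_{T-t}$ (because $\ola X_0\sim p_T$ in \eqref{eq:backward:general}), this gives the representation
\begin{align*}
Z_t = \left(\frac{1}{\sigma^2}-\frac{1}{\sigma_{T-t}^2}\right)\mathrm{I}_d + \frac{1}{\sigma_{T-t}^2}\,\mathrm{Cov}\big(N\mid \ola X_t\big)\eqsp.
\end{align*}

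From this representation the estimate is routine. Taking Frobenius norms and using $(a+b)^4\le 8(a^4+b^4)$, the deterministic part contributes $(\sigma^{-2}-\sigma_{T-t}^{-2})^4 d^2 \le C(\sigma^{-8}+\sigma_{T-t}^{-8})d^2$, so it remains to bound $\mathbb{E}\big[\|\mathrm{Cov}(N\mid\ola X_t)\|_{\mathrm{Fr}}^4\big]$. Writing $\mathrm{Cov}(N\mid x) = \mathbb{E}[NN^\top\mid x] - \mathbb{E}[N\mid x]\mathbb{E}[N\mid x]^\top$, both summands are positive semidefinite, hence have Frobenius norm at most their trace, so that $\|\mathrm{Cov}(N\mid x)\|_{\mathrm{Fr}} \le \mathbb{E}[\|N\|^2\mid x] + \|\mathbb{E}[N\mid x]\|^2 \le 2\,\mathbb{E}[\|N\|^2\mid x]$. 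Raising to the fourth power, applying the conditional Jensen inequality, and then the tower property, we obtain $\mathbb{E}\big[\|\mathrm{Cov}(N\mid\ola X_t)\|_{\mathrm{Fr}}^4\big] \le 16\,\mathbb{E}[\|N\|^8]$. Collecting both contributions and bounding $d^2\le d^4$ for $d\ge 1$ yields exactly \eqref{eq:moment4_Z}.

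The only delicate point is the rigorous derivation of the closed-form Hessian, i.e.\ justifying that $\nabla_x$ and $\nabla_x^2$ may be interchanged with the integrals defining $p_s(x)=\int p_0(y)q_s(y,x)\rmd y$ and $\mathbb{E}[\ora X_0\mid\ora X_s=x]$. This is handled exactly as in the proof of Lemma~\ref{score differential form}: following \citet[][Proposition~1]{conforti2023}, $p_s$ belongs to $C^{1,2}((0,T]\times\R^d)$ since $\alpha$ and $g$ are continuous and monotone, and the differentiations under the integral sign are legitimized by dominated convergence, the Gaussian kernel $q_s(y,x)$ and its first and second $x$-derivatives admitting, locally uniformly in $x$, integrable Gaussian majorants in $y$. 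Note that, thanks to the $N$-representation, the final bound \eqref{eq:moment4_Z} involves only $\mathbb{E}[\|N\|^8]$ and the dimension, and in particular requires no extra moment assumption on $p_0$.
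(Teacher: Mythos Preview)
Your proof is correct and follows the same overall strategy as the paper: compute the Hessian of $\log\tilde p_s$ via the Gaussian transition kernel, rewrite the result in terms of conditional expectations, and control the Frobenius moments by Jensen-type arguments, using that $\ora X_s$ and $a_s\ora X_0+\sigma_s N$ have the same joint law.

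The one noteworthy difference is organizational. The paper carries the three-term expansion
\[
\nabla^2\log\tilde p_{T-t}(x)= -\sigma_{T-t}^{-4}\,\mathbb{E}[\,\cdot\,|\,x]\,\mathbb{E}[\,\cdot\,|\,x]^\top +\big(\sigma^{-2}-\sigma_{T-t}^{-2}\big)\mathrm{I}_d +\sigma_{T-t}^{-4}\,\mathbb{E}\big[(\cdot)(\cdot)^\top\,\big|\,x\big]
\]
and bounds each piece separately via Jensen. You collapse the first and third terms into the conditional covariance $\mathrm{Cov}(\ora X_0\mid \ora X_s=x)$ and then, via the almost-sure identity $a_s\ora X_0 - x = -\sigma_s N$ under the conditioning $\ora X_s=x$, rewrite everything in terms of $\mathrm{Cov}(N\mid \ola X_t)$. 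This packaging is slightly cleaner: the positive semidefiniteness of the covariance gives the trace bound in one line, and the final estimate involves only $\mathbb{E}[\|N\|^8]$ and $d$ without any moment of $\ora X_0$, exactly as in the paper's endpoint. No step in your argument differs in substance from the paper's; the covariance representation just makes the cancellations explicit.
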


\begin{proof}

Let $q_t(y,x) $ be the transition density associated to the semi-group of the process (\ref{eq:forward:general}). Write 
\begin{align*}
    &\nabla^2 \log p_{T-t} (x)\\
    & \hspace{.3cm}=
    \nabla \left( \frac{1}{p_{T-t}(x)} \int p_0(y) \frac{y e^{- \int_0^{T-t} \alpha (s) \rmd s} - x }{\sigma^2_{T-t}}q_{T-t}(y,x) \rmd y \right) 
    \\
    & \hspace{.3cm}= -  \frac{\nabla p_{T-t} (x)}{p_{T-t}^2(x)} \left( \int p_0(y) \frac{y e^{- \int_0^{T-t} \alpha (s) \rmd s} - x }{\sigma^2_{T-t}} q_{T-t}(y,x)\rmd y \right)^\top
    \\
    &\hspace{.3cm}\quad
    +
    \frac{1}{p_{T-t}(x)} \nabla\int p_0(y) \frac{y e^{- \int_0^{T-t} \alpha (s) \rmd s} - x }{\sigma^2_{T-t}}
    q_{T-t}(y,x) \rmd y \\
    & \hspace{.3cm}= \frac{1}{\sigma^2_{T-t}\;p_{T-t}(x)}
    \Bigg(- \int \left( \frac{\nabla p_{T-t} (x)}{p_{T-t}(x)} \right) \left(  \frac{y e^{- \int_0^{T-t} \alpha (s) \rmd s} - x }{\sigma^2_{T-t}} \right)^\top 
    q_{T-t}(y,x) p_0(y) \rmd y
    \\
    &\hspace{.3cm}\quad 
    - \mathrm{I}_d + 
    \int \frac{1}{\sigma^2_{T-t}}\left(  y e^{- \int_0^{T-t} \alpha (s) \rmd s} - x  \right)
    \left(  
    y e^{- \int_0^{T-t} \alpha (s) \rmd s} - x  \right)^\top 
    q_{T-t}(y,x) p_0(y) \rmd y
    \Bigg)
    \eqsp.
\end{align*}
% \ao{to be continued}

Therefore, 
\begin{align*}
     &\nabla^2 \log \tilde p_{T-t} \left( \ora X_{T-t} \right)\\ 
     & \hspace{.3cm}=
     - \frac{1}{\sigma_{T-t}^{2}}\left(  \mathbb{E} \left[ \left( \frac{\nabla p_{T-t} \left( \ora X_{T-t} \right)}{p_{T-t} \left( \ora X_{T-t} \right)} \right) \left(  \ora X_0 e^{- \int_0^{T-t} \alpha (s) \rmd s} - \ora X_{T-t} \right)^\top
     \middle| \ora X_{T-t}   \right] + \mathrm{I}_d\right)
     \\
    & \hspace{.3cm}\quad 
    + \sigma_{T-t}^{-4}  \mathbb{E} \left[ \left(\ora X_0 e^{- \int_0^{T-t} \alpha (s) \rmd s} - \ora X_{T-t} \right) \left(\ora X_0 e^{- \int_0^{T-t} \alpha (s) \rmd s} - \ora X_{T-t} \right)^\top \middle| \ora X_{T-t} \right] +
    \sigma^{-2}\mathrm{I}_d
    \\
    &\hspace{.3cm} = - \sigma_{T-t}^{-4}  \left(\mathbb{E} \left[ \ora X_0 e^{- \int_0^{T-t} \alpha (s) \rmd s} - \ora X_{T-t} \middle| \ora X_{T-t}\right] \right)\left( \mathbb{E} \left[    \ora X_0 e^{- \int_0^{T-t} \alpha (s) \rmd s} - \ora X_{T-t}  \middle| \ora X_{T-t}  \right] \right)^\top \\
    & \hspace{.5cm}\quad  + \left(\sigma^{-2}- \sigma_{T-t}^{-2}\right) \mathrm{I}_d 
    \\
   & \hspace{.3cm}\quad
   + \sigma_{T-t}^{-4}  \mathbb{E} \left[ \left(\ora X_0 e^{- \int_0^{T-t} \alpha (s) \rmd s} - \ora X_{T-t} \right) \left(\ora X_0 e^{- \int_0^{T-t} \alpha (s) \rmd s} - \ora X_{T-t} \right)^\top \middle| \ora X_{T-t} \right]\eqsp.
\end{align*}
There exists a constant $C>0$ (which may change from line to line) such that
\begin{align*}
    &\mathbb{E} \left[ \left\| \nabla^2 \log p_{T-t} \left( \ora X_{T-t} \right) \right\|^4_{\mathrm{Fr}} \right]\\
    & \leq \frac{C}{\sigma_{T-t}^{16}} \mathbb{E} \left[\left\| \mathbb{E} \left[ \ora X_0 e^{- \int_0^{T-t} \alpha (s) \rmd s} - \ora X_{T-t} \middle| \ora X_{T-t}\right] \mathbb{E} \left[    \ora X_0 e^{- \int_0^{T-t} \alpha (s) \rmd s} - \ora X_{T-t}  \middle| \ora X_{T-t}   \right]^\top \right\|^4_{\mathrm{Fr}} \right]
    \\
    & \quad + C \left(\sigma^{-8}_{T-t}+ \sigma^{-8}\right) d^4
    \\
    & \quad + \frac{C}{\sigma_{T-t}^{16}} \mathbb{E } \left[ \left\| \mathbb{E} \left[ \left(\ora X_0 e^{- \int_0^{T-t} \alpha (s) \rmd s} - \ora X_{T-t} \right) \left(\ora X_0 e^{- \int_0^{T-t} \alpha (s) \rmd s} - \ora X_{T-t} \right)^\top 
    \middle| \ora X_{T-t} \right]  \right\|^4_{\mathrm{Fr}} \right].
\end{align*}

As in the previous proof, we note that $\ora X_t$ has the same law as $\rme^{-\int_0^t \alpha(s) \rmd s} \ora X_0  + \sigma_t Z$, with $Z \sim \mathcal{N}(0, \mathrm{I}_d)$ independent of $\ora X_0$. Therefore, using Jensen's inequality,
\begin{align*}
    & \mathbb{E} \left[\left\| \mathbb{E} \left[ \ora X_0 e^{- \int_0^{T-t} \alpha (s) \rmd s} - \ora X_{T-t} \middle| \ora X_{T-t}\right] \mathbb{E} \left[    \ora X_0 e^{- \int_0^{T-t} \alpha (s) \rmd s} - \ora X_{T-t}  \middle| \ora X_{T-t}   \right]^\top \right\|^4_{\mathrm{Fr}} \right] \\
    & \leq \mathbb{E} \left[\left\| \mathbb{E} \left[ \ora X_0 e^{- \int_0^{T-t} \alpha (s) \rmd s} - \ora X_{T-t} \middle| \ora X_{T-t}\right] \right\|^4_2 \left\| \mathbb{E} \left[    \ora X_0 e^{- \int_0^{T-t} \alpha (s) \rmd s} - \ora X_{T-t}  \middle| \ora X_{T-t}   \right] \right\|^4_{2} \right] \\
    & \leq \mathbb{E} \left[ \mathbb{E} \left[ \left\|  \ora X_0 e^{- \int_0^{T-t} \alpha (s) \rmd s} - \ora X_{T-t} \right\|^8_2 \middle| \ora X_{T-t}\right] \right]  \\
    & \leq \sigma_t^8 \mathbb{E} \left[ \left\|   Z \right\|^8_2 \right]
\end{align*}
and
\begin{align*}
    & \mathbb{E } \left[ \left\| \mathbb{E} \left[ \left(\ora X_0 e^{- \int_0^{T-t} \alpha (s) \rmd s} - \ora X_{T-t} \right) \left(\ora X_0 e^{- \int_0^{T-t} \alpha (s) \rmd s} - \ora X_{T-t} \right)^\top \middle| \ora X_{T-t} \right]  \right\|^4_{\mathrm{Fr}} \right] \\
    & \leq \mathbb{E } \left[  \mathbb{E} \left[ \left\| \left(\ora X_0 e^{- \int_0^{T-t} \alpha (s) \rmd s} - \ora X_{T-t} \right)
    \left(\ora X_0 e^{- \int_0^{T-t} \alpha (s) \rmd s} - \ora X_{T-t} \right)^\top  \right\|^4_{\mathrm{Fr}}  \middle| \ora X_{T-t} \right]  \right] \\
    & =  \mathbb{E} \left[ \left\| \left(\ora X_0 e^{- \int_0^{T-t} \alpha (s) \rmd s} - \ora X_{T-t} \right) \right\|^8_{\rm 2}  \right]   \\
    & \leq \sigma_t^8 \mathbb{E} \left[ \left\|   Z \right\|^8_2 \right]\eqsp.
\end{align*}
Hence, we can conclude that
\begin{align*}
    \mathbb{E} \left[ \left\| Z_t \right\|^4_{\mathrm{Fr}} \right] = \mathbb{E} \left[ \left\| \nabla^2 \log \tilde p_{T-t} \left( \ora X_{T-t} \right) \right\|^4_{\mathrm{Fr}} \right] \leq C \left(\sigma^{-8}_{T-t}+ \sigma^{-8}\right) \left( \mathbb{E} \left[ \left\|   Z \right\|^8_2 \right] + d^4 \right)\eqsp.
\end{align*}
\end{proof}

\section{Proofs of Section~\ref{sec:wasserstein}}
\label{sec:proof:wasserstein}

\subsection{Gaussian case: proof of Lemma~\ref{lem:gaussian:contract}}
In the case where $\pi_{\rm data}$ is the Gaussian probability density with mean $\mu_0$ and variance $\Sigma_0$, we have 
\begin{align*}
\nabla\log\tilde p_t(x) = -\left(m_t^2\Sigma_0 + \sigma_t^2 \mathrm{I}_d\right)^{-1}(x-m_t\mu_0) + \sigma^{-2}x\eqsp,
\end{align*}
with $m_t = \exp\left(-\int_0^t\beta(s)\rmd s/(2\sigma^2)\right)$ and $\sigma_t =
\sigma^2(1-m^2_t) $. Let $\ora \Sigma_t = m_t^2\Sigma_0 + \sigma_t^2 \mathrm{I}_d$ be the covariance of the forward process $\ora X_t$ and $b_t = \ora \Sigma_t^{-1} m_t \mu_0$ so that 
\begin{equation} \label{eq:gaussian_score_linear}
\nabla\log\tilde p_t(x) = A_t x + b_t \quad \text{with} \quad A_t = - \left(\ora {\Sigma_t}^{-1} - \sigma^{-2} \mathrm{I}_d \right) \eqsp.
\end{equation}
Note that, if we denote by $\lambda_0^1\leq\dots\leq\lambda^d_0$ the eigenvalues of $\Sigma_0$, which are positive as $\Sigma_0$ is positive definite, we have that the eigenvalues of $A_t$ are
\begin{align*}
    \lambda^i_t\eqdef -\frac{1}{m_t^2\lambda^i_0 + \sigma_t^2} + \frac{1}{\sigma^2}\eqsp.
\end{align*}
It is straightforward to see that $\lambda^1_t\leq \dots \leq \lambda^d_t$. Moreover, we always have that in this case
\begin{align*}
    \left(\nabla \log \tilde p_t(x) - \nabla \log \tilde p_t(y)\right)^\top(x-y)&\leq \lambda^d_t \left\|x-y\right\|^2\eqsp,
    \\
    \left\|\nabla \log \tilde p_t(x) - \nabla \log \tilde p_t(y)\right\|&\leq
    \max \left\{\left|\lambda^1_t\right|, \left|\lambda^d_t\right|\right\} \left\|x-y\right\|\eqsp,
\end{align*}
which entails that we can define
\begin{align*}
L_t\eqdef\max \left\{\left|\lambda^1_t\right|, \left|\lambda^d_t\right|\right\}\eqsp,\qquad
C_t\eqdef-\lambda^d_t\eqsp,
\end{align*}
and apply Proposition \ref{prop:contractivity}.

The condition $\lambda^d_t \le 0$, or equivalently $\sigma^2 \ge \lambda_{\max}(\Sigma_0)$, yields a contraction in $2$--Wasserstein distance in the backward process as well in the forward process from Proposition \ref{prop:contractivity}. This shows that, in specific cases, with an appropriate calibration of the variance of the stationary law with respect to the initial law, we have a contraction both in the forward and in the backward flows. 

As a consequence, note that
{
\begin{equation*}
    \Wc_2\left(\pi_{\rm data}, \pi_{\infty} Q_T \right)^2 \le \Wc_2\left(p_T, \pi_{\infty}\right)^2 \exp\left( - \frac{1}{\sigma^2} \int_0^T \beta(t)(1 + 2 C_t \sigma^2) \rmd t \right).
\end{equation*}
Using Talagrand’s $T_2$ inequality for the Gaussian measure $\Wc_2\left(\mu, \pi_{\infty}\right)^2 \le 2 \sigma^2 \kl(\mu \| \pi_{\infty})$ and Lemma~\ref{lem:mix} we get 
\begin{equation*}
    \Wc_2\left(\pi_{\rm data}, \pi_{\infty} Q_T \right)^2 \le 2 \sigma^2 \kl \left(\pi_{\rm data} \| \pi_{\infty} \right) \exp\left( - \frac{2}{\sigma^2} \int_0^T \beta(t)(1 + 2C_t \sigma^2) \rmd t \right).
\end{equation*}
}

\begin{proposition}\label{prop:back_contractivity_kl}
Assume that $\pi_{\rm data}$ is a Gaussian distribution $\mathcal{N}(\mu_0, \Sigma_0)$ such that $\lambda_{\max}(\Sigma_0) \le \sigma^2$ where $\lambda_{\max}(\Sigma_0)$ denotes the largest eigenvalue of $\Sigma_0$. Then, 
\begin{equation*}
    \kl\left(\pi_{\rm data} \| \pi_{\infty} Q_T \right) \le \kl \left(\pi_{\rm data} \| \varphi_{\sigma^2} \right) \exp\left( - \frac{2}{\sigma^2} \int_0^T \beta(s) \rmd s \right).
\end{equation*}
\end{proposition}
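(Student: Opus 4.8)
The plan is to obtain a relative‑entropy contraction directly along the exact (continuous, perfectly scored) backward flow and then compose it with Lemma~\ref{lem:mix}. Write $(Q_t)_{t\in[0,T]}$ for the semigroup of the exact backward SDE~\eqref{eq:backward_SDE}. Since this process is the time reversal of the forward one, $p_T Q_t = p_{T-t}$; in particular $p_T Q_T = \pi_{\rm data}$. Set $\nu_t \eqdef p_T Q_t = p_{T-t}$ and $\mu_t \eqdef \varphi_{\sigma^2} Q_t$, so that $\nu_0 = p_T$, $\mu_0 = \varphi_{\sigma^2}$, $\nu_T = \pi_{\rm data}$ and $\mu_T = \varphi_{\sigma^2} Q_T$. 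Because $\pi_{\rm data}$ is Gaussian, the backward drift is affine (cf.\ the computation preceding Lemma~\ref{lem:gaussian:contract}), hence $\nu_t$ and $\mu_t$ are Gaussian for every $t$; denote by $S_t$ the covariance of $\mu_t$, with $S_0 = \sigma^2 \mathrm{I}_d$. Throughout, all objects are Gaussian with smooth positive densities, which makes the formal manipulations below routine.

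First I would establish the relative‑entropy dissipation identity along the flow. Exactly as in the proof of Lemma~\ref{lem:mix}, but now using that $\nu_t$ and $\mu_t$ solve the \emph{same} Fokker--Planck equation with diffusion coefficient $\bar\sched(t)$, a direct computation in which the drift contributions cancel gives
\[
\frac{\rmd}{\rmd t}\kl(\nu_t\|\mu_t) \;=\; -\frac{\bar\sched(t)}{2}\int \big\|\nabla\log(\nu_t/\mu_t)\big\|^2\,\rmd\nu_t \;=\; -\frac{\bar\sched(t)}{2}\,\mathcal{I}(\nu_t\,|\,\mu_t)\eqsp.
\]
Since $\mu_t$ is Gaussian with covariance $S_t$, the Bakry--Émery/log‑Sobolev inequality (as in Lemma~\ref{stam-gross}) gives $\mathcal{I}(\nu_t\,|\,\mu_t) \ge \frac{2}{\lambda_{\max}(S_t)}\kl(\nu_t\|\mu_t)$, whence
\[
\frac{\rmd}{\rmd t}\kl(\nu_t\|\mu_t) \;\le\; -\frac{\bar\sched(t)}{\lambda_{\max}(S_t)}\,\kl(\nu_t\|\mu_t)\eqsp.
\]

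The key step, and the main obstacle, is to show $S_t \preceq \sigma^2 \mathrm{I}_d$ for all $t\in[0,T]$, i.e.\ $\lambda_{\max}(S_t)\le\sigma^2$. The affine backward drift reads $b_t(x) = \mathcal{A}_t x + c_t$ with $\mathcal{A}_t = \frac{\bar\sched(t)}{2\sigma^2}\mathrm{I}_d - \bar\sched(t)\,\ora\Sigma_{T-t}^{-1}$, where $\ora\Sigma_s = m_s^2\Sigma_0 + \sigma^2(1-m_s^2)\mathrm{I}_d$ is the covariance of $\ora X_s$; the hypothesis $\lambda_{\max}(\Sigma_0)\le\sigma^2$ gives $\ora\Sigma_s = \sigma^2\mathrm{I}_d - m_s^2(\sigma^2\mathrm{I}_d - \Sigma_0) \preceq \sigma^2\mathrm{I}_d$. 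The covariance obeys the Lyapunov ODE $\dot S_t = \mathcal{A}_t S_t + S_t\mathcal{A}_t + \bar\sched(t)\mathrm{I}_d$ with $S_0 = \sigma^2\mathrm{I}_d$, so $\Psi_t \eqdef \sigma^2\mathrm{I}_d - S_t$ solves $\dot\Psi_t = \mathcal{A}_t\Psi_t + \Psi_t\mathcal{A}_t + \Lambda_t$ with $\Psi_0 = 0$ and $\Lambda_t \eqdef 2\bar\sched(t)\big(\sigma^2\ora\Sigma_{T-t}^{-1} - \mathrm{I}_d\big) \succeq 0$. Writing $\Psi_t = \int_0^t \Phi(t,s)\,\Lambda_s\,\Phi(t,s)^\top\,\rmd s$ with $\Phi(\cdot,s)$ the state‑transition matrix of $\dot y = \mathcal{A}_t y$ (equivalently, by the usual first‑crossing argument for symmetric matrix ODEs) yields $\Psi_t \succeq 0$, i.e.\ $\lambda_{\max}(S_t)\le\sigma^2$.

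Combining the last two displays gives $\frac{\rmd}{\rmd t}\kl(\nu_t\|\mu_t)\le -\frac{\bar\sched(t)}{\sigma^2}\kl(\nu_t\|\mu_t)$, and Grönwall's lemma together with $\int_0^T\bar\sched(t)\rmd t = \int_0^T\sched(s)\rmd s$ gives $\kl(\pi_{\rm data}\|\varphi_{\sigma^2}Q_T)\le\kl(p_T\|\varphi_{\sigma^2})\exp\!\big(-\tfrac1{\sigma^2}\int_0^T\sched(s)\rmd s\big)$. Finally, Lemma~\ref{lem:mix} provides $\kl(p_T\|\varphi_{\sigma^2})\le\kl(\pi_{\rm data}\|\varphi_{\sigma^2})\exp\!\big(-\tfrac1{\sigma^2}\int_0^T\sched(s)\rmd s\big)$, and multiplying the two bounds yields the claim. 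The only genuinely delicate point is the matrix‑ODE positivity argument for $\Psi_t$; everything else mirrors the proof of Lemma~\ref{lem:mix}, with the stationary target $\varphi_{\sigma^2}$ replaced by the moving (but sub‑$\sigma^2$‑variance) target $\varphi_{\sigma^2}Q_t$.
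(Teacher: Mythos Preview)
Your proposal is correct and follows essentially the same route as the paper: both track $\kl(p_T Q_t\|\varphi_{\sigma^2}Q_t)$ along the backward flow, derive the entropy dissipation identity $\partial_t\kl=-\tfrac{\bar\sched(t)}{2}\,\mathcal I(\cdot|\cdot)$ (the paper phrases this via the carr\'e du champ operator, you via Fokker--Planck directly), apply the Gaussian log-Sobolev inequality for $\varphi_{\sigma^2}Q_t$, show its covariance is dominated by $\sigma^2\mathrm I_d$, integrate with Gr\"onwall, and finish with Lemma~\ref{lem:mix}. The only cosmetic difference is that the paper writes down the explicit matrix-exponential formula for $\ola\Sigma_t$ and asserts the eigenvalue bound, whereas you derive it via the Lyapunov ODE and the state-transition representation $\Psi_t=\int_0^t\Phi(t,s)\Lambda_s\Phi(t,s)^\top\rmd s$; both arguments are valid and yield the same conclusion.
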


\begin{proof}
In this Gaussian case, the backward process is linear (see~\eqref{eq:gaussian_score_linear}) and the associated infinitesimal generator writes, for $g \in \mathcal{C}^2$,  
\begin{equation*}
    \ola{\mathcal{L}}_t g(x) = \nabla g(x)^\top \left(- \frac{\bar \beta(t)}{2 \sigma^2} + \bar \beta(t) (\bar A_t x + \bar b_t) \right) + \frac{1}{2} \bar \beta(t) \Delta g(x),
\end{equation*}
where $\bar A_t = A_{T-t}$ and $\bar b_t = b_{T-t}$. 

Our objective is to monitor the evolution of the Kullback-Leibler divergence, $\kl(p_T Q_t \|\varphi_{\sigma^2} Q_t)$, for $t \in [0, T]$. We follow~\citet[Section 6]{del_moral_contraction_2003} \citep[see also][]{collet_logarithmic_2008}. Let $q_t = p_T Q_t$ and $\phi_t = \varphi_{\sigma^2} Q_t$ two densities 
that satisfy the Fokker-Planck equation, involving the dual operator $\ola{\mathcal{L}}^*_t$ of the infinitesimal generator $\ola{\mathcal{L}}$ 
\begin{align*}
    \partial_t q_t &= \ola{\mathcal{L}}^*_t q_t, \qquad q_0(x) = p_T(x) \\
    \partial_t \phi_t &= \ola{\mathcal{L}}^*_t \phi_t, \qquad \phi_0(x) = \varphi_{\sigma^2}(x).
\end{align*}

Let $f_t = q_t / \phi_t$.  By definition of $\kl(q_t \| \phi_t) = \int \ln\left(f_t(x)\right) q_t(x) \rmd x$ we have 
\begin{align*}
    \partial_t \kl \left( q_t \| \phi_t \right) &= 
    \int \ln \left(f_t(x)\right) \partial_t q_t(x) \rmd x
    + \int \partial_t \ln\left(f_t(x)\right) q_t(x) \rmd x \\
    &= \int \ln \left(f_t(x)\right) \partial_t q_t(x) \rmd x
    - \int f_t(x) \partial_t \phi_t(x) \rmd x\eqsp.
\end{align*}
By employing the Fokker-Planck equation and the adjoint relation, which states that $\int f(x) \ola{\mathcal{L}}^*_t(g)(x) \rmd x = \int \ola{\mathcal{L}}_t f(x) g(x) \rmd x$ we obtain 
\begin{align*}
    \partial_t \kl \left( q_t \| \phi_t \right) 
    &= \int \ola{\mathcal{L}} \ln \left(f_t \right) (x) q_t(x) \rmd x
    - \int \ola{\mathcal{L}} f_t(x) \phi_t(x) \rmd x \eqsp.
\end{align*}
The infinitesimal generator $\ola{\mathcal{L}}$ satisfies the change of variables formula \citep[see][]{bakry2014analysis} so that 
\begin{equation*}
    \ola{\mathcal{L}}_t(\ln(f)) = \frac{1}{f} \ola{\mathcal{L}}_t f - \frac{1}{2 f^2} \ola{\Gamma}_t(f, f) \eqsp,
\end{equation*}
where $\ola{\Gamma}_t$ is the ``carré du champ'' operator associated with $\ola{\mathcal{L}}_t$  defined by $\ola{\Gamma}_t(f, f)(x) = \beta(t) |\nabla f(x)|^2$. We then obtain 
\begin{align}
    \partial_t \kl \left( q_t \| \phi_t \right) 
    &= \int \ola{\mathcal{L}}f_t(x) \frac{q_t(x)}{f_t(x)} \rmd x 
    - \int \frac{\beta(t)}{2} \frac{|\nabla f_t(x)|^2}{f_t^2(x)} q_t(x) \rmd x
    - \int \ola{\mathcal{L}} f_t(x) \phi_t(x) \rmd x  \notag \\
    &= - \frac{\beta(t)}{2} \int \frac{|\nabla f_t(x)|^2}{f_t(x)} \phi_t(x) \rmd x \eqsp.
    \label{eq:kl_contract:dt}
\end{align}

To obtain a control of the Kullback-Leibler divergence we need a logarithmic Sobolev inequality for the distribution of density $\phi_t = \varphi_{\sigma^2} Q_t$.
In this Gaussian case, if $\ola X_0 \sim \mathcal{N}(0, \sigma^2)$ then for all $t \in [0,T]$ the law of $\ola X_t$ is a centered Gaussian with covariance matrix $\ola{\Sigma}_t$ given by 
\begin{equation*}
    \ola{\Sigma}_t = 
    \sigma^2 \exp\left(\int_0^t -\frac{\bar \beta(s)}{\sigma^2} + 2 \bar \beta_s \bar A_s \rmd s\right)
    + \int_0^t \beta(s)\exp\left(\int_s^t - \frac{\bar \beta(u)}{\sigma^2} 
    + 2 \bar \beta(u) \bar A_u \rmd u\right) \rmd s \eqsp,
\end{equation*}
where we use the matrix exponential. As mentioned before, if $\lambda_{\max}(\Sigma_0) \le \sigma^2$, the eigenvalues of $A_s$, for $s \in [0,T]$, are negative. We can easily deduce that $\lambda_{\max}(\ola \Sigma_t) \le \sigma^2$.
We recall the logarithmic Sobolev inequality for a normal distribution \citep[see][Corollary~9]{chafai_entropies_2004}  
\begin{equation*}
    \kl(q_t \| \phi_t) \le \frac{1}{2} \int \frac{1}{f_t(x)} \nabla f_t(x)^\top \ola{\Sigma}_t \nabla f_t(x) \phi_t(x) \rmd x 
    \le \frac{\lambda_{\max}(\ola{\Sigma_t})}{2} 
    \int \frac{|\nabla f_t(x)|^2}{f_t(x)} \phi_t(x) \rmd x \eqsp.
\end{equation*} 
Plugging this into~\eqref{eq:kl_contract:dt} we get 
\begin{equation*}
    \partial_t \kl(q_t \| \phi_t) \le - \frac{\beta(t)}{\sigma^2} \kl(q_t \| \phi_t) \eqsp.
\end{equation*}
Therefore, recalling that $q_0 = p_T$ and $\phi_0 = \varphi_{\sigma^2}$ 
\begin{equation*}
    \kl \left(q_T \| \varphi_{\sigma^2} Q_T \right)
    \le \kl(p_T \| \varphi_{\sigma^2}) \exp\left(-\int_0^T \frac{\beta(s)}{\sigma^2} \rmd s\right) \eqsp.
\end{equation*}
We conclude using Lemma~\ref{lem:mix}.
\end{proof}

\subsection{Proof of Theorem~\ref{thm:wasserstein_bound}}
\label{ap:sec:wasserstein}

\paragraph{EI scheme. }
Using the fact that 
\begin{align*}
    \int_{t_k}^t \rme^{-\int^{t}_{s} \bar \beta(v)/(2 \sigma^2) \rmd v} \bar \beta (s) \rmd s = 2\sigma^2 \left( 1 -   \rme^{-\int_{t_k}^{t} \bar \beta (v)/(2 \sigma^2) \rmd v} \right)\eqsp,
\end{align*}
the Exponential Integrator scheme that we consider consists in the following discretization, recursively given with respect to the index $k$,
\begin{multline}
    \label{eq:backward_EI}
    \ola{X}_{t}  = \rme^{-\int_{t_k}^{t} \bar \beta (s)/(2 \sigma^2)  \rmd s} \bar X_{t_k} + 2 \sigma^2 \left( 1 -   \rme^{-\int_{t_k}^{t} \bar \beta (s)/(2 \sigma^2) \rmd s} \right) \nabla \log \tilde p_{T-t_k} \left( \bar X_{t_k} \right) \\ + \sigma \sqrt{ \left( 1 - \rme^{-  \int_{t_k}^{t} \bar \beta(s)/\sigma^2 \rm d s} \right)} Z_k \eqsp,
\end{multline}
where $Z_k$ are i.i.d. Gaussian random vectors $\mathcal{N}\left( 0 , I_d \right)$. In particular, we have that
\begin{multline}
    \label{eq:backward_EI-theta}
    \bar{X}^\theta_{t}  = \rme^{-\int_{t_k}^{t} \bar \beta (s)/(2 \sigma^2)  \rmd s} \bar X^\theta_{t_k} + 2 \sigma^2 \left( 1 -   \rme^{-\int_{t_k}^{t} \bar \beta (s)/(2 \sigma^2) \rmd s} \right) s_\theta\left(T-t_k , \bar X^\theta_{t_k} \right)  \\+ \sigma \sqrt{ \left( 1 - \rme^{- \int_{t_k}^{t} \bar \beta(s)/\sigma^2 \rm d s} \right)} Z_k \eqsp,
\end{multline}
and $\bar{X}^\theta_{0}\sim \mathcal{N}\left( 0 , \sigma^2 I_d \right)$.  
Note that
    \begin{align}
    \label{eq:decomp:w2}
        \Wc_2 \left( \pi_{\rm data} ,\pihat \right) \leq \Wc_2 \left( \pi_{\rm data} ,\pi_\infty Q_T \right) + 
         \Wc_2 \left(\pi_\infty Q_T ,\pi_\infty Q^{N,\param}_T \right),
    \end{align}
where 
$$
\Wc_2 \left( \pi_{\rm data} ,\pi_\infty Q_T \right)
=
\Wc_2\left( p_T Q_T, \pi_\infty Q_T \right),
$$
which corresponds to the discrepancy between the same process  \eqref{eq:backward_SDE} with two different initializations. The first term of \eqref{eq:decomp:w2} is upper bounded by Proposition~\ref{prop:contractivity}.

\begin{proposition}\label{prop:contractivity}
    Assume that $\Wc_2\left(\pi_{\rm data},\pi_{\infty}\right)^2<+\infty$. The marginal distribution at the end of the forward phase satisfies 
    \begin{align}
    \label{eq:contraction_forward}
\Wc_2\left(p_T,\pi_{\infty}\right)^2 \leq
        \Wc_2\left(\pi_{\rm data},\pi_{\infty}\right)^2\exp\left(- 
        \int_0^T \frac{\beta(t)}{\sigma^2}\rmd t\right)\eqsp.
    \end{align}
    Assume that H\ref{hyp:score_regularity}\eqref{hyp:score_lipschitz} holds. Then,
    \begin{align}
    \notag
        \Wc_2\left(\pi_{\rm data}, \pi_{\infty} Q_T\right)^2 &\leq 
        \Wc_2\left(p_T,\pi_{\infty}\right)^2
        \exp\left(- 
        \int_0^T \frac{\beta(t)}{\sigma^2}\left(1- 2 L_t\sigma^2\right)\rmd t\right) \\
        &\leq \Wc_2\left(\pi_{\rm data},\pi_{\infty}\right)^2
        \exp\left(- 
        \int_0^T \frac{\beta(t)}{\sigma^2}\left(2- 2 L_t\sigma^2\right)\rmd t\right)
        \eqsp.
            \label{eq:contractivity}
    \end{align}
    Moreover, under Assumption H\ref{hyp:score_regularity}\eqref{hyp:strong-log-concavity}, we have
    \begin{align}
    \notag
    \Wc_2\left(\pi_{\rm data}, \pi_{\infty} Q_T\right)^2 &\leq \Wc_2\left(p_T,\pi_{\infty}\right)^2
    \exp\left(- 
    \int_0^T \frac{\beta(t)}{\sigma^2}\left(1+ 2 C_t\sigma^2\right)\rmd t\right) \\
    &\leq 
    \Wc_2\left(\pi_{\rm data},\pi_{\infty}\right)^2
    \exp\left(- 
    \int_0^T \frac{\beta(t)}{\sigma^2}\left(2+ 2 C_t\sigma^2\right)\rmd t\right) 
    \eqsp.
    \label{eq:contractivity_monotone_operator}
    \end{align}

\end{proposition}
\begin{proof}[Proof of Proposition \ref{prop:contractivity}]
Let $x\in\R^d$ (resp. $y\in\R^d$) and denote by $\ora{X}^x$ (resp. $\ora{X}^y$) the solution of \eqref{eq:forward-SDE:beta}, with initial condition $\ora{X}^x_0 = x$ (resp. $\ora{X}^x_0 = y$). Applying the chain rule, we get
\begin{align*}
    \left\|\ora{X}^x_t - \ora{X}^y_t\right\|^2&=
    \left\|x-y\right\|^2 + 
    2\int_0^t
       - \frac{\bar\beta(s)}{2\sigma^2}
       \left\|\ora{X}^x_s - \ora{X}^y_s\right\|^2\rmd s
   \eqsp.
\end{align*}
Therefore, applying Grönwall's lemma, we obtain
\begin{align*}
    \E\left[ \sup_{t\in[0,T]} \left\|\ora{X}^x_t - \ora{X}^y_t\right\|^2 \right] \leq {\rm exp}\left(- 
   \int_0^T\frac{\bar\beta(t)}{\sigma^2}\rmd t\right)
   \left\|x-y\right\|^2 \eqsp.
\end{align*}
From this, we can show contraction \eqref{eq:contraction_forward} in $2$--Wasserstein distance by taking the infimum over all couplings.

Now, let $x\in\R^d$ (resp. $y\in\R^d$) and denote by $\ola{X}^x$ (resp. $\ola{X}^y$) the solution of \eqref{eq:backward_SDE}, with initial condition $\ola{X}^x_0 = x$ (resp. $\ola{X}^x_0 = y$). Applying the chain rule and using Cauchy-Schwarz inequality, we get
\begin{align*}
    \left\|\ola{X}^x_t - \ola{X}^y_t\right\|^2&=
    \left\|x-y\right\|^2 + 
    2\int_0^t
       - \frac{\bar\beta(s)}{2\sigma^2}
       \left\|\ola{X}^x_s - \ola{X}^y_s\right\|^2\rmd s
       \\
       &\qquad
       +
       2\int_0^t\bar\beta(s)\left(
       \nabla\log\tilde p_{T-s}\left( \ola{X}^x_s \right) - \nabla\log\tilde p_{T-s}\left( \ola{X}^y_s \right)
       \right)^\top
    \left(\ola{X}^x_s - \ola{X}^y_s\right)
   \rmd s
   \\
   &\leq\left\|x-y\right\|^2 - 
   \int_0^t \frac{\bar\beta(s)}{\sigma^2}\left(1- 2\bar L_{s}\sigma^2\right)\left\|\ola{X}^x_s - \ola{X}^y_s\right\|^2\rmd s\eqsp.
\end{align*}
Therefore, applying Grönwall's lemma, we obtain
\begin{align*}
    \E\left[ \sup_{t\in[0,T]} \left\|\ola{X}^x_t - \ola{X}^y_t\right\|^2 \right] \leq {\rm exp}\left(- 
   \int_0^T\frac{\bar\beta(t)}{\sigma^2}\left(1- 2\bar L_{t}\sigma^2\right)\rmd t\right)
   \left\|x-y\right\|^2 \eqsp.
\end{align*}
From this, we can show contraction \eqref{eq:contractivity} in $2$--Wasserstein distance by taking the infimum over all couplings.

To establish \eqref{eq:contractivity_monotone_operator} note that, under Assumption H\ref{hyp:score_regularity}\eqref{hyp:strong-log-concavity}, we have
\begin{align*}
    \left\|\ola{X}^x_t - \ola{X}^y_t\right\|^2&=
    \left\|x-y\right\|^2 + 
    2\int_0^t
       - \frac{\bar\beta(s)}{2\sigma^2}
       \left\|\ola{X}^x_s - \ola{X}^y_s\right\|^2\rmd s
       \\
       &\qquad
       +
       2\int_0^t\bar\beta(s)\left(
       \nabla\log\tilde p_{T-s}\left( \ola{X}^x_s \right) - \nabla\log\tilde p_{T-s}\left( \ola{X}^y_s \right)
       \right)^\top
    \left(\ola{X}^x_s - \ola{X}^y_s\right)
   \rmd s
   \\
   &\leq\left\|x-y\right\|^2 - 
   \int_0^t \frac{\bar\beta(s)}{\sigma^2}\left(1+ 2\bar C_{s}\sigma^2\right)\left\|\ola{X}^x_s - \ola{X}^y_s\right\|^2\rmd s\eqsp.
\end{align*}
Therefore, applying Grönwall's lemma, we obtain
\begin{align*}
    \E\left[ \sup_{t\in[0,T]} \left\|\ola{X}^x_t - \ola{X}^y_t\right\|^2 \right] \leq {\rm exp}\left(- 
   \int_0^T \frac{\bar\beta(t)}{\sigma^2}\left(1+2\bar C_{t}\sigma^2\right)\rmd t\right)
   \left\|x-y\right\|^2 \eqsp.
\end{align*}
From this, we can show contraction \eqref{eq:contractivity_monotone_operator} in the $2$--Wasserstein distance by taking the infimum over all couplings.
\end{proof}

Note that a similar assumption as Assumption H\ref{hyp:score_regularity}\eqref{hyp:strong-log-concavity} is used in \citet[Proposition 10,11,12]{debortoli2021}, in particular to bound the conditional  moments of $\ola{X}_0$ given $\ola{X}_t$ for $t>0$. However, in this paper the authors also require additional assumptions, in particular that the score of $\pi_{\mathrm{data}}$ has a linear growth.

\paragraph{Second term. } The second term of \eqref{eq:decomposition_W2} can be handled as follows
\begin{align*}
\Wc_2 \left(\pi_\infty Q_T ,\pi_\infty Q^{N,\param}_T \right) &\leq
    \left\| \ola{X}^\infty_{T} - \bar{X}^\theta_{T} \right\|_{L_2}.
\end{align*}
 To upper bound $\| \ola{X}^\infty_{T} - \bar{X}^\theta_{T} \|_{L_2}$, 
we aim at controlling $\| \ola{X}^\infty_{t_{k+1}} - \bar{X}^\theta_{t_{k+1}} \|_{L_2}$ by 
$\| \ola{X}^\infty_{t_k} - \bar{X}^\theta_{t_k} \|_{L_2}$ to resort subsequently to a telescopic sum.

\begin{proposition} 
\label{prop:wasserstein_tk}
Assume that  H\ref{hyp:score_regularity}, H\ref{hyp:sup_approx} and H\ref{hyp:lipschitz_score_time} hold.  Consider the regular discretization $\{t_k, 0\leq k \leq N\}$ of $[0,T]$ of constant step size $h$ such that for all $t_k$ with $0\leq k\leq N-1$,
$$
h < \frac{ 2\bar C_{t} }{\bar \beta(t_k) \left(\max_{t_k\leq s \leq t_{k+1}}\bar L_{s}\right)\bar L_{t}} \frac{\widetilde m_{t_{k+1}}}{\widetilde m_{t_{k}}}\eqsp,
$$
where $\widetilde m_t := \exp(-\int_0^t\bar \beta(s)\rmd s/(2\sigma^2))$, $ m_t := \exp(-\int_0^t \beta(s)\rmd s/(2\sigma^2))$. Then, 
     \begin{multline*}
         \left\|  \ola{X}_{T}^{\infty} - \bar{X}_{T}^{\theta} \right\|_{L_2} 
        \leq \varepsilon T\beta(T) +  M h T \beta(T)\left(1+2B \right) \\ 
        + \sum_{k=0}^{N-1}
        \left(\int_{t_k}^{t_{k+1}}\bar L_{t} 
            \frac{\widetilde m_{t}}{\widetilde m_{t_{k}}} \bar \beta (t) \rmd t \right)
         \left(
        \frac{\sqrt{2h\beta(T)}}{\sigma} + 
        m_T \int_{t_k}^{t_{k+1}} \left( \frac{1}{2\sigma^2}+2\bar{L}_{t}\right) \bar{\beta} (t) \rmd t
    \right) B\eqsp,
    \end{multline*}
    where $M$ is defined in H\ref{hyp:lipschitz_score_time} and $B:= (  \mathbb{E} [ \| X_0 \|^2 ] + \sigma^2 d )^{1/2}$.
\end{proposition}

\begin{proof}
    Using \eqref{eq:backward_EI-theta} and the triangular inequality, we have
    \begin{align}
        \left\|  \ola{X}_{t_{k+1}}^{\infty} - \bar{X}_{t_{k+1}}^{\theta} \right\|_{L_2} & \nonumber\\
        &\hspace{-2.9cm}= \left\|
            \frac{\widetilde m_{t_{k+1}}}{\widetilde m_{t_{k}}} \ola{X}_{t_{k}}^{\infty} - \frac{\widetilde m_{t_{k+1}}}{\widetilde m_{t_{k}}}
            \bar{X}_{t_{k}}^{\theta} + 
            \int_{t_k}^{t_{k+1}} 
            \frac{\widetilde m_{t}}{\widetilde m_{t_{k}}} \bar \beta (t) \left( 
                \nabla \log \tilde p_{T-t} \left( \ola{X}_t^{\infty} \right)- \tilde s_{\theta} \left( T- t_k ,\bar{X}_{t_{k}}^{\theta}  \right) \right) \rmd t \right\|_{L_2}\notag \\
                & \hspace{-2.9cm} \leq \left\|
            \frac{\widetilde m_{t_{k+1}}}{\widetilde m_{t_{k}}} \ola{X}_{t_{k}}^{\infty} - \frac{\widetilde m_{t_{k+1}}}{\widetilde m_{t_{k}}}
            \bar{X}_{t_{k}}^{\theta} + 
            \int_{t_k}^{t_{k+1}} 
            \frac{\widetilde m_{t}}{\widetilde m_{t_{k}}} \bar \beta (t) \left( 
                \nabla \log \tilde p_{T-t} \left( \ola{X}_{t_k}^{\infty} \right)- \nabla \log \tilde p_{T-t} \left(\bar{X}_{t_{k}}^{\theta}  \right) \right) \rmd t \right\|_{L_2} \notag \\
        & + 
            \left\|  \int_{t_k}^{t_{k+1}} 
            \frac{\widetilde m_{t}}{\widetilde m_{t_{k}}} \bar \beta (t) \left( 
                \nabla \log \tilde p_{T-t} \left( \ola{X}_{t}^{\infty} \right)- \nabla \log \tilde p_{T-t} \left(\ola{X}_{t_{k}}^{\infty}  \right) \right) \rmd t \right\|_{L_2}
            \label{eq:decomposition_W2}\\
        & + 
        \left\| \int_{t_k}^{t_{k+1}} 
            \frac{\widetilde m_{t}}{\widetilde m_{t_{k}}} \bar \beta (t) \left( 
                \nabla \log \tilde p_{T-t} \left( \bar{X}_{t_k}^{\theta} \right)- \tilde s_{\theta} \left( T- t_k ,\bar{X}_{t_{k}}^{\theta}  \right) \right) \rmd t \right\|_{L_2} \eqsp.\notag
    \end{align}
    Using the strong concavity and Lipschitz properties of the modified score function, we have that the first term of r.h.s. of \eqref{eq:decomposition_W2} can be bounded as follows
    \begin{align*}
        & \left\|
            \frac{\widetilde m_{t_{k+1}}}{\widetilde m_{t_{k}}} \ola{X}_{t_{k}}^{\infty} - \frac{\widetilde m_{t_{k+1}}}{\widetilde m_{t_{k}}}
            \bar{X}_{t_{k}}^{\theta} + 
            \int_{t_k}^{t_{k+1}} 
            \frac{\widetilde m_{t}}{\widetilde m_{t_{k}}} \bar \beta (t) \left( 
                \nabla \log \tilde p_{T-t} \left( \ola{X}_{t_k}^{\infty} \right)- \nabla \log \tilde p_{T-t} \left(\bar{X}_{t_{k}}^{\theta}  \right) \right) \rmd t \right\|^2 \\
        &= \frac{\widetilde m^2_{t_{k+1}}}{\widetilde m^2_{t_{k}}} \left\|  \ola{X}_{t_{k}}^{\infty} - \bar{X}_{t_{k}}^{\theta} \right\|^2 +
        \left\| \int_{t_k}^{t_{k+1}} 
            \frac{\widetilde m_{t}}{\widetilde m_{t_{k}}}
            \bar \beta (t) \left( 
                \nabla \log \tilde p_{T-t} \left( \ola{X}_{t_k}^{\infty} \right)- \nabla \log \tilde p_{T-t} \left(\bar{X}_{t_{k}}^{\theta}  \right) \right) \rmd t\right\|^2 \\
        & \quad \quad \quad +
        \frac{\widetilde m_{t_{k+1}}}{\widetilde m_{t_{k}}}
        2 \int_{t_k}^{t_{k+1}} \frac{\widetilde m_{t}}{\widetilde m_{t_{k}}} \bar \beta (t) \eqsp
        \left[ \ola{X}_{t_{k}}^{\infty} - \bar{X}_{t_{k}}^{\theta}\right]^\top \left[ \nabla \log \tilde p_{T-t} \left(  \ola{X}_{t_k}^{\infty} \right)- \nabla \log \tilde p_{T-t} \left(\bar{X}_{t_{k}}^{\theta} \right)\rmd t \right] 
        \\
        & \leq \left\|  \ola{X}_{t_{k}}^{\infty} - \bar{X}_{t_{k}}^{\theta} \right\|^2 \left( \frac{\widetilde m^2_{t_{k+1}}}{\widetilde m^2_{t_{k}}} + 
        \left(  \int_{t_k}^{t_{k+1}} \bar L_{t}
        \frac{\widetilde m_{t}}{\widetilde m_{t_{k}}}
        \bar \beta (t)  \rmd t \right)^2
        - 2 \frac{\widetilde m_{t_{k+1}}}{\widetilde m_{t_{k}}}\int_{t_k}^{t_{k+1}} \bar C_{t}
        \frac{\widetilde m_{t}}{\widetilde m_{t_{k}}}
        \bar \beta (t)  \rmd t \right) \eqsp.
    \end{align*}
    Using the Lipschitz property of the modified score and Proposition \ref{prop:X_inf_sup_t}, the second term of the r.h.s. of \eqref{eq:decomposition_W2} can be controlled as follows
    \begin{align*}
        &\left\|  \int_{t_k}^{t_{k+1}} 
            \frac{\widetilde m_{t}}{\widetilde m_{t_{k}}} \bar \beta (t) \left( 
                \nabla \log \tilde p_{T-t} \left( \ola{X}_{t}^{\infty} \right)- \nabla \log \tilde p_{T-t} \left(\ola{X}_{t_{k}}^{\infty}  \right) \right) \rmd t \right\|_{L_2}\\
        &\hspace{.8cm}\leq \left(\int_{t_k}^{t_{k+1}}L_{T-t} 
            \frac{\widetilde m_{t}}{\widetilde m_{t_{k}}} \bar \beta (t) \rmd t \right) \sup_{t_k\leq t \leq t_{k+1}} \left\| \ola{X}_{t}^{\infty} - \ola{X}_{t_{k}}^{\infty}\right\|_{L_2}\\
        &\hspace{.8cm}\leq \left(\int_{t_k}^{t_{k+1}}\bar L_{t} 
            \frac{\widetilde m_{t}}{\widetilde m_{t_{k}}} \bar \beta (t) \rmd t \right)
         2\left(
            \frac{1}{\sigma}\sqrt{h\beta(T)} + {\rm exp}\left(- 
            \int_0^{t_k}  \frac{\bar\beta(s)}{ \sigma^2}\left(1+ \bar C_{s} \sigma^2\right)\rmd s\right)
        \right) B\eqsp.
    \end{align*}
Using Assumption H\ref{hyp:sup_approx}, we can control the third term of the r.h.s. of \eqref{eq:decomposition_W2} as follows
    \begin{align*}
    & \left\| \int_{t_k}^{t_{k+1}} 
            \frac{\widetilde m_{t}}{\widetilde m_{t_{k}}} \bar \beta (t) \left( 
                \nabla \log \tilde p_{T-t} \left( \bar{X}_{t_k}^{\theta} \right)- \tilde s_{\theta} \left( T- t_k ,\bar{X}_{t_{k}}^{\theta}  \right) \right) \rmd t \right\|_{L_2}\\
    & \leq \left\| \int_{t_k}^{t_{k+1}} 
            \frac{\widetilde m_{t}}{\widetilde m_{t_{k}}} \bar \beta (t) \left( 
                \nabla \log \tilde p_{T-t_k} \left( \bar{X}_{t_k}^{\theta} \right)- \tilde s_{\theta} \left( T- t_k ,\bar{X}_{t_{k}}^{\theta}  \right) \right) \rmd t \right\|_{L_2}\\
    & \quad +\left\| \int_{t_k}^{t_{k+1}} 
            \frac{\widetilde m_{t}}{\widetilde m_{t_{k}}} \bar \beta (t) \left( 
                \nabla \log \tilde p_{T-t} \left( \bar{X}_{t_k}^{\theta} \right) -  \nabla \log \tilde p_{T-t_k} \left( \bar{X}_{t_k}^{\theta} \right) \right) \rmd t \right\|_{L_2}\\
    &\leq \varepsilon \int_{t_k}^{t_{k+1}} 
            \frac{\widetilde m_{t}}{\widetilde m_{t_{k}}} \bar \beta (t) \rmd t + 
    \int_{t_k}^{t_{k+1}} 
            \frac{\widetilde m_{t}}{\widetilde m_{t_{k}}} \bar \beta (t) 
            \left\|
                \nabla \log \tilde p_{T-t} \left( \bar{X}_{t_k}^{\theta} \right) - \nabla \log \tilde p_{T-t_k} \left( \bar{X}_{t_k}^{\theta} \right)
            \right\|
            \rmd t \\
         &\leq \varepsilon \int_{t_k}^{t_{k+1}} 
                \frac{\widetilde m_{t}}{\widetilde m_{t_{k}}} \bar \beta (t)
                \rmd t + h M \left(1+\left\| \bar{X}_{t_k}^{\theta} \right\|_{L_2}\right)
         \int_{t_k}^{t_{k+1}} 
                \frac{\widetilde m_{t}}{\widetilde m_{t_{k}}} \bar \beta (t)
                \rmd t\eqsp .        
    \end{align*}
    Note that $
        \ora X_t$ has the same law as $m_t X_0 + \sigma \sqrt{( 1 - m_t^2)} G$, with $G$ a standard Gaussian random variable independent of $X_0$. We have that $\ola{X}^\infty_0\sim \mathcal{N}(0,\sigma^2 I_d)$. Define $(\ola{X}_t)_{t\in[0,T]}$ satisfying \eqref{eq:backward_SDE} but initialized at
    \begin{align*}
        \ola{X}_0 & = m_T X_0 + \sqrt{ \left( 1 - m_T^2\right)} \ola{X}^\infty_0 \eqsp,
    \end{align*}
    with $X_0\sim \pi_{\rm data}$.
    Employing Proposition \ref{prop:true_backward_bound} and \eqref{eq:X_infty_X_ola:X_inf_sup_t}, we obtain
    \begin{align*}
        \left\| \bar{X}_{t_k}^{\theta} \right\|_{L_2} \leq
        \left\| \bar{X}_{t_k}^{\theta} - \ola{X}_{t_k}^{\infty} \right\|_{L_2} +
        \left\| \ola{X}_{t_k}^{\infty} - \ola{X}_{t_k} \right\|_{L_2} +
        \left\| \ola{X}_{t_k} \right\|_{L_2} \leq  \left\| \bar{X}_{t_k}^{\theta} - \ola{X}_{t_k}^{\infty} \right\|_{L_2} + 2B \eqsp.
    \end{align*}
    Therefore, combining the previous bounds, together with \eqref{eq:decomposition_W2}, we obtain
    \begin{align*}
         & \left\|  \ola{X}_{t_{k+1}}^{\infty} - \bar{X}_{t_{k+1}}^{\theta} \right\|_{L_2} \\
         & \leq \left\|  \ola{X}_{t_{k}}^{\infty} - \bar{X}_{t_{k}}^{\theta} \right\| \left( \frac{\widetilde m^2_{t_{k+1}}}{\widetilde m^2_{t_{k}}} + 
        \left(  \int_{t_k}^{t_{k+1}} \bar L_{t}
        \frac{\widetilde m_{t}}{\widetilde m_{t_{k}}}
        \bar \beta (t)  \rmd t \right)^2
        - 2 \frac{\widetilde m_{t_{k+1}}}{\widetilde m_{t_{k}}}\int_{t_k}^{t_{k+1}} \bar C_{t}
        \frac{\widetilde m_{t}}{\widetilde m_{t_{k}}}
        \bar \beta (t)  \rmd t \right)^{1/2}
        \\
        & \quad \quad \quad + \left(\int_{t_k}^{t_{k+1}}\bar L_{t} 
            \frac{\widetilde m_{t}}{\widetilde m_{t_{k}}} \bar \beta (t) \rmd t \right)
         \left(
        \frac{1}{\sigma}\sqrt{2h\beta(T)} + 
        m_T \int_{t_k}^{t_{k+1}} \left( \frac{1}{2\sigma^2}+2\bar L_{t}\right) \bar{\beta} (t) \rmd t
    \right) B
        \\
        & \quad \quad \quad + \varepsilon \int_{t_k}^{t_{k+1}} 
                \frac{\widetilde m_{t}}{\widetilde m_{t_{k}}} \bar \beta (t)
                \rmd t + h M \left(1+\left\| \bar{X}_{t_k}^{\theta} - \ola{X}_{t_k}^{\infty} \right\|_{L_2} + 2B \right)
         \int_{t_k}^{t_{k+1}} 
                \frac{\widetilde m_{t}}{\widetilde m_{t_{k}}} \bar \beta (t)
                \rmd t \eqsp.
    \end{align*}
    By the assumption on $h$ and Proposition \ref{prop:step_size_choice}, 
    $$
    0< 1 + 
        \frac{\widetilde m^2_{t_{k}}}{\widetilde m^2_{t_{k+1}}}
        \left(  \int_{t_k}^{t_{k+1}} \bar L_{t}
        \frac{\widetilde m_{t}}{\widetilde m_{t_{k}}}
        \bar \beta (t)  \rmd t \right)^2
        - 2
        \frac{\widetilde m_{t_{k}}}{\widetilde m_{t_{k+1}}}
        \int_{t_k}^{t_{k+1}} \bar C_{t}
        \frac{\widetilde m_{t}}{\widetilde m_{t_{k}}}
        \bar \beta (t)  \rmd t
        <1\eqsp,
    $$
    and, using that $\sqrt{1-x}\leq 1-x/2$ for $x\in[0,1]$, we conclude that 
    \begin{align*}
         & \left\|  \ola{X}_{t_{k+1}}^{\infty} - \bar{X}_{t_{k+1}}^{\theta} \right\|_{L_2} \notag\\
         \notag
         & \leq \left\|  \ola{X}_{t_{k}}^{\infty} - \bar{X}_{t_{k}}^{\theta} \right\|  \frac{\widetilde m^2_{t_{k+1}}}{\widetilde m^2_{t_{k}}} \\
         & \qquad   \times  \left(
            1 + 
            \frac{1}{2}\frac{\widetilde m^2_{t_{k}}}{\widetilde m^2_{t_{k+1}}}
            \left(  \int_{t_k}^{t_{k+1}} \bar L_{t}
            \frac{\widetilde m_{t}}{\widetilde m_{t_{k}}}
            \bar \beta (t)  \rmd t \right)^2
            - \frac{\widetilde m_{t_{k}}}{\widetilde m_{t_{k+1}}}
            \int_{t_k}^{t_{k+1}} \bar C_{t}
            \frac{\widetilde m_{t}}{\widetilde m_{t_{k}}}
            \bar \beta (t) \rmd t + \right.\\
            &\left. \hspace{9cm}\frac{\widetilde m^2_{t_{k}}}{\widetilde m^2_{t_{k+1}}} M h \int_{t_k}^{t_{k+1}} 
                \frac{\widetilde m_{t}}{\widetilde m_{t_{k}}} \bar \beta (t)
                \rmd t 
        \right) \notag
        \\
        & \qquad   + \left(\int_{t_k}^{t_{k+1}}\bar L_{t} 
            \frac{\widetilde m_{t}}{\widetilde m_{t_{k}}} \bar \beta (t) \rmd t \right)
         \left(
        \frac{1}{\sigma}\sqrt{2h\beta(T)} + 
        m_T \int_{t_k}^{t_{k+1}} \left( \frac{1}{2\sigma^2}+2\bar L_{t}\right) \bar{\beta} (t) \rmd t
    \right) B 
        \\
        & \qquad   +\varepsilon \int_{t_k}^{t_{k+1}} 
                \frac{\widetilde m_{t}}{\widetilde m_{t_{k}}} \bar \beta (t)
                \rmd t + h M \left(1+ 2B \right)
         \int_{t_k}^{t_{k+1}} 
                \frac{\widetilde m_{t}}{\widetilde m_{t_{k}}} \bar \beta (t)
                \rmd t \eqsp.\notag
    \end{align*}
    Define
    \begin{align*}
    \delta_k &:=
        \frac{\widetilde m^2_{t_{k+1}}}{\widetilde m^2_{t_{k}}} \left(
            1 + 
            \frac{1}{2}\frac{\widetilde m^2_{t_{k}}}{\widetilde m^2_{t_{k+1}}}
            \left(  \int_{t_k}^{t_{k+1}} \bar L_{t}
            \frac{\widetilde m_{t}}{\widetilde m_{t_{k}}}
            \bar \beta (t)  \rmd t \right)^2
            - \frac{\widetilde m_{t_{k}}}{\widetilde m_{t_{k+1}}}
            \int_{t_k}^{t_{k+1}} \bar C_{t}
            \frac{\widetilde m_{t}}{\widetilde m_{t_{k}}}
            \bar \beta (t) \rmd t \right.\\
            &\left.\hspace{9cm}+ \frac{\widetilde m^2_{t_{k}}}{\widetilde m^2_{t_{k+1}}} M h \int_{t_k}^{t_{k+1}} 
                \frac{\widetilde m_{t}}{\widetilde m_{t_{k}}} \bar \beta (t)
                \rmd t 
        \right) \\
        &\leq \left(
            1 + 
            \frac{1}{2}\frac{\widetilde m^2_{t_{k}}}{\widetilde m^2_{t_{k+1}}}
            \left(  \int_{t_k}^{t_{k+1}} \bar L_{t}
            \frac{\widetilde m_{t}}{\widetilde m_{t_{k}}}
            \bar \beta (t)  \rmd t \right)^2
            - \frac{\widetilde m_{t_{k}}}{\widetilde m_{t_{k+1}}}
            \int_{t_k}^{t_{k+1}} \bar C_{t}
            \frac{\widetilde m_{t}}{\widetilde m_{t_{k}}}
            \bar \beta (t) \rmd t \right.\\
            &\left. \hspace{9cm}+ \frac{\widetilde m^2_{t_{k}}}{\widetilde m^2_{t_{k+1}}} M h \int_{t_k}^{t_{k+1}} 
                \frac{\widetilde m_{t}}{\widetilde m_{t_{k}}} \bar \beta (t)
                \rmd t 
        \right)\eqsp.
    \end{align*}
    By Proposition \ref{prop:step_size_choice},  $\delta_k \leq 1$ for any $0\leq k\leq N-1$ , which yields
    \begin{align*}
        \left\|  \ola{X}_{T}^{\infty} - \bar{X}_{T}^{\theta} \right\|_{L_2} 
        &\leq \prod_{k=0}^{N-1} \delta_k  \left\|  \ola{X}_{0}^{\infty} - \bar{X}_{0}^{\theta} \right\|_{L_2} +\left(\varepsilon h\beta(T) +  M h^2\beta(T)\left(1+2B \right)\right)\sum_{k=0}^{N-1}\prod_{\ell=k}^{N-1} \delta_\ell\\
        &\hspace{-2cm}+\sum_{k=0}^{N-1}
        \left(\int_{t_k}^{t_{k+1}}\bar L_{t} 
            \frac{\widetilde m_{t}}{\widetilde m_{t_{k}}} \bar \beta (t) \rmd t \right)
         \left(
        \frac{1}{\sigma}\sqrt{2h\beta(T)} + 
        m_T \int_{t_k}^{t_{k+1}} \left( \frac{1}{2\sigma^2}+2\bar L_{t}\right) \bar{\beta} (t) \rmd t
    \right) B \prod_{\ell=k}^{N-1} \delta_\ell
        \\
        &\leq \varepsilon T\beta(T) +  M h T \beta(T)\left(1+2B \right)
        \\
        & \hspace{-2cm}+ \sum_{k=0}^{N-1}
        \left(\int_{t_k}^{t_{k+1}}\bar L_{t} 
            \frac{\widetilde m_{t}}{\widetilde m_{t_{k}}} \bar \beta (t) \rmd t \right)
         \left(
        \frac{1}{\sigma}\sqrt{2h\beta(T)} + 
        m_T \int_{t_k}^{t_{k+1}} \left( \frac{1}{2\sigma^2}+2\bar L_{t}\right) \bar{\beta} (t) \rmd t
    \right) B\eqsp.
    \end{align*}

    \end{proof}

    \paragraph{Final bound. } Finally, combining the results of Proposition \ref{prop:contractivity} and Proposition \ref{prop:wasserstein_tk}, we conclude that
    \begin{align*}
        \Wc_2 \left( \pi_{\rm data} ,\pihat \right)&\leq 
        \Wc_2\left(\pi_{\rm data},\pi_{\infty}\right)
    \exp\left(- 
    \int_0^T \frac{\beta(t)}{\sigma^2}\left(1+  C_t\sigma^2\right)\rmd t\right) \\
    &\hspace{-2cm} + \sum_{k=0}^{N-1}
        \left(\int_{t_k}^{t_{k+1}}\bar L_{t} 
            \frac{\widetilde m_{t}}{\widetilde m_{t_{k}}} \bar \beta (t) \rmd t \right)
         \left(
        \frac{1}{\sigma}\sqrt{2h\beta(T)} + 
        m_T \int_{t_k}^{t_{k+1}} \left( \frac{1}{2\sigma^2}+2\bar L_{t}\right) \bar{\beta} (t) \rmd t
    \right) B
        \\
        &\hspace{-2cm}  + \varepsilon T\beta(T) +  M h T \beta(T)\left(1+2B \right)\eqsp.
    \end{align*}

\subsection{Technical results for Wasserstein upper bound}
    \begin{proposition}
    \label{prop:step_size_choice}
        Assume that  H\ref{hyp:score_regularity} and H\ref{hyp:lipschitz_score_time} hold. Consider the regular discretization $\{t_k, 0\leq k \leq N\}$ of $[0,T]$ of constant step size $h$.
        Assume that $h>0$ is such that for all $t_k$ with $0\leq k\leq N-1$,
        \begin{align}
    \label{eq:condition_step_size1}
            h &< \frac{ 2\bar C_{t} }{\bar{\beta}(t_k) \left(\max_{t_k\leq s \leq t_{k+1}}\bar L_{s}\right)\bar L_{t}} \frac{\widetilde m_{t_{k+1}}}{\widetilde m_{t_{k}}} \eqsp ,  
        \end{align}
        where $\widetilde m_t := \exp(-\int_0^t\bar \beta(s)\rmd s/(2\sigma^2))$, $ m_t := \exp(-\int_0^t \beta(s)\rmd s/(2\sigma^2))$. Then, for all $0\leq k\leq N-1$,
        \begin{align*}
            0< 1 + 
        \frac{\widetilde m^2_{t_{k}}}{\widetilde m^2_{t_{k+1}}}
        \left(  \int_{t_k}^{t_{k+1}} \bar{L}_{t}
        \frac{\widetilde m_{t}}{\widetilde m_{t_{k}}}
        \bar \beta (t)  \rmd t \right)^2
        - 2
        \frac{\widetilde m_{t_{k}}}{\widetilde m_{t_{k+1}}}
        \int_{t_k}^{t_{k+1}} \bar{C}_{t}
        \frac{\widetilde m_{t}}{\widetilde m_{t_{k}}}
        \bar \beta (t)  \rmd t
            <1\eqsp.
        \end{align*}
        In addition, if
        \begin{align}
        \label{eq:condition_step_size2}
            h &< \frac{ 2\bar{C}_{t} }{M + \bar{\beta}(t_k) \left(\max_{t_k\leq s \leq t_{k+1}}\bar{L}_{s}\right)\bar{L}_{t}} \frac{\widetilde m_{t_{k+1}}}{\widetilde m_{t_{k}}} \eqsp ,   
        \end{align}
        then, for all $0\leq k\leq N-1$,
        \begin{multline*}
            0< 
            1 + 
            \frac{1}{2}\frac{\widetilde m^2_{t_{k}}}{\widetilde m^2_{t_{k+1}}}
            \left(  \int_{t_k}^{t_{k+1}} \bar{L}_{t}
            \frac{\widetilde m_{t}}{\widetilde m_{t_{k}}}
            \bar \beta (t)  \rmd t \right)^2
            - \frac{\widetilde m_{t_{k}}}{\widetilde m_{t_{k+1}}}
            \int_{t_k}^{t_{k+1}} \bar{C}_{t}
            \frac{\widetilde m_{t}}{\widetilde m_{t_{k}}}
            \bar \beta (t) \rmd t \\+ \frac{\widetilde m^2_{t_{k}}}{\widetilde m^2_{t_{k+1}}} M h \int_{t_k}^{t_{k+1}} 
                \frac{\widetilde m_{t}}{\widetilde m_{t_{k}}} \bar \beta (t)
                \rmd t 
        <1 \eqsp . 
        \end{multline*}
    \end{proposition}
    \begin{proof}
    Denote $\epsilon_1$ and $\epsilon_2$ the following quantities
    \begin{align}
    \label{eq:step_size_choice:eps1}
        \epsilon_1&= 1 + 
            \frac{\widetilde m^2_{t_{k}}}{\widetilde m^2_{t_{k+1}}}
            \left(  \int_{t_k}^{t_{k+1}} \bar{L}_{t}
            \frac{\widetilde m_{t}}{\widetilde m_{t_{k}}}
            \bar \beta (t)  \rmd t \right)^2
            - 2
            \frac{\widetilde m_{t_{k}}}{\widetilde m_{t_{k+1}}}
            \int_{t_k}^{t_{k+1}} \bar{C}_{t}
            \frac{\widetilde m_{t}}{\widetilde m_{t_{k}}}
            \bar \beta (t)  \rmd t\eqsp,\\
        \epsilon_2&= 1 + 
            \frac{\widetilde m^2_{t_{k}}}{\widetilde m^2_{t_{k+1}}}
            \left(  \int_{t_k}^{t_{k+1}} \bar{L}_{t}
            \frac{\widetilde m_{t}}{\widetilde m_{t_{k}}}
            \bar \beta (t)  \rmd t \right)^2
            - 2
            \frac{\widetilde m_{t_{k}}}{\widetilde m_{t_{k+1}}}
            \int_{t_k}^{t_{k+1}} \bar{C}_{t}
            \frac{\widetilde m_{t}}{\widetilde m_{t_{k}}}
            \bar \beta (t)  \rmd t\nonumber\\
            &\hspace{7cm}
            +
            \frac{\widetilde m^2_{t_{k}}}{\widetilde m^2_{t_{k+1}}} M h \int_{t_k}^{t_{k+1}} 
                \frac{\widetilde m_{t}}{\widetilde m_{t_{k}}} \bar \beta (t)
                \rmd t \eqsp.
                \label{eq:step_size_choice:eps2}
    \end{align}

        First, we prove that $\epsilon_1$ is positive. Completing the square, we obtain
        \begin{align*}
            \epsilon_1&
            =\left(1 - 
            \frac{\widetilde m_{t_{k}}}{\widetilde m_{t_{k+1}}}
             \int_{t_k}^{t_{k+1}} \bar{L}_{t}
            \frac{\widetilde m_{t}}{\widetilde m_{t_{k}}}
            \bar \beta (t)  \rmd t \right)^2 +
            2
            \frac{\widetilde m_{t_{k}}}{\widetilde m_{t_{k+1}}}
            \int_{t_k}^{t_{k+1}} \bar{L}_{t}
            \frac{\widetilde m_{t}}{\widetilde m_{t_{k}}}
            \bar \beta (t)  \rmd t\\
            &\hspace{7cm}
            - 2
            \frac{\widetilde m_{t_{k}}}{\widetilde m_{t_{k+1}}}
            \int_{t_k}^{t_{k+1}} \bar{C}_{t}
            \frac{\widetilde m_{t}}{\widetilde m_{t_{k}}}
            \bar \beta (t)  \rmd t\\
            &\eqsp=\left(1 - 
            \frac{\widetilde m_{t_{k}}}{\widetilde m_{t_{k+1}}}
             \int_{t_k}^{t_{k+1}} \bar{L}_{t}
            \frac{\widetilde m_{t}}{\widetilde m_{t_{k}}}
            \bar \beta (t)  \rmd t \right)^2 +
            2
            \frac{\widetilde m_{t_{k}}}{\widetilde m_{t_{k+1}}}
                \int_{t_k}^{t_{k+1}} \left(\bar{L}_{t}
                - \bar{C}_{t}\right)
            \frac{\widetilde m_{t}}{\widetilde m_{t_{k}}}
            \bar \beta (t)  \rmd t.
        \end{align*}
        The first term if the r.h.s. of the previous equality is a square, therefore always positive. The second term is always positive as well, as $\bar{L}_{t}\geq  \bar{C}_{t}$ for any $t$, as the Lipschitz constant and the log-concavity coefficient of the score function respectively. Moreover, the previous is always strictly positive as
        \begin{align*}
            \frac{\widetilde m_{t_{k}}}{\widetilde m_{t_{k+1}}}
         \int_{t_k}^{t_{k+1}} \bar{L}_{t}
        \frac{\widetilde m_{t}}{\widetilde m_{t_{k}}}
        \bar \beta (t)  \rmd t >0\eqsp.
        \end{align*}

        Secondly, proving that the previous quantity is smaller than $1$ is equivalent to show that
        \begin{align*}
            \frac{\widetilde m^2_{t_{k}}}{\widetilde m^2_{t_{k+1}}}
            \left(  \int_{t_k}^{t_{k+1}} \bar{L}_{t}
            \frac{\widetilde m_{t}}{\widetilde m_{t_{k}}}
            \bar \beta (t)  \rmd t \right)^2
            - 2
            \frac{\widetilde m_{t_{k}}}{\widetilde m_{t_{k+1}}}
            \int_{t_k}^{t_{k+1}} \bar{C}_{t}
            \frac{\widetilde m_{t}}{\widetilde m_{t_{k}}}
            \bar \beta (t)  \rmd t <0\eqsp.
        \end{align*}
        As $\bar\beta(t)$ is a decreasing function, we obtain the following bound
        \begin{align*}
            &\frac{\widetilde m^2_{t_{k}}}{\widetilde m^2_{t_{k+1}}}
            \left(  \int_{t_k}^{t_{k+1}} \bar{L}_{t}
            \frac{\widetilde m_{t}}{\widetilde m_{t_{k}}}
            \bar \beta (t)  \rmd t \right)^2
            - 2
            \frac{\widetilde m_{t_{k}}}{\widetilde m_{t_{k+1}}}
            \int_{t_k}^{t_{k+1}} \bar{C}_{t}
            \frac{\widetilde m_{t}}{\widetilde m_{t_{k}}}
            \bar \beta (t)  \rmd t
            \\
            &\leq\left(\frac{\widetilde m_{t_{k}}}{\widetilde m_{t_{k+1}}} \max_{t_k\leq s \leq t_{k+1}}\bar{L}_{s} \bar\beta(t_k) h\right) \frac{\widetilde m_{t_{k}}}{\widetilde m_{t_{k+1}}}
            \int_{t_k}^{t_{k+1}} \bar{L}_{t}
            \frac{\widetilde m_{t}}{\widetilde m_{t_{k}}}
            \bar \beta (t)  \rmd t - 2
            \frac{\widetilde m_{t_{k}}}{\widetilde m_{t_{k+1}}}
            \int_{t_k}^{t_{k+1}} \bar{C}_{t}
            \frac{\widetilde m_{t}}{\widetilde m_{t_{k}}}
            \bar \beta (t)  \rmd t\\
            \\
            &= \frac{\widetilde m_{t_{k}}}{\widetilde m_{t_{k+1}}}
            \int_{t_k}^{t_{k+1}} \left(\left(\frac{\widetilde m_{t_{k}}}{\widetilde m_{t_{k+1}}} \max_{t_k\leq s \leq t_{k+1}}\bar{L}_{s} \bar\beta(t_k) h\right)\bar{L}_{t}- 2 \bar{C}_{t}\right)
            \frac{\widetilde m_{t}}{\widetilde m_{t_{k}}}
            \bar \beta (t)  \rmd t\eqsp.
        \end{align*}
        This means that, if we have
        \begin{align*}
            \frac{\widetilde m_{t_{k}}}{\widetilde m_{t_{k+1}}}\left( \max_{t_k\leq s \leq t_{k+1}}\bar{L}_{s} \right)\bar\beta(t_k) h\bar{L}_{t}- 2 \bar{C}_{t} < 0
        \end{align*}
        for $t_k\leq t\leq t_{k+1}$, we have $\epsilon_1<1$.
        Isolating $h$ in the previous inequality, we obtain that it is equivalent to the condition \eqref{eq:condition_step_size1}.
    
        Now we focus on $\epsilon_2$. This quantity is clearly positive as the $\epsilon_2\geq \epsilon_1$. Moreover, following the same lines as to prove that $\epsilon_1<1$, we have
        \begin{align*}
            &\epsilon_2 - 1
            \leq \frac{\widetilde m_{t_{k}}}{\widetilde m_{t_{k+1}}}
            \int_{t_k}^{t_{k+1}} \left(\frac{\widetilde m_{t_{k}}}{\widetilde m_{t_{k+1}}}\left( \max_{t_k\leq s \leq t_{k+1}}\bar{L}_{s} \right)\bar\beta(t_k) h\bar{L}_{t} + \frac{\widetilde m_{t_{k}}}{\widetilde m_{t_{k+1}}}M h- 2 \bar{C}_{t}\right)
            \frac{\widetilde m_{t}}{\widetilde m_{t_{k}}}
            \bar \beta (t)  \rmd t\eqsp.
        \end{align*}
        This means that, if we have
        \begin{align*}
            \frac{\widetilde m_{t_{k}}}{\widetilde m_{t_{k+1}}}\left( \max_{t_k\leq s \leq t_{k+1}}\bar{L}_{s} \right)\bar\beta(t_k) h\bar{L}_{t} + \frac{\widetilde m_{t_{k}}}{\widetilde m_{t_{k+1}}}M h- 2 \bar{C}_{t} < 0
        \end{align*}
        for $t_k\leq t\leq t_{k+1}$, we have $\epsilon_2<1$.
        Isolating $h$ in the previous inequality, we obtain that it is equivalent to the condition \eqref{eq:condition_step_size2}.
    
    \end{proof}

\begin{proposition} \label{prop:true_backward_bound}
    Assume that H\ref{hyp:fisher_info} holds. For all $t \geq 0$,
    \begin{align*}
        \sup_{0 \leq t \leq T} \left\| \ola X_t \right\|_{L_2} \leq \sup_{0 \leq t \leq T} \left( m_t^2   \mathbb{E} \left[ \left\| X_0 \right\|^2 \right]  +(1 - m_t^2) \sigma^2 d \right)^{1/2} \leq  \left(  \mathbb{E} \left[ \left\| X_0 \right\|^2 \right] + \sigma^2 d \right)^{1/2}\eqsp,
    \end{align*}
    where $m_t=\exp(-\int_{0}^t \beta(s) \rmd s /2\sigma^2)$.
\end{proposition}

\begin{proof}
      Recall the following equality in law
     \begin{align*}
        \ora X_t & = m_t X_0 + \sigma \sqrt{ \left( 1 - m_t^2\right)} G \eqsp.
    \end{align*}
    with $X_0 \sim \pi_{\rm data}$ and $G \sim \mathcal{N}\left( 0 , I_d \right)$.

    Therefore, for any $t \in [0,T]$
    \begin{align*}    
     \mathbb{E} \left[  \left\| \ola X_{T-t} \right\|^2 \right] = \mathbb{E} \left[  \left\| \ora X_t \right\|^2 \right] & \leq  m_t^2  \mathbb{E} \left[ \left\| X_0 \right\|^2 \right]  +  \sigma^2  \left( 1 - m_t^2 \right) \mathbb{E} \left[ \left\| G \right\|^2 \right] \\
     &  \leq  m_t^2  \mathbb{E} \left[ \left\| X_0 \right\|^2 \right]  +  \sigma^2  \left( 1 - m_t^2 \right)d \eqsp.
     \end{align*}

\end{proof}

\begin{proposition}
    \label{prop:X_inf_sup_t}
    Assume that H\ref{hyp:fisher_info} holds. For all $t_k \leq t \leq t_{k +1}$, 
    \begin{align}
    \label{eq:X_inf_sup_t}
     \sup_{t_k \leq t \leq t_{k+1}} \left\|   \ola{X}_{t}^{\infty} -  \ola{X}_{t_k}^{\infty} \right\|_{L_2}
    &\leq
     \left(
        \frac{1}{\sigma}\sqrt{2h\beta(T)} + 
        m_T \int_{t_k}^{t_{k+1}} \left( \frac{1}{2\sigma^2}+2\bar{L}_{t}\right) \bar{\beta} (t) \rmd t
    \right) B\eqsp,
    \\
        \sup_{0 \leq t \leq T}\left\| \ola{X}_{t}^{\infty} - \ola X_{t}  \right\|_{L_2}
        &\leq \left(\left[ \left\| X_0 \right\|^2\right]+ \sigma^2 d
        \right)^{1/2}{\rm exp}\left(- 
        \int_0^{T} \frac{\bar\beta(s)}{ 2\sigma^2}\rmd s\right)
        \eqsp,
        \label{eq:X_infty_X_ola:X_inf_sup_t}
    \end{align}
where $m_t=\exp(-\int_{0}^t \beta(s) \rmd s /2\sigma^2)$ and $B=(  \mathbb{E} [ \| X_0 \|^2 ] + \sigma^2 d )^{1/2}$.
\end{proposition}

\begin{proof}
    Note that $\ora X_t$ has the same distribution as $m_t X_0 + \sigma \sqrt{ \left( 1 - m_t^2\right)} G$ where $G \sim \mathcal{N}(0,I_d)$ is independent of $X_0$.
    We have that $\ola{X}^\infty_0=G\sim \mathcal{N}(0,\sigma^2 I_d)$. Define $(\ola{X}_t)_{t\in[0,T]}$ satisfying \eqref{eq:backward_SDE} but initialized at
    \begin{align}
    \label{eq:initialization:X_inf_sup_t}
        \ola{X}_0 & = m_T Y + \sqrt{ \left( 1 - m_T^2\right)} G \eqsp,
    \end{align}
    with $Y\sim \pi_{\rm data}$ independent of $G$ ($G$ being shared by $\ola{X}_0$ and $\ola{X}^\infty_0$).

        On the one hand, following the same proof as in Proposition \ref{prop:contractivity}, we have that
    \begin{align*}
        \notag 
        \left\| \ola{X}_{t}^{\infty} - \ola X_{t}  \right\|_{L_2} 
        &\leq \left\| \ola{X}_0^{\infty} - \ola X_0 \right\|_{L_2} {\rm exp}\left(- 
       \int_0^{t} \frac{\bar\beta(s)}{2 \sigma^2}\left(1+ 2\bar{C}_{s} \sigma^2\right)\rmd s\right)\\
        &\leq \left(\left[ \left\| Y \right\|^2\right]+ \sigma^2 d
        \right)^{1/2} m_T
        \eqsp,
    \end{align*}
    where we have used \eqref{eq:initialization:X_inf_sup_t} as well as the fact that
    \begin{align*}
        \left\|X_0 - G\right\|_{L_2}
        = 
        \left(\left[ \left\| Y \right\|^2\right]+
        \left[ \left\| G \right\|^2\right]
        \right)^{1/2}
        = B\eqsp.
    \end{align*}
    Therefore,
    \begin{align*}
        \sup_{0 \leq t \leq T}\left\| \ola{X}_{t}^{\infty} - \ola X_{t}  \right\|_{L_2}
        &\leq \left(\left[ \left\| Y \right\|^2\right]+ \sigma^2 d
        \right)^{1/2}{\rm exp}\left(- 
        \int_0^{T} \frac{\bar\beta(s)}{ 2\sigma^2}\rmd s\right),
    \end{align*}
    corresponding to \eqref{eq:X_infty_X_ola:X_inf_sup_t}.

    On the other hand, we have that
    \begin{align*}
        \left\| \ola{X}_{t}^{\infty} - \ola{X}_{t_k}^{\infty}  \right\|_{L_2}
        \leq
        \left\| \ola{X}_{t} - \ola{X}_{t_k} \right\|_{L_2}+
        \left\| \left(\ola{X}_{t}^{\infty} - \ola{X}_{t}\right) - 
         \left(\ola{X}_{t_k}^{\infty} - \ola{X}_{t_k}\right) \right\|_{L_2}
        \eqsp.
    \end{align*}

    The process $(\ola{X}_{t}^{\infty} - \ola{X}_{t})_{t\geq 0}$ is determined by the following ODE:
    $$
    d\left(\ola{X}_{t}^{\infty} - \ola{X}_{t}\right)
    = \left( -\frac{\bar{\beta}(t)}{2\sigma^2}\left(\ola{X}_{t}^{\infty} - \ola{X}_{t}\right) + 2 \bar{\beta}(t)\left( \nabla \log \tilde{p}_{T-t} \left(\ola{X}_{t}^{\infty}\right) - \nabla \log \tilde{p}_{T-t} \left(\ola{X}_{t}\right) \right) \right)\rmd t \eqsp .
    $$
    Then,
    \begin{align*}
        &\left\| \left(\ola{X}_{t}^{\infty} - \ola{X}_{t}\right) - 
         \left(\ola{X}_{t_k}^{\infty} - \ola{X}_{t_k}\right) \right\|_{L_2} \\
         &\hspace{.5cm}= \left\|\int_{t_k}^{t} \left( - \frac{\bar{\beta}(s)}{2\sigma^2}
         \left(\ola{X}_{s}^{\infty} - \ola{X}_{s}\right) + 2\bar{\beta}(s)\left( \nabla \log \tilde{p}_{T-s} \left(\ola{X}_{s}^{\infty}\right) - \nabla \log \tilde{p}_{T-s} \left(\ola{X}_{s}\right) \right) \right)\rmd s \right\|_{L_2}
         \\
         &\hspace{.5cm}\leq \sup_{t_k\leq t\leq t_{k+1}}\left\|   \ola{X}_{t}^{\infty} - \ola{X}_{t}
         \right\|_{L_2} \int_{t_k}^{t_{k+1}} \left( \frac{1}{2\sigma^2}+2\bar{L}_{t}\right) \bar{\beta} (t) \rmd t \\
         &\hspace{.5cm}\leq B m_T \int_{t_k}^{t_{k+1}} \left( \frac{1}{2\sigma^2}+2\bar{L}_{t}\right) \bar{\beta} (t) \rmd t\eqsp.
    \end{align*}

  Write $(\ora{X}_t)_{t\in[0,T]}$ the time reversal of $(\ola{X}_t)_{t\in[0,T]}$, which clearly satisfies \eqref{eq:forward-SDE:beta}. Using the following equality in law
    \begin{align*}
        \ora{X}_{T-t_k} = \frac{m_{T-t_k}}{m_{T-t}}\ora{X}_{T-t}+ \left(1-\left(\frac{m_{T-t_k}}{m_{T-t}}\right)^2\right)^{1/2}\sigma G\eqsp,
    \end{align*}
    with $G\sim \mathcal{N}(0,I_d)$, we get
    \begin{align*}
        \left\| \ola{X}_{t} - \ola{X}_{t_k} \right\|_{L_2} =
        \left\| \ora{X}_{T-t_k} - \ora{X}_{T-t} \right\|_{L_2} &= 
       \left(1-\left(\frac{m_{T-t_k}}{m_{T-t}}\right)^2\right)^{1/2}
       \left(\left[ \left\| \ora{X}_{T-t} \right\|^2\right]+ \sigma^2 d
        \right)^{1/2}\\
        &\leq  \left(1-\left(\frac{m_{T-t_k}}{m_{T-t}}\right)^2\right)^{1/2} \sqrt{2} B\eqsp,
    \end{align*}
    where we have applied Proposition \ref{prop:true_backward_bound} in the last inequality.
    Since
    \begin{align*}
        1-\left(\frac{m_{T-t_k}}{m_{T-t}}\right)^2 &= 1-{\rm exp}\left(-\frac{1}{\sigma^2}\int_{T-t}^{T-t_k} \beta(s)\rmd s\right)\\
        &= \frac{1}{\sigma^2}
        \int_{T-t}^{T-t_k}{\rm exp}\left(-\frac{1}{\sigma^2}\int_{T-t}^{T-u} \beta(s)\rmd s\right)\beta(u)\rmd u\\
        &\leq \frac{1}{\sigma^2} h \beta(T) 
        \eqsp,
    \end{align*}
    which concludes the proof of  \eqref{eq:X_inf_sup_t}.    
\end{proof}

\section{Discussion on the hypotheses}

\begin{proposition}\label{prop:from_C_0_to_C_t}
Assume that $\log \pi_{\rm data}$ is $\Cpdata$-strongly concave and that $\Cpdata> 1/ \sigma^2$. Then, the modified score function $\log \tilde p_t (x)$ is, for any $t \in ( 0,T ]$, $C_t$-strongly concave, with 
\begin{align*}
    m_t &=  \exp\left(-\frac{1}{2 \sigma^2}\int_0^t \beta(s) \rm d s\right) \eqsp,
    \\
     C_t &= \frac{1}{ m_t^{2}/\Cpdata + \sigma^{2}\left(1-m_t^{2}\right)} - \frac{1}{\sigma^{2}}\eqsp.
\end{align*}
Moreover, we have that $C_t \leq \Cpdata - 1/\sigma^2$ for any $t\geq0$.
\end{proposition}

\begin{proof}
This result is also proved in \cite{saremi2023chain}. We provide an alternative proof here for completeness.
    For all $1\leq t\leq T$, $\ora X_t$ has the same law has $m_t X_0 + \sigma\sqrt{1 - m_t^2}Z$ where  $X_0 \sim \pi_{\rm data}$ and $Z \sim \mathcal{N}( 0 , I_d )$ are independent.
    Therefore, writing $p_0 = \pi_{\rm data}$,
\begin{align}\label{eq:def_p_t:convolution}
        p_t (y) & = \int_{\mathbb{R}^d} (2 \pi \sigma^2\left(1-m^2_t\right))^{-d/2} \exp \left\{ \frac{-\left\|y - x_0 m_t \right\|^2}{2 \sigma^2\left(1-m^2_t\right)} \right\} p_0(x_0) \rmd x_0 \eqsp.
    \end{align}
    This implies that 
    \begin{align*}
        \log p_t(y) & = - \frac{d}{2} \log \left(2 \pi \sigma^2\left(1-m^2_t\right) \right) + \log \left( \int_{\mathbb{R}^d} \exp \left\{ -\frac{\left\|y - x_0 m_t \right\|^2}{2 \sigma^2\left(1-m^2_t\right)} \right\} p_0(x_0) \rmd x_0 \right) \\
        & = - \frac{d}{2} \log \left(2 \pi \sigma^2\left(1-m^2_t\right) \right) + \log \left( \int_{\mathbb{R}^d} \exp \left\{ -\frac{\left\|y - u \right\|^2}{2 \sigma^2\left(1-m^2_t\right)} \right\} p_0 \left(\frac{u}{m_t} \right) \rmd u \right)\\
        &\hspace{9cm}+   \frac{d}{2 \sigma^2}\int_0^t \beta (s) \rmd s \eqsp.
    \end{align*}
    Since $\log p_0$ is $\Cpdata$-strongly concave, the function $x \mapsto p_0 \left(u/m_t \right)$ is $\Cpdata/ m_t^2$-strongly log-concave. Moreover, we have that the function
    $y\mapsto \exp \{ -\|y\|^2/(2 \sigma^2(1-m^2_t))\}$ is $(\sigma^2(1-m^2_t)^{-1}$-strongly log-concave. Applying \citet[Proposition 7.1][]{saumard2014log}, since $p_t$ is a convolution of the previous two functions up to terms independent in space, we have that $\log p_t$ is $\left(m_t^2/\Cpdata + \sigma^2\left(1-m^2_t\right)\right)^{-1}$-strongly concave. Note that if $\Cpdata\geq 1/\sigma^2$,  
    \begin{align*}
        \frac{\Cpdata}{m_t^2 + \sigma^2\Cpdata\left(1-m^2_t\right)} \geq \frac{1}{\sigma^2}.
    \end{align*}
    This entails that $\log \tilde p_t$ is $C_t$-strongly concave, with 
    \begin{align*}
        C_t &= \frac{1}{ m_t^{2}/\Cpdata + \sigma^{2}\left(1-m_t^{2}\right)} - \frac{1}{\sigma^{2}}\eqsp.
    \end{align*}
    Finally, finding the maximum $t\mapsto C_t$, is equivalent to find the maximum of the following function on $[0,1]$:
    \begin{align*}
        \psi:z\mapsto \frac{\Cpdata}{z + \sigma^2\Cpdata(1-z)} - \frac{1}{\sigma^2}\eqsp.
    \end{align*}
    We have that $\psi(0)= \Cpdata -1/\sigma^2$, $\psi(1)= 0$ and for all $z\in [0,1]$, 
    $$
    \psi'(z)= \frac{\sigma^2- 1/\Cpdata}{\left(z/\Cpdata + \sigma^2(1-z)\right)^2}\eqsp,
    $$ 
    which is negative since $\Cpdata\geq 1\leq1/\sigma^2$. Therefore, we  get $0\leq C_t\leq \Cpdata -1/\sigma^2$.
\end{proof}

\begin{proposition}\label{prop:from_L_0_to_L_t}
    If $\log \pi_{\rm data}$ is $\Lpdata$-smooth, then for all $0\leq t\leq T$, $\nabla \log \tilde p_{t}$ is $L_t$-Lipschitz in the space variable with 
    \begin{align*}
        L_t = \min \left\{ \frac{1}{\sigma^2\left(1-m^2_t\right)} ;  \frac{\Lpdata}{m^2_t} \right\} + \frac{1}{\sigma^2} \eqsp.
    \end{align*}
    Moreover, if $\Lpdata>1/\sigma^2$, we can choose $L_t$ as follows:
    \begin{align*}
        L_t = \min \left\{ \frac{1}{\sigma^2\left(1-m^2_t\right)} ;  \frac{\Lpdata}{m^2_t} \right\} - \frac{1}{\sigma^2} \eqsp.
    \end{align*}
    Moreover, in this case, we have that $L_t \leq \Lpdata$ for any $t\geq0$.
\end{proposition}

\begin{proof}
    In the proof of Proposition \ref{prop:from_C_0_to_C_t}, we proved that, if $\log \pi_{\rm data}$ is $\Cpdata$-strongly concave, $\log p_t$ is $\left(m_t^2/\Cpdata + \sigma^2\left(1-m^2_t\right)\right)^{-1}$-strongly concave i.e.,
    \begin{equation*}
        \nabla^2 \left( - \log p_t \right) (x) \succcurlyeq \frac{1}{m_t^2/\Cpdata + \sigma^2\left(1-m^2_t\right)} I_d \eqsp.
    \end{equation*}
    For $p_0 := \pi_{\rm data}$, we have that $p_t$ is given by  \eqref{eq:def_p_t:convolution}. This means that $p_t$ is the density of the sum of two independent random variables $X_1 + X_0$ of density respectively $q_0$ and $q_1$, such that
    \begin{align*}
        q_0(x) &\eqdef \frac{1}{m^d_t} p_0 \left(\frac{u}{m^d_t} \right) = e^{- \phi_0 (x)}\eqsp, \\
        q_1(x) &\eqdef \frac{1}{\left(2\pi \sigma^2 \left(1-m^2_t\right)\right)^{d/2}} \exp \left\{ -\frac{\|y\|^2}{2 \sigma^2\left(1-m^2_t\right)} \right\} = e^{- \phi_1 (x)} \eqsp,
    \end{align*}
    for two functions $\phi_0$ and $\phi_1$.
    Therefore, as in the proof of \citet[Proposition 7.1][]{saumard2014log}, we get
    \begin{align*}
         \nabla^2 \left( - \log p_t \right) (x) & = -\text{Var}(\nabla\phi_0(X_0) | X_0 + X_1 = x) + \mathbb{E}[\nabla^2\phi_0(X_0) | X_0 + X_1 = x] \\
         & = -\text{Var}(\nabla\phi_1(X_1) | X_0 + X_1 = x) + \mathbb{E}[\nabla^2\phi_1(X_1) | X_0 + X_1 = x] \eqsp.
    \end{align*}
    Since $\nabla \log p_0$ is $\Lpdata$-Lipschitz and from the definition of $q_1$,
    \begin{equation*}
        \nabla^2 \phi_0 \preccurlyeq \frac{\Lpdata}{m^2_t} I_d\eqsp,\quad 
        \nabla^2 \phi_1 \preccurlyeq  \frac{1}{\sigma^2\left(1-m^2_t\right)} I_d  \eqsp.
    \end{equation*}
    Hence, 
    \begin{equation*}
        \nabla^2 \left( - \log p_t \right) (x) \preccurlyeq \min \left\{ \frac{1}{\sigma^2\left(1-m^2_t\right)} ;   \frac{L_0}{m^2_t} \right\} I_d \eqsp.
    \end{equation*}
    Therefore, since the difference between $\nabla \log p_t$ and $\nabla \log \tilde p_t$ is a linear function, we can choose $L_t$ as follows:
    \begin{align*}
        L_t = \min \left\{ \frac{1}{\sigma^2\left(1-m^2_t\right)} ;   \frac{\Lpdata}{m^2_t} \right\} + \frac{1}{\sigma^2} \eqsp.
    \end{align*}
Clearly we have that $0\leq m_t^2\leq 1$, therefore $1/ m^2_t \geq 1$ and $1/ \left( 1- m^2_t\right) \geq 1$. This means that, if $\Lpdata\geq 1/\sigma^2$, 
    \begin{align*}
        \min \left\{ \frac{1}{\sigma^2\left(1-m^2_t\right)} ;   \frac{\Lpdata}{m^2_t} \right\} \geq \frac{1}{\sigma^2}.
    \end{align*}
    Thus, we can choose $L_t$ to be
    \begin{align*}
        L_t = \min \left\{ \frac{1}{\sigma^2\left(1-m^2_t\right)} ;   \frac{\Lpdata}{m^2_t} \right\} - \frac{1}{\sigma^2} \eqsp.
    \end{align*}
    Finally, since $m_0=1$, we have that $L_0 = \Lpdata -1/\sigma^2$. This function increases up to the point where $\Lpdata/m^2_t = (\sigma^2 (1-m^2_t)^{-1}$, achieved for $m^2_{t^*} = (\sigma^2 \Lpdata)/(\sigma^2 \Lpdata + 1)$. At this point, we have that $L_{t^*} =\Lpdata$. After this point the Lipschitz constant decreases to $0$, as $m_t\to 0$ for $t\to \infty$. This means that for any $t$, $L_t$ is bounded by $\Lpdata$.
\end{proof}

   \begin{proposition}
    \label{prop:step_size_choice:case_C_0}
        Assume that $\log \pi_{\rm data}$ is $\Lpdata$-smooth and $\Cpdata$-strongly concave. Consider the regular discretization $\{t_k, 0\leq k \leq N\}$ of $[0,T]$ of constant step size $h$.
        By choosing $h>0$ such that for all $t_k$ with $0\leq k\leq N-1$,
        \begin{align}
        \label{eq:condition_step_size1:case_C_0}
            h\leq \min\left\{
                \frac{\log(2) 2\sigma^2}{\beta(T)} ;
                \frac{\sigma^2 \Cpdata -1}{\sigma^2\Cpdata\left(\sigma^2 \Lpdata +1\right)\Lpdata\beta(T)} ;
                \frac{\sigma^2 \Cpdata -1}{\left(\sigma^2 \Lpdata -1\right)\Lpdata\beta(T)}
            \right\}\eqsp,
        \end{align}
        then, for all $0\leq k\leq N-1$,
        \begin{align*}
            0< 1 + 
        \frac{\widetilde m^2_{t_{k}}}{\widetilde m^2_{t_{k+1}}}
        \left(  \int_{t_k}^{t_{k+1}} \bar{L}_{t}
        \frac{\widetilde m_{t}}{\widetilde m_{t_{k}}}
        \bar \beta (t)  \rmd t \right)^2
        - 2
        \frac{\widetilde m_{t_{k}}}{\widetilde m_{t_{k+1}}}
        \int_{t_k}^{t_{k+1}} \bar{C}_{t}
        \frac{\widetilde m_{t}}{\widetilde m_{t_{k}}}
        \bar \beta (t)  \rmd t
            <1\eqsp.
        \end{align*}
        In addition, if
        \begin{multline}
        \label{eq:condition_step_size2:case_C_0}
            h\leq \min\bigg\{
                \frac{\log(2) 2\sigma^2}{\beta(T)} ;
                \frac{\sigma^2 \Cpdata -1}{\sigma^2 M+ \beta(T)\Lpdata\left(\sigma^2 \Lpdata -1\right)};
                \\
                \frac{\sigma^2 \Cpdata -1}{\sigma^2 \Cpdata}
                \frac{m_T^{2} \left(1-m_T^{2}\right)}{\sigma^2M \left(1-m_T^{2}\right)+ \beta(T)\Lpdata m_T^{2}}
                ;
                \left.
                \frac{ \left(\sigma^2 \Cpdata-1\right)\Lpdata}{\sigma^2 \Cpdata\left(\sigma^2 \Lpdata+1\right) \left(M+ \beta(T)\Lpdata^2\right)}
            \right\}\eqsp,
        \end{multline}
        then, for all $0\leq k\leq N-1$,
        \begin{multline*}
            0< 
            1 + 
            \frac{1}{2}\frac{\widetilde m^2_{t_{k}}}{\widetilde m^2_{t_{k+1}}}
            \left(  \int_{t_k}^{t_{k+1}} \bar{L}_{t}
            \frac{\widetilde m_{t}}{\widetilde m_{t_{k}}}
            \bar \beta (t)  \rmd t \right)^2
            - \frac{\widetilde m_{t_{k}}}{\widetilde m_{t_{k+1}}}
            \int_{t_k}^{t_{k+1}} \bar{C}_{t}
            \frac{\widetilde m_{t}}{\widetilde m_{t_{k}}}
            \bar \beta (t) \rmd t\\
            + \frac{\widetilde m^2_{t_{k}}}{\widetilde m^2_{t_{k+1}}} M h \int_{t_k}^{t_{k+1}} 
                \frac{\widetilde m_{t}}{\widetilde m_{t_{k}}} \bar \beta (t)
                \rmd t 
        <1 \eqsp . 
        \end{multline*}
    \end{proposition}
    \begin{proof}
    Define $\epsilon_1$ and $\epsilon_2$ as in \eqref{eq:step_size_choice:eps1}-\eqref{eq:step_size_choice:eps2}.
        From Proposition \ref{prop:step_size_choice}, we have that $\epsilon_i\in (0,1)$, for $i=1,2$, if we have \eqref{eq:condition_step_size1}-\eqref{eq:condition_step_size2}.
        
        First, we prove that \eqref{eq:condition_step_size1:case_C_0} implies \eqref{eq:condition_step_size1}. From Proposition \ref{prop:from_L_0_to_L_t}, we have that $L_t$ is bounded by $\Lpdata$ everywhere. Moreover, since $\widetilde m_{t_{k+1}}/\widetilde m_{t_{k}} = \exp\left(-\int_{t_{k}}^{t_{k+1}} \bar{\beta}(s)/2\sigma ^2 \rmd s\right)$, we can find $h$ small enough such that $2\widetilde m_{t_{k}}/\widetilde m_{t_{k+1}} \geq 1$. This is equivalent to $\int_{t_{k}}^{t_{k+1}} \bar{\beta}(s)/2\sigma ^2 \rmd s \leq \log(2)$ and it is implied by
        \begin{align*}
            h\leq \frac{\log(2) 2\sigma^2}{\beta(T)}\eqsp.
        \end{align*}
        Now, we study the function $t\mapsto C_t/L_t$. From the proof of the Proposition \ref{prop:from_C_0_to_C_t}, we have that
        \begin{align*}
            C_t &= \frac{1}{ m_t^{2}/\Cpdata + \sigma^{2}\left(1+m_t^{2}\right)} - \frac{1}{\sigma^{2}}\eqsp,
        \end{align*}
        which is a decreasing function. Moreover, from the proof of the Proposition \ref{prop:from_L_0_to_L_t}, we have that
        \begin{align*}
            L_t = \min \left\{ \frac{1}{\sigma^2\left(1-m^2_t\right)} ;   \frac{\Lpdata}{m^2_t} \right\} - \frac{1}{\sigma^2}\eqsp,
        \end{align*}
        which is an increasing function from $0$ up to $t^*$, such that $m^2_{t^*} = \frac{\sigma^2 \Lpdata}{\sigma^2 \Lpdata + 1}$ and decreasing for $t\geq t^*$. On the one hand, this means that for $t\in[0,t^*]$, the function $t\mapsto C_t/  L_t$ is decreasing, therefore reaching its minimum $\left(\sigma^2 \Cpdata -1\right)/ \left(\sigma^2 \Lpdata -1\right)$ in $0$, which is a positive quantity. On the other hand, for $t\geq t^*$, we have that
        \begin{align*}
            \frac{C_t}{L_t} &=
            \frac{1}{m_t^{2}/\Cpdata + \sigma^{2}\left(1+m_t^{2}\right)} \eqsp \frac{1}{\sigma^{2}}
            \frac{\left(\sigma^2 \Cpdata -1 \right)m^2_t}{\Cpdata}\eqsp \frac{\sigma^2\left(1-m_t^{2}\right)}{m_t^{2}}\\
            &\geq \frac{1}{\sigma^2}\frac{\sigma^2 \Cpdata -1}{\Cpdata} \left(1-m_t^{2}\right)\\
            &\geq \frac{1}{\sigma^2}\frac{\sigma^2 \Cpdata -1}{\Cpdata} \left(1-m_{t^*}^{2}\right) =
            \frac{\sigma^2 \Cpdata -1}{\sigma^2\Cpdata\left(\sigma^2 \Lpdata +1\right)}\eqsp.
        \end{align*}
        % Moreover, we have that since $C_0\geq 1/\sigma^2$, we have that
        % \begin{align*}
        %     \frac{\sigma^2 C_0 -1}{\sigma^2C_0\left(\sigma^2 L_0 -1\right)} \leq
        %     \frac{\sigma^2 C_0 -1}{\sigma^2 L_0 -1}\eqsp,
        % \end{align*}
        % which implies that
        % \begin{align*}
        %     \inf_{t\geq0}\frac{\bar C_t}{\bar L_t} \geq
        %     \frac{\sigma^2 C_0 -1}{\sigma^2C_0\left(\sigma^2 L_0 -1\right)}\eqsp.
        % \end{align*}

        Therefore, combining the previous inequalities, we have that condition \eqref{eq:condition_step_size1:case_C_0} implies \eqref{eq:condition_step_size1}.

        Secondly, we prove \eqref{eq:condition_step_size2:case_C_0} implies \eqref{eq:condition_step_size2}. Take $h$ to satisfy
        \begin{align*}
            h\leq \frac{\log(2) 2\sigma^2}{\beta(T)}\eqsp.
        \end{align*}

        We now need to study the function $t\mapsto \frac{C_t}{M+ \beta(T)\Lpdata\bar{L}_t}$.
        On the one hand, this function is decreasing for $t\in[0,t^*]$, therefore reaching its minimum $\frac{\sigma^2 \Cpdata -1}{\sigma^2 M+ \beta(T)\Lpdata\left(\sigma^2 \Lpdata -1\right)}$ in $0$, which is a positive quantity. On the other hand, for $t\geq t^*$, we have that
        \begin{align*}
            \frac{C_t}{M+ \beta(T)\Lpdata L_t}
            &=
            \frac{1}{m_t^{2}/\Cpdata + \sigma^{2}\left(1+m_t^{2}\right)} \eqsp \frac{1}{\sigma^{2}}
            \frac{\left(\sigma^2 \Cpdata -1 \right)m^2_t}{\Cpdata}\eqsp \frac{\sigma^2\left(1-m_t^{2}\right)}{\sigma^2M \left(1-m_t^{2}\right)+ \beta(T)\Lpdata m_t^{2}}
            \\
            &\geq \frac{1}{\sigma^2}\frac{\sigma^2 \Cpdata -1}{\Cpdata} 
            \frac{m_t^{2}\left(1-m_t^{2}\right)}{\sigma^2M \left(1-m_t^{2}\right)+ \beta(T)\Lpdata m_t^{2}}\eqsp.
        \end{align*}
        Controlling from below the previous quantity, boils down to control from below the function $\psi(y)= \frac{y(1-y)}{\sigma^2M (1-y) +\beta(T)\Lpdata y}$ for $y\in [m^2_{t^*},m_T^2]$. We see that $\psi$ in this interval can be bounded by $\min\{\psi\left(m^2_{t^*}\right), \psi\left(m^2_{T}\right)\}$. Therefore, we get
         \begin{align*}
            \frac{C_t}{M+ \beta(T)\Lpdata L_t}
            &\geq \frac{\sigma^2 \Cpdata -1}{\sigma^2 \Cpdata}
            \min\left\{
                \frac{m_T^{2} \left(1-m_T^{2}\right)}{\sigma^2M \left(1-m_T^{2}\right)+ \beta(T)\Lpdata m_T^{2}}
                ;
                \frac{\Lpdata}{\left(\sigma^2 L_0+1\right) \left(M+ \beta(T)\Lpdata^2\right)}
            \right\}
            \eqsp.
        \end{align*}
        Therefore, combining the previous inequalities, we have that condition \eqref{eq:condition_step_size2:case_C_0} implies \eqref{eq:condition_step_size2}.
        
    \end{proof}

\section{Details on numerical experiments}

This section is divided into two parts. The first part is dedicated to providing  detailed implementation choices for the numerical experiments presented in Section \ref{sec:exp}. The second part displays additional experiments and more details about the experiments of Section \ref{sec:exp}. All experiments were conducted on a local computer CPU equipped with an Apple M3 processor (8GB of unified memory). This setup is sufficient to replicate the experiments of this paper.

\subsection{Implementation choices}

\subsubsection{Exact score and metrics in the Gaussian case}
\label{subsec:formulas_gaussian_setting}

\begin{lemma} \label{lem:exactscore}
Assume that the forward process defined in (\ref{eq:forward-SDE:beta}) : 
\begin{equation*} 
    \rmd  \overrightarrow{X}_t = - \frac{\sched(t)}{2 \sigma^2}  \overrightarrow{X}_t \rmd t + \sqrt{ \sched(t)}  \rmd B_t, \quad \ora{X}_0 \sim \pi_0,
    \end{equation*} 
is initialised with $\pi_0$ the Gaussian probability density function with mean $\mu_0$ and variance $\Sigma_0$. Then, the score function of (\ref{eq:forward-SDE:beta}) is:
\begin{equation*}
  \nabla \log p_t: x\mapsto -(m_t^2 \Sigma_0 + \sigma_t^2 \mathrm{I}_d)^{-1} (x - m_t \mu_0)\eqsp,  
\end{equation*}
where $p_t$ is the probability density function of $\overrightarrow{X}_t$,  $m_t = \exp\{-\int_0^t \beta(s) \rm d s/(2 \sigma^2)\}$ and $ \sigma^2_t = \sigma^2 (1-m_t^2)$.
%$( 1 - \exp\{- \int_0^t \beta(s)/\sigma^2 \rm d s\}) $.
\end{lemma}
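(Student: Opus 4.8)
The plan is to exploit the linearity of the forward dynamics: when the initial law is Gaussian, the solution of \eqref{eq:forward-SDE:beta} remains Gaussian at every time $t$, so $p_t$ is a Gaussian density and its score is obtained simply by differentiating a Gaussian log-density. Thus the whole statement reduces to computing the mean and covariance of $\ora X_t$.

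First I would solve \eqref{eq:forward-SDE:beta} by variation of constants, exactly as in \eqref{eq:stationary_dirst1}: writing $m_t\eqdef\exp\{-\frac{1}{2\sigma^2}\int_0^t\beta(s)\rmd s\}$, one gets
\[
\ora X_t = m_t\,\ora X_0 + m_t\int_0^t m_s^{-1}\sqrt{\beta(s)}\,\rmd B_s\eqsp.
\]
Since $\ora X_0\sim\mathcal{N}(\mu_0,\Sigma_0)$ is independent of the driving Brownian motion, $\ora X_t$ is Gaussian with mean $m_t\mu_0$ and covariance $m_t^2\Sigma_0 + s_t^2\mathrm{I}_d$, where $s_t^2\eqdef m_t^2\int_0^t m_s^{-2}\beta(s)\rmd s$ is produced by the Itô isometry applied to the Wiener-integral term. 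I would then identify $s_t^2$ with $\sigma_t^2=\sigma^2(1-m_t^2)$ via the elementary primitive computation already carried out in the proof of Lemma~\ref{stationary distribution forward} (equivalently, this is \eqref{eq:cond_variance} specialized to $\alpha(t)=\beta(t)/(2\sigma^2)$ and $g^2(t)=\beta(t)$). Alternatively one may invoke the in-law decomposition $\ora X_t\stackrel{d}{=}m_t\ora X_0+\sigma_t N$ with $N\sim\mathcal{N}(0,\mathrm{I}_d)$ independent of $\ora X_0$, which is used elsewhere in the appendix.

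It then follows that $p_t=\gausspdf_{m_t\mu_0,\,\Sigma_t}$ with $\Sigma_t\eqdef m_t^2\Sigma_0+\sigma_t^2\mathrm{I}_d$, and differentiating $\log\gausspdf_{\mu,\Sigma}(x)=-\tfrac12(x-\mu)^\top\Sigma^{-1}(x-\mu)+\mathrm{const}$ gives $\nabla\log p_t(x)=-\Sigma_t^{-1}(x-m_t\mu_0)$, which is the asserted formula. The only point demanding any care is the covariance computation, namely the independence of the stochastic-integral term from $\ora X_0$ together with the evaluation of $s_t^2$; both are already established in the excerpt, so the argument is essentially a specialization followed by a one-line differentiation, and I do not expect a genuine obstacle.
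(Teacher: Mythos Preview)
Your proposal is correct and follows essentially the same approach as the paper: the paper's proof simply invokes the equality in law $\ora X_t \stackrel{d}{=} m_t X_0 + \sigma_t N$ with $N\sim\mathcal N(0,\mathrm{I}_d)$ independent of $X_0$, deduces $\ora X_t\sim\mathcal N(m_t\mu_0,\,m_t^2\Sigma_0+\sigma_t^2\mathrm{I}_d)$, and concludes. You have spelled out the underlying variation-of-constants and Itô-isometry computation a bit more explicitly, but the argument is the same.
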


\begin{proof}
Note that $\overrightarrow X_t $ has the same law as  $m_t X_0 + \sigma_tZ$ where $Z \sim \mathcal{N}(0,\mathrm{I}_d)$ is independent of $X_0$. Therefore $\overrightarrow X_t \sim \mathcal{N}(m_t \mu_0, \ora{\Sigma_t})$ with $\ora{\Sigma_t}= m_t^2 \Sigma_0 + \sigma_t^2 \mathrm{I}_d$ which concludes the proof.

\end{proof}

\begin{lemma} \label{KL divergence Gaussian rvs}
Let $\mu_1, \mu_2$ in $\rset^d$ and $\Sigma_1$ and $\Sigma_2$ be two definite positive matrices in $\rset^{d\times d}$. Then, 
\begin{equation}
\kl (\gausspdf_{\mu_1,\Sigma_1}\|\gausspdf_{\mu_2,\Sigma_2}) =\frac{1}{2} \left(  \log \frac{| \Sigma_2|}{|\Sigma_1|} - d + \operatorname{Tr}\left(\Sigma_2^{-1} \Sigma_1\right) + \left(\mu_2 - \mu_1\right)^\top \Sigma_2^{-1} \left(\mu_2 - \mu_1\right) \right)\eqsp.
\end{equation}
\end{lemma}

\begin{lemma} \label{W2 Gaussian rvs}
Let $\mu_1, \mu_2$ in $\rset^d$ and $\Sigma_1$ and $\Sigma_2$ be two definite positive matrices in $\rset^{d\times d}$. Then, 
\begin{equation}
\mathcal{W}_2^2 (\gausspdf_{\mu_1,\Sigma_1},\gausspdf_{\mu_2,\Sigma_2}) =\left\| \mu_2 - \mu_1 \right\|^2 + \operatorname{Tr} \left( \Sigma_1 + \Sigma_2 - 2 \left( \Sigma_2^{1/2} \Sigma_2 \Sigma_1^{1/2} \right)^{1/2} \right) \eqsp.
\end{equation}
\end{lemma}

\begin{lemma} \label{Info Fisher Gaussian rvs}
The relative Fisher information between $X_0 \sim \mathcal{N}(\mu_0, \Sigma_0)$ and $X_{\infty} \sim \mathcal{N}(0, \sigma^2 \mathrm{I}_d)$ is given by: 
\begin{align*}
\mathcal{I} \left(\gausspdf_{\mu_0 , \Sigma_0} \| \gausspdf_{\sigma^2} \right) =  \frac{1}{\sigma^4} \left( \operatorname{Tr} \left( \Sigma_{0} \right) + \left\|\mu_{0}\right\|^2 \right)  - \frac{2 d}{\sigma^2} + \operatorname{Tr} \left( \Sigma_0^{-1} \right)\eqsp.
\end{align*}
\end{lemma}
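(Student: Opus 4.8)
The plan is to use the fact that both $\gausspdf_{\mu_0,\Sigma_0}$ and $\gausspdf_{\sigma^2}$ are Gaussian, so that the log-density ratio is an explicit quadratic polynomial; its gradient is then affine in $x$, and the relative Fisher information reduces to a second moment of an affine image of a Gaussian vector, computable by elementary trace identities.

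First I would write out the two densities and form
\[
\log\frac{\rmd\gausspdf_{\mu_0,\Sigma_0}}{\rmd\gausspdf_{\sigma^2}}(x)= c-\tfrac12 (x-\mu_0)^\top\Sigma_0^{-1}(x-\mu_0)+\tfrac1{2\sigma^2}\|x\|^2\eqsp,
\]
with $c$ independent of $x$. Differentiating kills the constant and yields
\[
\nabla\log\frac{\rmd\gausspdf_{\mu_0,\Sigma_0}}{\rmd\gausspdf_{\sigma^2}}(x)=\tfrac1{\sigma^2}x-\Sigma_0^{-1}(x-\mu_0)\eqsp;
\]
equivalently, this follows from $\nabla\log\tfrac{\rmd\pi_{\rm data}}{\rmd\pi_\infty}=\nabla\log p_0+x/\sigma^2$ together with Lemma~\ref{lem:exactscore} evaluated at $t=0$ (i.e.\ $m_0=1$, $\sigma_0^2=0$).

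Next I substitute $Y\eqdef X_0-\mu_0\sim\mathcal{N}(0,\Sigma_0)$, so that the integrand of $\mathcal{I}$ becomes $\|AY+\sigma^{-2}\mu_0\|^2$ with $A\eqdef\sigma^{-2}\mathrm{I}_d-\Sigma_0^{-1}$, a symmetric matrix. Expanding the square, the cross term is linear in $Y$ and hence of zero expectation, while $\mathbb{E}[Y^\top A^2Y]=\operatorname{Tr}(A^2\Sigma_0)$. Using $A^2\Sigma_0=\sigma^{-4}\Sigma_0-2\sigma^{-2}\mathrm{I}_d+\Sigma_0^{-1}$ and taking traces gives
\[
\mathcal{I}\left(\gausspdf_{\mu_0,\Sigma_0}\|\gausspdf_{\sigma^2}\right)=\frac1{\sigma^4}\operatorname{Tr}(\Sigma_0)-\frac{2d}{\sigma^2}+\operatorname{Tr}(\Sigma_0^{-1})+\frac1{\sigma^4}\|\mu_0\|^2\eqsp,
\]
which is the claimed identity.

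There is no genuine obstacle here: everything is elementary once one observes that the gradient of the log-ratio is affine. The only points needing a little care are that the additive constant disappears under differentiation, that the term linear in $Y$ has zero mean, and that $\Sigma_0\succ0$ makes $\operatorname{Tr}(\Sigma_0^{-1})$ finite so that the integral is well defined; consequently the answer depends on $\Sigma_0$ only through $\operatorname{Tr}(\Sigma_0)$ and $\operatorname{Tr}(\Sigma_0^{-1})$.
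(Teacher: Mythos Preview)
Your proof is correct and follows essentially the same approach as the paper: compute the gradient of the log-ratio as $\sigma^{-2}x-\Sigma_0^{-1}(x-\mu_0)$ and then take its second moment under $\gausspdf_{\mu_0,\Sigma_0}$ via trace identities. The only cosmetic difference is that you center first (writing $Y=X_0-\mu_0$ and $A=\sigma^{-2}\mathrm{I}_d-\Sigma_0^{-1}$), which makes the cross term vanish immediately, whereas the paper expands $\|\,\cdot\,\|^2$ directly in $X_0$ and evaluates each of the three resulting terms separately using $\mathbb{E}[X_0X_0^\top]=\Sigma_0+\mu_0\mu_0^\top$.
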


\begin{proof}
The relative Fisher information between $X_0$ and $X_{\infty}$ is given by
\begin{align*}
\mathcal{I} \left( \gausspdf_{\mu_0 , \Sigma_0} \| \gausspdf_{\sigma^2} \right) = \int \left\| \nabla \log \left(  \frac{ \gausspdf_{\mu_0 , \Sigma_0} (x) }{ \gausspdf_{\sigma^2}(x)}\right) \right\|^2  \gausspdf_{\mu_0 , \Sigma_0} (x) \rmd x\eqsp.
\end{align*}
Write
\begin{align*}
\nabla \log \frac{\gausspdf_{\mu_0 , \Sigma_0} (x)}{ \gausspdf_{\sigma^2} (x) } %& = \nabla \log \frac{ \exp\left(-\frac{1}{2}(x - \mu_0)^T \Sigma^{-1}_0 (x - \mu_0)\right)}{
%   \sigma^{-d} |\Sigma_0|^{1/2} e^{- \frac{||x||^2}{2 \sigma^2}  }   } \\
    = \frac{x}{\sigma^2} - \Sigma_0^{-1} (x - \mu_0)\eqsp,
\end{align*}
so that,
\begin{align*}
\left\| \nabla \log \frac{\gausspdf_{\mu_0 , \Sigma_0} (x)}{ \gausspdf_{\sigma^2} (x) } \right\|^2 & = \left\|  \frac{x}{\sigma^2} - \Sigma_0^{-1} (x - \mu_0) \right\|^2 \\
& = \left(\frac{x}{\sigma^2} - \Sigma_0^{-1} (x - \mu_0)\right)^\top \left(\frac{x}{\sigma^2} - \Sigma_0^{-1} (x - \mu_0)\right) \\
& = \frac{ \| x\|^2}{\sigma^4} - \frac{2}{ \sigma^2} x^\top \Sigma_0^{-1} (x - \mu_0) + (x - \mu_0)^\top \Sigma_0^{-2} (x - \mu_0)\eqsp.
 \end{align*}
First,
\begin{align*}
\mathbb{E} \left[ \frac{\| X_0\|^2}{\sigma^4}\right]  = \frac{1}{\sigma^4} \left( \text{Tr} \left( \Sigma_{0} \right) + \|\mu_{0}\|^2 \right)\eqsp. 
\end{align*}
Then,
\begin{equation*}
\mathbb{E}  \left[ \frac{2}{ \sigma^2} X_0^T \Sigma_0^{-1} (X_0 - \mu_0) \right] 
 = \frac{2}{ \sigma^2} \left(  \text{Tr} \left( \Sigma_0^{-1} \mathbb{E} \left[X_0 X_0^\top  \right] \right) -   \mu_0^\top \Sigma_0^{-1}  \mu_0  \right)\eqsp.
\end{equation*}

Using that $\mathbb{E} \left[X_0  X_0^\top  \right]  = \Sigma_0 + \mu_0 \mu_0^\top$
yields
\begin{align*}
\mathbb{E} \left[ \frac{2}{ \sigma^2} X_0^\top \Sigma_0^{-1} (X_0 - \mu_0) \right] & = \frac{2}{ \sigma^2} \left(  \text{Tr} \left( \Sigma_0^{-1} \left( \Sigma_0 + \mu_0 \mu_0^\top \right) \right)   - \mu_0^\top \Sigma_0^{-1}  \mu_0  \right) \\
& = \frac{2}{ \sigma^2} \left( d + \text{Tr} \left( \Sigma_0^{-1}  \mu_0 \mu_0^\top  \right)   - \mu_0^\top \Sigma_0^{-1}  \mu_0  \right) \\
& = \frac{2 d}{\sigma^2}\eqsp.
\end{align*}
Finally,
\begin{align*}
\mathbb{E} \left[ (X_0 - \mu_0)^\top \Sigma_0^{-2} (X_0 - \mu_0) \right] &= \mathbb{E} \left[ \operatorname{Tr} \left( (X_0 - \mu_0)^\top \Sigma_0^{-2} (X_0 - \mu_0) \right) \right] \\
&= \mathbb{E} \left[ \operatorname{Tr} \left( \Sigma_0^{-2} (X_0 - \mu_0) (X_0 - \mu_0)^\top\right) \right] \\
&= \text{Tr} \left( \Sigma_0^{-2} \mathbb{E} \left[   (X_0 - \mu_0) (X_0 - \mu_0)^\top  \right] \right) \\
& = \operatorname{Tr} \left( \Sigma_0^{-2} \Sigma_0 \right) \\
& = \operatorname{Tr} \left( \Sigma_0^{-1} \right)\eqsp,
\end{align*}
which concludes the proof.
\end{proof}

\begin{proposition} \label{prop:bar_l_t}
    Under the same assumptions as in Lemma \ref{lem:exactscore}, the Euclidean norm of the score function admits the following upper bound for $t_1 \leq t_2$:

\begin{align*}
    \sup_{ t_1 \leq t \leq t_2} \left\| \nabla \log p_{t_1}(x) -  \nabla \log p_{t}(x) \right\|  & \leq \left(t_2 - t_1 \right) \max \left\{  \left\| \mu_0 \right\| \kappa_2 ; \kappa_1 \right\}  \left( 1 + \left\| x \right\| \right) \eqsp,
\end{align*}
with 

\begin{align*}
\kappa_1 \eqdef \frac{m_{t_1}^2  \frac{\beta(t_2)}{\sigma^2} \left| \lambda_{\rm min} - \sigma^2 \right| }{ \left| \left( \sigma^2 + m_{t_1}^2 \left( \lambda_{\rm min} - \sigma^2 \right) \right) \left( \sigma^2 + m_{t_2}^2 \left( \lambda_{\rm min} - \sigma^2 \right) \right) \right| } \eqsp,
\end{align*}

and

\begin{align*}
     \kappa_2 \eqdef \frac{ m_{t_1} \frac{\beta(t_2)}{2 \sigma^2} \left| m_{t_1} m_{t_2} \left(  \lambda_{\rm min} - \sigma^2 \right) - \sigma^2 \right| }{ \left| \left( \sigma^2 + m_{t_1}^2 \left( \lambda_{\rm min} - \sigma^2 \right) \right) \left( \sigma^2 + m_{t_2}^2 \left( \lambda_{\rm min} - \sigma^2 \right) \right) \right|} \eqsp,
\end{align*}

where $\lambda_{\min}$ is the smallest eigenvalue of $\Sigma_0$.
\end{proposition}

\begin{proof}
        Let $t_1 \leq t_2$,

    \begin{align*}
        \left\| \nabla \log p_{t_1}(x) -  \nabla \log p_{t_2}(x)\right\| & = \left\| - (m_{t_1}^2 \Sigma_0 + \sigma_{t_1}^2 \mathrm{I}_d)^{-1} (x - m_{t_1} \mu_0) + (m_{t_2}^2 \Sigma_0 + \sigma_{t_2}^2 \mathrm{I}_d)^{-1} (x - m_{t_2} \mu_0) \right\| \\
        & \leq \left\| \left( m_{t_1} \left(m_{t_1}^2 \Sigma_0 + \sigma_{t_1}^2 \mathrm{I}_d \right)^{-1} - m_{t_2}\left( m_{t_2}^2 \Sigma_0 + \sigma_{t_2}^2 \mathrm{I}_d \right)^{-1} \right) \mu_0 \right\| \\
        & + \left\| \left( \left(m_{t_1}^2 \Sigma_0 + \sigma_{t_1}^2 \mathrm{I}_d \right)^{-1} - \left(m_{t_2}^2 \Sigma_0 + \sigma_{t_2}^2 \mathrm{I}_d \right)^{-1} \right) x  \right\| \eqsp.
    \end{align*}

Writing $
    M_t =  \left(m_{t}^2 \Sigma_0 + \sigma_{t}^2 \mathrm{I}_d \right)^{-1}$ we have, for $t_1 \leq t_2$,
\begin{align*}
    \left\| M_{t_1} - M_{t_2} \right\| & \leq \left|\frac{1}{m_{t_1}^2 \lambda_{\rm min} + \sigma_{t_1}^2} - \frac{1}{m_{t_2}^2 \lambda_{\rm min} + \sigma_{t_2}^2}\right| \\
    &\leq  \left| \frac{\left( m_{t_2}^2 - m_{t_1}^2 \right) \left( \lambda_{\rm min} - \sigma^2 \right) }{ \left( \sigma^2 + m_{t_1}^2 \left( \lambda_{\rm min} - \sigma^2 \right) \right) \left( \sigma^2 + m_{t_2}^2 \left( \lambda_{\rm min} - \sigma^2 \right) \right)} \right| \\
    &\leq \left(t_2 - t_1 \right)  \underbrace{\frac{m_{t_1}^2  \frac{\beta(t_2)}{\sigma^2} \left| \lambda_{\rm min} - \sigma^2 \right| }{ \left| \left( \sigma^2 + m_{t_1}^2 \left( \lambda_{\rm min} - \sigma^2 \right) \right) \left( \sigma^2 + m_{t_2}^2 \left( \lambda_{\rm min} - \sigma^2 \right) \right) \right| }}_{\kappa_1} \eqsp.
\end{align*}
Moreover, 
for $t_1 \leq t_2$,
\begin{align*}
    \left\| m_{t_1} M_{t_1} - m_{t_2} M_{t_2} \right\| & \leq \left|\frac{m_{t_1}}{m_{t_1}^2 \lambda_{\rm min} + \sigma_{t_1}^2} - \frac{m_{t_2}}{m_{t_2}^2 \lambda_{\rm min} + \sigma_{t_2}^2}\right| \\
    &\leq  \left| \frac{\left( m_{t_1} m_{t_2}^2 - m_{t_2} m_{t_1}^2 \right) \left( \lambda_{\rm min} - \sigma^2 \right) + \sigma^2 \left( m_{t_1} - m_{t_2} \right) }{ \left( \sigma^2 + m_{t_1}^2 \left( \lambda_{\rm min} - \sigma^2 \right) \right) \left( \sigma^2 + m_{t_2}^2 \left( \lambda_{\rm min} - \sigma^2 \right) \right)} \right| \\
    &\leq \frac{ \left| m_{t_2} - m_{t_1} \right| \left| m_{t_1} m_{t_2} \left( \lambda_{\rm min} - \sigma_2 \right) - \sigma^2 \right| }{ \left| \left( \sigma^2 + m_{t_1}^2 \left( \lambda_{\rm min} - \sigma^2 \right) \right) \left( \sigma^2 + m_{t_2}^2 \left( \lambda_{\rm min} - \sigma^2 \right) \right) \right|} \\
    &\leq   \left( t_2 - t_1 \right) \underbrace{\frac{ m_{t_1} \frac{\beta(t_2)}{2 \sigma^2} \left| m_{t_1} m_{t_2} \left(  \lambda_{\rm min} - \sigma^2 \right) - \sigma^2 \right| }{ \left| \left( \sigma^2 + m_{t_1}^2 \left( \lambda_{\rm min} - \sigma^2 \right) \right) \left( \sigma^2 + m_{t_2}^2 \left( \lambda_{\rm min} - \sigma^2 \right) \right) \right|}}_{\kappa_2}   \eqsp.
\end{align*}
Finally, 
    \begin{align*}
        \left\| \nabla \log p_{t_1}(x) -  \nabla \log p_{t_2}(x)\right\|  & \leq \left(t_2 - t_1 \right) 
         \left\| \mu_0 \right\| \kappa_2 + \left(t_2 - t_1 \right) \kappa_1 \left\| x \right\| \\
        &\leq  \left(t_2 - t_1 \right) \max \left\{  \left\| \mu_0 \right\| \kappa_2 ; \kappa_1 \right\}  \left( 1 + \left\| x \right\| \right) \eqsp. 
    \end{align*}

\end{proof}

\subsubsection{Stochastic differential equation exact simulation}
\label{sec:exact_simulation_forward}

In certain cases, exact simulation of stochastic differential equations is possible. In particular, due to the linear nature of the drift the forward process (\ref{eq:forward-SDE:beta}) can be simulated exactly. Indeed, the marginal distribution of (\ref{eq:forward-SDE:beta}) at time $t$ writes as 
\begin{equation*}
 \overrightarrow X_t = m_t X_0 + \sigma_t Z\eqsp,
\end{equation*}
with 
$Z \sim \mathcal{N} \left( 0, \mathrm{I}_d \right)$ independent of $X_0$, $X_0 \sim \pi_0$, $m_t = \exp\{-\int_0^t \beta(s) \rm d s/(2 \sigma^2)\}$ and $ \sigma^2_t = \sigma^2 ( 1 - \exp\{- \int_0^t \beta(s)/\sigma^2 \rm d s\}) $. Therefore, sampling from the forward process only necessitates access to samples from $\pi_0$ and $\mathcal{N}(0,\mathrm{I}_d)$. 

\subsubsection{Noise schedules}
\label{sec:noise_sched}

\paragraph{Linear and parametric noise schedules.}

In Section \ref{sec:exp}, we introduced parametric noise schedules of the form 
\begin{align*}
\beta_a(t) &\propto (\rme^{at} -1)/(\rme^{aT} -1)\eqsp,
\end{align*}
with $a \in \mathbb{R}$ ranging from $-10$ to $10$ (see Figure \ref{fig:noise_schedules_appendix}). For all $a$, with a time horizon of $T = 1$, the initial and final values have been set to match exactly the schedule prescribed by \citet{song2021score} (i.e. $\beta_a(0) = 0.1$ and $\beta_a(1) = 20$) when $a = 0$ (linear schedule). 

\begin{figure}[h]
    \centering
    \includegraphics[width=0.6\linewidth]{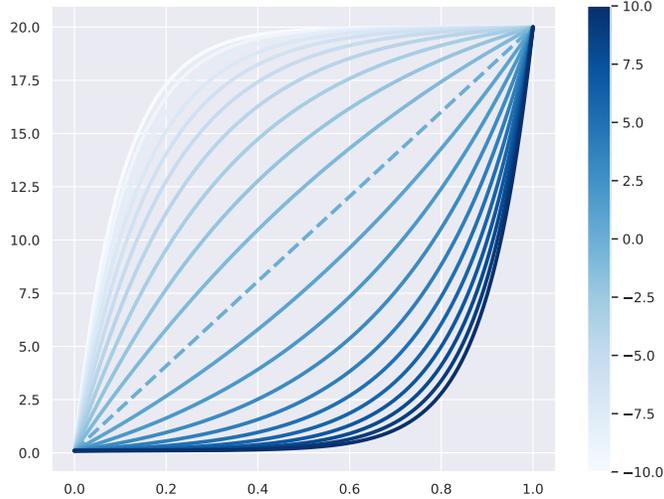}
    \caption{Evolution of noise schedules $\beta_a$ w.r.t.\ time, for different values of parameter between $-10$ to $10$. The linear case $a=0$ \citep{song2019generative, song2021score} is dashed.}
    \label{fig:noise_schedules_appendix}
\end{figure}

As shown in Section \ref{sec:exact_simulation_forward} $m_t$ and $\sigma_t$ are the two quantities of interest in the calibration of the noising procedure of the forward proces. Their values for different choices of $a$ are displayed in Figure \ref{fig:convergence_forward}.

\begin{figure}[h]
    \centering
    \begin{tabular}{cc}
    \includegraphics[width=0.45\linewidth]{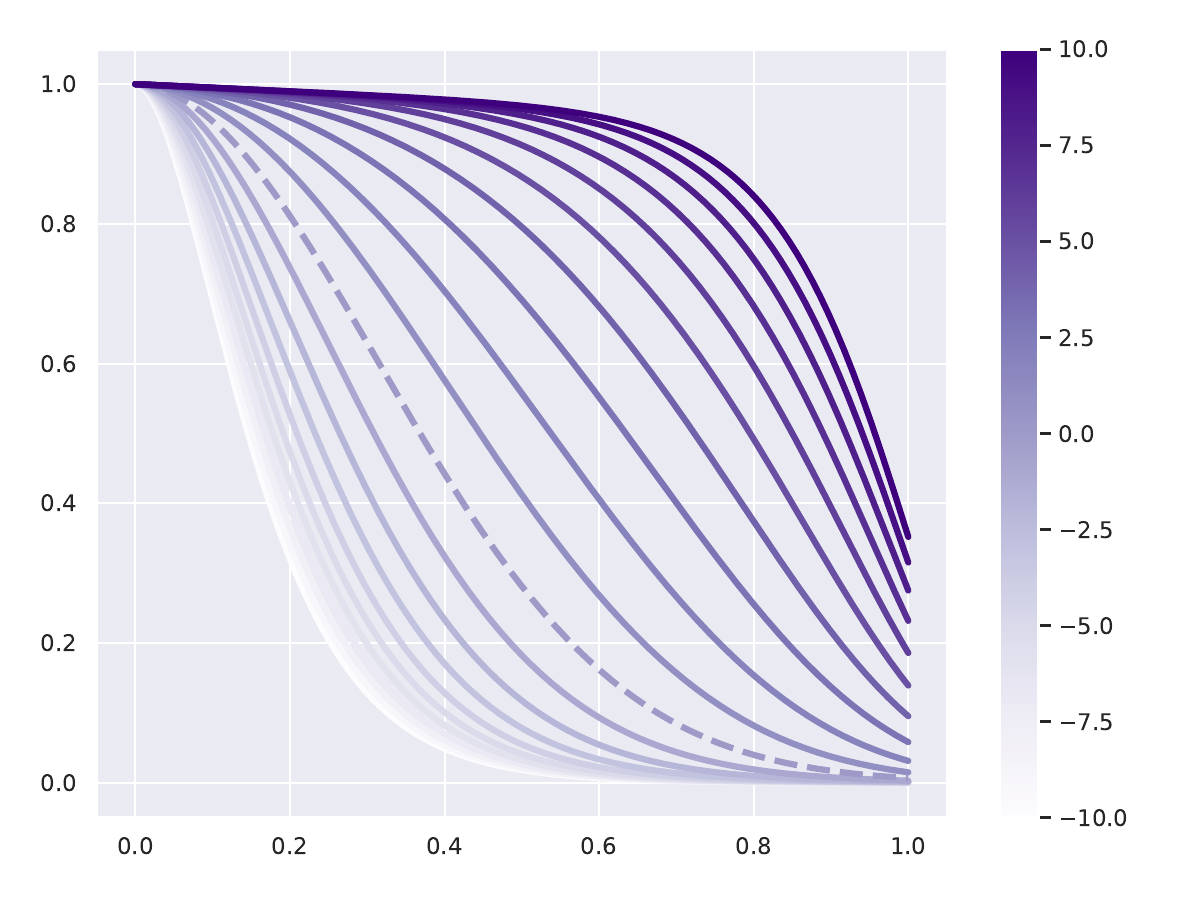} &
    \includegraphics[width=0.45\linewidth]{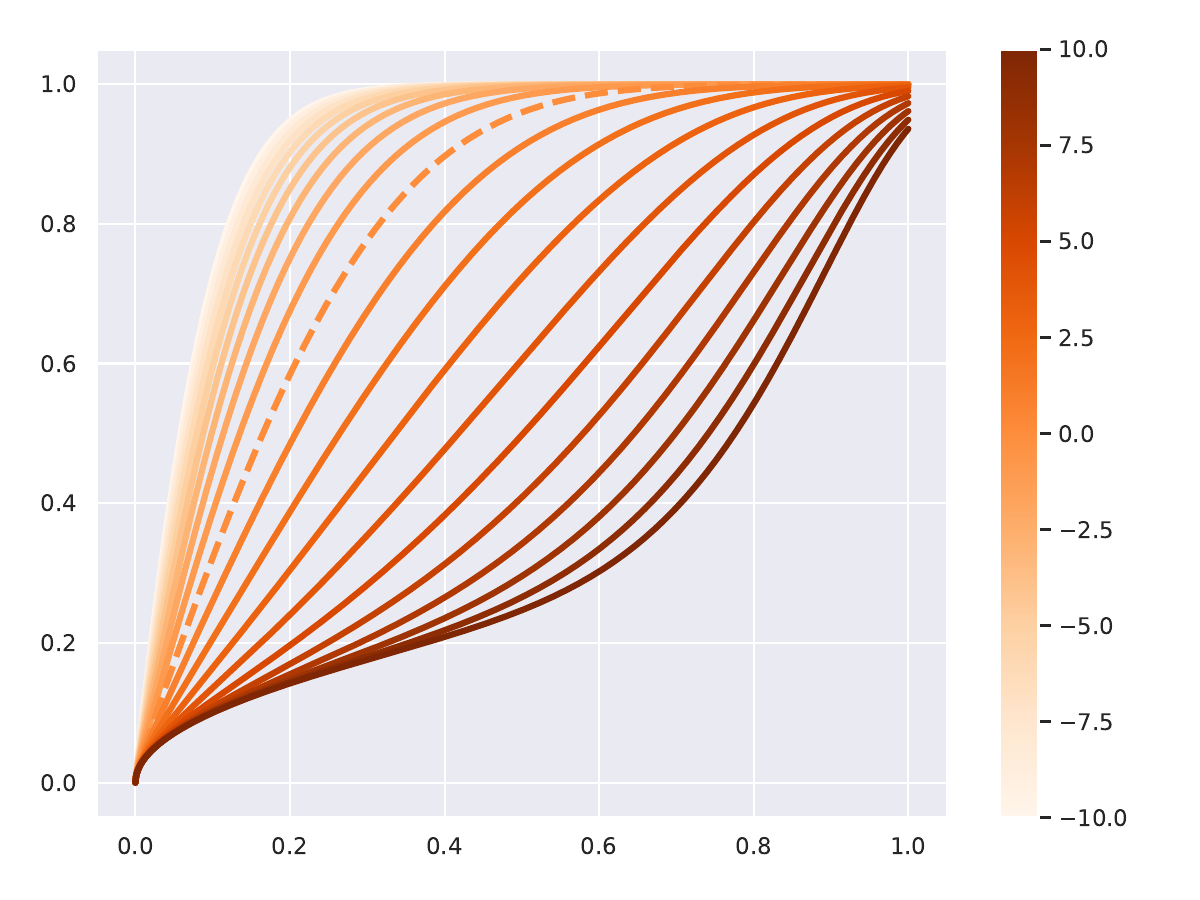} \\
    $m_t$ & $\sigma_t$
    \end{tabular}
    \caption{Evolution of  $m_t$ and  $\sigma_t$ over time, for different choices of $a$ in the noise schedule $\beta_a$ used in see Section \ref{sec:exp}. The stationary distribution of the forward process $\sigma^2$ is set to 1. 
    The range for $a$ spans from -10 to 10, with the dashed line representing the linear schedule as proposed originally in the VPSDE models \citep{song2021score}.}
    \label{fig:convergence_forward}
\end{figure}

\paragraph{Cosine noise schedule. }
We consider the cosine schedule introduced in \cite{nichol2021improved} for which the forward process is defined for $t \in \{ 1 , ..., T\}$ as
$$ X_t \eqdef \sqrt{ \bar \alpha_t} X_0 + \sqrt{ 1 - \bar \alpha_t} Z \eqsp, $$
with $X_0 \sim \pi_{\rm data}$, $Z \sim \mathcal{N} \left( 0 , I_d \right)$ and with
$$ 
\bar \alpha_t = \frac{f(t)}{f(0)}; \eqsp f(t) = \cos \left( \frac{t/T + s}{1 + s} \frac{\pi }{2} \right)^2 \eqsp. 
$$
To use this noise schedule in the SDE setting we notice that the forward process writes, for $t \in [0,T]$,
$$ 
\ora X_t = m_t X_0 + \sigma_t Z \eqsp,
$$
with $m_t = \exp\{-\int_0^t \beta(s) \rm d s/(2 \sigma^2)\}$, $ \sigma^2_t = \sigma^2 (1-m_t^2)$ and $Z \sim \mathcal{N}\left( 0, I_d \right)$. Therefore, we can simply identify $\beta_{\cos}$ by solving 
\begin{align*}
    - \int_0^t \frac{\beta_{cos}(s)}{2 \sigma^2} \rmd s = \log \left(\bar \alpha_t \right) \eqsp, 
\end{align*}
which yields the following noising function:
\begin{align}
\label{eq:cosine_sched}
    \beta_{\rm cos}(t) \eqdef \sigma^2 \frac{\pi}{T \left( s + 1 \right)}  \tan \left( \frac{\pi \left( s + t/T \right)}{2 \left( s + 1 \right) } \right) \eqsp.
\end{align}
Finally, to ensure fair comparison with the linear schedule and the parametric schedules defined in Section \ref{sec:exp}, we set in all our experiments $s=0.021122$ so that $\beta_{\cos} (0) \approx \beta_a(0) = 0.1$ for any $a$.

\begin{figure}[h]
    \centering
    \includegraphics[width=0.6\linewidth]{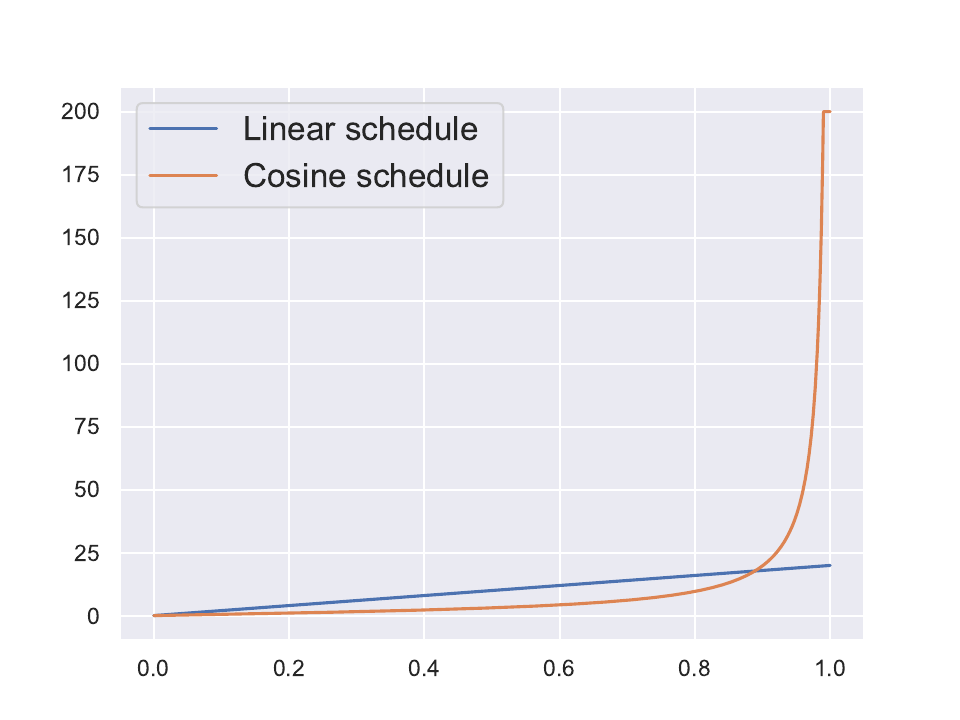}
\caption{Evolution of noising functions under the cosine schedule (orange, $\beta_{\rm cos}$) compared to the linear schedule ($\beta_0$, blue) over time with $\sigma^2 = 1$ and $s = 0.021122$. Additionally, since $\beta_0$ increases unboundedly near $T$, we clip its value to 200 for better visualization.}

    \label{fig:noise_schedule_cosine}
\end{figure}

\begin{figure}[h]
    \centering
    \begin{tabular}{cc}
    \includegraphics[width=0.45\linewidth]{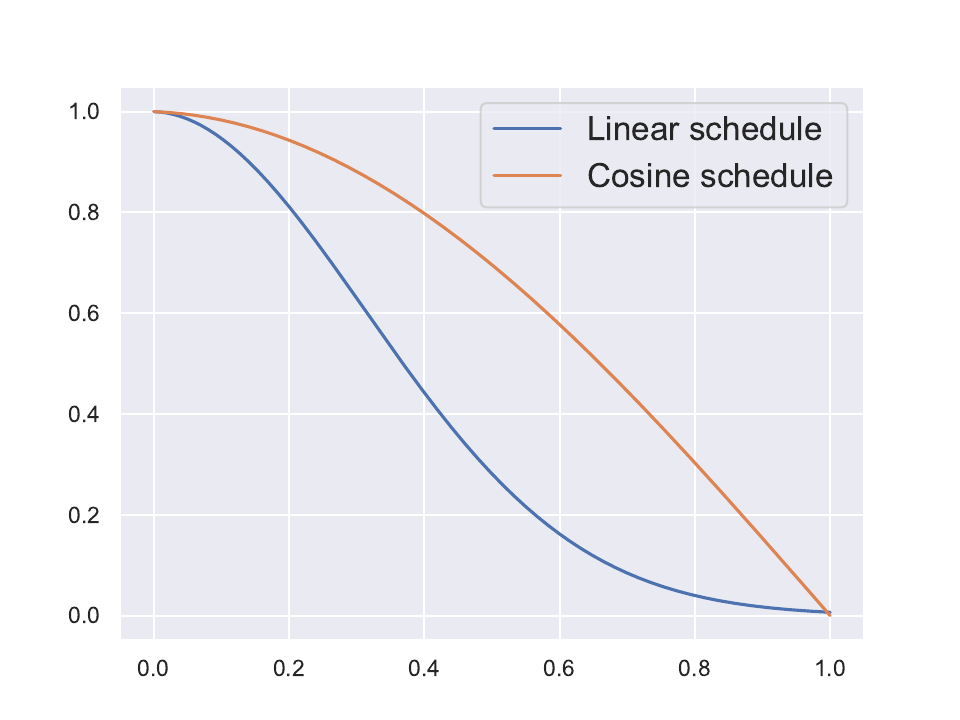} &
    \includegraphics[width=0.45\linewidth]{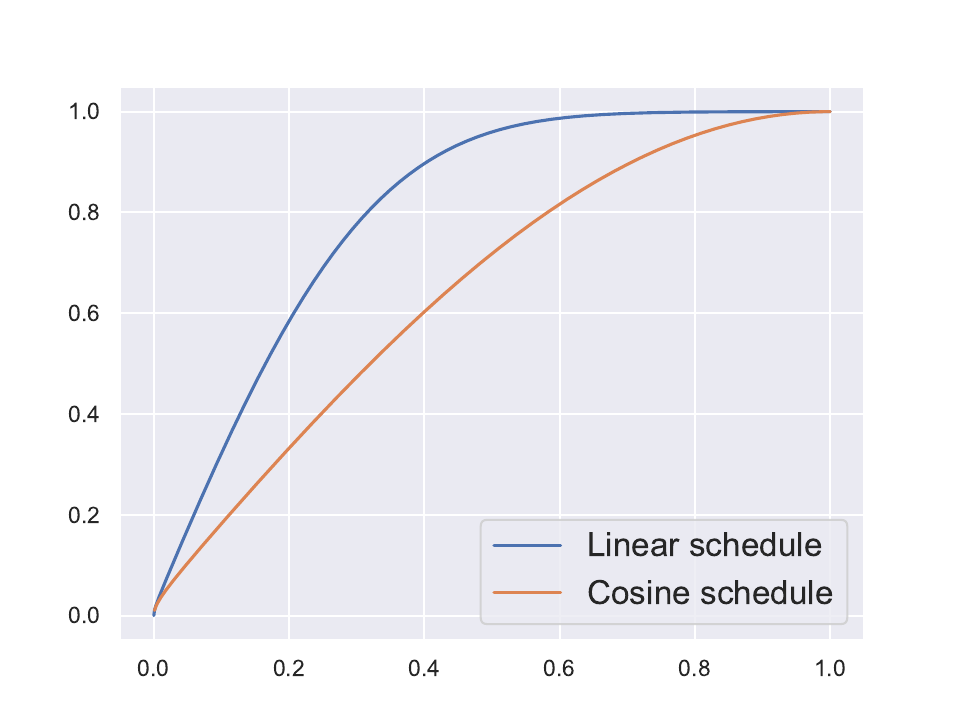} \\
    $m_t$ & $\sigma_t$
    \end{tabular}
    \caption{Evolution of  $m_t$ and  $\sigma_t$ for both the cosine schedule (orange) and the linear schedule (blue) w.r.t.\ time, with $s = 0.021122$ and $\sigma^2 = 1$. We clip the value of $\beta_{\rm cos}$ by 200 for better visualization.}
    \label{fig:convergence_forward_cosine}
\end{figure}

\subsubsection{Discretization details of the diffusion SDE}

In contrast to the forward process, described in Equation (\ref{eq:forward-SDE:beta}), which is simulated exactly, the backward process needs to be discretized. 
Recall that the backward process of (\ref{eq:forward-SDE:beta}) is given by: 
\begin{align*}
\rmd \overleftarrow{X}_t =   - \frac{\bar \sched (t)}{2 \sigma^2} \overleftarrow{X}_t + \bar \sched (t) \nabla \log   p_{T-t}\left( \ola{X}_t\right) \rmd t  +  \sqrt{\bar \sched(t)} \rmd B_t, \quad \ola X_0 \sim \pi_{\infty}.
\end{align*}
Consider time intervals $0 \leq t_k \leq t \leq t_{k+1} \leq T$, with $t_k = \sum_{\ell=1}^{k} \gamma_\ell$ and $T = \sum_{k=1}^N \gamma_k$.

In our theoretical analysis, we have considered the Exponential Integrator discretization,  defined recursively for $t \in  [ t_k , t_{k+1} ]$ by
\begin{equation*}
    \rmd \overleftarrow{X}^{EI}_t  =  \bar \sched (t)\left( - \frac{1}{2 \sigma^2} \overleftarrow{X}^{EI}_t +  \nabla \log p_{T-t_k} \left( T-{t_k}, \ola{X}_{t_k}^{EI}    \right) \right)  \rmd t +  \sqrt{\bar \sched(t)} \rmd B_t, \quad \ola X_0^{EI} \sim \pi_{\infty}\eqsp.
\end{equation*}

In the numerical experiments, we have given priority to the Euler-Maruyama discretization, which is widely used, and defined recursively for $t \in  [ t_k , t_{k+1} ]$ by
\begin{align}
\label{def_Euler_Maruyama}
\rmd \overleftarrow{X}_t^{EM} =   - \frac{\bar \sched (t_k)}{2 \sigma^2} \overleftarrow{X}_{t_k}^{EM} + \bar \sched (t_k) \nabla \log   p_{T-t_k}\left( \ola{X}_{t_k}^{EM}\right) \rmd t  +  \sqrt{\bar \sched(t_k)} \rmd B_t, \quad \ola X_0^{EM} \sim \pi_{\infty}\eqsp.
\end{align}

To ensure transparency, the graphs presented in Figure \ref{fig:gaussian_xp} of Section \ref{sec:exp_gaussian} are reproduced in Figure \ref{fig:fig2_using_EI}  using an Exponential Integrator discretization scheme. As expected for fine discretization steps (here 500 steps were used) the two schemes produce nearly identical results.

    \begin{figure}[h!]
    \centering
    \begin{tabular}{ccc}
    \hspace{-1cm}
        \includegraphics[width=0.32\linewidth]{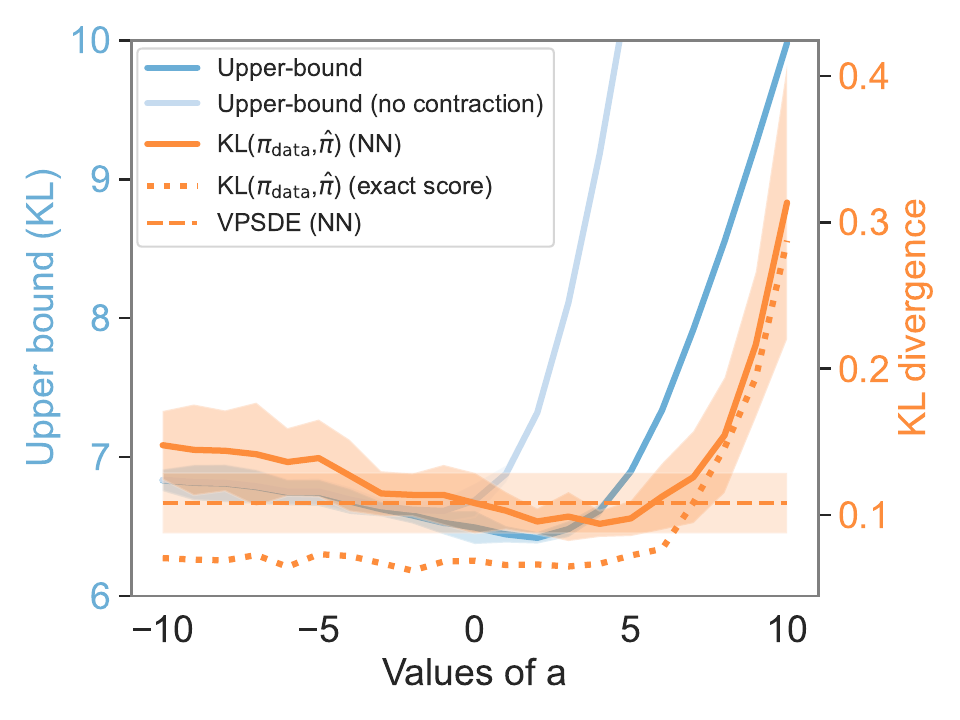}&
        \includegraphics[width=0.32\linewidth]{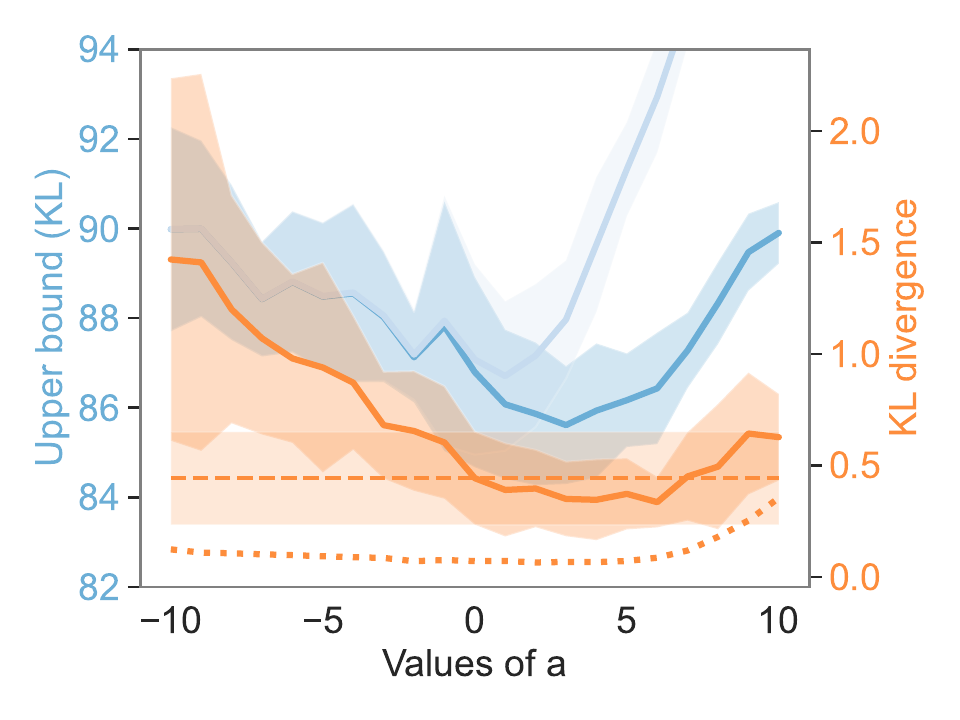}&
        \includegraphics[width=0.32\linewidth]{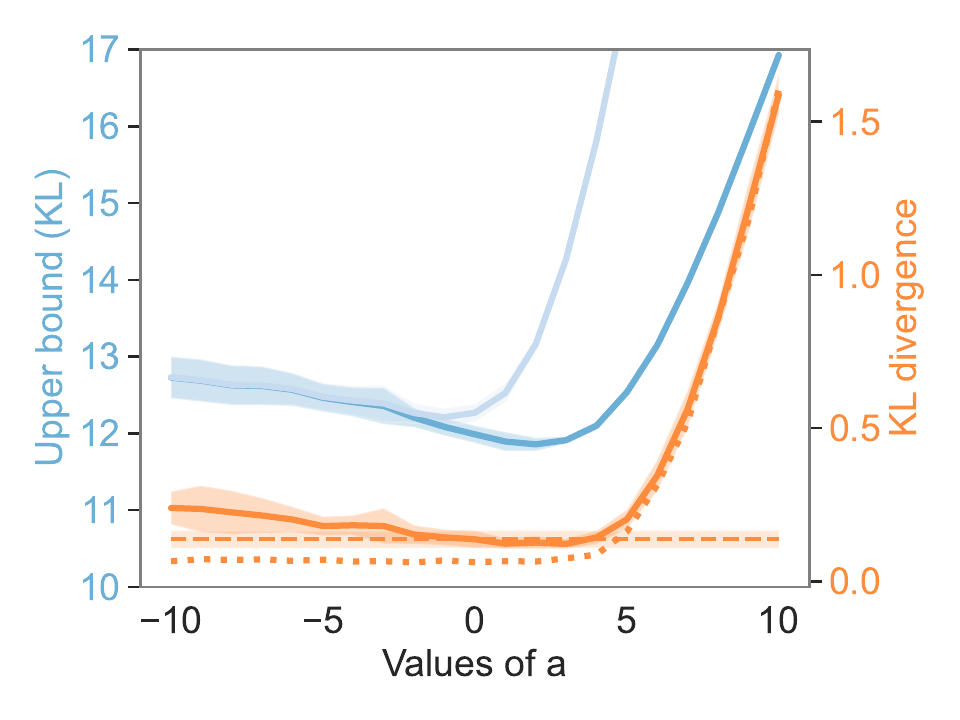} \\
        \hspace{-1cm}  
        \includegraphics[width=0.32\linewidth]{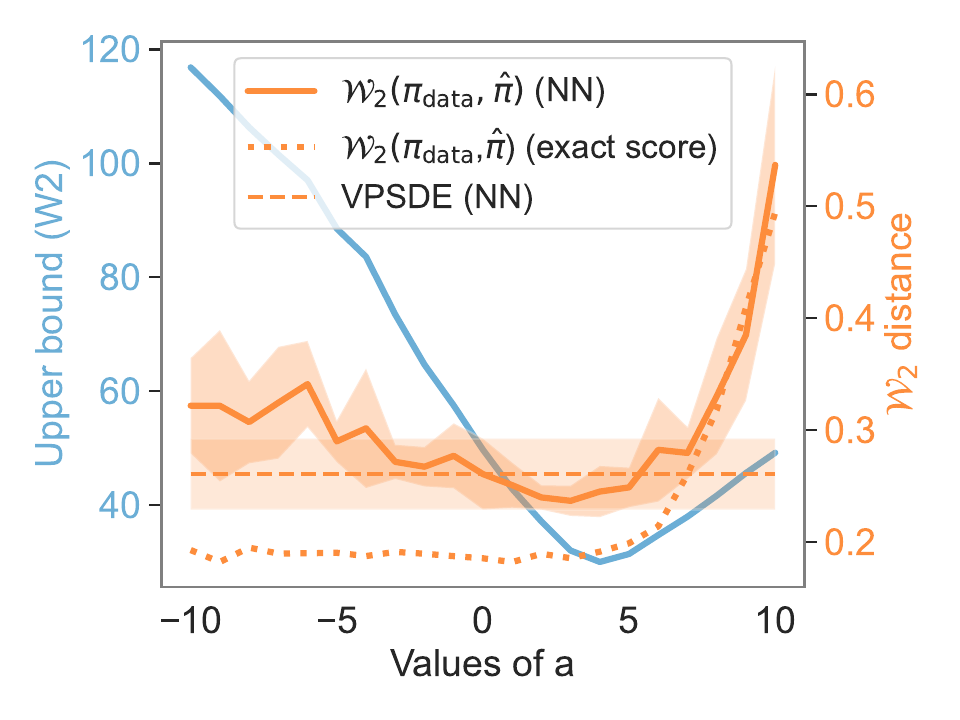}&
        \includegraphics[width=0.32\linewidth]{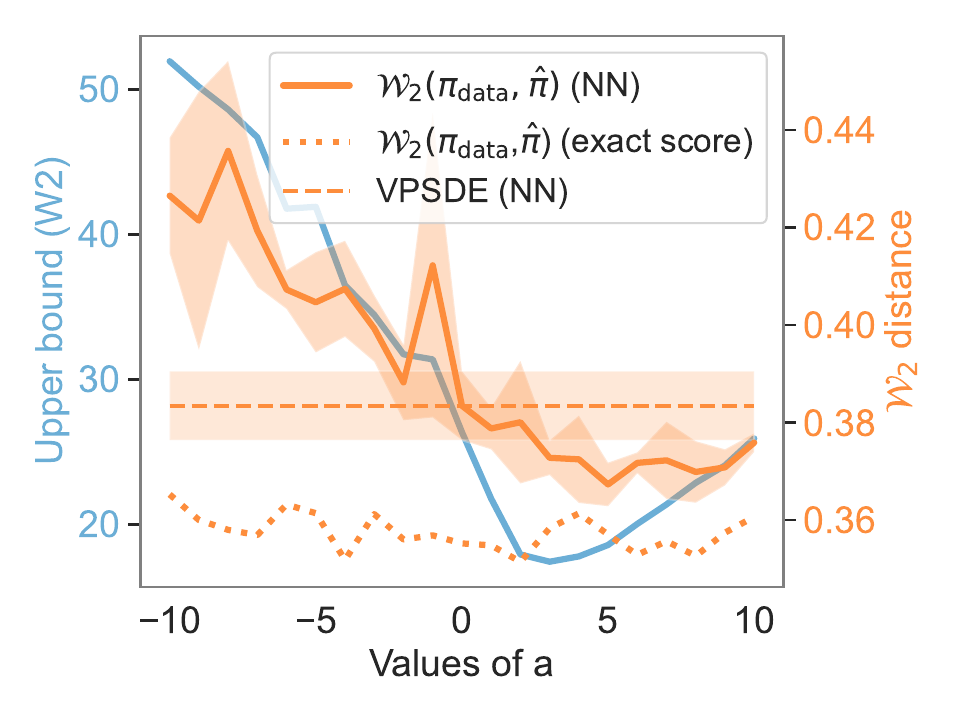}&
        \includegraphics[width=0.32\linewidth]{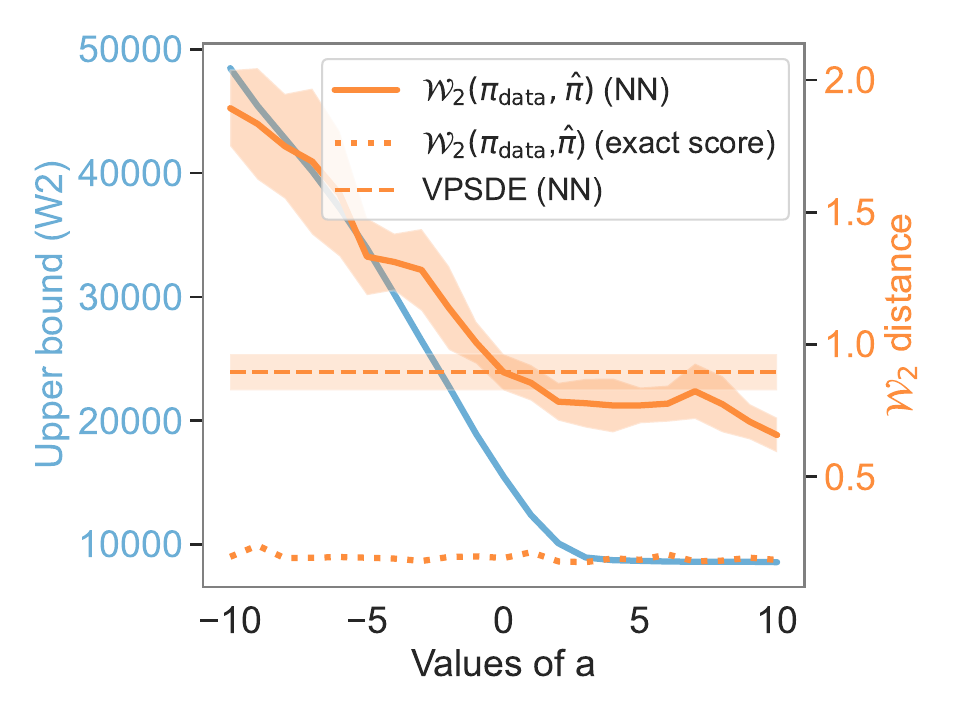} \\
        \hspace{-1cm} {(a) Isotropic setting 
        } & {(b) Heteroscedastic setting } & {(c) Correlated setting }
    \end{tabular}
    \caption{ \label{fig:fig2_using_EI}Comparison of the empirical KL divergence (mean ± std over 30 runs) (top) and $\mathcal{W}_2$  distance (mean ± std over 10 runs) (bottom)  between $\pi_{\mathrm{data}}$ and $\pihat$ (orange) and the related upper bounds (blue) from Theorem \ref{th:main} and Theorem \ref{thm:wasserstein_bound}  across parameter $a$ for noise schedule $\beta_a$, $d=50$ with \textbf{Exponential Integrator} discretization scheme. We also show the metrics for the linear VPSDE model (dashed line) and our model (dotted line) with exact score evaluation.}
    \end{figure}

\subsubsection{Implementation of the score approximation in the Gaussian setting} \label{NN_approx_model}

Although the score function is explicit when $\pi_{\rm data}$ is Gaussian (see Lemma~\ref{lem:exactscore}),  we implement SGMs as done in applications, i.e., we train a deep neural network to witness the effect of the noising function on the approximation error. We train a neural network architecture $s_{\param}(t,x) \in [0,T] \times \mathbb{R}^d \mapsto \mathbb{R}^d$ using the actual score function as a target: 
\begin{align*}
\mathcal{L}_{\rm explicit} (\theta )& = \mathbb{E} \left[ \left\|  s_{\theta} \left(\tau, \ora X_{\tau}\right) - \nabla \log p_{\tau} \left(\ora X_{\tau}\right)  \right\|^2 \right] \\
& =  \mathbb{E} \left[ \left\|  s_{\theta} \left(\tau, \ora X_{\tau}\right) -(m_{\tau}^2 \Sigma_0 + \sigma_{\tau}^2 \mathrm{I}_d)^{-1} (\ora X_{\tau} + m_{\tau} \mu_0)  \right\|^2 \right]\eqsp,
\end{align*}
where $t \to m_t$ and $t \to \sigma_t$ are defined in Lemma~\ref{lem:exactscore} and $\tau \sim \mathcal{U}(0,T)$ is independent of $\ora X$. The neural network architecture chosen for this task is described in Figure \ref{fig:nn_architecture}. The width of each dense layer $\texttt{mid\_features}$ is set to 256 throughout the experiments. 

\begin{figure}[h]
    \centering
    \includegraphics[width=\linewidth]{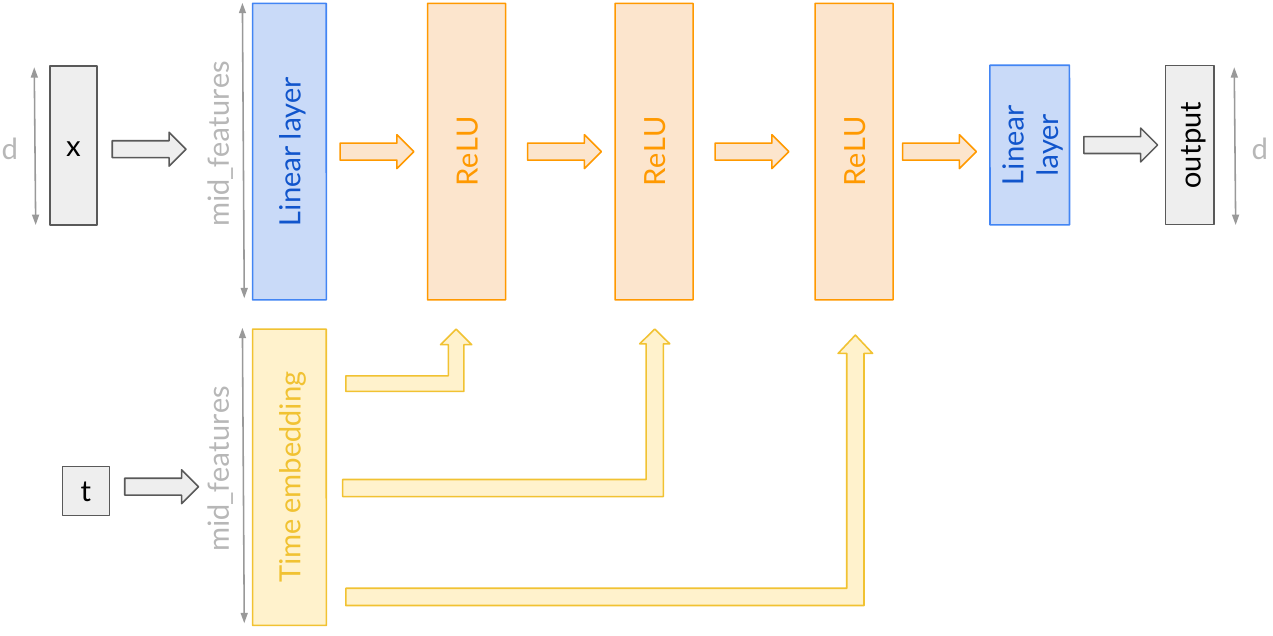}
    \caption{Neural network architecture. The input layer is composed of a vector $x$ in dimension $d$ and the time $t$. Both are respectively embedded using a linear transformation or a sine/cosine transformation \citep{nichol2021improved} of width $\texttt{mid\_features}$. Then, 3 dense layers of constant width $\texttt{mid\_features}$ followed by ReLu activations and skip connections regarding the time embedding. The output layer is linear resulting in a vector of dimension $d$. }
    \label{fig:nn_architecture}
\end{figure}

\subsection{Details on the experiments and additional results}

\subsubsection{Illustration of the KL bound in the Gaussian setting}
\label{sec:appendix_KL_bound}

\paragraph{Target distributions.}
We investigate the relevancy of the upper bound from Theorem \ref{th:main} for different noise schedules in the Gaussian setting. We use as a training sample $10^4$ samples with distribution $\mathcal{N} \left( {\bf 1}_d, \Sigma \right) $ for $d$ the dimension of the target distribution with different choices of covariance structure.

\begin{enumerate}
    \item (Isotropic) $\Sigma^{\mathrm{iso}} = 0.5 \mathrm{I}_d$.
    \item (Heteroscedastic) $\Sigma^{\mathrm{heterosc}}\in\mathbb{R}^{d\times d}$ is a diagonal matrix such that $\Sigma^{\mathrm{heterosc}}_{jj}=1$ for $1\leq j \leq d$, and $\Sigma^{\mathrm{heterosc}}_{jj}=0.01$ otherwise.
    \item (Correlated) $\Sigma^{\mathrm{corr}}\in\mathbb{R}^{d\times d}$ is a full matrix whose diagonal entries are equal to one and the off-diagonal terms are given by  $\Sigma^{\mathrm{corr}}_{jj'}=1/\sqrt{|j-j'|}$ for $1\leq j\neq j' \leq d$.
\end{enumerate}
The resulting data distributions are respectively denoted by $\pi_{\rm data}^{\mathrm{(iso)}}$, $\pi_{\rm data}^{\mathrm{(heterosc)}}$ and $\pi_{\rm data}^{\mathrm{(corr)}}$.

\paragraph{Upper bound evaluation.}We leverage the Gaussian nature of the target distribution to compute explicitly all the terms in the bound. On the one hand, the relative entropy in $\mathcal{E}^{\rm KL}_1$, $\kl \left( \pi_{\rm data} \middle\| \pi_{\infty} \right) $ is computed using the analytical formula for $\kl$-divergence between two random Gaussian variable (Lemma \ref{KL divergence Gaussian rvs}). On the other, the relative Fisher information in $\mathcal{E}^{\rm KL}_3$,  $\mathcal{I}(\pi_{\rm data} | \pi_{\infty} )$, is computed using Lemma (\ref{Info Fisher Gaussian rvs}). Moreover, as the noise schedule function $\beta_a$ and its primitive are analytically known, every occurrences of either of them are explicitly computed. Finally, it remains to estimate the expectation in $\mathcal{E}^{\rm KL}_2(\param,\sched)$. This is done via Monte Carlo estimation on 500 samples from the forward process (see Section \ref{sec:exact_simulation_forward})  for every step forward:
\begin{align*}
    \frac{1}{500} \sum_{k = 0}^{N-1} \sum_{i = 1}^{500} \left\| \nabla \log \tilde p_{T-t_k} \left( \ora X_{T-t_k}^{(i)} \right) - \tilde s_{\param}\left(T-t_k,  \ora X_{T-t_k}^{(i)}\right) \right\|^2 
    \int^{T-t_k}_{T-t_{k+1}} \sched_a (t) \rmd t\eqsp. 
\end{align*}

\paragraph{SGM data generation in dimension 50.} In Figures \ref{fig:gaussian_xp} (top) of the main paper, we represent the following quantities in the same graph, in dimension $d =50$, for different values of $a$.
\begin{itemize}
    \item In blue the upper bound from Theorem \ref{th:main}. The dark blue color is used to refer to the upper bound with the contraction argument in equation \eqref{eq:KL_refined_mixing_error} from Proposition \ref{prop:back_contractivity_kl} while the ligther blue bound is the same bound without the contraction argument.
    \item In orange (dotted line) the KL divergence between the target distribution $\pi_{\rm data}$ and the empirical mean and covariance of the data generated using the true score function from Lemma \ref{lem:exactscore}.
    \item In orange (plain line) we represent $\mathrm{KL}(\pi_{\rm data}\| \widehat{\pi}_N^{(\beta_a, \theta)})$ for $a \in \{ -10,-9, -8,\ldots ,10 \}$. That is, the KL divergence between the target distribution $\pi_{\rm data}$ and the empirical mean and covariance of the data generated using the neural network architecture described in Figure \ref{fig:nn_architecture} to approximate the score function.
    \item In orange (dashed line)  we represent $\mathrm{KL}(\pi_{\rm data}\| \widehat{\pi}_N^{(\beta_0, \theta)})$. That is, the KL divergence between the target data $\pi_{\rm data}$ and the empirical mean and covariance of the data generated by the linear schedule VPSDE presented in \citet{song2021score} with the neural network architecture described in Figure \ref{fig:nn_architecture}.
\end{itemize}

We generate 10 000 samples. The batch size is set to 64 and neural networks are optimized with Adam. All the KL divergences written above are computed using Lemma \ref{KL divergence Gaussian rvs}. Due to the stochastic nature of our experiments, they are repeated \textcolor{blue}{30} times so that the corresponding mean value and standard deviations of these results are respectively depicted using plain and fill-in-between plots. 

To disentangle the effect of each error term it is possible to plot the mixing time error $\mathcal{E}_1^{\rm KL}(\sched)$, the approximation error $\mathcal{E}_2^{\rm KL}(\param,\sched)$ and the discretization error $\mathcal{E}_3^{\rm KL}(\sched)$ on a same graph for different values of $a$. However, for the schedule choice presented in Figure \ref{fig:noise_schedules} as $\beta(T)$ is set to be $20$ for every $a$ values it is pointless to display $\mathcal{E}_3^{\rm KL}(\sched)$ as it would not vary for different choices of schedule from $\beta_a$. The three error terms for Theorem \ref{th:main}, corresponding to the example in Figure \ref{fig:gaussian_xp} (top) are provided below in Figure \ref{fig:error_decomposition}.

     \begin{figure}[h!]
    \centering
    \begin{tabular}{ccc}
    \hspace{-1cm}
        \includegraphics[width=0.32\linewidth]{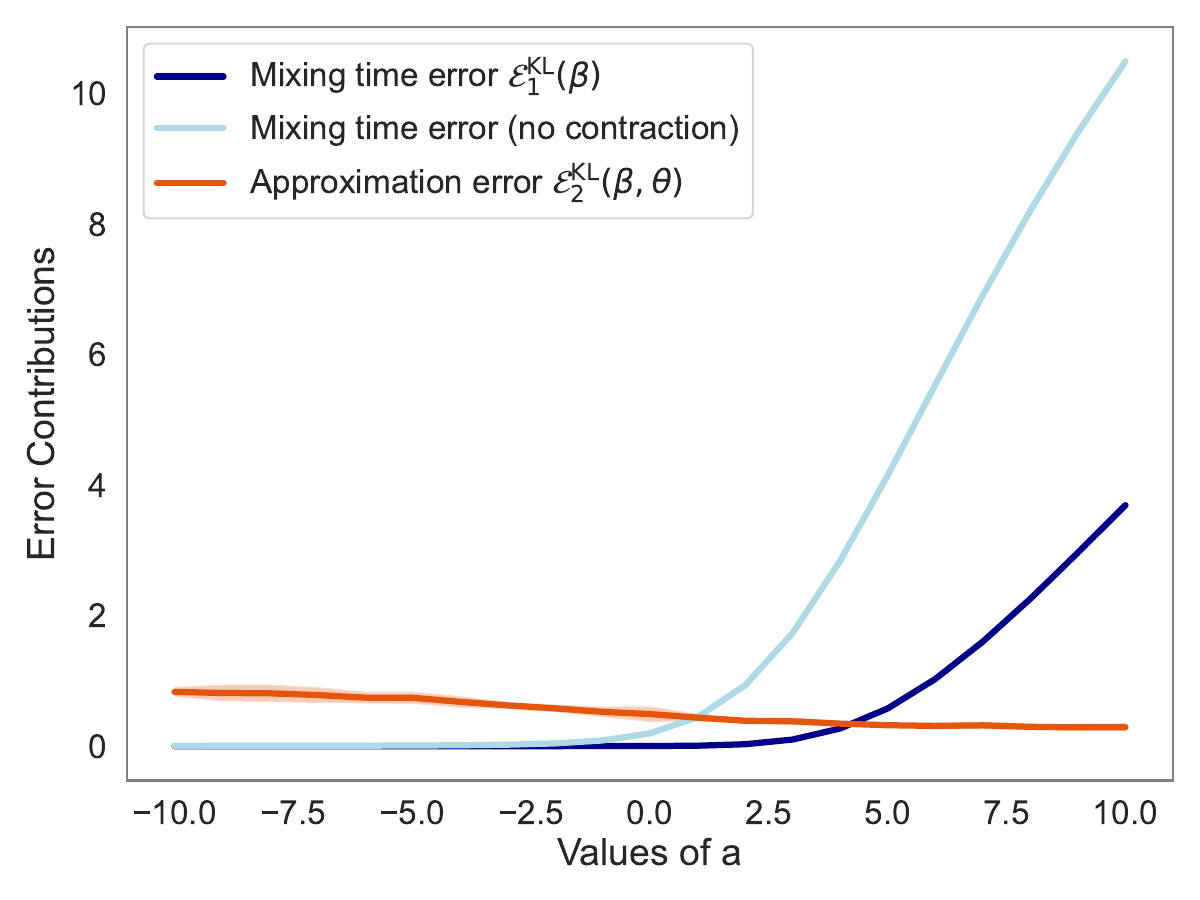}&
        \includegraphics[width=0.32\linewidth]{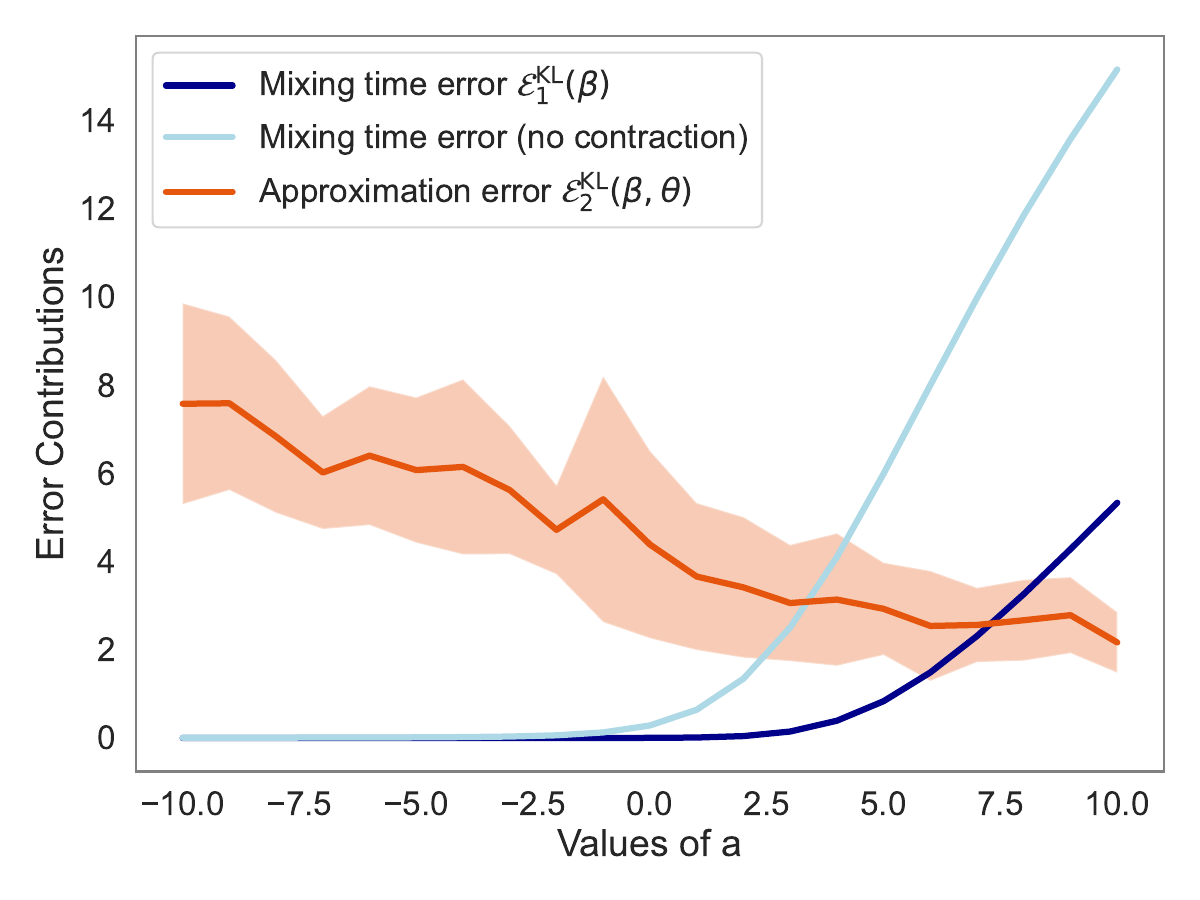}&
        \includegraphics[width=0.32\linewidth]{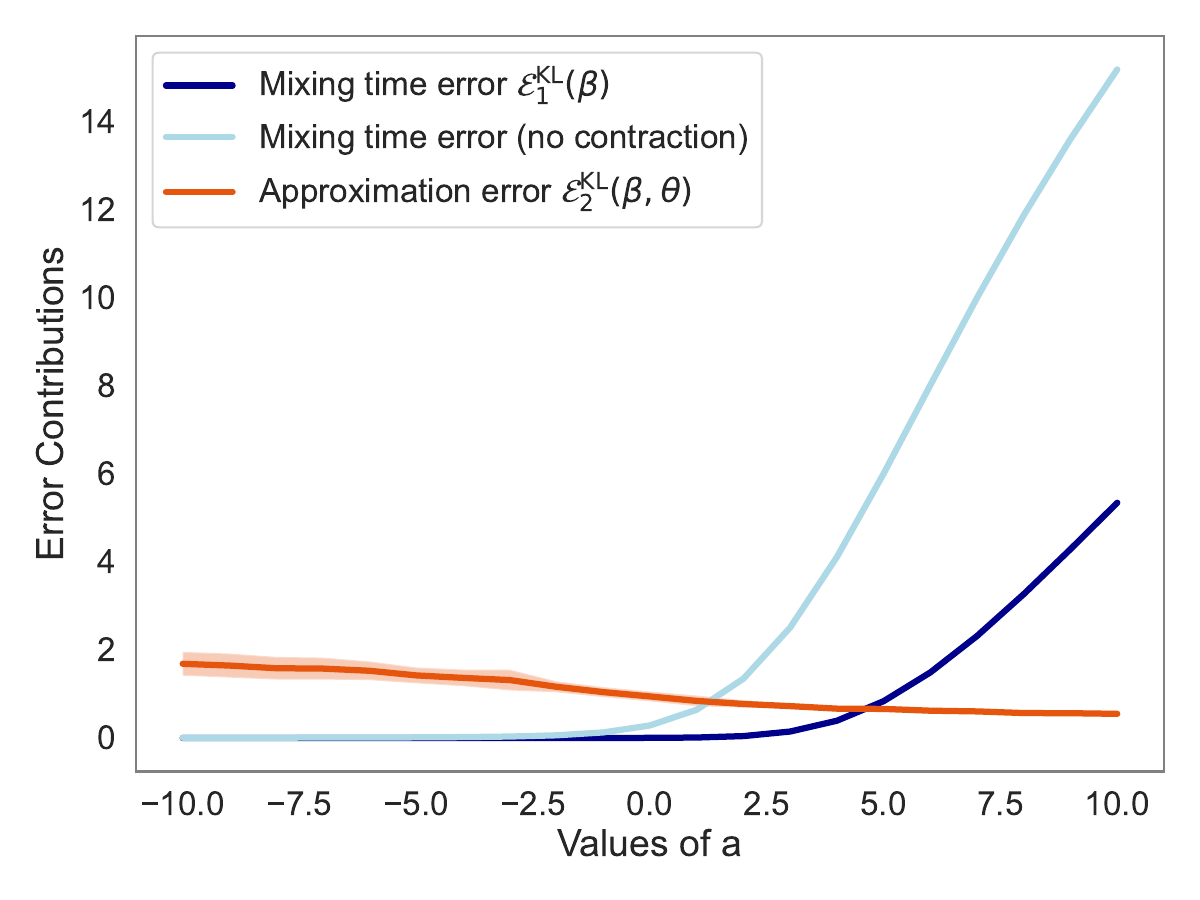} \\
        \hspace{-1cm} {(a) Isotropic setting 
        } & {(b) Heteroscedastic setting } & {(c) Correlated setting }
    \end{tabular}
    \caption{\label{fig:error_decomposition}Error terms contribution from Theorem \ref{th:main} displayed from the same examples as in Figure \ref{fig:gaussian_xp} (top).}
    \end{figure}

\paragraph{Optimal schedule versus classical choices.}
We investigate the gain from using the parametric schedule with $a^\star
$ minimising the upper bound from Theorem \ref{th:main} for $d \in \{ 5, 10, 25, 50 \}$ compared to using the linear and cosine schedules (see Appendix \ref{sec:noise_sched}) 
in the isotropic and correlated settings (as mentioned in Section \ref{sec:exp_gaussian}, up to rescaling the heteroscedastic setting boils down to an isotropic setting).

To determine the optimal value $a^\star$, upper bounds were initially calculated across various dimensions for a range of $a$ values from $\{-10, -9, \ldots, 10\}$. This initial calculation aimed to identify a preliminary minimum value. Subsequently, the search was refined around these preliminary values using finer step-sizes of 0.25 to more precisely locate $a^\star$. 

Results are given in tabular form in Table \ref{tab:KL_table_gaussian} and in Figure \ref{fig:gaussian_xp_dim}. The parametric schedule optimized to minimize the upper bound $\beta_{a^\star}$ consistently surpasses the linear schedule, delivering significant improvements. This enhanced performance is shown by lower average Kullback-Leibler divergence between $\pi_{\rm data}$ and the generated sample distribution, as well as a reduction in the standard deviation of these divergences, which contributes to more stable generation. These results are competitive with or even exceed those obtained with state-of-the-art schedules such as the cosine schedule, particularly in higher dimensions $d = 25$ and $d = 50$. However, one should note that this comparison may not be entirely fair, as the cosine schedule increases unboundedly near $T$, whereas we capped the parametric schedule at $\beta(T) = 20$ to align with the linear schedule described in \citet{song2021score}.

\begin{table}[ht]
\centering
\resizebox{\textwidth}{!}{\begin{tabular}{@{}cccccc@{}} 
\toprule
 & Dimension & 5 & 10 & 25 & 50\\ 
\midrule
\multirow{6}{*}{Isotropic} & Upper bound min $a^\star$ & 1.75 & 1.00 & 1.50 & 2.00 \\
& Generation value in $a^\star$ & 0.001607 $\pm$ 0.000462 & 0.005343 $\pm$ 0.001155 & \textbf{0.026724} $\pm$ 0.004046 & \textbf{0.095981} $\pm$ 0.005485 \\
& VPSDE (linear sched.) & 0.001935 $\pm$ 0.000405 & 0.005594 $\pm$ 0.001377 & 0.031748 $\pm$ 0.006158 & 0.105592 $\pm$ 0.019529 \\
& Cosine schedule & \textbf{0.001390} $\pm$ 0.000296 & \textbf{0.005097} $\pm$ 0.001064 & 0.026900 $\pm$ 0.001859 & 0.099917 $\pm$ 0.004375 \\
& \% gain (vs VPSDE) & +16.93 \% & +4.48 \% & +15.80 \% & +9.10 \% \\
& \% gain (vs Cosine) & -15.61 \% & -4.83 \% & +0.66 \% & +3.94 \% \\
\midrule
\multirow{6}{*}{Correlated} & Upper bound min $a^\star$ & 2.25 & 1.75 & 1.75 & 2.25 \\
& Generation value in $a^\star$ & 0.001861 $\pm$ 0.000880 & 0.005871 $\pm$ 0.001165 & \textbf{0.033156} $\pm$ 0.003785 & \textbf{0.109649} $\pm$ 0.008056 \\
& VPSDE (linear sched.) & 0.002568 $\pm$ 0.002708 & 0.006210 $\pm$ 0.001816 & 0.038434 $\pm$ 0.010313 & 0.134716 $\pm$ 0.016541 \\
& Cosine schedule & \textbf{0.001197} $\pm$ 0.000332 & \textbf{0.005515} $\pm$ 0.000775 & 0.040430 $\pm$ 0.003475 & 0.110515 $\pm$ 0.004646 \\
& \% gain (vs VPSDE) & +27.53 \% & +5.46 \% & +13.74 \% & +18.63 \% \\
& \% gain (vs Cosine) & -55.47 \% & -6.46 \% & +17.98 \% & +0.78 \% \\
\midrule
\multirow{2}{*}{Parameters} & Learning rate & 1e-4 & 1e-4 & 1e-3 & 1e-3 \\
& Epochs & 20 & 30 & 75 & 150 \\
\bottomrule \\
\end{tabular}}
\caption{
\label{tab:KL_table_gaussian}
Comparison of the KL divergence between the target value and the generated value at $a^{\star}$ (the minimum value of the upper bound from Theorem \ref{th:main}) with the KL divergence between the generated value by VPSDE with linear schedule and the target distribution. We display average KL divergences plus or minus standard deviations over 10 runs. The target distributions are chosen to be Gaussian with different covariance structures:  isotropic ($\pi_{\rm data}^{\mathrm{(iso)}}$), heteroscedastic ($\pi_{\rm data}^{\mathrm{(heterosc)}} $) and correlated ($\pi_{\rm data}^{\mathrm{(corr)}}$).  }
\end{table}

\begin{figure*}[h!]
    \centering
    \begin{tabular}{cc}
    \hspace{-1cm}
        \includegraphics[width=0.4\linewidth]{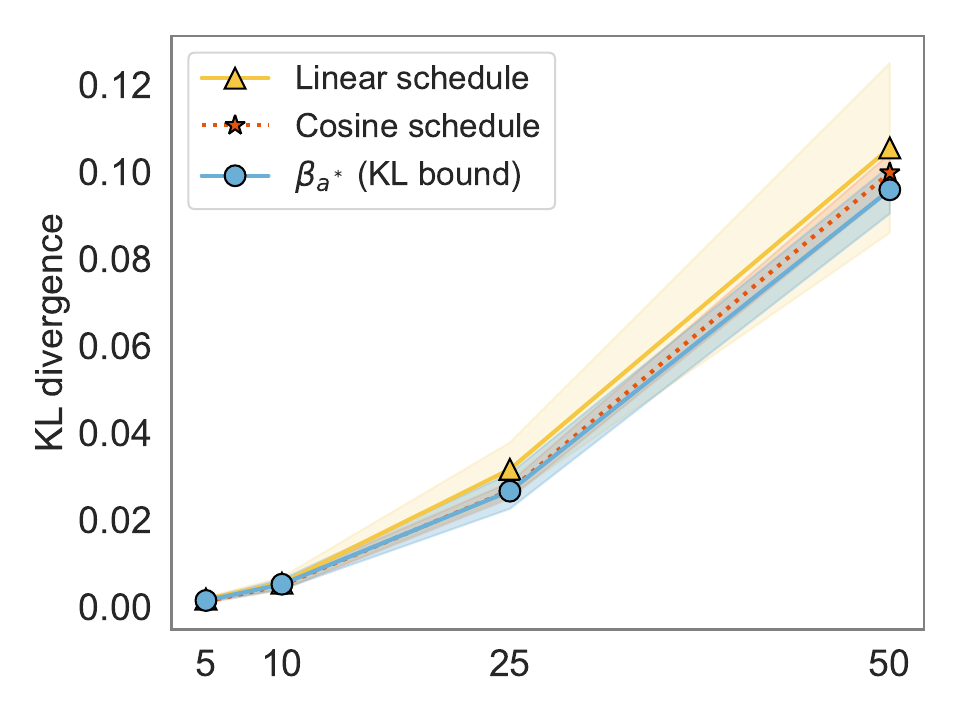}&
        \includegraphics[width=0.4\linewidth]{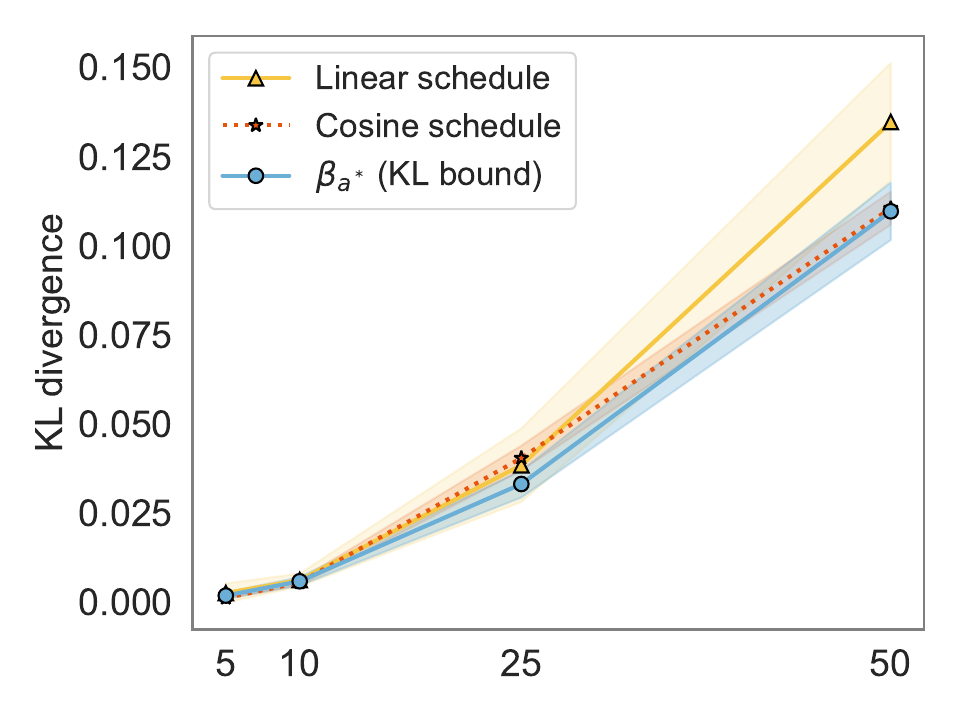} \\
        \hspace{-1cm}
        {(a) Isotropic setting $\pi_{\rm data}^{\mathrm{(iso)}}$} 
        & {(b) Correlated setting $\pi_{\rm data}^{\mathrm{(corr)}}$}
    \end{tabular}
    \caption{\label{fig:gaussian_xp_dim} Comparison of the empirical KL divergence (mean value $\pm$ std over 10 runs) between $\pi_{\mathrm{data}}$ and the generative distribution $\hat{\pi}$ for different values of the dimension.
    The generative distributions considered are $ \widehat{\pi}_N^{(\beta_{a^\star}, \theta)})$ obtained by the time-inhomogeneous SGM for $\beta_{a^\star}$ (blue plain), $ \widehat{\pi}_N^{(\beta_{0}, \theta)}$ obtained by a standard linear VPSDE model (yellow dashed) and $ \widehat{\pi}_N^{(\beta_{\cos}, \theta)}$ obtained by using a cosine schedule (orange dotted).   
    }
\end{figure*}

\subsubsection{Illustration of the Wasserstein bound in the Gaussian setting}
\label{sec:appendix_W2_bound}

\paragraph{Target distributions.} The target distributions are Gaussian and are the same as for the the Kullback-Leibler bound:  $\pi_{\rm data}^{\mathrm{(iso)}}$, $\pi_{\rm data}^{\mathrm{(heterosc)}}$ and $\pi_{\rm data}^{\mathrm{(corr)}}$.  

\paragraph{Upper bound evaluation.} We leverage the Gaussian nature of the target distribution to compute
explicitly all the terms in the bound from Theroem \ref{thm:wasserstein_bound}. For the mixing time $\mathcal{E}_1^{\mathcal{W}_2}$, the strong log-concavity constant $\bar C_t$ is derived using Lemma \ref{lem:gaussian:contract} and $\mathcal{W}_2 (\pi_{\rm data} , \pi_{\infty})$ is derived using Lemma \ref{W2 Gaussian rvs}. For $\mathcal{E}_2^{\mathcal{W}_2}$, the analytical expressions for $\bar L_t$ is given in Lemma \ref{lem:gaussian:contract} and an upper bound to $M$ is derived in Proposition \ref{prop:bar_l_t}. All non analytically solvable integrals estimated numerically using the trapezoidal rule, implemented with the built-in PyTorch function \texttt{torch.trapezoid}. To estimate $\varepsilon$, we use Monte-Carlo simulations with 500 samples (in the same manner as for the Kullback-Leibler bound):
\begin{align*}
    \sup_{k \in \{0, ..., N-1 \} } \sqrt{ \frac{1}{500} \sum_{i = 1}^{500} \left\| \nabla \log \tilde p_{T-t_k} \left( \ora X_{T-t_k}^{(i)} \right) - \tilde s_{\param}\left(T-t_k,  \ora X_{T-t_k}^{(i)}\right) \right\|^2  }
    \eqsp. 
\end{align*}

\paragraph{SGM data generation dimension 50.}
In Figures \ref{fig:W_2_result_gaussian} (and Figures \ref{fig:gaussian_xp} (bottom) of the main paper) we represent on the same graph, in dimension $d = 50$, for different values of a:
\begin{itemize}
    \item in blue the upper bound from Theorem \ref{thm:wasserstein_bound}.
    \item in orange (dotted line) the $\mathcal{W}_2$ distance between the target distribution $\pi_{\rm data}$ and the empirical mean and covariance of the data generated using the true score function from Lemma \ref{lem:exactscore}.
    \item in orange (plain line) we represent $\mathcal{W}_2(\pi_{\rm data} , \widehat{\pi}_N^{(\beta_a, \theta)})$ for $a \in \{ -10,-9, -8,.. ,10 \}$. That is, the 
    $\mathcal{W}_2$ distance between the target distribution $\pi_{\rm data}$ and the empirical mean and covariance of the data generated using the neural network architecture described in Figure \ref{fig:nn_architecture} to approximate the score function 
    \item in orange (dashed line)  we reprensent $\mathcal{W}_2(\pi_{\rm data} , \widehat{\pi}_N^{(\beta_0, \theta)})$. That is, the $\mathcal{W}_2$ distance between the target data $\pi_{\rm data}$ and the empirical mean and covariance of the data generated by the linear schedule VPSDE presented in \citet{song2021score} with the neural network architecture described in Figure \ref{fig:nn_architecture}.
\end{itemize}

\underline{First \phantom{p}results.} We generate 10 000 samples. The batch size is set to 64 and neural networks are optimized with Adam. All the $\mathcal{W}_2$ distances written above are computed using Lemma \ref{W2 Gaussian rvs}. Due to the stochastic nature of our experiments, they are repeated ten times so that the corresponding mean value and standard deviations of these results are respectively depicted using plain and fill-in-between plots.

\begin{figure*}[h] 
    \centering
    \begin{tabular}{ccc}
        \includegraphics[width=0.32\linewidth]{w2_iso_main.pdf}&
        \includegraphics[width=0.32\linewidth]{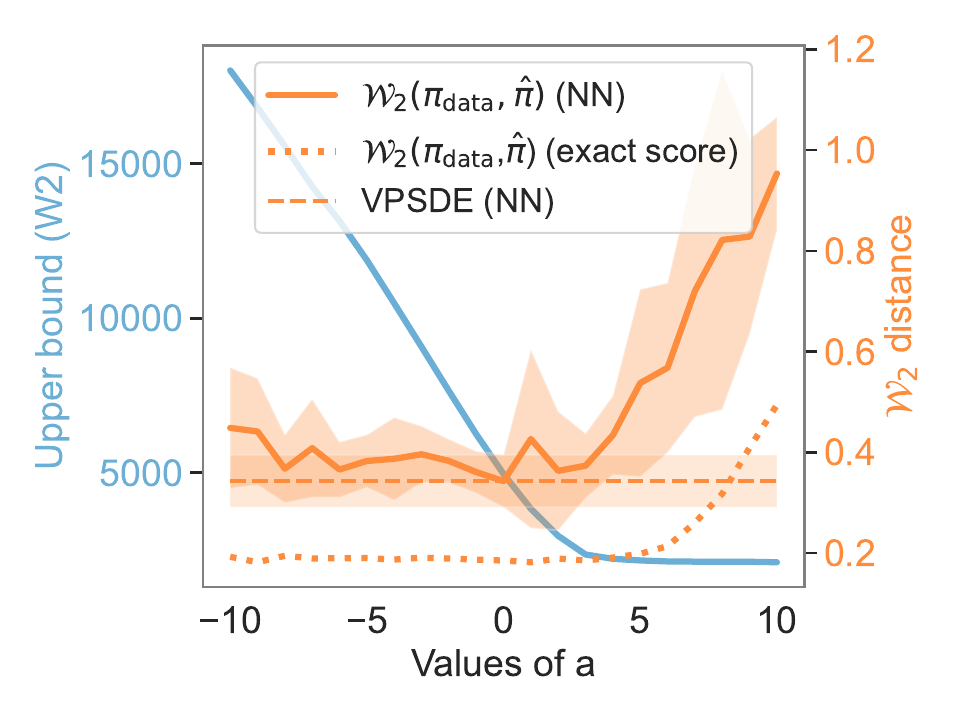}&
        \includegraphics[width=0.32\linewidth]{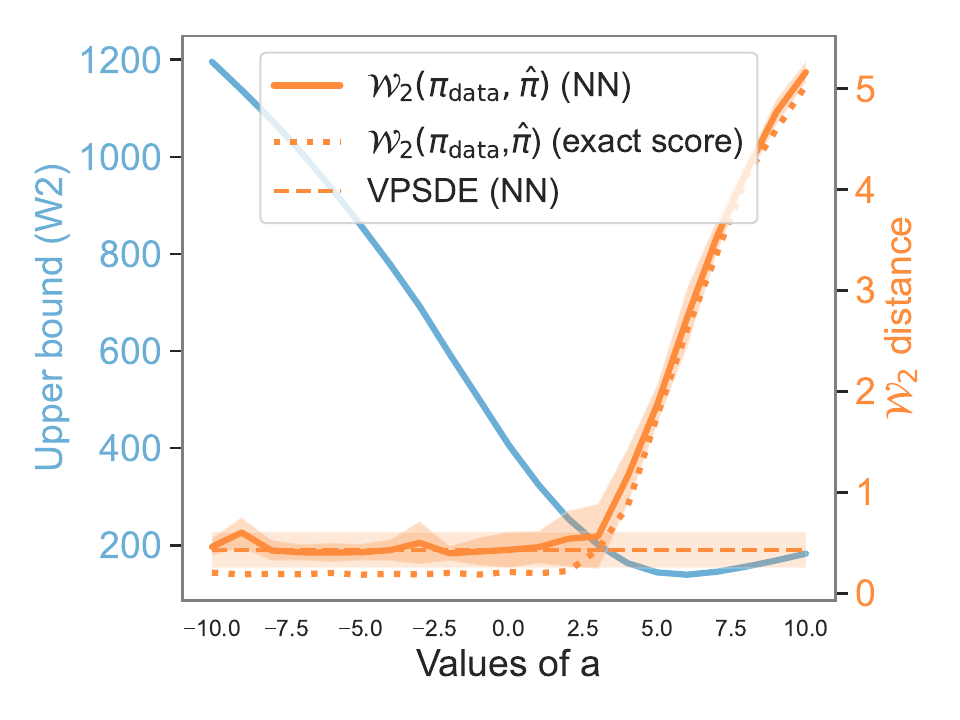} 
        \\
        {(a) Isotropic setting } & {(b) Heteroscedastic setting} & {(c) Correlated setting}
    \end{tabular}
    \caption{\label{fig:W_2_result_gaussian} Comparison of the empirical 2-Wasserstein distance (mean value $\pm$ std over 10 runs) between $\pi_{\mathrm{data}}$ and $ \widehat{\pi}_N^{(\beta_a, \theta)}$ (in orange) and the upper bound from Theorem \ref{thm:wasserstein_bound} (in blue) w.r.t.\ the parameter $a$ used in the definition of the noise schedule $\beta_a$, for $d=50$. We also represent the 2-Wasserstein distances obtained with the linear VPSDE model (dashed line) and the one obtained with the parametric model (dotted line) when the score is not approximated but exactly evaluated. The data distribution $\pi_{\mathrm{data}}$ is chosen Gaussian, corresponding to  (a) $\pi_{\rm data}^{\mathrm{(iso)}}$, (b) $\pi_{\rm data}^{\mathrm{(heterosc)}} $ and (c) $\pi_{\rm data}^{\mathrm{(corr)}}$. 
    }
\end{figure*}

Performances obtained from raw distributions for $\pi_{\rm data}^{\mathrm{(iso)}}$, $\pi_{\rm data}^{\mathrm{(heterosc)}}$ and $\pi_{\rm data}^{\mathrm{(corr)}}$ are displayed in Figure \ref{fig:W_2_result_gaussian}. In the isotropic case (Figure \ref{fig:W_2_result_gaussian} (a)) the curve for the upper bound (blue line) points a global minimum near the minimal values obtain by $\mathcal{W}_2(\pi_{\rm data}, \widehat{\pi}_N^{(\beta_a, \theta)})$ (plain orange line), which underlines that the upper bound is indeed informative in such a case.
However, the upper bounds obtained for the heteroscedastic and correlated settings (Figure \ref{fig:W_2_result_gaussian} (b,c)) are not in line with the generation results.

These observed discrepancies can be linked to the conditioning of the covariance matrices. In both heteroscedastic and correlated cases, the largest eigenvalue of the covariance matrices is not smaller than the variance stationary distribution of the forward process (set to $\sigma^2 = 1$ in those experiments) violating the requirements of Lemma \ref{lem:gaussian:contract} ($\lambda_{\max} \left( \Sigma^{\mathrm{(heterosc)}}\right) = 1$ and $\lambda_{\max} \left( \Sigma^{\mathrm{(corr)}}\right) \approx 15$). 
This induces the default of strong log-concavity of the renormalized densities $\tilde{p}_t$.
In this way, the Gaussian scenario highlights the critical influence of the covariance matrix conditioning on SGMs. 
Additionally, a smaller $\lambda_{\min} \left( \Sigma \right)$ would increase $L_t$ and $M$, which in turn would increase the bound from Theorem \ref{thm:wasserstein_bound}. 

\underline{Data preprocessing.} As frequently done in practice, we expect better conditioning by running SGMs on a standardized distribution. In this way, note that if $X_0 \sim \pi_{\rm data}$ we consider the centered standardized distribution $X_{\rm stand} = D \left( X_0 - \mu \right)$ with  $D=\mathrm{diag}(\sigma_1, \hdots, \sigma_d) \in \mathbb{R}^{d\times d}$ a diagonal matrix with diagonal entries $\sigma_j$ corresponding to the standard deviation of the $j$-th component of $X_0$ and with $\mu = \left[ \mathbb{E} \left[ X_{0,1} \right], ..., \mathbb{E} \left[ X_{0,d} \right] \right]^\top$.  
A last transformation shrinks the data into a rescaled version of $X_{\rm stand}$ defined as $X_{\rm scale} = \kappa D \left( X_0 - \mu \right)$ with $\kappa \eqdef 1/ (2 \lambda_{\max} \left( \Sigma_{ \rm (stand)}\right))^{1/2}$, where $\lambda_{\max} \left( \Sigma_{ \rm (stand)}\right)$ is the largest eigenvalue of the covariance matrix of $X_{\rm stand}$. 
We then train SGMs to approximate the distribution of $X_{\rm scale}$.
By doing so we ensure the applicability of Lemma \ref{lem:gaussian:contract} (with $\sigma^2 = 1$), as the largest eigenvalue of the covariance matrix of $X_{\rm scale}$ is no larger than 0.5.

\underline{Adapted upper bound.} We can finally adapt the upper bound from Theorem \ref{thm:wasserstein_bound} to a rescaled setting by noting that
\begin{equation}
\label{eq:w2_bound_recale}
\mathcal{W}_2 \left( \pi_{\rm data} , \tilde{\pi} \right) \leq \frac{1}{\kappa} \left( \max_{1\leq j \leq d} \sigma_j \right) 
\mathcal{W}_2 \left( \pi_{\rm scale} , \widehat{\pi}_{N, \text{scale}}^{(\beta_a, \theta)} \right)
\eqsp , 
\end{equation}
where 
\begin{itemize}
\item $\pi_{\mathrm{scale}}$ is the distribution of scaled sample $X_{\rm scale}$
\item $\widehat{\pi}_{N, \text{scale}}^{(\beta_a, \theta)}$ corresponds to the distribution of SGM trained on $X_{\rm scale}$ 
\item $\tilde{\pi}$ is the distribution of the descaled generated samples, i.e., the distribution of $ D^{-1} X/\kappa +\mu$ with $X\sim \widehat{\pi}_{N, \text{scale}}^{(\beta_a, \theta)}$.  
\end{itemize}
Therefore, we can evaluate the upper bound of Theorem \ref{thm:wasserstein_bound} for scaled samples (r.h.s.\ of \eqref{eq:w2_bound_recale}), and transfer it up to a constant to descaled generated samples (l.h.s.\ of \eqref{eq:w2_bound_recale}).

\underline{Results with scaled data preprocessing.} The results are detailed in Figure \ref{fig:W_2_result_gaussian_rescale} for the heteroscedastic case (e) $\pi_{\rm scale}^{\mathrm{(heterosc)}}$ and the correlated case (f) $\pi_{\rm scale}^{\mathrm{(corr)}}$, and are discussed extensively in Section \ref{sec:exp_gaussian} of the main paper. Note that the minima of the evaluated bounds now align closely with the empirical metrics. However, the upper bound profile for the correlated case has been shifted up. This increase was anticipated due to the effect of rescaling by the largest eigenvalue of $\Sigma_{\rm (stand)}^{\rm corr}$, approximately 15, which reduces the magnitude of the values in $\pi_{\rm scale}^{\mathrm{(corr)}}$. This tends to increase the values of $L_t$ and $M$ through the effect on $\lambda_{\min}( \Sigma^{\rm (corr)}_{\rm (stand)} )$ as explained above. Despite this effect, 
these experiments confirm the overall utility of the bound for selecting the appropriate noise schedule. The effect of data rescaling on the Lispschitz continuity and log concavity of the true score function $\nabla \log p_t$ are illustrated in Figure \ref{fig:lip_concav_hetero} on the Heteroscedastic setting. 

\begin{figure*}[h]
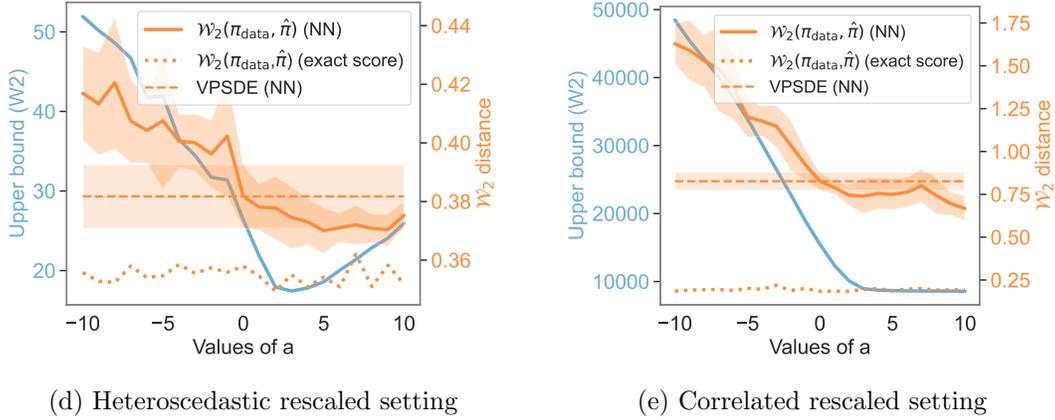
 
    \centering
    \begin{tabular}{cc}
        \includegraphics[width=0.45\linewidth]{w2_aniso_main.pdf}
        &
        \includegraphics[width=0.45\linewidth]{w2_covar_main.pdf}
        \\
        {(d) Heteroscedastic rescaled setting} & {(e) Correlated rescaled setting}
    \end{tabular}
    \caption{\label{fig:W_2_result_gaussian_rescale} Comparison of the empirical 2-Wasserstein distance on rescaled datasets for (d) $\pi_{\rm scale}^{\mathrm{(heterosc)}}$, (e) $\pi_{\rm scale}^{\mathrm{(corr)}}$.
    }
\end{figure*}

% \textcolor{blue}{The rescaling effect visible through the $L_0/C_0$ ratio seems to propagate along the forward diffusion. In particular it is possible to plot the $L_t/C_t$ ratio through diffusion using the expressions given in Proposition \ref{prop:from_C_0_to_C_t} and Proposition \ref{prop:from_L_0_to_L_t}. This quantity is dependent of the noise schedule choice through $m_t$. We propose to illustrate this for the two rescaled distribution below. 
% }

\paragraph{Optimal schedule versus classical choices.}

We investigate the gain from using SGM with the schedule $a^\star$ minimising the upper bound from Theorem \ref{thm:wasserstein_bound} for $d \in \{ 5, 10, 25, 50 \}$ compared to the linear and cosine schedules (see Appendix \ref{sec:noise_sched}). To determine the optimal value $a^\star$, upper bounds were initially calculated across various dimensions for a range of $a$ values from $\{-10, -9, \ldots, 10\}$. This initial calculation aimed to identify a preliminary minimum value. Subsequently, the search was refined around these preliminary values using finer step-sizes of 0.25 to more precisely locate $a^\star$.

Results for the isotropic, heteroscedastic, and correlated cases are presented in both tabular form in Table \ref{tab:W2_table_gaussian} and visually in Figure \ref{fig:gaussian_xp_dim_w2} within the main paper. These findings are discussed in Section \ref{sec:exp_gaussian} of the main paper. 

\begin{table}[ht]
\centering
\scalebox{0.7}{\begin{tabular}{@{}cccccc@{}} 
\toprule
 & Dimension & 5 & 10 & 25 & 50\\ 
\midrule
\multirow{6}{*}{\shortstack{Isotropic}} & Upper bound min $a^\star$ & 4.5 & 4.25 & 3.75 & 4.25 \\
& Generation value in $a^\star$ & 0.039241 $\pm$ 0.012572 & \textbf{0.059274} $\pm$ 0.009438 & \textbf{0.130829} $\pm$ 0.014245 & \textbf{0.233812} $\pm$ 0.010584 \\
& VPSDE (linear sched.) & 0.036995 $\pm$ 0.004663 & 0.063939 $\pm$ 0.010876 & 0.141601 $\pm$ 0.020447 & 0.256384 $\pm$ 0.032709 \\
& Cosine schedule & \textbf{0.030996} $\pm$ 0.003254 & 0.060649 $\pm$ 0.007117 & 0.131234 $\pm$ 0.004794 & 0.251959 $\pm$ 0.005588 \\
& \% gain (vs VPSDE) & -6.07 \% & +7.30 \% & +7.61 \% & +8.79 \% \\
& \% gain (vs Cosine) & -26.60 \% & +2.26 \% & +0.31 \% & +7.20 \% \\
\midrule
\multirow{6}{*}{\shortstack{Heterosc.\\ (with rescaling)}} & Upper bound min $a^\star$ & 4.00 & 3.25 & 2.00 & 2.75 \\
& Generation value in $a^\star$ & 0.096592 $\pm$ 0.003062 & \textbf{0.143224} $\pm$ 0.004899 & \textbf{0.242493} $\pm$ 0.004769 & \textbf{0.372292} $\pm$ 0.004694 \\
& VPSDE (linear sched.) & 0.098889 $\pm$ 0.003604 & 0.147478 $\pm$ 0.009638 & 0.249144 $\pm$ 0.011394 & 0.385612 $\pm$ 0.009333 \\
& Cosine schedule & \textbf{0.096437} $\pm$ 0.002380 & 0.143701 $\pm$ 0.002460 & 0.250520 $\pm$ 0.004448 & 0.374868 $\pm$ 0.003243 \\
& \% gain (vs VPSDE) & +2.32 \% & +2.89 \% & +2.67 \% & +3.46 \% \\
& \% gain (vs Cosine) & -0.16 \% & +0.33 \% & +3.20 \% & +0.69 \% \\
\midrule
\multirow{6}{*}{\shortstack{Correlated\\ (with rescaling)}} & Upper bound min $a^\star$ & 8.00 & 8.75 & 10.50 & 11.00 \\
& Generation value in $a^\star$ & 0.066548 $\pm$ 0.013873 & \textbf{0.107291} $\pm$ 0.028454 & \textbf{0.261075} $\pm$ 0.029533 & \textbf{0.676151} $\pm$ 0.123277 \\
& VPSDE (linear sched.) & 0.072068 $\pm$ 0.019861 & 0.138240 $\pm$ 0.031119 & 0.302986 $\pm$ 0.045539 & 0.897584 $\pm$ 0.079860 \\
& Cosine schedule & \textbf{0.048276} $\pm$ 0.008605 & 0.112898 $\pm$ 0.011284 & 0.391753 $\pm$ 0.030112 & 0.765524 $\pm$ 0.022376 \\
& \% gain (vs VPSDE) & +7.65 \% & +22.36 \% & +13.81 \% & +24.68 \% \\
& \% gain (vs Cosine) & -37.77 \% & +4.96 \% & +33.31 \% & +11.67 \% \\
\midrule
\multirow{2}{*}{Parameters} & Learning rate & 1e-4 & 1e-4 & {\shortstack{1e-3\\ (1e-4 for Corr.)}} & {\shortstack{1e-3\\ (1e-4 for Corr.)}} \\
& Epochs & 20 & 30 & 75 & 150 \\
\bottomrule \\
\end{tabular}}
\caption{
\label{tab:W2_table_gaussian}
Comparison of the $\mathcal{W}_2$ distance between the target value and the generated value at $a^\star$ (the minimum value of the upper bound from Theorem \ref{thm:wasserstein_bound}) with the $\mathcal{W}_2$ distance between the generated value by VPSDE and the target distribution. We display averages plus or minus standard deviations over 10 runs. The target distributions are chosen to be Gaussian with different covariance structures: isotropic, heteroscedastic (with rescaling applied), and correlated (with rescaling applied).
}
\end{table}

\begin{figure}[h]
    \centering
    \includegraphics[width=0.6\linewidth]{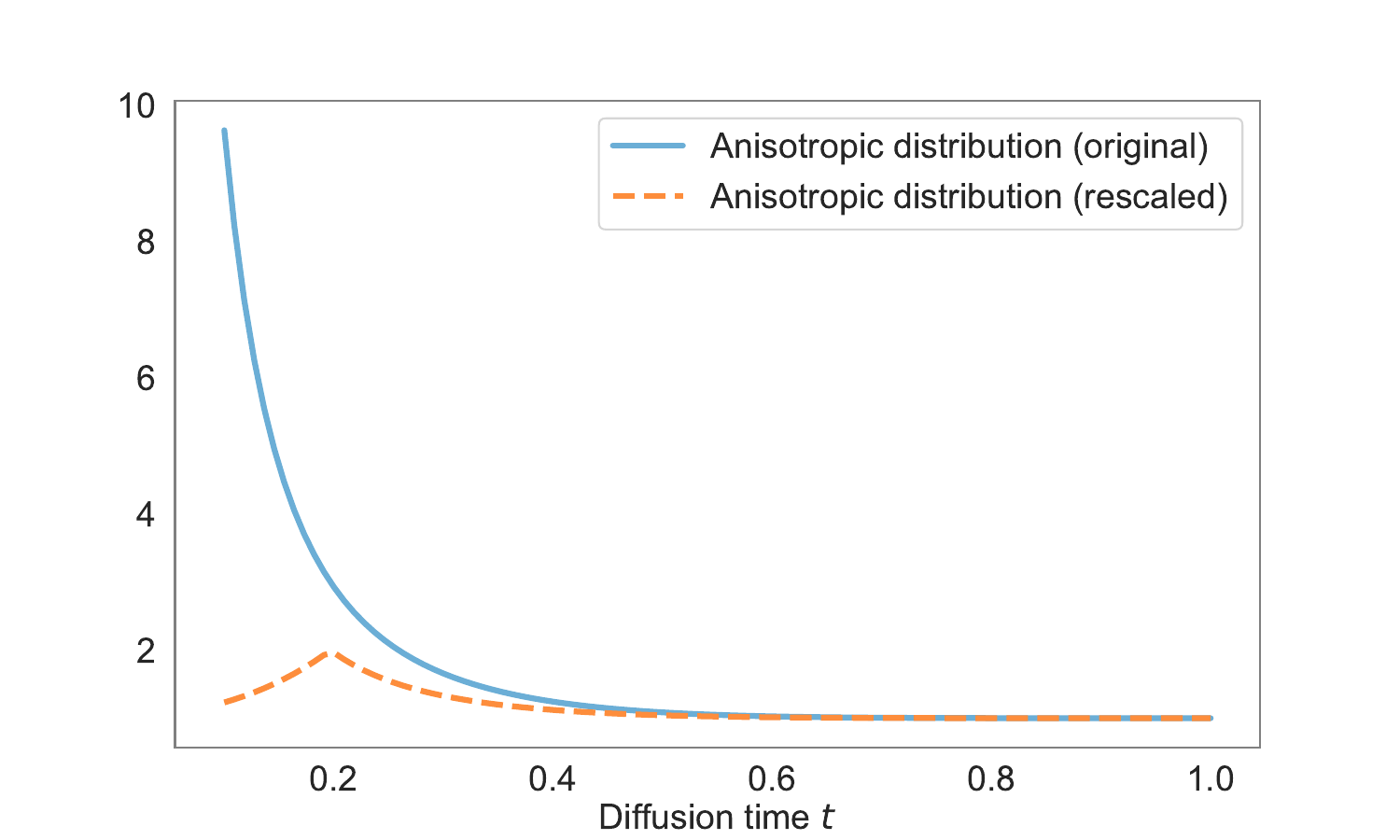}
    \caption{Comparison of the ratio strong concavity / Lipschitz continuity for the true score function $\nabla \log p_t$ in the Heteroscedastic setting before rescaling (Figure \ref{fig:W_2_result_gaussian} (b)) and after rescaling  (Figure \ref{fig:W_2_result_gaussian_rescale} (d)) throughout diffusion time $t \in (0,T]$.}
    \label{fig:lip_concav_hetero}
\end{figure}

\subsubsection{Numerical experiments on more complex synthetic data}
\label{sec:appendix_synthetic_data}

In the context of complex data distributions, the Kullback-Leibler bound (Theorem \ref{th:main}) appears to be of limited practical applicability. Specifically, $\mathcal{E}^{\rm KL}_2(\theta, \beta)$ implies that for each noise schedule tested, a distinct score approximation $\tilde{s}_{\theta}(t,x)$ must be trained. This requirement renders the bound computationally intensive and therefore not realistically usable. Additionally, $\mathcal{E}^{\rm KL}_3(\beta)$ is independent of the schedule choice over $(0,T)$, as it depends solely on its final value $\beta(T)$ which is set constant in our empirical setting (for all $a$, $\beta_a(T) = 20$).  As a consequence, the last remaining error term to analyse the bound through the lens of noise schedules is the mixing time $\mathcal{E}^{\rm KL}_1( \beta)$. However, relying exclusively on $\mathcal{E}^{\rm KL}_1 (\beta)$ would suggest selecting a schedule $t \mapsto \beta(t)$ that maximises $\int_0^T \beta(t) \rmd t$. As demonstrated in Section \ref{sec:exp_gaussian}, this approach clearly fails to yield the schedule choices near the optimal solution.

Therefore, a more reliable choice  would be to use the $\mathcal{W}_2$ bound of Theorem \ref{thm:wasserstein_bound} for which most of the terms can be computed explicitly with reasonable computational cost in the Gaussian setting. In particular, we leverage the Gaussian framework to estimate the constant terms and apply the rescaling defined in Appendix \ref{sec:appendix_W2_bound} to ensure that $C_t$ is non negative for $t \in (0,T]$. More precisely,
\begin{itemize}
    \item $L_t$ and $C_t$ are given in Lemma \ref{lem:gaussian:contract} and are computed using the empirical covariance matrix associated with $\pi_{\rm scale}$ (and using when applicable the refinements in Propositions \ref{prop:from_C_0_to_C_t} and \ref{prop:from_L_0_to_L_t}),
    \item $M$ is derived with Proposition \ref{prop:bar_l_t} with appropriate empirical estimators, 
    \item $\mathcal{W}_2(\pi_{\rm data}, \pi_\infty)$ is computed using closed-form formulas for Gaussian distributions, involving empirical estimators of the mean and covariance of  $\pi_{\rm scale}$,
    \item the term $\varepsilon$ is deliberately omitted to avoid the prohibitively high computational costs associated with training distinct models for different noise schedules.
\end{itemize} 

The experiments are run using the same neural network architecture as in the Gaussian illustrations of Appendices \ref{sec:appendix_KL_bound} and \ref{sec:appendix_W2_bound} (i.e., a dense neural network with 3 hidden layers
of width 256). The network was trained over 200 epochs for $a \in \{-10, -9, \dots, 19, 20 \}$. Contrary to the Gaussian case, conditional score matching $\mathcal{L}_{\mathrm{score}}(\param)$ \eqref{eq:cond_score_matching_objective} is used, as being closer to what is done in practice (explicit scores are now out of reach). To assess the quality of the data generation three metrics are used: 
\begin{enumerate}[(a)]
    \item an estimator of the KL-divergence based on $k$-nearest neighbors \citep{KLviaKNN} with $\lceil \sqrt{d} \rceil$ neighbors, 
    \item the sliced 2-Wasserstein distance \citep{pot_library} with 2000 projections,
    \item the negative log likelihood computed on 1000 samples defined as 
    $ - \frac{1}{1000} \sum_{i=1}^{1000} \log \pi_{\rm data} (x_i)$ with $(x_i)_{1 \leq i \leq 1000}$ samples from the generated distribution and $\pi_{\rm data}$ the probability density function to be estimated.
\end{enumerate}

\paragraph{Funnel distribution.} The first distribution considered is the Funnel distribution \citep{thin2021neo} in dimension $50$, defined as $$\pi_{\rm data} (x) =  \varphi_{a^2} (x_1)\prod_{j=2}^{d} \varphi_{\exp(2bx_1)}(x_j) \eqsp,$$
with $a=1$ and $b=0.5$. To ensure the applicability of Theorem \ref{thm:wasserstein_bound} and Lemma \ref{lem:gaussian:contract}
the samples are standardized and rescaled according to the method described in Appendix \ref{sec:appendix_W2_bound}. The results, illustrated in Figures \ref{fig:funnel_main} and \ref{fig:funnel_xp}, show that the upper bounds effectively mirror the generation outcomes across the three metrics considered. Moreover, the generation results for the parametric schedule $a^\star$ (the one that minimizes the upper bound) outperforms in all three metrics both the linear and cosine schedules (see Table \ref{tab:table_synthetic_distributions}).

\begin{figure*}[h] 
    \centering
    \begin{tabular}{cc}
        \includegraphics[width=0.4\linewidth]{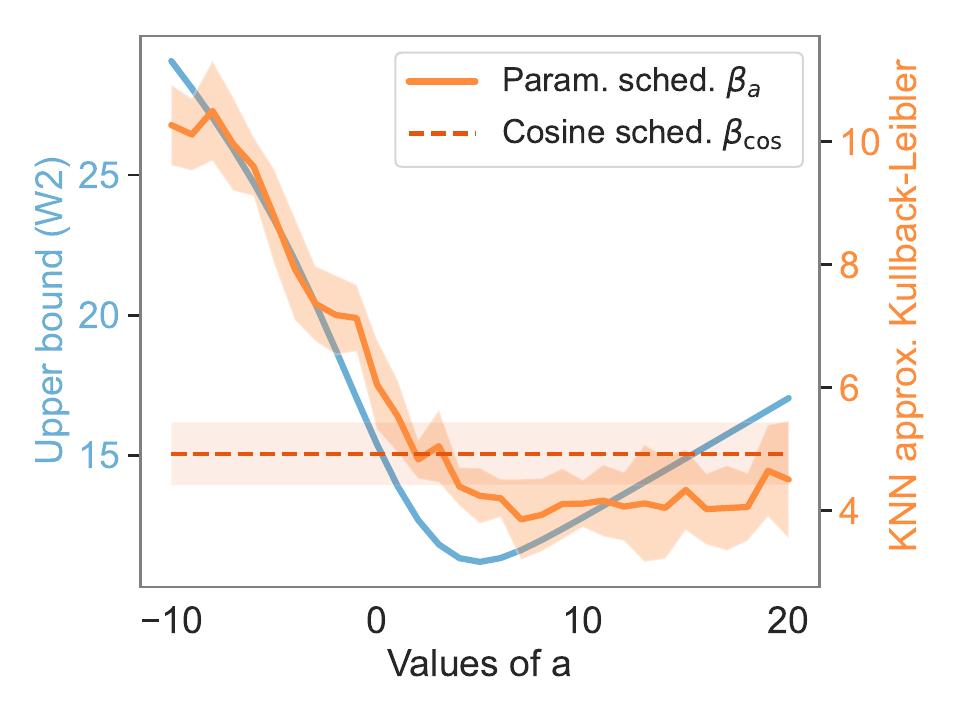}
        &
        \includegraphics[width=0.4\linewidth]{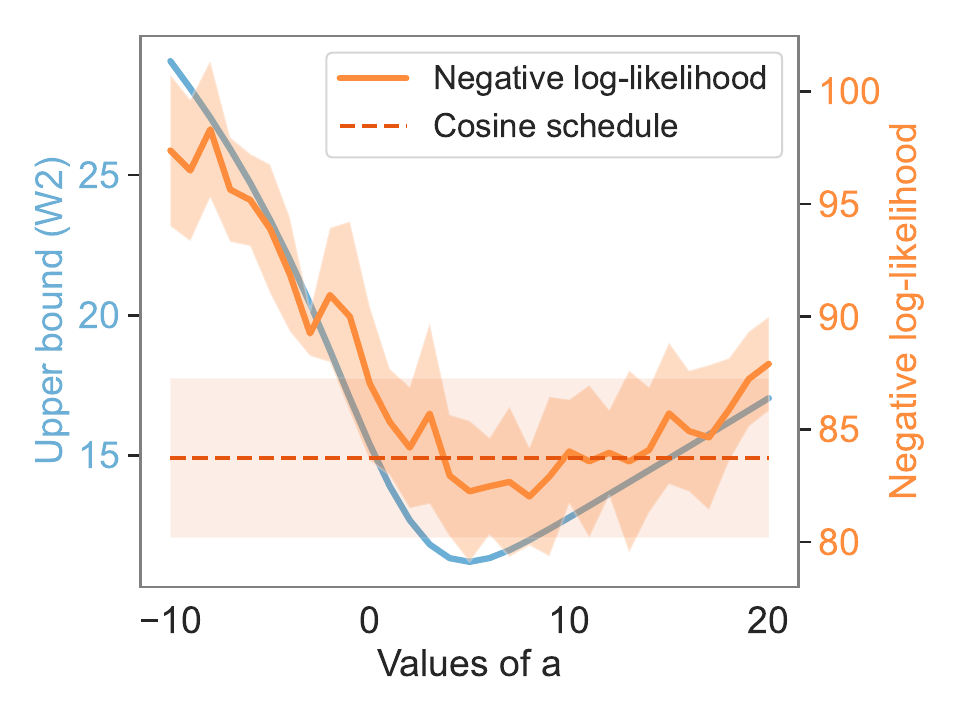}
        \\
        {(b) KL divergence with $k$-nearest neighbors estimate} & {(c) Negative log-likelihood}
    \end{tabular}
    \caption{\label{fig:funnel_xp} Upper bound and empirical distances between the data distribution and the generated samples for different metrics on a Funnel dataset in dimension 50.
    }
\end{figure*}

\paragraph{Gaussian mixture models.} The second distribution considered is a Gaussian mixture model with 25 modes in dimension $50$, defined as
$$ \pi_{\rm data} (x) = \frac{1}{25} \sum_{(j,k)\in\{-2, \dots , 2\}^2}\varphi_{\mu_{jk} , \Sigma_d} (x)$$

with $\varphi_{\mu_{jk} , \Sigma_d}$ denoting the probability density function of the Gaussian distribution with covariance matrix $\Sigma_d = \rm diag \left( 0.01, 0.01, 0.1, ..., 0.1 \right)$ and mean vector $\mu_{jk} = [j,k, 0,0,0...,0]^\top$. The results shown in Figure \ref{fig:MG25_xp} and Table \ref{tab:table_synthetic_distributions} confirm the relevance of the upper bound even for non-Gaussian datasets.

\begin{figure*}[h] 
    \centering
    \begin{tabular}{ccc}
        \includegraphics[width=0.31\linewidth]{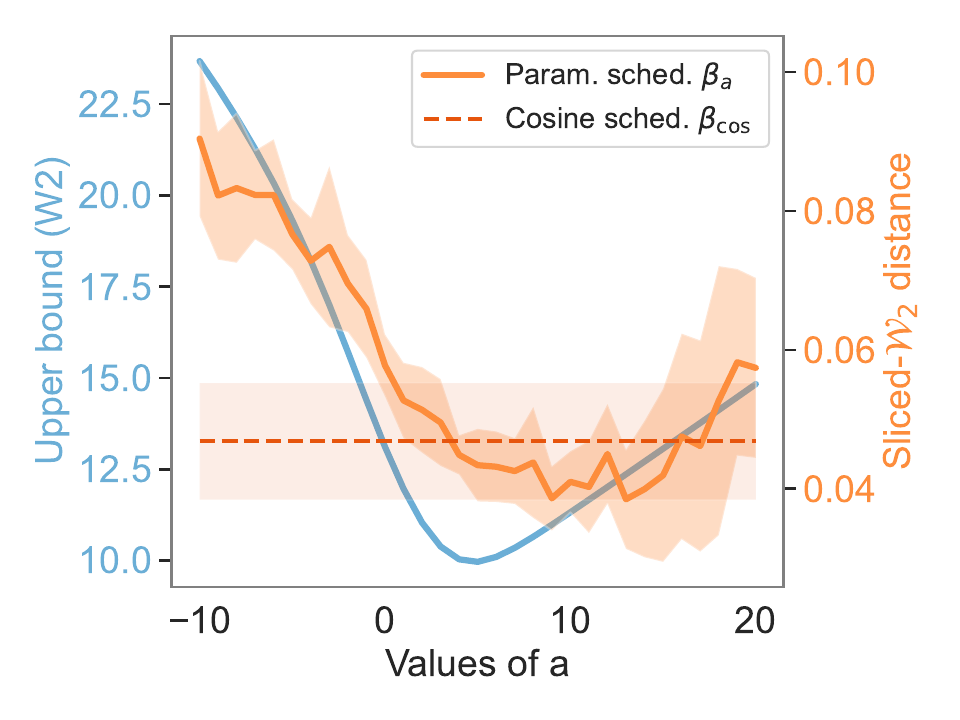}
        &
        \includegraphics[width=0.31\linewidth]{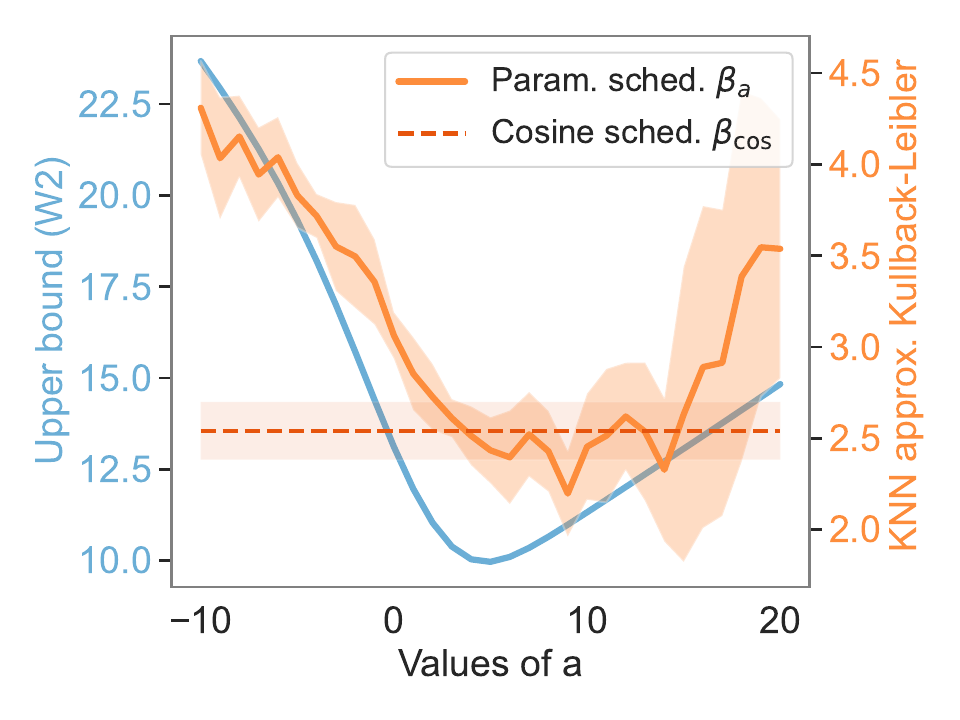}
         &
        \includegraphics[width=0.31\linewidth]{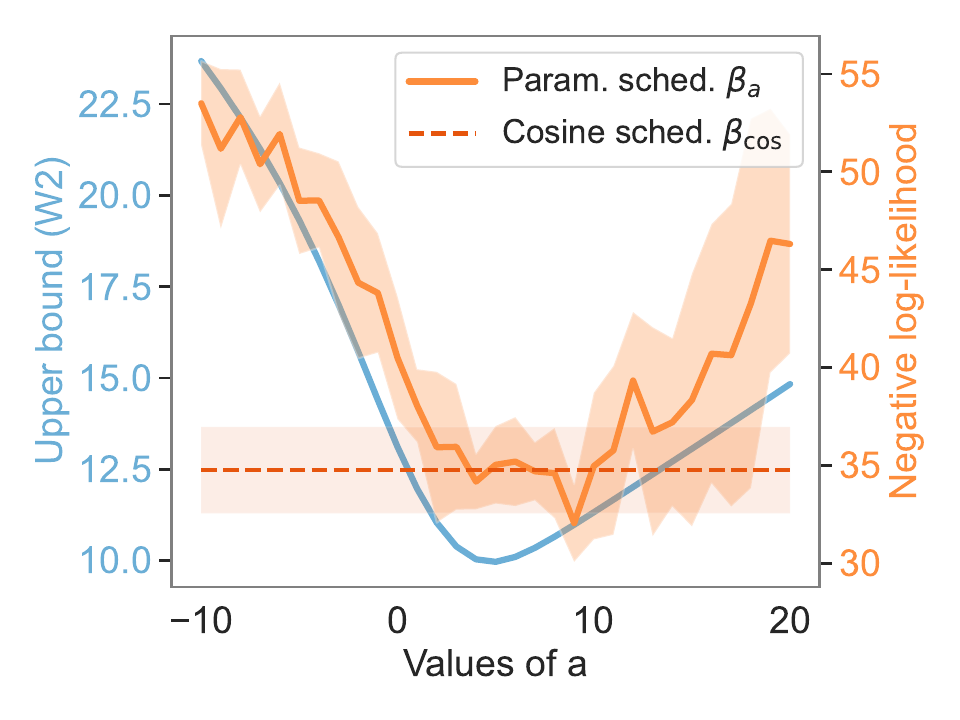}       
        \\
        {(a) Sliced-$\mathcal{W}_2$ distance} & {(b) $k$-nearest neighbors estimator} & {(c) Negative log-likelihood}
    \end{tabular}
    \caption{\label{fig:MG25_xp} Upper bound and empirical distances between the data distribution and the generated samples for different metrics on a mixture of 25 Gaussian variables dataset in dimension 50.
    }
\end{figure*}

\begin{table}[ht]
\centering
\scalebox{0.7}{\begin{tabular}{@{}ccccc@{}} 
\toprule
 & Metric & Sliced-Wasserstein & $k$-nn (Kullback-Leibler) & NLL \\ 
\midrule
\multirow{6}{*}{\shortstack{Funnel distribution}} 
& Generation value in $a^\star$ & \textbf{0.218498} $\pm$ 0.049882 & \textbf{4.242455} $\pm$ 0.450224 & \textbf{82.25179} $\pm$ 3.12809  \\
& VPSDE (linear sched.) & 0.240664 $\pm$ 0.036578 & 6.048403 $\pm$ 0.726221 & 87.02893 $\pm$ 3.40642 \\
& Cosine schedule & 0.221851 $\pm$ 0.054309 & 4.927209 $\pm$ 0.510968 & 83.73294 $\pm$ 3.53262 \\
& \% gain (vs VPSDE) & +9.21 \% & +29.88 \% & +5.49 \% \\
& \% gain (vs Cosine) & +1.51 \% & +13.91 \% & +1.77 \% \\
\midrule
\multirow{6}{*}{\shortstack{Gaussian mixture models}}
& Generation value in $a^\star$ & \textbf{0.043388} $\pm$ 0.005222 & \textbf{2.433759} $\pm$ 0.180652 & 35.033176 $\pm$ 1.97863  \\
& VPSDE (linear sched.) & 0.057763 $\pm$ 0.004450 & 3.063054 $\pm$ 0.126697 & 40.49867 $\pm$ 3.13705 \\
& Cosine schedule & 0.046816 $\pm$ 0.008402 & 2.541213 $\pm$ 0.158563 & \textbf{34.76353} $\pm$ 2.20980 \\
& \% gain (vs VPSDE) & +24.91 \% & +20.55 \% & +13.49 \% \\
& \% gain (vs Cosine) & +7.32 \% & +4.23 \% & -0.77 \%  \\
\midrule
\multirow{2}{*}{Parameters} & Learning rate & 1e-3 & 1e-3 & 1e-3  \\
& Epochs & 200 & 200 & 200  \\
\bottomrule \\
\end{tabular}}
\caption{
\label{tab:table_synthetic_distributions}
Comparison of the sliced-$\mathcal{W}_2$ distance, KL divergence coupled with $k$-nearest neighbors estimate and negative log-likelihood between the target distribution and the SGM-generated one. For the latter, the SGM is either trained with linear, cosine and $\beta_{a^\star}$ schedules. We display averages plus or minus standard deviations over 10 runs. The target distributions are chosen are Funnel and Gaussian mixture models.
}
\end{table}

\subsection{Numerical experiments on real-world datasets } \label{sec:CIFAR_EXP}

To evaluate the impact of the noise schedule on the performance of score-based generative models we evaluate the parametric family $\beta_a$ introduced in Equation \eqref{eq:noise_schedules} using CIFAR 10 dataset.  We suggest to analyse the FID (Fréchet Inception Distance) score on 50 000 samples generated for different noise schedules (different values of $a$ in $\beta_a$, see Figure \ref{fig:noise_schedules}) on CIFAR 10. 

We use pretrained models from \citet{karras2022edm} with the recommended hyperparameters designed to replicate the experiments in \cite{song2021score} corresponding to our linear schedule ($a=0$) as shown in Figure \ref{fig:noise_schedules}. In particular, we let $T=1$, $\beta (0) =0.1$, $\beta(T) = 20$, $1000$ discretization steps and sample over the diffusion $[\epsilon,1]$ with $\epsilon = 10^{-3}$.

The training process in \cite{karras2022edm} is slightly different, though equivalent, to the original implementation. In particular, the networks are not trained to directly estimate $\nabla \log p_t ( \overrightarrow X_t)$. Instead, a denoiser function $D_{\theta}(X,\sigma)$ is trained to isolate the noise from the signal for some noise level (see Equations (2) and (3) in \cite{karras2022edm}). With appropriate rescaling this denoiser can be used in the VP setting by letting 
$$ s_{\theta}(\overrightarrow X_t,t) = \frac{\sigma_t^2}{m_t}\left( D_{\theta}  \left(\frac{\overrightarrow X_t}{m_t} , \frac{\sigma_t}{m_t}\right) - \frac{\overrightarrow X_t}{m_t} \right) \eqsp,  $$
where $s_{\theta}$ is the score approximation as defined in our paper, $m_t = \exp\{-\int_0^t \beta(s) \rm d s/(2 \sigma^2)\}$ and $ \sigma^2_t = \sigma^2 (1-m_t^2)$. This formulation bridges the denoising approach with score-based methods in the VP framework. 

Figure \ref{fig:FID_SCORE_EM_CIFAR} displays the FID score for samples generated using the Euler-Maruyama discretization of the backward process for different choices of $\beta_a$ with $a \in \{ -10,-9,\ldots,10\}$ and cosine schedule $\beta_{\text{cos}}$. Although the assumptions of our results cannot be verified in such a setting, it is interesting to note that the empirical performance follows the same dynamics as in the toy numerical experiments. This indicates that the analysis and optimization of noise schedules is an interesting problem to be explored further for complex cases.

\begin{figure}[h]
    \centering
    \includegraphics[width=0.6\linewidth]{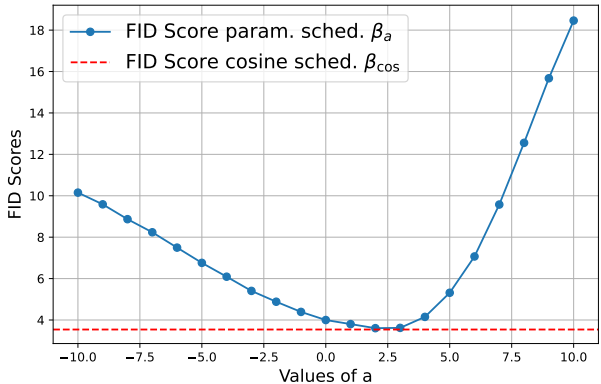}
    \caption{FID scores on 50 000 generated samples using pre-trained models from \cite{karras2022edm} with different noise schedules from the parametric family \eqref{eq:noise_schedules} and cosine schedule.}
    \label{fig:FID_SCORE_EM_CIFAR}
\end{figure}

\section{Conditional training in the Gaussian setting}
\label{sec:conditional_training_SGM}

Section \ref{sec:exp_gaussian} of this paper is dedicated to the illustration of the theoretical upper bounds and their relevance in the Gaussian setting (i.e., when $\pi_{\rm data}$ is Gaussian). This choice has been motivated by the fact that, under this setting, all constants in the upper bounds from Theorem \ref{th:main} and Theorem \ref{thm:wasserstein_bound} are either analytically available or could be precisely estimated (see Appendices \ref{sec:appendix_KL_bound} and \ref{sec:appendix_W2_bound}). 

In particular, both upper bounds display error terms proportional to $\mathcal{L}_{\rm explicit} (\theta)$ \eqref{eq:explicit_score_matching_objective}, which has motivated the use of explicit score matching during the training. To do so, we used a deep neural architecture (see Figure \ref{fig:nn_architecture}) trained to minimize $\mathcal{L}_{\rm explicit} (\theta)$ \eqref{eq:explicit_score_matching_objective} using as a target the true score function. This is possible because in the Gaussian setting, the true score function is analytically known (Lemma \ref{lem:exactscore}). However, in most applications the score function is not available, because the data distribution is not known and has to be learned. This is the reason why, in practice we rely on conditional score matching (i.e., the minimization of $\mathcal{L}_{\mathrm{score}}(\param)$ \eqref{eq:cond_score_matching_objective}). 
This approach is particularly relevant given the relationship between the explicit and conditional score functions:
$    \mathcal{L}_{\rm explicit} \left( \theta \right) = \mathcal{L}_{\rm score} \left( \theta \right) - \mathbb{E}\left[\|\nabla \log p_\tau(\ora X _\tau) - \nabla \log p_\tau(\ora X _\tau| X _0)\|^2\right] \eqsp.$

Consequently, all the theoretical upper bounds discussed in Sections \ref{sec:main} and \ref{sec:wasserstein} can be adjusted by a constant (with respect to $\theta$) to account for discrepancies between the score function learned through $\mathcal{L}_{\mathrm{score}}$ or $\mathcal{L}_{\rm explicit}$.

The rest of this section demonstrates the numerical effects of employing conditional score matching instead of explicit score matching, following the numerical set-up of Appendices \ref{sec:appendix_KL_bound} and \ref{sec:appendix_W2_bound}. In Figure \ref{fig:conditional_KL}, the Kullback-Leibler upper bound from Theorem \ref{th:main} is depicted in varying shades of blue, while the empirical $\mathrm{KL}(\pi_{\rm data} || \widehat{\pi}_N^{(\beta_a, \theta)})$  across parameters $a \in \{ -10, -9, -8, ..., 10 \}$ is shown in varying shades of orange.

In Figure \ref{fig:conditional_KL}, three learning scenarios are presented: one using explicit score matching (which exactly matches the results of Figure \ref{fig:gaussian_xp} (top)), another with conditional score matching over 150 epochs, and a third with 300 epochs. Both the generation results and the upper bounds show diminished performance as the curves are shifted upwards. Nonetheless, the overall curve shapes are similar, and the optimal points remain closely aligned. Interestingly, both the upper bounds and the generation outcomes in the conditional scenarios demonstrate more pronounced peaks near the minimum values. This suggests that precise noise schedule selection may yield even better performance gain when SGMs are trained using conditional score matching.

Additionally, Figure \ref{fig:conditional_1000_epochs} demonstrates that increasing the number of training iterations when using conditional score matching provides results more and more similar to that obtained with explicit score matching. This effect is noticeable in both the KL divergence and the $\mathcal{W}_2$ distance.

\begin{figure}[H]
    \centering
    \begin{subfigure}{}     
    \includegraphics[width=0.7\linewidth]{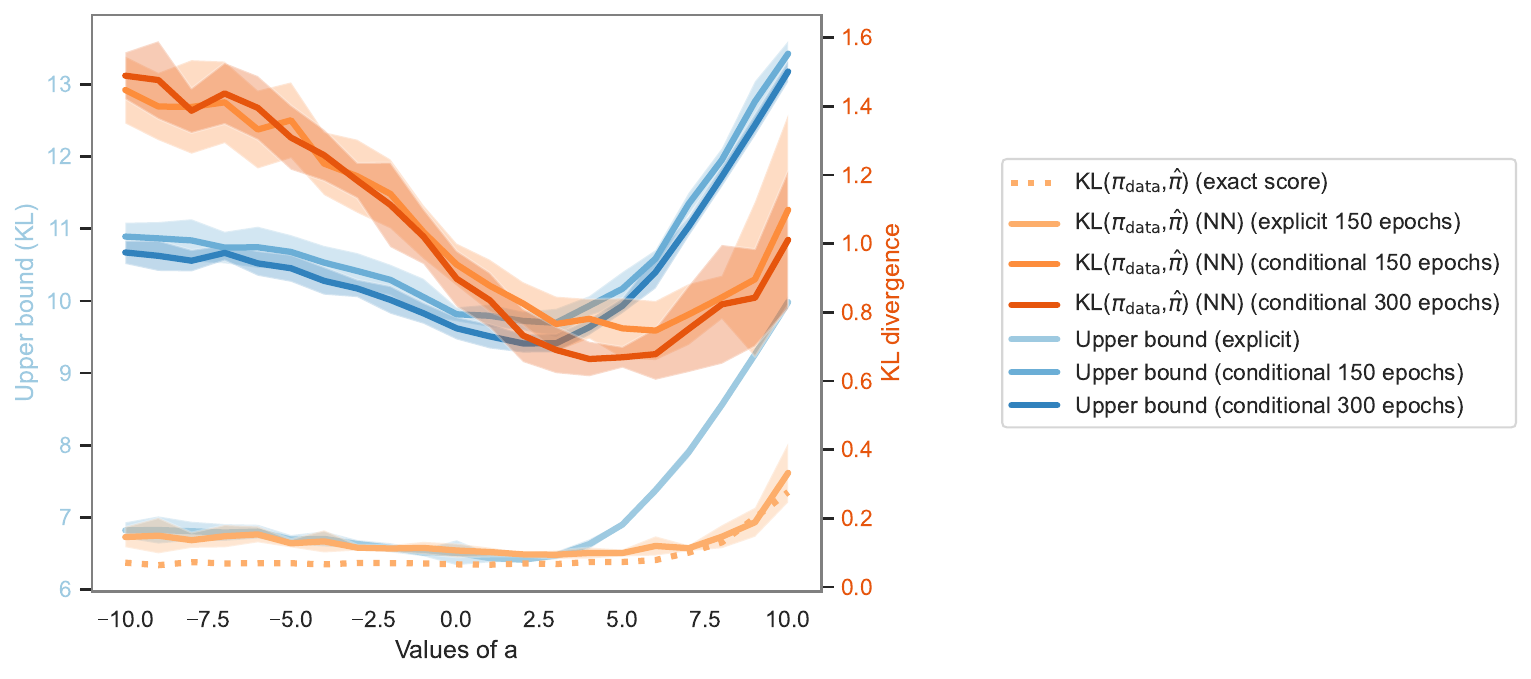}
        \caption*{Isotropic setting}
    \end{subfigure}
    \begin{subfigure}{}
        \includegraphics[width=0.7\linewidth]{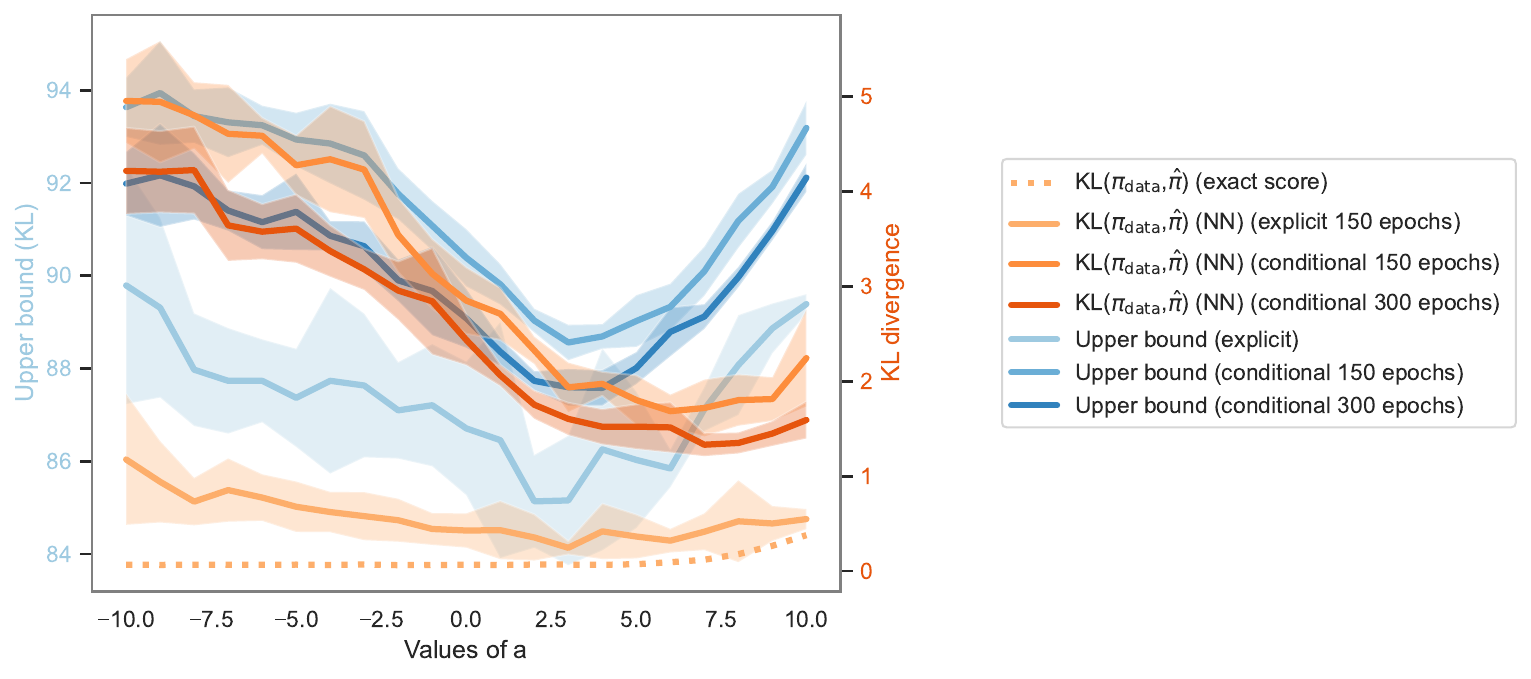}
        \caption*{Heteroscedastic setting}
    \end{subfigure}
    \begin{subfigure}{}
    \includegraphics[width=0.7\linewidth]{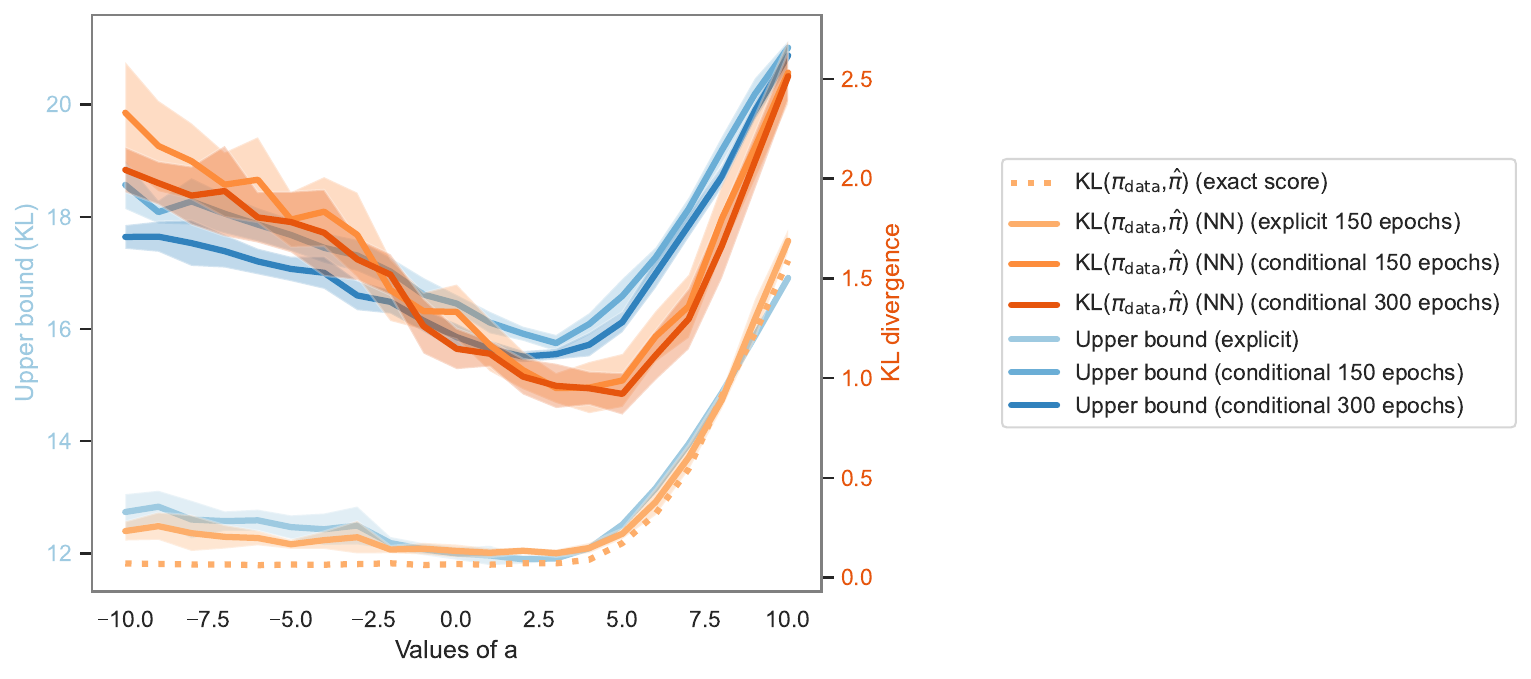}
        \caption*{Correlated setting}
    \end{subfigure}
    
    \caption{\label{fig:conditional_KL}
Comparison of the empirical KL divergence (mean value $\pm$ std over 10 runs) between $\pi_{\mathrm{data}}$ and $ \widehat{\pi}_N^{(\beta_{a}, \theta)}$ (in orange) and the upper bound of Theorem \ref{th:main} (in blue) w.r.t.\ the parameter $a$ used in the definition of the noise schedule $\beta_a$, for $d=50$.}
\end{figure}

\begin{figure}[H]
    \centering
    \begin{subfigure}{}     
    \includegraphics[width=0.7\linewidth]{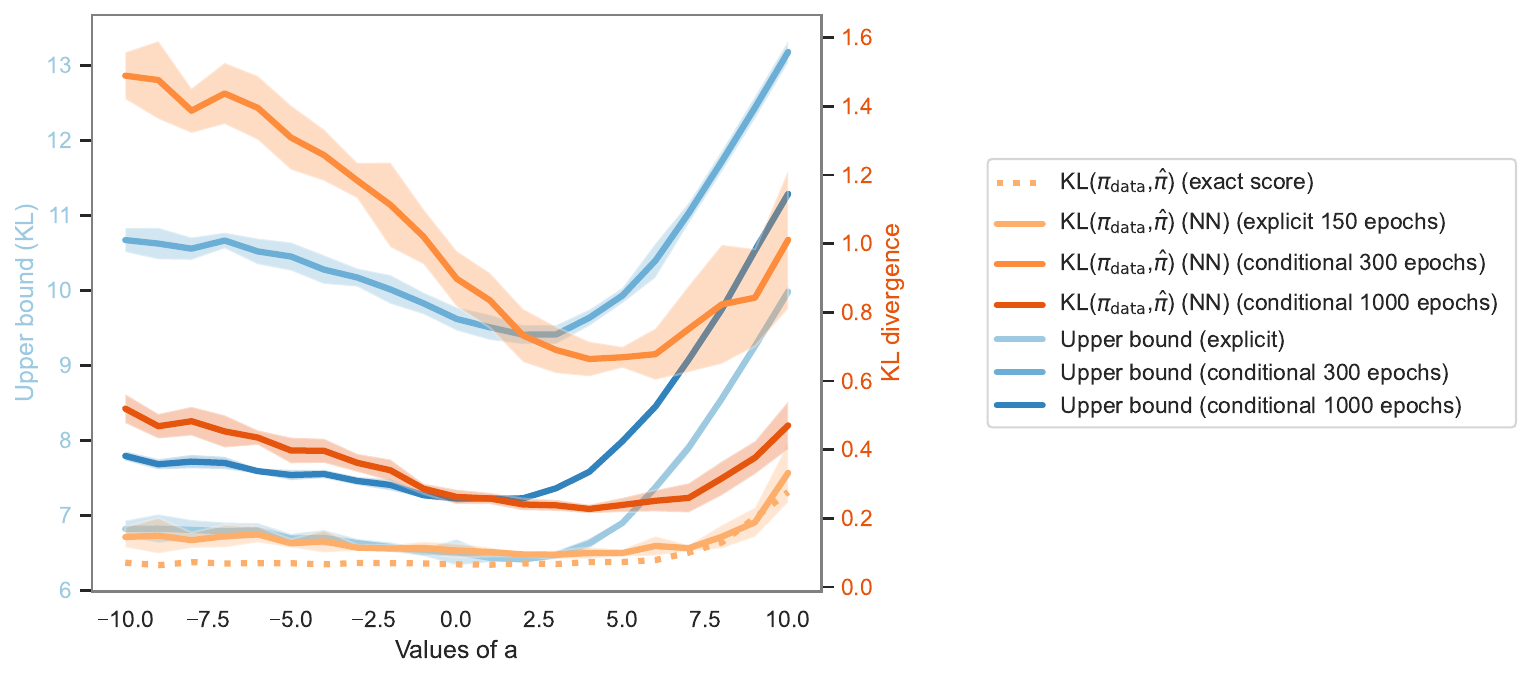}
    \end{subfigure}
    \begin{subfigure}{}
        \includegraphics[width=0.7\linewidth]{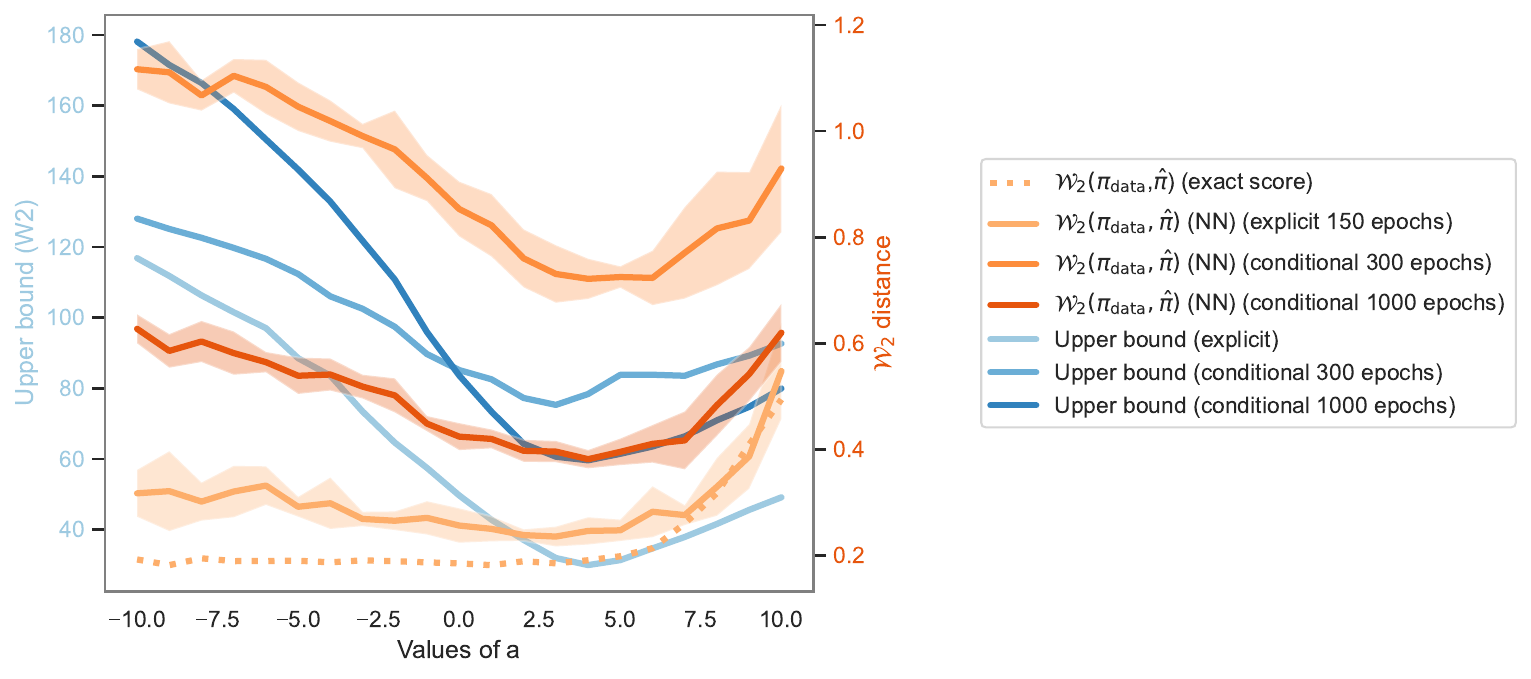}
    \end{subfigure}
 \caption{\label{fig:conditional_1000_epochs}
Comparison of the empirical KL divergence (top) and the $\mathcal{W}_2$ distance (bottom) (mean value $\pm$ std over 10 runs) between $\pi_{\mathrm{data}}=\pi_{\rm data}^{\mathrm{(iso)}}$ and $ \widehat{\pi}_N^{(\beta_{a}, \theta)}$ (in orange) and the upper bound of Theorem \ref{th:main} (top) and of Theorem \ref{thm:wasserstein_bound} (bottom) (in blue) w.r.t.\ the parameter $a$ used in the definition of the noise schedule $\beta_a$, for $d=50$.
}
\end{figure}

\end{document}